\let\mathbb=\varmathbb
\DeclareSymbolFont{letters}{OML}{ztmcm}{m}{it}
\newtheorem{theorem}{Theorem}[section]
\newtheorem{lemma}[theorem]{Lemma}
\newtheorem{corollary}[theorem]{Corollary}
\newtheorem{proposition}[theorem]{Proposition}
\theoremstyle{definition}
\newtheorem{remark}[theorem]{Remark}
\newtheorem{definition}[theorem]{Definition}
\numberwithin{equation}{section}
\numberwithin{equation}{section}
\begin{document}

\arraycolsep=1pt

\title{\bf\Large Equivalent Characterizations and Applications
of Fractional Sobolev Spaces with Partially Vanishing Traces
on $(\epsilon,\delta,D)$-Domains Supporting $D$-Adapted
Fractional Hardy Inequalities
\footnotetext{\hspace{-0.35cm}
2020 {\it Mathematics Subject Classification}. Primary: 46E35;
Secondary: 42B35, 26A33, 26D10, 35J25.
\endgraf {\it Key words and phrases}. Fractional Sobolev space,
mixed boundary condition, Hardy inequality,
extension operator, elliptic operator, fractional power.
\endgraf This project is supported by the Zhejiang Provincial
Natural Science Foundation of China (Grant No. LR22A010006)
and the National Natural Science Foundation of China
(Grant Nos. 12431006 and 12371093).}}
\author{Jun Cao, Dachun Yang\footnote{Corresponding author, E-mail:
\texttt{dcyang@bnu.edu.cn}/{\color{red}{\today}}/Final version.} \ and Qishun Zhang}
\date{}
\maketitle

\vspace{-0.7cm}

\begin{center}
\begin{minipage}{13.5cm}
{\small {\bf Abstract} \quad
Let $\Omega\subset\mathbb{R}^n$ be an $(\epsilon,\delta,D)$-domain,
with $\epsilon\in(0,1]$, $\delta\in(0,\infty]$, and
$D\subset \partial \Omega$ being a closed part of $\partial \Omega$,
which is  a general open connected set when $D=\partial \Omega$ and
an $(\epsilon,\delta)$-domain when $D=\emptyset$.
Let $s\in(0,1)$ and $p\in[1,\infty)$. If ${W}^{s,p}(\Omega)$,
${\mathcal W}^{s,p}(\Omega)$, and $\mathring{W}_D^{s,p}(\Omega)$ are
the fractional Sobolev spaces on $\Omega$ that are defined
respectively via the restriction of $W^{s,p}(\mathbb{R}^n)$ to $\Omega$,
the intrinsic Gagliardo norm, and the completion of all $C^\infty(\Omega)$
functions with compact support away from $D$, in this article we
prove their equivalences [that is,
${W}^{s,p}(\Omega)={\mathcal{W}}^{s,p}(\Omega)
=\mathring{W}_D^{s,p}(\Omega)$]
if $\Omega$ supports a $D$-adapted fractional Hardy inequality
and, moreover, when $sp\ne 1$ such a fractional Hardy inequality is shown to be
necessary to guarantee these equivalences under some mild geometric
conditions on $\Omega$. 
Using the aforementioned equivalences,
we show that the real interpolation space $(L^p(\Omega), \mathring{W}_D^{1,p}(\Omega))_{s,p}$
equals to some weighted fractional order  Sobolev space
$\mathcal{W}^{s,p}_{d_D^s}(\Omega)$ when $p\in (1,\infty)$.
Applying this to the elliptic operator $\mathcal{L}_D$
in $\Omega$ with mixed boundary condition,
we characterize both the domain of its fractional power and
the parabolic maximal regularity of its Cauchy initial
problem by means of $\mathcal{W}^{s,p}_{d_D^s}(\Omega)$.}
\end{minipage}
\end{center}

\vspace{0.2cm}

\tableofcontents

\section{Introduction}
We divide this introduction into two subsections. In Subsection
\ref{s1.1}, we simply present some developments about
fractional Sobolev spaces on domains, while in Subsection
\ref{s1.2} we state the main results of this article.

\subsection{Backgrounds}\label{s1.1}
Let $s\in (0,1)$ and $p\in [1,\infty)$.
The fractional Sobolev space
$W^{s,p}(\mathbb{R}^n)$ on the Euclidean space $\mathbb{R}^n$ is
defined as the collection of all functions $f\in L^p(\mathbb{R}^n)$ such that
\begin{align}\label{eqn-normfs}
\left\|f\right\|_{W^{s,p}(\mathbb{R}^n)}:=\|f\|_{L^p(\mathbb{R}^n)}
+\left[\int_{\mathbb{R}^n}\int_{\mathbb{R}^n}\dfrac{|f(x)-f(y)|^p}
{|x-y|^{n+sp}}\,dxdy\right]^{\frac 1p}<\infty.
\end{align}
It is known that the  space $W^{s,p}(\mathbb{R}^n)$ was introduced
almost simultaneously
by Aronszajn \cite{Ar55}, Gagliardo \cite{Ga58}, and Slobodecki\u{i} \cite{Sl58},
to fill the smoothness gap of function spaces
between the Lebesgue space $L^p(\mathbb{R}^n)$ and the first-order
Sobolev space $W^{1,p}(\mathbb{R}^n)$ (see, e.g., \cite{RuSi96,Tri95}
for some historical reviews).
Since then, $W^{s,p}(\mathbb{R}^n)$ has been a classical subject in
various areas of mathematics, which
offers a more refined smoothness description than the integer order
Sobolev spaces. A substantial body of literature has approached this
research topic from diverse perspectives (see, e.g., \cite{Ad03,AlYaYu24,DaLiYaYu18,DaLiYaYuZh23,EE23,HaTr08,Lua,DPV12,
SHSHJ24,Sa18}).

To extend the theory of  fractional Sobolev spaces from the whole
space $\mathbb{R}^n$ to a general domain $\Omega\subset\mathbb{R}^n$,
a usual way is to consider the following trace space $W^{s,p}(\Omega)$
(see, e.g., \cite{Tri83}) of  $W^{s,p}(\mathbb{R}^n)$, defined  by setting
\begin{align}\label{eqn-fstrace}
 W^{s,p}(\Omega):=\left\{f\in L^p(\Omega):\ \text{there is} \ g\in W^{s,p}(\mathbb{R}^n)\ \text{such that } g|_\Omega=f\right\},
\end{align}
equipped with the standard quotient norm
\begin{align}\label{eqn-intrinn}
\|f\|_{W^{s,p}(\Omega)}:=\inf \left\{\left\|g\right\|_{W^{s,p}(\mathbb{R}^n)}:\ g\in W^{s,p}(\mathbb{R}^n)\ \text{such that } g|_\Omega=f\right\}.
\end{align}
For the fractional Sobolev space $W^{s,p}(\Omega)$ defined in \eqref{eqn-fstrace},
the following questions naturally appear:

\begin{itemize}
\item
{\bf Intrinsic characterization}: Can the norm
$\|\cdot\|_{W^{s,p}(\Omega)}$ in \eqref{eqn-intrinn}  be characterized by
other kinds of intrinsic norms which involve only points in $\Omega$?
A typical candidate intrinsic norm is the following {\it Gagliardo norm} $\|\cdot\|_{\mathcal{W}^{s,p}(\Omega)}$ defined by setting
\begin{align}\label{eqn-normfsdm}
\left\|f\right\|_{\mathcal{W}^{s,p}(\Omega)}:=\,&\|f\|_{L^p(\Omega)}
+\left\|f\right\|_{\dot{\mathcal{W}}^{s,p}(\Omega)}\notag\\
:=\,&\|f\|_{L^p(\Omega)}+\left[\int_\Omega\int_\Omega
\dfrac{|f(x)-f(y)|^p}{|x-y|^{n+sp}}\,dxdy\right]^{\frac 1p}.
\end{align}
Recall that \eqref{eqn-normfsdm} is the norm used by Lions--Magenes \cite{LiMa60} and Ne\u{c}as \cite{Ne67}
to introduce the fractional Sobolev space on domain
\begin{align}\label{eqn0iss}
\mathcal{W}^{s,p}(\Omega):=\left\{f\in L^p(\Omega):\ \left\|f\right\|_{\mathcal{W}^{s,p}(\Omega)}<\infty\right\}.
\end{align}
Also, there are other kinds of intrinsic norms that are defined by
the fractional maximal function \cite{Ch84},
the modulus of smoothness \cite{DeSh93}, etc.

\item {\bf Geometric characterization}:
How to characterize the behaviors of fractional Sobolev
functions in various geometric objects derived from $\Omega$?
A typical problem in this aspect is the
linear extension problem \cite{Cal61,Ste70}, which asks for a linear operator
$\mathcal{E}$ bounded from $W^{s,p}(\Omega)$ or
$\mathcal{W}^{s,p}(\Omega)$ to
$W^{s,p}(\mathbb{R}^n)$ such that,  for
any $f\in W^{s,p}(\Omega)$, $\mathcal{E}f|_\Omega=f.$
It is well known that such an extension bridges the theory of $W^{s,p}(\Omega)$ with the
corresponding one of
$W^{s,p}(\mathbb{R}^n)$ (\cite{Tri08}). A closely related
inverse problem is the boundary trace problem \cite{BrBr12},
which asks
for characterizations of the limiting behavior of functions in  $W^{s,p}(\Omega)$
as they approach the boundary $\partial\Omega$.

\item
{\bf Approximation characterization}: Can the space
$W^{s,p}(\Omega)$ be characterized by the approximation
of  some particular smooth function classes
(see, e.g., \cite{Ad03,FiSe15}),  such as $C_{\rm c}^\infty(\Omega)$ and $C^\infty(\Omega)$, etc.
Notice that, if  $C_{\rm c}^\infty(\Omega)$ is dense in
$W^{s,p}(\Omega)$, then we have the dense inclusions
$C_{\rm c}^\infty(\Omega)\subset W^{s,p}(\Omega)\subset \left(C_{\rm c}^\infty(\Omega)\right)'$,
which implies that $W^{s,p}(\Omega)$ is a regular space so that
a theory of distributions in the sense of H\"ormander \cite{Ho03} can been applied.
\end{itemize}

It has been observed that the research on the aforementioned issues is significantly contingent upon the
geometric characteristics of the underlying domain $\Omega$ under consideration.
We refer to \cite{Tri95,Tri83} for the case where $\Omega=\mathbb{R}_+^n:=\mathbb{R}^{n-1}\times
(0,\infty)$ is the upper half space of $\mathbb{R}^n$,
to \cite{Tri83,Tri92} for the case where $\Omega$ is a bounded smooth domain,
to \cite{DPV12,Pra15,Ryc00} for the case where $\Omega$ is a bounded Lipschitz domain, and
to \cite{AlYaYu22,WaLiLi25,Pra17,ShXi17,Tri08,Yao23} for the case where $\Omega$ is a $(\epsilon,\delta)$-domain.
It is worth emphasizing that the literature we have cited here merely constitutes a small fraction of the extensive
body of works pertinent to the research in this field. Moreover, the aforementioned questions are closely related  each other. For instance, to
obtain the approximation characterization, one usually needs a suitable intrinsic norm characterization.
For the latter characterization, some geometric characterizations such as bounded linear extensions play a crucial role.

In this article, we address the aforementioned problems in the setting where the functions in  $W^{s,p}(\Omega)$ have
partially vanishing traces, that is, the boundary $\partial\Omega$ is decomposed into two parts
$\partial\Omega=\Gamma\cup D$
and functions in $W^{s,p}(\Omega)$ vanish at $D$ (see Figure \ref{p1}).
\begin{figure}[h]
\centering
\includegraphics[width=6cm]{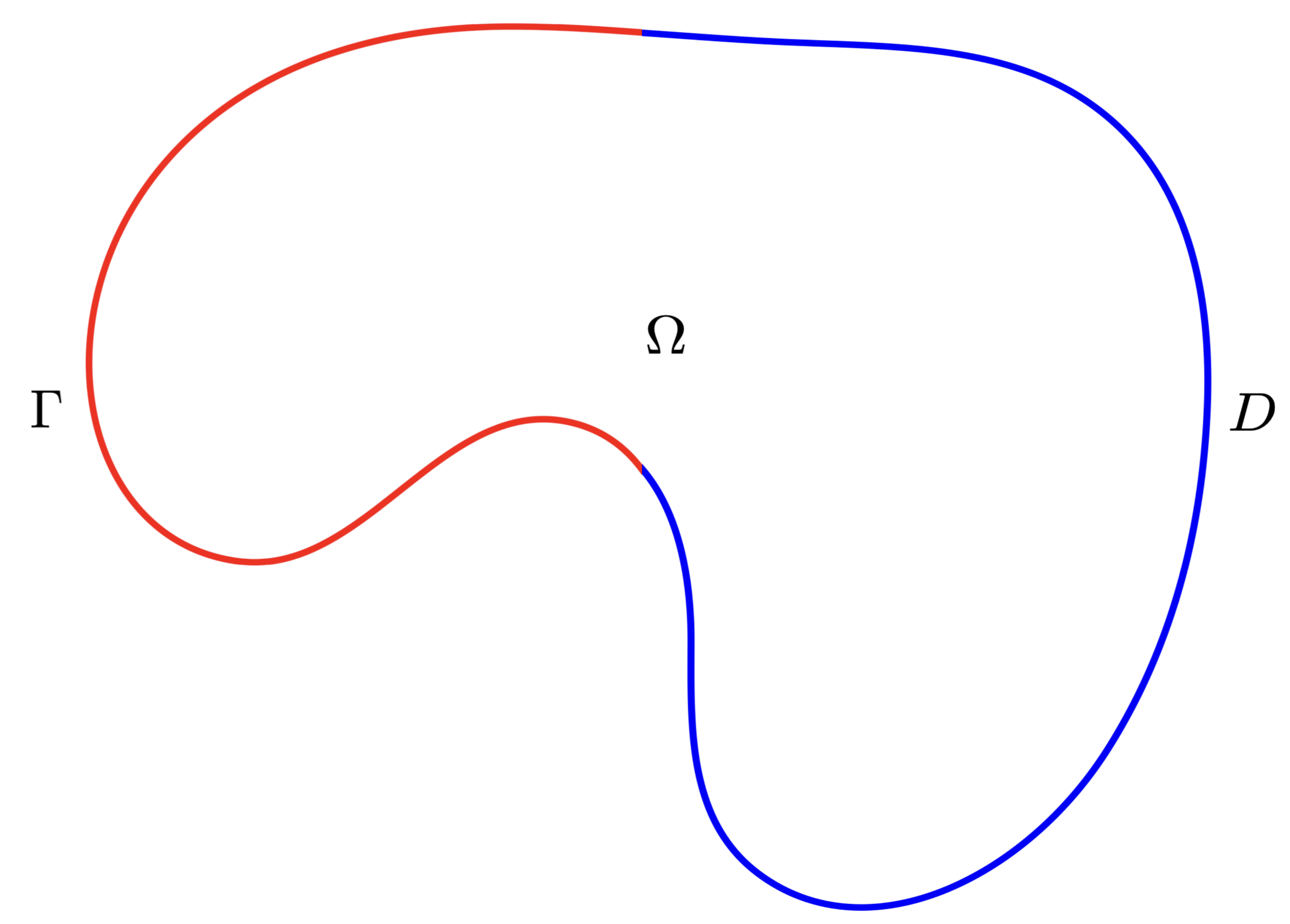}
\caption{Domain with mixed boundary\label{p1}}
\end{figure}
Such setting aries naturally in many scenarios such as elliptic
problems with mixed boundary conditions (see, e.g., \cite{Bre14,TOB13,Bui21,BuDu21,Mit07,Sn66}),
where the solutions are required to
satisfy different types of boundary conditions on different
non overlapping parts of the boundary.

To be more precise, let $\Omega\subset\mathbb{R}^n$ be an $(\epsilon,\delta,D)$-domain,
with $\epsilon\in(0,1]$, $\delta\in(0,\infty]$, and
$D\subset \partial \Omega$ being a closed part of $\partial \Omega$,
which is  a general open connected set when $D=\partial \Omega$ and
an $(\epsilon,\delta)$-domain when $D=\emptyset$.
Let $s\in(0,1)$ and $p\in[1,\infty)$. If ${W}^{s,p}(\Omega)$,
${\mathcal W}^{s,p}(\Omega)$, and $\mathring{W}_D^{s,p}(\Omega)$ are
the fractional Sobolev spaces on $\Omega$ that are defined
respectively via the restriction of $W^{s,p}(\mathbb{R}^n)$ to $\Omega$,
the intrinsic Gagliardo norm, and the completion of all $C^\infty(\Omega)$
functions with compact support away from $D$, in this article we
prove their equivalences [that is,
${W}^{s,p}(\Omega)={\mathcal{W}}^{s,p}(\Omega)
=\mathring{W}_D^{s,p}(\Omega)$]
if $\Omega$ supports a $D$-adapted fractional Hardy inequality
and, moreover, when $sp\ne 1$ such a fractional Hardy inequality is shown to be
necessary to guarantee these equivalences under some mild geometric
conditions on $\Omega$.
Using the aforementioned equivalences,
we show that the real interpolation space $(L^p(\Omega), 
\mathring{W}_D^{1,p}(\Omega))_{s,p}$
equals to some weighted fractional order Sobolev space
$\mathcal{W}^{s,p}_{d_D^s}(\Omega)$ when $p\in (1,\infty)$.
Applying this to the elliptic operator $\mathcal{L}_D$
in $\Omega$ with mixed boundary condition,
we characterize both the domain of its fractional power and
the parabolic maximal regularity of its Cauchy initial
problem by means of $\mathcal{W}^{s,p}_{d_D^s}(\Omega)$.

Recall in \cite{Bec19-In,Bre14} that a natural way,
to incorporate the vanishing trace condition at $D$ into the fractional Sobolev space
$W^{s,p}(\Omega)$ under consideration, is to
define the following trace space
\begin{align}\label{eqn-fstraceD}
{W}_D^{s,p}(\Omega):=\left\{f\in L^p(\Omega):\ \text{there is}\  g\in W^{s,p}
(\mathbb{R}^n)\ \text{satisfying}\ \mathcal{R}_Dg\equiv 0\ \text{such that }\ g|_\Omega=f \right\},
\end{align}
equipped with the quotient norm
\begin{align*}
\|f\|_{ W_D^{s,p}(\Omega)}:=\inf \left\{\left\|g\right\|_{W^{s,p}(\mathbb{R}^n)}:\ g\in
W^{s,p}(\mathbb{R}^n)\ \text{satisfying}\ \mathcal{R}_Dg\equiv 0 \
\text{such that }\ g|_\Omega=f\right\},
\end{align*}
where, for any $x\in D$,
\begin{align}\label{eqn-Rd}
\mathcal{R}_Dg(x):=\lim_{r\in (0,\infty), r\to 0}\dfrac{1}{|B(x,r)|}\int_{B(x,r)}g(y)\,dy
\end{align}
denotes the trace of $g$ at $x$. However, such definition has
a restriction that it makes sense only for a limited range of $s$.
In particular, it was proven in \cite[Theorem VI.1]{Jon1984}
that the trace operator $\mathcal{R}_D$ in \eqref{eqn-Rd} is
well-defined only when $p\in(1,\infty)$, $s\in(1/p,1)$, and $D$ is
an $(n-1)$-set [see Definition \ref{def-d-set} for the precise
definition of $(n-1)$-sets].

In this article, we utilize a $D$-adapted Hardy inequality
to characterize the boundary vanishing trace property
of fractional Sobolev functions. To be precise, we assume that, for any $f\in\mathcal{W}^{s,p}(\Omega)$ as in
\eqref{eqn-normfsdm}, it holds that
\begin{align}\label{eqn-HI}
\int_\Omega\frac{|f(x)|^p}{[{{\mathop\mathrm{dist\,}}}(x,D)]^{sp}}\,dx \lesssim\|f\|^p_{\mathcal{W}^{s,p}(\Omega)},
\end{align}
where the implicit positive constant is independent of $f$.
The validity of \eqref{eqn-HI} inherently requires that functions in $\mathcal{W}^{s,p}(\Omega)$ tend to zero as
they approach $D$. In particular, if $D=\emptyset$,
then \eqref{eqn-HI} always holds; if
$D=\partial\Omega$, \eqref{eqn-HI} is a variation of the classical
Hardy inequality in domains, where the latter  has been widely studied (see, e.g., \cite{Bec21,Dy04,DyVa14,EE23,lll12,LiNi05}).
Recall that the idea of exploiting the Hardy inequality to characterize the partially vanishing trace
property of (fractional) Sobolev spaces has already been appeared in \cite{Bec21,Ege14-Hd,EgTo17}. Compared
with \eqref{eqn-fstraceD}, the $D$-adapted Hardy inequality
\eqref{eqn-HI} has the advantage that it makes sense
for any $s\in(0,1)$ and $p\in[1,\infty)$ and only requires that $D$ is a
subset of $\mathbb{R}^n$.

Throughout this article, we work within the framework of the 
so-called $(\epsilon, \delta, D)$-domains introduced in
\cite{Bec19-Ext} (see Figure \ref{p2}). Here, and hereafter, 
we always assume that $\Omega$ is a domain, i.e., a connected open set in 
$\mathbb{R}^n$.

\begin{center}
\begin{figure}[h]
\centering
\includegraphics[width=6cm]{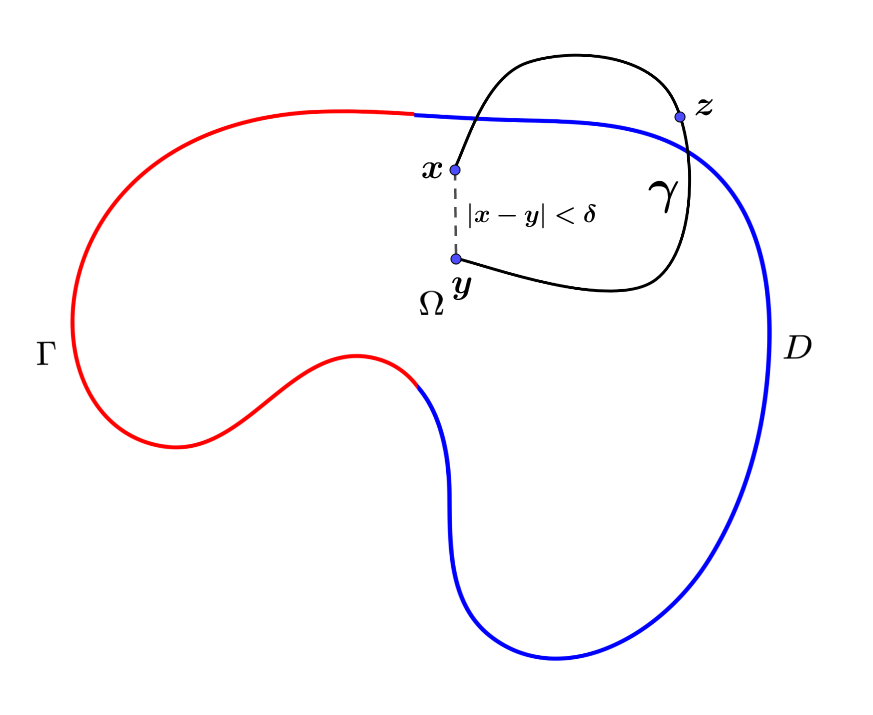}
\caption{The $(\epsilon,\delta,D)$-domain}\label{p2}
\end{figure}
\end{center}

\begin{definition}[\cite{Bec19-Ext}]\label{ass-1}
Let $\epsilon\in(0,1]$, $\delta\in(0,\infty]$, and $\Omega\subset\mathbb{R}^n$ be a domain with $D\subset\partial\Omega$
being closed. We call $\Omega$ an {\it $(\epsilon,\delta,D)$-domain} if there exists a positive constant $K$ such that, for
every pair of points $x,y\in\Omega$ with $|x-y|<\delta$, there exists a rectifiable curve $\gamma$, connecting $x$ and $y$ and
taking values in $\Xi:=\mathbb{R}^n\setminus\overline{\Gamma}$ with $\Gamma:=\partial\Omega\setminus D$, that satisfies
\begin{itemize}
\item[{\rm (i)}]$\ell(\gamma)\leq \frac{|x-y|}{\epsilon}$, where $\ell(\gamma)$ denotes the length of $\gamma$,

\item[{\rm (ii)}]${{\mathop\mathrm{dist\,}}}(z,\Gamma)\geq\epsilon\dfrac{|x-z|\, |y-z|}{|x-y|}$ for any $z\in\gamma$, and

\item[{\rm (iii)}]$k_\Xi(z,\Omega)\leq K$ for any $z\in\gamma$, where $k_\Xi(z,\Omega)$ denotes the {\it quasihyperbolic distance}
between $z$ and $\Omega$ in $\Xi$.
\end{itemize}
\end{definition}

\begin{remark}
\begin{itemize}
\item[(i)] Recall in \cite{Geh76} that, for any two points $x,y\in\Xi$,
their {\it quasihyperbolic distance} $k_\Xi(x,y)$ is defined by setting
\begin{align*}
k_\Xi(x,y):=\inf_\gamma\int_\gamma\frac{1}{{{\mathop\mathrm{\,dist\,}}}(z,\partial\Xi)}\,dz,
\end{align*}
where the infimum is taken over all rectifiable curves $\gamma\subset\Xi$ joining $x$ and $y$.
For any $x\in\Xi$ and $\Omega\subset\Xi$, define
$k_\Xi(x,\Omega):=\inf_{y\in\Omega}k_\Xi(x,y).$
\item[(ii)] By Definition \ref{ass-1}, we easily conclude that every domain $\Omega$ is an
$(\epsilon,\delta,\partial\Omega)$-domain. On the other hand, the
$(\epsilon,\delta,\emptyset)$-domain reduces to the so-called $(\epsilon,\delta)$-domain introduced in
\cite{JeKe82,Jones81}. There are more interesting examples of $(\epsilon,\delta,D)$-domains
when $\emptyset\neq D\subsetneqq\partial\Omega$. For instance, it was proven in
\cite[Propositions 3.2 and 3.4]{Bec19-Ext}
that, if a domain $\Omega$ is locally $(\epsilon,\delta)$ near $\Gamma$
as in \cite[Definition 3.4]{Bre14} or locally Lipschitz along $\Gamma$ in the sense of
\cite[Assumption 3.3]{Bec19-Ext}, then $\Omega$ is an $(\epsilon,\delta,D)$-domain.
\end{itemize}
\end{remark}

\subsection{Main results}\label{s1.2}

We are now in a position to state the first main result of this article, which establishes the characterizations of $W^{s,p}(\Omega)$
under partially vanishing trace condition in the sense of \eqref{eqn-HI}, respectively, in terms of the intrinsic
Gagliardo norm $\|\cdot\|_{\mathcal{W}^{s,p}(\Omega)}$ in \eqref{eqn-normfsdm} and the approximation of functions in
\begin{align*}
C_D^\infty(\Omega):=\left\{f|_\Omega:f\in C_{\rm c}^\infty(\mathbb{R}^n),\ {{\mathop\mathrm{dist\,}}}({\rm supp}\, f,D)>0\right\}.
\end{align*}

\begin{theorem}\label{thm-equiv}
Let $s\in(0,1)$, $p\in[1,\infty)$, and $\Omega\subset \mathbb{R}^n$ be an $(\epsilon,\delta,D)$-domain as in
Definition \ref{ass-1} with $\epsilon\in(0,1]$, $\delta\in(0,\infty]$,
and $D\subset\partial\Omega$ being closed. If $\Omega$ supports the $D$-adapted Hardy inequality \eqref{eqn-HI}, then
\begin{align}\label{h-con1}
W^{s,p}(\Omega)=\mathcal{W}^{s,p}(\Omega)=\mathring{W}_D^{s,p}(\Omega),
\end{align}
where
\begin{align}\label{h-con121}
\mathring{W}_D^{s,p}(\Omega):=\overline{C_D^\infty
(\Omega)}^{\|\cdot\|_{\mathcal W^{s,p}(\Omega)}}
\end{align}
with $\|\cdot\|_{\mathcal W^{s,p}(\Omega)}$ as in \eqref{eqn-normfsdm}.
\end{theorem}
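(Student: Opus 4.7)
The plan is to prove the two non-trivial inclusions $\mathcal{W}^{s,p}(\Omega) \hookrightarrow W^{s,p}(\Omega)$ and $\mathcal{W}^{s,p}(\Omega) \subset \mathring{W}_D^{s,p}(\Omega)$; the reverse embeddings $W^{s,p}(\Omega) \hookrightarrow \mathcal{W}^{s,p}(\Omega)$ (by restriction and the quotient-norm definition) and $\mathring{W}_D^{s,p}(\Omega) \hookrightarrow \mathcal{W}^{s,p}(\Omega)$ (by definition of the closure in the Gagliardo norm) are immediate.

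The first step is a truncation near $D$. Fixing a cutoff $\psi \in C^\infty([0,\infty); [0,1])$ with $\psi \equiv 0$ on $[0,1]$ and $\psi \equiv 1$ on $[2,\infty)$, and writing $d_D := \operatorname{dist}(\cdot, D)$, I would set $\chi_k(x) := \psi(k\, d_D(x))$, so that $\chi_k f$ vanishes on $\{d_D < 1/k\}$, and prove $\|f - \chi_k f\|_{\mathcal{W}^{s,p}(\Omega)} \to 0$ as $k \to \infty$. The $L^p$-piece is immediate from dominated convergence. For the Gagliardo seminorm piece I would use the decomposition
\begin{align*}
(1-\chi_k)(x) f(x) - (1-\chi_k)(y) f(y) = [1 - \chi_k(x)][f(x) - f(y)] + f(y)[\chi_k(y) - \chi_k(x)];
\end{align*}
the first summand is pointwise dominated by $2|f(x)-f(y)|^p/|x-y|^{n+sp}$ and tends to zero a.e., while the second, exploiting $|\chi_k(x) - \chi_k(y)| \lesssim \min\{1, k|x-y|\}$ together with the fact that its contribution vanishes unless $\min\{d_D(x), d_D(y)\} < 2/k$, should reduce via the $D$-adapted Hardy inequality \eqref{eqn-HI} to a tail of $\int_\Omega |f|^p/d_D^{sp}$, which vanishes in $k$ by absolute continuity of the integral.

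The second step combines Step 1 with the extension theorem for $(\epsilon,\delta,D)$-domains of \cite{Bec19-Ext}. Since each $\chi_k f$ has support at distance $\geq 1/k$ from $D$, it falls within the range of that theorem, yielding a bounded linear extension $\mathcal{E}(\chi_k f) \in W^{s,p}(\mathbb{R}^n)$ that still vanishes near $D$, with $\|\mathcal{E}(\chi_k f)\|_{W^{s,p}(\mathbb{R}^n)} \lesssim \|\chi_k f\|_{\mathcal{W}^{s,p}(\Omega)} \lesssim \|f\|_{\mathcal{W}^{s,p}(\Omega)}$. Taking $k \to \infty$ gives $f \in W^{s,p}(\Omega)$ with $\|f\|_{W^{s,p}(\Omega)} \lesssim \|f\|_{\mathcal{W}^{s,p}(\Omega)}$. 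For the density assertion, I would further multiply by a large ball cutoff to gain compact support in $\mathbb{R}^n$, mollify at a scale $\varepsilon_k \ll 1/k$, and restrict to $\Omega$: the result is a member of $C_D^\infty(\Omega)$ approximating $\chi_k f$, and a diagonal argument then produces a sequence in $C_D^\infty(\Omega)$ converging to $f$ in $\mathcal{W}^{s,p}(\Omega)$, so that $f \in \mathring{W}_D^{s,p}(\Omega)$.

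The step I expect to be the main obstacle is the seminorm convergence in Step 1: the naive application of the Hardy inequality gives only the uniform bound $\|(1 - \chi_k) f\|_{\dot{\mathcal{W}}^{s,p}(\Omega)}^p \lesssim \int_\Omega |f|^p/d_D^{sp}$, which is finite but not a priori vanishing in $k$. Extracting genuine convergence forces a careful decomposition of $\Omega \times \Omega$ into the region where both distances exceed $2/k$ (no contribution), a near-$D$ diagonal region where both points are close to $D$ (absorbed by a vanishing Hardy tail), and a mixed region $d_D(x) < 2/k \leq d_D(y)$ handled by integrating in $x$ first and using the Lipschitz estimate on $\chi_k$ together with the elementary bound $|x - y| \gtrsim d_D(y) - 2/k$. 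The $(\epsilon, \delta, D)$-structure of $\Omega$ enters implicitly here to ensure both the applicability of the extension theorem of \cite{Bec19-Ext} and enough regularity of $d_D$ for the above decomposition to close uniformly.
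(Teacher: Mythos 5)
Your Step~1 (truncation by a cutoff concentrated near $D$, together with the $D$-adapted Hardy inequality to push the Gagliardo seminorm of the truncated part to zero) is essentially the argument the paper gives for the inclusion $\mathcal W^{s,p}(\Omega)\subset\mathring W_D^{s,p}(\Omega)$: the paper's $v_m$ plays the role of your $1-\chi_k$, the Lipschitz bound $|v_m(x)-v_m(y)|\le\min\{1,m|x-y|\}$ is your $|\chi_k(x)-\chi_k(y)|\lesssim\min\{1,k|x-y|\}$ (Lemma~\ref{lem-textf}), and the region decomposition you describe matches the paper's splitting into $\mathrm{I}$ (both points in $D_{3/m}$) and $\mathrm{II}$ (one near, one far), with absolute continuity of $\int_\Omega|f|^p d_D^{-sp}\,dx$ and of $\|f\|_{\dot{\mathcal W}^{s,p}(D_{3/m})}^p$ supplying the vanishing tail. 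Your observation that a naive Hardy bound is merely uniform in $k$ and needs the region splitting to refine into a vanishing tail is exactly right and is where the paper does the same work.

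The gap is in Step~2. You invoke ``the extension theorem for $(\epsilon,\delta,D)$-domains of \cite{Bec19-Ext}'' to obtain a bounded linear map with $\|\mathcal E(\chi_k f)\|_{W^{s,p}(\mathbb R^n)}\lesssim\|\chi_k f\|_{\mathcal W^{s,p}(\Omega)}$. But \cite[Theorem~1.2]{Bec19-Ext} is an integer-order statement: it proves boundedness of $\mathcal E_D$ from $\mathring W_D^{1,p}(\Omega)$ to $\mathring W_D^{1,p}(\mathbb R^n)$, i.e.\ only the case $s=1$. No fractional extension of this type on $(\epsilon,\delta,D)$-domains is available in that reference (nor, under the present hypotheses, in \cite{Bec21}, which assumes the interior thickness condition \eqref{eqn-isc} rather than the $(\epsilon,\delta,D)$ condition). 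The boundedness of $\mathcal E_D$ from $\mathcal W^{s,p}(\Omega)$ to $W^{s,p}(\mathbb R^n)$ for $s\in(0,1)$ is precisely Proposition~\ref{thm1x} (and, with the vanishing-trace preservation of Proposition~\ref{lem-extensionOP}, Theorem~\ref{thm1}), which is the central technical result of this paper and occupies all of Sections~\ref{s3.1}--\ref{seii}: the decomposition into ${\rm I}(f)$ and ${\rm II}(f)$, the reflected-cube combinatorics via Lemmas~\ref{lem3.4.4}, \ref{lem5.7}, \ref{lem4.5-1}, \ref{dist-2}, \ref{bound}, \ref{dist-1}, \ref{lem4.6-1}, and, crucially, the use of the $D$-adapted Hardy inequality inside those estimates to absorb the mismatch between the zero cells and $\Omega$. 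Treating this as already known leaves the argument circular at exactly the point where the ``fractional'' content of the theorem lives.

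A secondary issue worth noting: having support at distance $\geq 1/k$ from $D$ does not, by itself, let you retreat to a classical $(\epsilon,\delta)$-domain extension (Jones/Zhou) with constants uniform in $k$. The $(\epsilon,\delta,D)$ condition only requires the cigar curve to avoid $\overline\Gamma$ (it may pass arbitrarily close to, or through, $D$), so there is no subdomain of $\Omega$ that is uniformly $(\epsilon,\delta)$ and contains ${\rm supp}\,\chi_k f$. This is why the paper keeps the single operator $\mathcal E_D$ of \eqref{ext-x1} and reproves its fractional boundedness directly, using the Hardy inequality to control the terms that involve $\xi\in Q^*\setminus\Omega$ (where $E_{Q^*}f$ vanishes).
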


\begin{remark}\label{rem1.4}
\begin{itemize}
\item[(i)] Theorem \ref{thm-equiv} is an intermediate result between
two endpoint cases:
\begin{itemize}
\item[(a)] if $D=\emptyset$,   then $\Omega$ is an $(\epsilon,\delta)$-domain.
In this case,  the equality
$W^{s,p}(\Omega)=\mathcal{W}^{s,p}(\Omega)$ follows essentially
from constructing a linear reflection extension bounded from
$\mathcal{W}^{s,p}(\Omega)$ to $W^{s,p}(\mathbb{R}^n)$ (see, for instance,
\cite[Theorem 1.1]{ZY2015}), together with an argument similar to that used in the proof of
Proposition \ref{lemprop-equiv}. On the other hand,
$ W^{s,p}(\Omega)=\mathring{W}_D^{s,p}(\Omega)$ holds automatically
based on their definitions and the fact that $C_{\rm c}^\infty(\mathbb{R}^n)$
is dense in $W^{s,p}(\mathbb{R}^n)$ (see, for instance,
\cite[Theorem 2.4]{DPV12});

\item[(b)] if $D=\partial\Omega$, then $\Omega$ is a general connected
domain. In this case, the equality
$\mathcal{W}^{s,p}(\Omega)=\mathring{W}_D^{s,p}(\Omega)$ can be deduced
from \cite[Theorem 17]{DyKi22}: for any domain $\Omega$ supporting
the $\partial\Omega$-adapted Hardy inequality as in \eqref{eqn-HI},
it holds that
\begin{align}\label{cdw}
\mathring{W}_{\partial\Omega}^{s,p}(\Omega)=
\left\{f\in \mathcal{W}^{s,p}(\Omega):\ \int_\Omega\dfrac{|f(x)|^p}
{[{{\mathop\mathrm{dist\,}}}(x,\partial\Omega)]^{sp}}\,dx
<\infty\right\}.
\end{align}
The equality $\mathring{W}_{\partial\Omega}^{s,p}(\Omega)=W^{s,p}(\Omega)$
in this case seems new, with the proof based on the boundedness of
the zero extension from
$\mathcal{W}^{s,p}(\Omega)$ to $W^{s,p}(\mathbb{R}^n)$ for any domain
$\Omega\subset\mathbb{R}^n$ [see \eqref{ext-x1} for the definition of the
extension and Theorem \ref{thm1} for its boundedness].

\item [(c)] Assume that $\emptyset\neq D\subsetneqq\partial\Omega$, $sp>1$,
$D$ is a uniformly $(n-1)$-set, and $\Omega$ satisfies 
the \emph{interior thickness condition in $\Gamma:=\partial \Omega\setminus D$}
(that is, for any $x\in \Gamma$ and $r\in (0,1]$, it holds that
\begin{align}\label{eqn-isc}
|B(x,r)\cap \Omega|  \gtrsim |B(x,r)|,
\end{align}
where the implicit positive constant is independent of $x$ and $r$).
In this case, the following identity was proved in \cite[Theorem 4.0.2]{Bec22} that
\begin{align*}
{W}_{D}^{s,p}(\Omega)= \left\{f\in {W}^{s,p}(\Omega):\ \int_\Omega\dfrac{|f(x)|^p}
{[{{\mathop\mathrm{dist\,}}}(x,D)]^{sp}}\,dx
<\infty\right\},
\end{align*}
where ${W}_{D}^{s,p}(\Omega)$ is defined as in \eqref{eqn-fstraceD}.
This, together with the fact
$W_D^{s,p}(\Omega)=\mathring W_D^{s,p}(\Omega)$ when $sp>1$
(see \cite[Lemma 3.3]{Bec19-In}), implies
the equality $\mathring W_D^{s,p}(\Omega)={W}^{s,p}(\Omega)$
if the following variant of Hardy inequality \eqref{eqn-HI}
holds that, for any $f\in {W}^{s,p}(\Omega)$,
\begin{align*}
\int_\Omega\frac{|f(x)|^p}{[{{\mathop\mathrm{dist\,}}}(x,D)]^{sp}}\,dx
\lesssim\|f\|^p_{{W}^{s,p}(\Omega)},
\end{align*}
where the implicit positive constant is independent of $f$. Notice that
every $(\epsilon,\delta,D)$-domain supporting the
$D$-adapted Hardy inequality \eqref{eqn-HI} is indeed an $n$-set (see the argument
after Theorem \ref{thm1}).
\end{itemize}
\item[(ii)] Theorem \ref{thm-equiv} can be extended to the integer case $s=1$.
Indeed,
for any $p\in (1,\infty)$, let $W^{1,p}(\Omega)$,
$\mathcal{W}^{1,p}(\Omega)$, and $\mathring{W}_D^{1,p}(\Omega)$
be the Sobolev spaces defined, respectively, similar to \eqref{eqn-fstrace}, \eqref{eqn-normfsdm},  and \eqref{h-con121},
with the intrinsic norm in \eqref{eqn-normfsdm} replaced by
\begin{align*}
\|f\|_{\mathcal{W}^{1,p}(\Omega)}:=\|f\|_{L^p(\Omega)}+\|\nabla f\|_{L^p(\Omega)}.
\end{align*}
Also, the $D$-adapted Hardy inequality \eqref{eqn-HI} is replaced by the following
integer version that, for any $f\in \mathcal{W}^{1,p}(\Omega)$, it holds that
 \begin{align}\label{eqn-IHI}
\int_\Omega\frac{|f(x)|^p}{[{{\mathop\mathrm{dist\,}}}(x,D)]^{p}}\,dx \lesssim\|f\|^p_{{\mathcal W}^{1,p}(\Omega)},
\end{align}
where the implicit positive constant is independent of $f$.
In this case, by Remark \ref{rem-notepf}, we have $\mathring W_D^{1,p}(\Omega)=\mathcal{W}^{1,p}(\Omega)$.
Moreover,  it was proven in  \cite[Theorem 1.2]{Bec19-Ext}  that $\Omega$ admits an bounded linear extension from
$\mathcal{W}^{1,p}(\Omega)$ to  $W^{1,p}(\mathbb{R}^n)$. This, combined with an argument similar
to that used in the proof of  Proposition \ref{lemprop-equiv}, further implies the identity
 $W^{1,p}(\Omega)=\mathcal{W}^{1,p}(\Omega)$. Thus, we have
\begin{align}\label{eqn-W1p}
W^{1,p}(\Omega)=\mathcal{W}^{1,p}(\Omega)=\mathring{W}_D^{1,p}(\Omega).
\end{align}
We refer to \cite[Section 7]{Bec19-Ext} for some related density results 
in the integer order case.
\end{itemize}
\end{remark}

The assumptions that either the $D$-Hardy inequalities \eqref{eqn-HI}
and \eqref{eqn-IHI} or the equivalences
in \eqref{h-con1} and \eqref{eqn-W1p} hold is very strong. For instance, \eqref{eqn-HI} and \eqref{eqn-IHI}
may fail to hold even for constant functions. On the other hand, 
constants may belong to the intrinsic space $\mathcal{W}^{s,p}(\Omega)$,
which are excluded from the space $\mathring W_{D}^{s,p}(\Omega)$.
Our next proposition shows that, under the case $sp \neq 1$  
and some mild geometric conditions on $\Omega$, 
\eqref{eqn-HI} and \eqref{eqn-IHI} are equivalent, respectively, to
\eqref{h-con1} and \eqref{eqn-W1p}.  It is worth noting that, 
in the applications of this article,
we will employ the weighted fractional Sobolev space
in Definition \ref{eqn-W1p} to substitute the stronger 
conditions given by \eqref{eqn-HI} and \eqref{eqn-IHI}.
Recall that a set $E\subset\mathbb{R}^n$ is called a $d$-{\it set}
with $0\le d\le n$
(see, e.g., \cite[(4.29)]{Tri08}) if there exists a constant $C\in (0,\infty)$
such that
\begin{align}\label{def-d-set}
C^{-1}r^d\le \mathcal{H}^d\left(B(x,r)\cap E\right)\le Cr^d
\end{align}
holds for every $x\in E$ and $r\in(0,1]$, where $\mathcal{H}^d$ denotes the
$d$-dimensional Hausdorff measure and $B(x,r):=\{y\in\mathbb{R}^n:\,|y-x|<r\}$.
If \eqref{def-d-set} holds for any $x\in E$ and
$r\in(0,\text{diam\,}E)$, then $E$ is called a {\it uniformly} $d$-{\it set},
here and thereafter, $\text{diam\,}E$ denotes the {\it diameter} of the set $E$.

\begin{proposition}\label{prop-Hcx}
Let $\Omega\subset\mathbb{R}^n$ be an $(\epsilon,\delta,D)$-domain with $\Omega$ being an $n$-set and
$D\subset\partial\Omega$ a closed uniformly $(n-1)$-set.
Suppose $s\in(0,1]$ and $p\in(1,\infty)$ satisfy $sp\neq1$. Then
\begin{align*}
\mathring{W}_D^{s,p}(\Omega)=
\left\{f\in \mathcal W^{s,p}(\Omega):\ \int_\Omega\frac{|f(x)|^p}
{[{{\mathop\mathrm{dist\,}}}(x,D)]^{sp}}\,dx<\infty\right\}.
\end{align*}
\end{proposition}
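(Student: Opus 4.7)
The plan is to prove the two inclusions separately, with the reverse direction constituting the substantive content. For the forward inclusion
$$\mathring W_D^{s,p}(\Omega) \subset \left\{f \in \mathcal W^{s,p}(\Omega) :\ \int_\Omega \frac{|f(x)|^p}{[{{\mathop\mathrm{dist\,}}}(x,D)]^{sp}}\,dx < \infty\right\},$$
I would first establish the $D$-adapted fractional Hardy inequality on the test class $C_D^\infty(\Omega)$: for every $f \in C_D^\infty(\Omega)$,
$$\int_\Omega \frac{|f(x)|^p}{[{{\mathop\mathrm{dist\,}}}(x,D)]^{sp}}\,dx \lesssim \|f\|_{\mathcal W^{s,p}(\Omega)}^p,$$
with implicit constant independent of $f$. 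Under the hypotheses that $D$ is a uniformly $(n-1)$-set, $\Omega$ is an $n$-set, and $sp \ne 1$, this is the standard fractional Hardy inequality for functions vanishing in a neighborhood of a uniformly $(n-1)$-dimensional boundary piece (see, e.g., the techniques in \cite{DyKi22,EE23}). Passing to a sequence $f_k \in C_D^\infty(\Omega)$ with $f_k \to f$ in $\|\cdot\|_{\mathcal W^{s,p}(\Omega)}$ and applying Fatou's lemma to $|f_k|^p/[{{\mathop\mathrm{dist\,}}}(\cdot,D)]^{sp}$ along an a.e.\ convergent subsequence transports this inequality to every $f \in \mathring W_D^{s,p}(\Omega)$.

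For the reverse inclusion, given $f \in \mathcal W^{s,p}(\Omega)$ with $\int_\Omega |f|^p/[{{\mathop\mathrm{dist\,}}}(\cdot,D)]^{sp}\,dx < \infty$, I would construct an approximating sequence in $C_D^\infty(\Omega)$ in two stages. Let $\eta_\varepsilon:\Omega\to[0,1]$ be a Lipschitz cutoff equal to $0$ on $\{{{\mathop\mathrm{dist\,}}}(\cdot,D)\le \varepsilon\}$, equal to $1$ on $\{{{\mathop\mathrm{dist\,}}}(\cdot,D)\ge 2\varepsilon\}$, with $|\nabla \eta_\varepsilon|\lesssim 1/\varepsilon$. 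The key step is showing $\eta_\varepsilon f \to f$ in $\mathcal W^{s,p}(\Omega)$ as $\varepsilon\to 0^+$. The $L^p$-convergence is immediate from dominated convergence; for the Gagliardo semi-norm, split
$$(\eta_\varepsilon f)(x)-(\eta_\varepsilon f)(y)=\eta_\varepsilon(x)\bigl(f(x)-f(y)\bigr)+f(y)\bigl(\eta_\varepsilon(x)-\eta_\varepsilon(y)\bigr).$$
The first summand tends to zero by dominated convergence against the finite Gagliardo measure of $f$. The second summand is controlled via a dyadic tube decomposition of $\Omega$ around $D$, using that $|\eta_\varepsilon(x)-\eta_\varepsilon(y)|\lesssim \min\{1,|x-y|/\varepsilon\}$; integrating the inner variable over shells and invoking the uniform $(n-1)$-set property of $D$ together with $\Omega$ being an $n$-set produces an estimate of the form $\int_\Omega |f(x)|^p/[{{\mathop\mathrm{dist\,}}}(x,D)]^{sp}\,dx$ restricted to $\{{{\mathop\mathrm{dist\,}}}(\cdot,D) \le 2\varepsilon\}$, which vanishes as $\varepsilon\to 0^+$ by the weighted integrability hypothesis.

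Once $\eta_\varepsilon f$ is supported in $\{{{\mathop\mathrm{dist\,}}}(\cdot,D)\ge \varepsilon\}$, mollification delivers the desired $C_D^\infty(\Omega)$ approximation. Because $\Omega$ is an $(\epsilon,\delta,D)$-domain and the support of $\eta_\varepsilon f$ lies at positive distance from $D$, I would invoke a Beck-type Whitney extension across $\Gamma:=\partial\Omega\setminus D$ (see \cite{Bec19-Ext} and its fractional analogues used later in the paper) to obtain $\widetilde{\eta_\varepsilon f}\in W^{s,p}(\mathbb{R}^n)$ whose support still lies at positive distance from $D$, then convolve with a standard mollifier $\rho_\delta$ supported in a ball of radius $<\varepsilon/4$. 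The convolution $\rho_\delta \ast \widetilde{\eta_\varepsilon f}$ lies in $C_{\rm c}^\infty(\mathbb{R}^n)$, after a harmless cutoff against a large ball, with support at positive distance from $D$, so its restriction to $\Omega$ belongs to $C_D^\infty(\Omega)$ and converges to $\eta_\varepsilon f$ in $\mathcal W^{s,p}(\Omega)$ as $\delta\to 0^+$. A diagonal selection in $(\varepsilon,\delta)$ produces the required approximation of $f$.

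The main obstacle will be the convergence $\eta_\varepsilon f \to f$ in the Gagliardo semi-norm, specifically the cross-term $f(y)(\eta_\varepsilon(x)-\eta_\varepsilon(y))$. One must bound this term using \emph{both} the intrinsic finiteness of $f\in \mathcal W^{s,p}(\Omega)$ and the weighted Hardy integrability, in a way that is uniform in $\varepsilon$ and respects the borderline exclusion $sp\ne 1$---the exponent at which a fractional Hardy-type estimate fails to control the Gagliardo semi-norm by a weighted $L^p$ quantity, reflecting the critical Besov-trace scaling. The geometric assumptions that $\Omega$ is an $n$-set and $D$ is a uniformly $(n-1)$-set are precisely what permit Fubini-type estimates over small balls intersected with $\Omega$ and summation over dyadic tubes around $D$. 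The remaining ingredients---fractional Hardy on $C_D^\infty(\Omega)$, Fatou-based extension of inequalities, and mollification across $\Gamma$ via the $(\epsilon,\delta,D)$-structure---are by now standard in this line of work.
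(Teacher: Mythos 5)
Your overall scheme matches the paper's: the forward inclusion is a Hardy inequality on $C_D^\infty(\Omega)$ plus a Fatou argument, and the reverse inclusion is a cutoff near $D$ followed by an $(\epsilon,\delta,D)$-extension across $\Gamma$ and mollification. The cutoff step you describe is essentially identical to what the paper does in the proof of Theorem \ref{thm-equiv} (with $\eta_\varepsilon = 1-v_m$ for the cutoffs $v_m$ from Lemma \ref{lem-textf}), and your invocation of the Bechtel-type Whitney extension that preserves support away from $D$ matches the role of Propositions \ref{thm1x} and \ref{lem-extensionOP}.

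The gap is in the forward inclusion. You assert that the $D$-adapted fractional Hardy inequality on $C_D^\infty(\Omega)$ with the intrinsic Gagliardo norm on the right-hand side is ``the standard fractional Hardy inequality for functions vanishing in a neighborhood of a uniformly $(n-1)$-dimensional boundary piece,'' citing \cite{DyKi22,EE23}. Those references treat the case $D=\partial\Omega$ only. For a genuinely partial boundary piece $D\subsetneq\partial\Omega$, such a Hardy inequality is one of the technical cruxes of the problem, not a standard fact. The paper establishes it via Lemma \ref{mixed-H}, whose internal mechanism is: (i) define $G:=\mathbb{R}^n\setminus D$ and observe that $G$ is an $n$-set with $\partial G=D$ (a uniformly $(n-1)$-set), turning the partial-trace inequality on $\Omega$ into a full-trace inequality on $G$; (ii) apply the Bechtel--Egert weighted Hardy inequality \cite[Proposition 6.6]{Bec19-In} on $G$ (this is exactly where $sp\neq1$ and the $(n-1)$-set hypothesis enter); and (iii) convert from $\Omega$ to $G$ by extending $f\in W^{s,p}(\Omega)$ (or $W_D^{s,p}(\Omega)$ if $sp>1$) to $W^{s,p}(\mathbb{R}^n)$ and restricting to $G$. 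Your proposal has none of this: it has neither the reduction to the complementary domain $G$, nor the intermediate role of the \emph{trace} norm $\|f\|_{W^{s,p}(\Omega)}$ on the right, nor the mechanism that converts the trace norm back to the intrinsic norm $\|f\|_{\mathcal W^{s,p}(\Omega)}$ on the closure $\mathring W_D^{s,p}(\Omega)$. As it stands, the forward inclusion is not proved; you have merely restated the conclusion while pointing at references that do not cover it.

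A further comment on what each approach buys: the paper, by routing through Lemma \ref{mixed-H} and Lemma \ref{mixed-Hb}, makes the dependence on the geometric hypotheses ($\Omega$ an $n$-set, $D$ uniformly $(n-1)$, $sp\ne1$) explicit — each enters at an identifiable step. Your formulation obscures this. Everything else in your reverse inclusion is correct and is the argument the paper already has in the proof of Theorem \ref{thm-equiv}, so re-deriving it adds nothing beyond what a reference to that proof would; but the forward direction needs the full apparatus above before your Fatou step can be run.
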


\begin{remark}\label{rem-1.5}
\begin{itemize}
\item[(i)] Assume that $\Omega$ satisfies the same geometric assumptions as those in
Proposition \ref{prop-Hcx}. By Theorem \ref{thm-equiv} and Proposition \ref{prop-Hcx},
we find that, if  the equivalences in \eqref{h-con1} hold, then, for any $s\in(0,1)$ and $p\in(1,\infty)$
satisfying $sp\neq1$, the $D$-adapted Hardy inequalities \eqref{eqn-HI} and \eqref{eqn-IHI}
hold (see Lemma \ref{mixed-H} for more details).
This indicates that the $D$-adapted Hardy inequalities \eqref{eqn-HI}
and \eqref{eqn-IHI} are also
necessary conditions to guarantee
\eqref{h-con1} in this setting.

\item[(ii)] From the previous arguments,
we deduce that Proposition \ref{prop-Hcx} is an extension of \eqref{cdw}, which
corresponds to the endpoint case $D=\partial \Omega$.
The restriction $sp\neq1$ here comes from the verification of the $D$-adapted Hardy inequality
in the proof (see Lemma \ref{mixed-H} and \cite[Theorem 1.1]{FrSe10}
for such a restriction even in the half space $\mathbb{R}_+^n$).

\item[(iii)]
Recall that,  in many literatures working on fractional Sobolev spaces with partially vanishing traces
(see, e.g., \cite{Bec19-In,EHT14}), one usually needs to use
the space  $W_D^{s,p}(\Omega)$  in \eqref{eqn-fstraceD}
for the case $sp>1$, but to use the space $W^{s,p}(\Omega)$ in \eqref{eqn-fstrace}
for the case $sp<1$.
Assume now  $\Omega$ satisfies the same geometric assumptions as those in
Proposition \ref{prop-Hcx}.
Recall that it was proven in \cite[Lemma 3.3]{Bec19-In} that, for any $sp>1$,
\begin{align}\label{eqn4.15}
\mathring W_D^{s,p}(\Omega)=W_D^{s,p}(\Omega).
\end{align}
On the other hand,  Theorem \ref{thm-equiv}, combined with the proof of Lemma \ref{mixed-H},
implies that, for any $sp<1$,
$\mathring{W}_D^{s,p}(\Omega)=W^{s,p}(\Omega).$
This shows that the space $\mathring{W}_D^{s,p}(\Omega)$
provides a unified framework of function spaces to deal with  the case $sp\ne 1$.
\end{itemize}
\end{remark}

As mentioned in the above Remark \ref{rem-1.5}(iii), the spaces
 $\mathring{W}_D^{s,p}(\Omega)$ are suitable for all $sp\ne1$.
To cover the critical case $sp=1$,
we need the following \emph{weighted} Hardy-type fractional Sobolev spaces $\mathcal{W}^{s,p}_{d_D^s}(\Omega)$,
which is precisely the intersection space 
${\mathcal{W}}^{s,p}(\Omega)\cap L^p(\Omega,d_D^{-sp})$ used in \cite[Theorem 1.1]{Bec21}.
For further details regarding such weighted Sobolev spaces, 
we refer to \cite{DaHaScSi19} for the integer order case and
to \cite{Ki25,Ro25} for certain closely related fractional order cases.

\begin{definition}\label{def-wfs}
Let $\Omega\subset\mathbb{R}^n$ be a domain
with $D\subset\overline{\Omega}$.
Suppose $s\in(0,1]$ and $p\in[1,\infty)$.  The \emph{weighted fractional Sobolev space}
$\mathcal{W}^{s,p}_{d_D^s}(\Omega)$ is defined by setting
\begin{align*}
{\mathcal{W}}^{s,p}_{d_D^s}(\Omega):=\left\{f\in \mathcal{W}^{s,p}(\Omega):\ \int_\Omega\frac{|f(x)|^p}
{[{{\mathop\mathrm{dist\,}}}(x,D)]^{sp}}\,dx<\infty\right\}
\end{align*}
equipped with the norm
\begin{align*}
 \|f\|_{{\mathcal{W}}^{s,p}_{d_D^s}(\Omega)}:=\|f\|_{\mathcal{W}^{s,p}(\Omega)}
 +\left\{\int_\Omega\frac{|f(x)|^p}
 {[{{\mathop\mathrm{dist\,}}}(x,D)]^{sp}}\,dx\right\}^{\frac 1p},
\end{align*}
here and thereafter, $d_D(x):={\rm dist}\,(x,D)$ for any $x\in \Omega$.
\end{definition}

\begin{remark}\label{rem1.9}
For any $s\in (0,1]$ and $p\in (1,\infty)$, let $\Omega$ be an $(\epsilon,\delta,D)$-domain with
 $D\subset\partial\Omega$ being closed. By the proof of Theorem \ref{thm-equiv} and Remark \ref{rem-notepf},
we always have the inclusion
${\mathcal{W}}^{s,p}_{d_D^s}(\Omega)\subset \mathring{W}^{s,p}_D(\Omega).$
On the other hand, if $sp\ne 1$, $\Omega$ is an $n$-set, and
$D$ is a uniformly $(n-1)$-set, then, by Proposition \ref{prop-Hcx},
we obtain the identity
\begin{align}\label{eqn-ee}
{\mathcal{W}}^{s,p}_{d_D^s}(\Omega)= \mathring{W}^{s,p}_D(\Omega).
\end{align}
Notice that in general we cannot expect the identity \eqref{eqn-ee}
in the critical case $sp=1$, due to the failure of the Hardy inequality as mentioned in Remark \ref{rem-1.5}(ii).
\end{remark}

We now use the space ${\mathcal{W}}^{s,p}_{d_D^s}(\Omega)$ to
characterize the  real interpolation space
$(L^p(\Omega),\mathring{W}_{D}^{1,p}(\Omega))_{s,p}$ for any $s\in (0,1)$ and $p\in (1,\infty)$

\begin{proposition}\label{t5}
Let $s\in (0,1)$, $p\in (1,\infty)$, and
 $\Omega\subset\mathbb{R}^n$ be an $(\epsilon,\delta,D)$-domain
with and $D\subset\partial\Omega$
being a closed uniformly $(n-1)$-set.
Then it holds that
\begin{align}\label{int-hh}
\left(L^p(\Omega),\mathring{W}_{D}^{1,p}
(\Omega)\right)_{s,p}={\mathcal{W}}_{d_D^{s}}^{s,p}(\Omega)
\end{align}
with $\Omega$ being an $n$-set when $sp=1$.
\end{proposition}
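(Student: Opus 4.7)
The plan is to establish \eqref{int-hh} via a retract--coretract argument anchored on the zero-extension to $\mathbb{R}^n$, combined with the classical real interpolation identity $(L^p(\mathbb{R}^n),W^{1,p}(\mathbb{R}^n))_{s,p}=W^{s,p}(\mathbb{R}^n)$ and a suitable fractional Hardy inequality to recover the weight $d_D^{-s}$. My first task is to verify that the zero-extension $E_0f:=f\chi_\Omega$ is bounded $L^p(\Omega)\to L^p(\mathbb{R}^n)$ (trivial) and $\mathring{W}_D^{1,p}(\Omega)\to W^{1,p}(\mathbb{R}^n)$; the latter follows from density of $C_D^\infty(\Omega)$ in $\mathring{W}_D^{1,p}(\Omega)$ together with the observation that every $\varphi\in C_D^\infty(\Omega)$ extends trivially by zero to a member of $C_{\mathrm c}^\infty(\mathbb{R}^n)$. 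Bilinear real interpolation then delivers a bounded map
\begin{align*}
E_0:\left(L^p(\Omega),\mathring{W}_D^{1,p}(\Omega)\right)_{s,p}\longrightarrow W^{s,p}(\mathbb{R}^n).
\end{align*}

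For the embedding $\subset$ of \eqref{int-hh}, take $f$ in the interpolation space; then $E_0f\in W^{s,p}(\mathbb{R}^n)$ vanishes on $\mathbb{R}^n\setminus\Omega\supset D$. Restricting to $\Omega$ controls $\|f\|_{\mathcal{W}^{s,p}(\Omega)}$, while for the Hardy-weight piece I would invoke the fractional Hardy inequality on $\mathbb{R}^n$ relative to the uniformly $(n-1)$-set $D$ (in the spirit of Dyda--V\"ah\"akangas and Ihnatsyeva--V\"ah\"akangas), valid when $sp\ne 1$, to conclude
\begin{align*}
\int_\Omega\frac{|f(x)|^p}{[d_D(x)]^{sp}}\,dx\le \int_{\mathbb{R}^n}\frac{|E_0f(x)|^p}{[d_D(x)]^{sp}}\,dx\lesssim \|E_0f\|^p_{W^{s,p}(\mathbb{R}^n)}.
\end{align*}
In the critical case $sp=1$ the additional hypothesis that $\Omega$ is an $n$-set permits an interior-thickness/Maz'ya capacity argument to replace the global Hardy inequality and recover the same weighted bound, thereby placing $f$ in $\mathcal{W}_{d_D^s}^{s,p}(\Omega)$.

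For the reverse embedding $\supset$ I would estimate the $K$-functional $K(t,f):=\inf\{\|g\|_{L^p(\Omega)}+t\|h\|_{\mathring{W}_D^{1,p}(\Omega)}:f=g+h\}$ directly by an explicit distance-to-$D$ splitting. Given $f\in\mathcal{W}_{d_D^s}^{s,p}(\Omega)$ and $t>0$, pick a smooth cutoff $\psi_t$ with $\psi_t\equiv 1$ on $\{d_D>2t\}$, $\psi_t\equiv 0$ on $\{d_D<t\}$, and $|\nabla\psi_t|\lesssim 1/t$, together with a standard mollifier $\phi_{\epsilon t}$ of scale $\epsilon t$ chosen so small that $\phi_{\epsilon t}\ast E_0(\psi_t f)$ is still supported in $\{d_D>t/2\}$. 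Set
\begin{align*}
h_t:=[\phi_{\epsilon t}\ast E_0(\psi_t f)]|_\Omega\in C_D^\infty(\Omega)\subset\mathring{W}_D^{1,p}(\Omega),\qquad g_t:=f-h_t.
\end{align*}
The $L^p$-piece $\|g_t\|_{L^p}$ splits into a near-$D$ contribution bounded by $t^s\{\int_\Omega|f|^pd_D^{-sp}\,dx\}^{1/p}$ and a mollification error bounded by $(\epsilon t)^s\|f\|_{\dot{\mathcal{W}}^{s,p}(\Omega)}$ via the Gagliardo difference formula. For $t\|\nabla h_t\|_{L^p}$ I would pass the gradient to $\phi_{\epsilon t}$, use $\int\nabla\phi_{\epsilon t}=0$, and split the resulting convolution against first differences of $\psi_t f$ by the Leibniz rule: the commutator term $[\nabla\psi_t,\phi_{\epsilon t}]f$ is absorbed by the Hardy weight with a $t^{s-1}$ factor, while the remaining term is handled by a Gagliardo estimate of the same order. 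Raising to the $p$-th power and integrating $[t^{-s}K(t,f)]^p\,dt/t$ over $(0,\infty)$ and invoking Fubini then yields $\|f\|_{(L^p,\mathring{W}_D^{1,p})_{s,p}}\lesssim \|f\|_{\mathcal{W}_{d_D^s}^{s,p}(\Omega)}$.

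The principal obstacle lies in the lower bound: one must simultaneously force $h_t$ to vanish near $D$ so that $h_t\in\mathring{W}_D^{1,p}(\Omega)$ while preserving enough of the fractional regularity of $f$ through the cutoff-and-mollify procedure. The delicate interplay between the width of the transition layer $\{t<d_D<2t\}$ and the mollification scale $\epsilon t$, together with the need to absorb the commutator produced by $\nabla\psi_t$ into the Hardy weight at exactly the rate $t^{s-1}$, forms the technical heart of the argument. The endpoint $sp=1$ adds a further difficulty, since the fractional Hardy inequality on $\mathbb{R}^n$ with respect to a codimension-one set is known to fail at this critical exponent; the supplementary $n$-thickness hypothesis on $\Omega$ should permit replacing the global Hardy inequality by an interior-thickness estimate on a Whitney decomposition of $\Omega$, to be run intrinsically on $\Omega$ rather than on $\mathbb{R}^n$.
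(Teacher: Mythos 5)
The central step of your argument—boundedness of the zero extension $E_0 f := f\chi_\Omega$ from $\mathring{W}_D^{1,p}(\Omega)$ to $W^{1,p}(\mathbb{R}^n)$—is false whenever $\Gamma := \partial\Omega\setminus D$ is nonempty, and this is precisely the situation the proposition addresses. A function $\varphi \in C_D^\infty(\Omega)$ is by definition the restriction to $\Omega$ of some $\psi\in C_{\rm c}^\infty(\mathbb{R}^n)$ supported away from $D$, but $\psi$ is in general \emph{not} supported away from $\Gamma$; indeed $\varphi$ need not vanish near $\Gamma$ at all (this is the whole point of a mixed boundary condition). Consequently $E_0\varphi = \psi\chi_\Omega$ has a jump discontinuity across $\Gamma$, and when $\mathcal{H}^{n-1}(\Gamma)>0$ (e.g.\ any Lipschitz domain with a genuine Neumann part) one has $E_0\varphi\notin W^{1,p}(\mathbb{R}^n)$. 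Your claim that ``every $\varphi\in C_D^\infty(\Omega)$ extends trivially by zero to a member of $C_{\rm c}^\infty(\mathbb{R}^n)$'' holds only if $\operatorname{supp}\psi\cap\Gamma=\emptyset$, i.e.\ only in the pure-Dirichlet case $D=\partial\Omega$. This collapses both the retract construction and the forward embedding $\subset$, and it also invalidates the cutoff-and-mollify device in your reverse embedding: mollifying $E_0(\psi_t f)$ across $\Gamma$ produces $L^p$ (not $W^{1,p}$) functions near $\Gamma$, not elements of $\mathring W_D^{1,p}(\Omega)$ after restriction.

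What is actually needed is the operator $\mathcal{E}_D$ from \eqref{ext-x1}, which performs a \emph{reflection} across $\Gamma$ (preserving Sobolev regularity there) and a zero extension only near $D$; the paper uses $\mathcal{R}_\Omega\circ\mathcal{E}_D=\mathrm{Id}$ to get a genuine retract of $\mathring W_D^{1,p}$ and then transfers the $\mathbb{R}^n$-interpolation identity from \cite{Bec19-In}. The zero extension does play a role in the paper, but it is the extension $\mathcal{E}_0$ to the $(n+1)$-dimensional $n$-set $\Omega_D=(\Omega\times\{0\})\cup(D\times\mathbb{R})$ in \eqref{eqn4.13}, invoked only for the reverse inclusion in the critical case $sp=1$ via Grisvard's trace characterization (Lemma \ref{lem4.7}); there is no flat part $\Gamma$ in $\Omega_D$, so the jump problem does not arise. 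Your proposal, as written, cannot be repaired without replacing $E_0$ on $\mathbb{R}^n$ by a reflection-type extension that handles $\Gamma$, at which point it essentially becomes the paper's Case 1 argument.
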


Proposition \ref{t5} is an analogue of  \cite[Theorem 1.1(b)]{Bec19-In},
where the additional restriction $sp\ne 1$ is needed and, moreover,
the geometric assumptions on $\Omega$ therein are also different from
those in Proposition \ref{t5}. Let next $sp\ne 1$.
In this case, by Remark \ref{rem1.9}, we have
${\mathcal{W}}_{d_D^{s}}^{s,p}(\Omega)=\mathring{W}_{D}^{s,p}(\Omega)$.
Let $\Omega$ be an $(\epsilon,\delta,D)$-domain with $D$ being a uniformly $(n-1)$-set. Assume that
$\Omega$ satisfies the {interior thickness condition in $\Gamma$} as in \eqref{eqn-isc}.
It was proven in \cite[Theorem 4.0.4]{Bec22} that the following interpolation identity
\begin{align*}
\left(L^p(\Omega),\mathring{W}_{D}^{1,p}
(\Omega)\right)_{s,p}=W^{s,p}(\Omega)\cap L^p\left(\Omega,d_D^{-sp}\right)
\end{align*}
holds, where $W^{s,p}(\Omega)$ is defined as in \eqref{eqn-fstrace}.

As mentioned before, the proof of Theorem \ref{thm-equiv}
depends essentially on the existence of the linear bounded extension operator
$\mathcal{E}_D$ from $\mathcal{W}^{s,p}(\Omega)$ to
$\mathring{W}_D^{s,p}(\mathbb{R}^n)$, which is defined as follows
(see Definition \ref{def-extension1} for its precise definition):
\begin{align}\label{ext-x1}
\mathcal{E}_Df(x):=
\begin{cases}
f(x) &\mathrm{if}\ x\in \Omega, \\
0 &\mathrm{if}\ x\in D, \\
\displaystyle\sum_{Q_j\in\mathscr{W}_{\rm e}}\left(E_{Q_j^*}f\right)_{Q_j^*}\psi_j(x) &\mathrm{if}\ x\in{\overline\Omega}^{\complement}.
\end{cases}
\end{align}

\begin{theorem}\label{thm1}
Let $\epsilon\in(0,1]$, $\delta\in(0,\infty]$, and $\Omega\subset\mathbb{R}^n$ be an $(\epsilon,\delta,D)$-domain with
$D\subset\partial\Omega$ being closed.
Assume $s\in(0,1)$, $p\in[1,\infty)$, and
$\mathcal{W}^{s,p}(\Omega)$ is the fractional Sobolev space as in \eqref{eqn0iss}.
If $\Omega$ supports the $D$-adapted Hardy inequality \eqref{eqn-HI},
then the linear extension operator $\mathcal{E}_D$ in \eqref{ext-x1} is
bounded from $\mathcal{W}^{s,p}(\Omega)$ to $\mathring{W}_D^{s,p}(\mathbb{R}^n)$.
\end{theorem}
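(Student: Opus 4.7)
The plan is to establish $\|\mathcal{E}_D f\|_{\mathcal{W}^{s,p}(\mathbb{R}^n)} \lesssim \|f\|_{\mathcal{W}^{s,p}(\Omega)}$ and then verify, by a cutoff-and-mollification argument, that $\mathcal{E}_D f$ lies in $\mathring{W}_D^{s,p}(\mathbb{R}^n)$. Up to a null set I may split $\mathbb{R}^n=\Omega\sqcup\overline{\Omega}^\complement$, and $\mathcal{E}_D f$ vanishes on $D$. For the $L^p$-bound on $\overline{\Omega}^\complement$, using $\sum_j\psi_j\equiv 1$ and Jensen's inequality reduces the estimate to $\sum_{Q_j\in\mathscr{W}_{\rm e}}|Q_j|\,|(E_{Q_j^*}f)_{Q_j^*}|^p$. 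Exterior cubes $Q_j$ whose reflected partner $Q_j^*$ is of comparable size and nearby contribute $\lesssim\|f\|_{L^p(\Omega)}^p$ via the finite overlap of $\{Q_j^*\}$; cubes clustering near $D$ have $\ell(Q_j)\sim d_D(Q_j)$, and a dyadic summation over their distances to $D$ yields a tail dominated by $\int_\Omega|f|^p/d_D^{sp}\,dx$, which is finite by the $D$-adapted Hardy inequality \eqref{eqn-HI}.

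For the Gagliardo seminorm I split
\[
\int_{\mathbb{R}^n}\!\!\int_{\mathbb{R}^n}\!\!\frac{|\mathcal{E}_D f(x)-\mathcal{E}_D f(y)|^p}{|x-y|^{n+sp}}\,dx\,dy=\mathrm{I}+\mathrm{II}+\mathrm{III},
\]
where $\mathrm{I}$ is over $\Omega\times\Omega$, $\mathrm{II}$ is over $\Omega\times\overline{\Omega}^\complement$ (plus its symmetric partner), and $\mathrm{III}$ is over $\overline{\Omega}^\complement\times\overline{\Omega}^\complement$. The term $\mathrm{I}$ equals $\|f\|_{\dot{\mathcal{W}}^{s,p}(\Omega)}^p$. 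For $\mathrm{III}$, I would group $(x,y)$ by the exterior Whitney cubes $(Q_j,Q_k)$ that contain them. Close pairs produce differences of polynomial averages $(E_{Q_j^*}f)_{Q_j^*}-(E_{Q_k^*}f)_{Q_k^*}$ controlled by the $s$-fractional oscillation of $f$ on a bounded union of reflected cubes, and, combined with the $(\epsilon,\delta,D)$-chain structure of Definition \ref{ass-1} together with the finite overlap of $\{Q_j^*\}$, this sums to $\|f\|_{\dot{\mathcal{W}}^{s,p}(\Omega)}^p$; far pairs are absorbed by $|x-y|^{n+sp}$ against the $L^p$-bound; and pairs straddling $D$, where chains from Definition \ref{ass-1} cannot cross $D$, are handled using $\mathcal{E}_D f\equiv 0$ on $D$ together with \eqref{eqn-HI}. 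The mixed term $\mathrm{II}$ is treated analogously: $x\in\Omega$ is chained via Definition \ref{ass-1} to $Q_j^*$ for the exterior cube $Q_j\ni y$, transferring the oscillation control to $\|f\|_{\dot{\mathcal{W}}^{s,p}(\Omega)}$, while \eqref{eqn-HI} supplies the needed decay of $\mathcal{E}_D f$ when $x$ or $y$ approaches $D$.

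For membership in $\mathring{W}_D^{s,p}(\mathbb{R}^n)$, I would approximate $\mathcal{E}_D f$ by $\varphi_\eta\ast(\chi_\eta\mathcal{E}_D f)$, where $\chi_\eta$ is a Lipschitz cutoff vanishing on the $\eta$-neighborhood of $D$ and $\varphi_\eta$ is a standard mollifier of radius much smaller than $\eta$; convergence in $\mathcal{W}^{s,p}(\mathbb{R}^n)$ follows from dominated convergence together with \eqref{eqn-HI}. The main obstacle is the Gagliardo seminorm control for pairs $(x,y)$ clustering near $D$ in both $\mathrm{II}$ and $\mathrm{III}$: the Dirichlet part of the boundary obstructs any Jones-type chain between $\Omega$ and its complement, so the $D$-adapted Hardy inequality \eqref{eqn-HI} is essentially the only available replacement, providing the quantitative ``boundary trace zero'' that glues the extension across $D$.
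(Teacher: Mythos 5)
Your overall strategy for the boundedness part coincides with the paper's Proposition \ref{thm1x}: decompose the Gagliardo integral into the $\Omega\times\Omega$ piece (equalling $\|f\|_{\dot{\mathcal{W}}^{s,p}(\Omega)}^p$), the mixed piece, and the complement piece, then estimate via the Whitney reflection machinery. A small inaccuracy: for the pure $L^p$ bound $\|\mathcal{E}_Df\|_{L^p(\mathbb{R}^n)}\lesssim\|f\|_{L^p(\Omega)}$, no Hardy term arises; the finite-overlap property of the reflected cubes from Lemma \ref{lem3.4.4}(ii) already closes it, so your ``dyadic summation over distances to $D$'' for the $L^p$ part is unnecessary (though not harmful). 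The Hardy inequality is needed only in the Gagliardo estimate, exactly where cubes abut $D$.

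The genuinely different point is the last step, and it contains a gap. The paper obtains membership of $\mathcal{E}_Df$ in $\mathring{W}_D^{s,p}(\mathbb{R}^n)$ by first proving $\mathcal{W}^{s,p}(\Omega)=\mathring{W}_D^{s,p}(\Omega)$ (Theorem \ref{thm-equiv}), which is done by a cutoff $v_m$ near $D$ applied to $f$ \emph{inside $\Omega$}; there, the needed seminorm control for the cutoff error reduces to $\int_{D_{3/m}}|f|^p\,d_D^{-sp}\,dx\to 0$, which follows directly from \eqref{eqn-HI} since the integral is over $\Omega$. Then one applies $\mathcal{E}_D$ to the approximating $C^\infty_D(\Omega)$ sequence, using Proposition \ref{lem-extensionOP} (support away from $D$ is preserved). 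You instead propose cutting off $\mathcal{E}_Df$ \emph{on $\mathbb{R}^n$} with $\chi_\eta$ and mollifying. This route is viable, but the analogous cutoff-error estimate requires
\begin{align*}
\int_{\mathbb{R}^n}\frac{|\mathcal{E}_Df(x)|^p}{[\mathop{\mathrm{dist}}(x,D)]^{sp}}\,dx<\infty ,
\end{align*}
i.e.\ a Hardy-type bound for the \emph{already extended} function over all of $\mathbb{R}^n$. The hypothesis \eqref{eqn-HI} only gives the integral over $\Omega$ and does not, by itself, control the contribution on $\Omega^{\complement}$. You write that convergence ``follows from dominated convergence together with \eqref{eqn-HI},'' which glosses over this. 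The missing ingredient is the weighted $L^p$ boundedness of $\mathcal{E}_D$ from $L^p(\Omega,d_D^{-sp})$ to $L^p(\mathbb{R}^n,d_D^{-sp})$; this is true and elementary (it is the $L^p$-part of the paper's Lemma \ref{lem4.5}, proved by comparing $\mathrm{dist}(x,D)$ for $x\in c_nQ_j$ to $\mathrm{dist}(y,D)$ for $y\in Q_j^*$), but it must be supplied. Once that lemma is established, your approach closes; the paper's bootstrap through Theorem \ref{thm-equiv} is designed precisely to avoid needing any Hardy-type estimate off $\Omega$ at this stage.
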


Theorem \ref{thm1}, together with the characterization of the fractional Sobolev
extension from \cite[Theorem 1.1]{ZY2015},
implies that every $(\epsilon,\delta,D)$-domain supporting the
$D$-adapted Hardy inequality \eqref{eqn-HI} is  an $n$-set as in \eqref{def-d-set}.
Recall that the construction of various linear extension operators from different kinds of function
spaces on domain to the whole space have been extensively studied in
\cite{AlMi13,Bre14,Cal61,Ch84,GHS23,Hei16,Jon1984,
KoZh22,Ste70}.
In particular, Theorem \ref{thm1} is an analogue of \cite[Theorem 1.1]{Bec21}.
To be precise, let $\Omega$ be an open set satisfying the interior thickness condition in
$\Gamma$ as in \eqref{eqn-isc}, Bechtel \cite[Theorem 1.1]{Bec21} shows 
that the linear extension $\mathcal{E}:=\mathbf{E}\circ E_0$,
constructed as a composition of the reflection extension
$\mathbf{E}$ in the sense of Zhou \cite{ZY2015}
and a zero extension $E_0$ from $\Omega$ to some suitable larger superset, is bounded
from the weighted fractional Sobolev space
${\mathcal W}^{s,p}(\Omega)\cap L^p(\Omega,d_D^{-sp})={\mathcal{W}}^{s,p}_{d_D^s}(\Omega)$
to ${W}^{s,p}(\mathbb{R}^n)$ (see also \cite[Theorem 4.0.1]{Bec22}).
Unlike the extension operator $\mathcal{E}$ in \cite[Theorem 1.1]{Bec21}, Theorem \ref{thm1} uses the linear extension
$\mathcal{E}_D$ in  \eqref{ext-x1}, which is the same as that used in the
integer case $s=1$ (see \cite[p.\,27]{Bec19-Ext}). This also shows 
that $\mathcal{E}_D$ in \eqref{ext-x1} is a
universal linear extension on $\mathring W_D^{s,p}(\Omega)$ for all $s\in [0,1]$.

In contrast to the constructions presented in \cite{Bre14,Cal61,Ch84,Jon1984,ShYa24,ShYa25,Ste70},
for functions with support away from  $D$
the operator $\mathcal{E}_D$ performs a zero extension along $D$ and a reflection extension along
$\Gamma$ (see Figure \ref{p3} and Proposition \ref{lem-extensionOP} for more
details). Recall also that in \cite{Bec19-Ext} the same
extension operator $\mathcal{E}_D$ was shown to be bounded from $\mathring{W}_D^{1,p}(\Omega)$ to $\mathring{W}_D^{1,p}(\mathbb{R}^n)$,
where, for any open set $O$ and a closed set $D\subset\overline O$,
$\mathring{W}^{1,p}_D(O):=\overline{C_D^\infty(O)}^{\|\cdot\|_{W^{1,p}(O)}}$
with $\|f\|_{W^{1,p}(O)}:=\|f\|_{L^p(O)}+\|\nabla f\|_{L^p(O)}$ for any
$f\in C_D^\infty(O)$. Thus, Theorem \ref{thm1}
is an extension of  \cite[Theorem 1.2]{Bec19-Ext} from $s=1$ to $s\in(0,1)$.
Unlike the treatment of the case $s=1$ in \cite{Bec19-Ext},
the proof of Theorem \ref{thm1} needs to borrow some ideas from
the estimation of \cite[(2.4)]{ZY2015},
but requires an elaborate modification in light of the partially vanishing
trace conditions on $D$. In particular,
based on the definition of $\mathcal{E}_D$ in \eqref{ext-x1},
for any $f\in\mathcal{W}^{s,p}(\Omega)$, we decompose the norm $\|\mathcal{E}_Df\|_{W^{s,p}(\mathbb{R}^n)}$ into two types of integrals: ${\rm I}(f)$
and ${\rm II}(f)$
[see \eqref{eqn-integralI} and \eqref{eqn-integralII}]. The estimations of ${\rm I}(f)$ and ${\rm II}(f)$ are the most technical part of this article,
which are presented, respectively,  in Sections \ref{sei} and \ref{seii}.
Notice that it is in this part that we need the $D$-adapted Hardy inequality to handle with the partially vanishing trace condition, which is not needed in both the
fractional order case \cite{ZY2015} of Zhou and the integer order case \cite{Bec19-Ext}
of Bechtel et al.

\begin{center}
\begin{figure}[h]
\centering
\includegraphics[width=7cm]{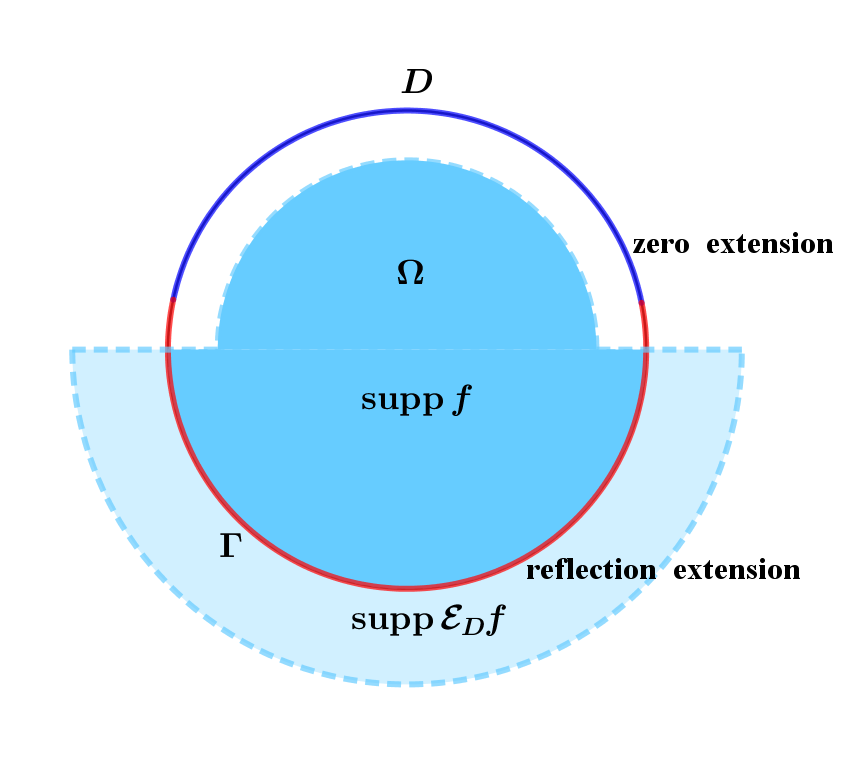}
\caption{Extension of the support for functions in $C_D^\infty(\Omega)$}\label{p3}
\end{figure}
\end{center}

Finally, we apply the above obtained results to the study of second-order
elliptic operators $\mathcal{L}_D$ in $\Omega$ with mixed boundary conditions,
with the hope to extend the corresponding results in existing works
(see, e.g., \cite{Bec21,Bre14,Ege18,EHT14,EgTo17}) from $sp\ne 1$ to the critical case $sp=1$.

To begin with, let us recall some basic definitions on $\mathcal{L}_D$. Let $A=\{a_{ij}\}_{i,j=1}^n:\
\Omega\to \mathbb{R}^{n\times n}$ be a bounded real symmetric matrix
satisfying the \emph{elliptic condition}: there exists $\lambda\in(0,\infty)$ such that,
for a.e. $x\in \Omega$ and any $\xi\in \mathbb{R}^n$, it holds that
$A(x)\xi \cdot\xi\ge \lambda |\xi|^2.$
Let $\Omega\subset \mathbb{R}^n$ be a domain with $D\subset \partial \Omega$
being closed. For any $u$, $v\in \mathcal{W}_{d_D}^{1,p}(\Omega)$,
the symmetric bilinear form $\mathfrak a(u,v)$ is defined by setting
\begin{align}\label{eqn-DF}
\mathfrak a(u,v):=\int_\Omega A(x)\nabla u(x)\cdot \nabla v(x)\,dx.
\end{align}
As proven in Lemma \ref{lem-DF},  $(\mathfrak a,\mathcal{W}_{d_D}^{1,p}(\Omega))$ is a Dirichlet form
in $L^2(\Omega)$. By the theory of Dirichlet forms (see, e.g., \cite{FuOsTa11}),   $(\mathfrak a,\mathcal{W}_{d_D}^{1,p}(\Omega))$ uniquely
corresponds
a nonnegative self-adjoint operator $\mathcal{L}_D$ on $L^2(\Omega)$, which is called a \emph{second-order elliptic operator in $\Omega$
with mixed boundary condition}. We refer to  \cite{Bec21,Bre14,Ege18,EgTo17}  for the definition of elliptic operators in
more general settings where the coefficients of operators are
complex and the theory of sesquilinear forms is used.

Our main concerns on the elliptic operator $\mathcal{L}_D$ with mixed boundary are
the characterizations of its fractional power and its maximal regularity, which are two major motivations
to establish a systematic theory of functional calculus for sectorial operators on Banach spaces
\cite{Haa06}.
The following corollary shows that the weighted fractional Sobolev spaces ${\mathcal{W}}_{d_D^{s}}^{s,2}(\Omega)$
in Definition \ref{def-wfs} can be used to characterize ${\mathrm{dom}}_2(\mathcal{L}_D^{s/2})$, the domain
 in \eqref{eqn-dfp} of the fractional power, for all $s\in (0,1]$ in a unified way.

\begin{corollary}\label{cor1.11}
Let $\Omega\subset\mathbb{R}^n$ be an $(\epsilon,\delta,D)$-domain
with $\Omega$ being an $n$-set and $D\subset\partial\Omega$ being a closed uniformly $(n-1)$-set. Then,
for any $s\in(0,1]$, it holds that
\begin{align*}
{\mathrm{dom}}_2(\mathcal{L}_D^{s/2})
={\mathcal{W}}_{d_D^{s}}^{s,2}(\Omega).
\end{align*}
\end{corollary}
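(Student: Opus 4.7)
My strategy is to bootstrap from the endpoint $s=1$, where Dirichlet form theory identifies the form domain, to $s\in(0,1)$ via the real interpolation identity for fractional powers of a nonnegative self-adjoint operator, and then to invoke Proposition \ref{t5} to recognise the resulting interpolation space as $\mathcal{W}_{d_D^s}^{s,2}(\Omega)$.

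First I would dispose of the case $s=1$. By Lemma \ref{lem-DF}, $\mathcal{L}_D$ is the nonnegative self-adjoint operator on $L^2(\Omega)$ associated with the Dirichlet form $(\mathfrak{a},\mathcal{W}_{d_D}^{1,2}(\Omega))$ from \eqref{eqn-DF}, so the spectral theorem applied to its square root gives
\begin{align*}
\mathrm{dom}_2(\mathcal{L}_D^{1/2})=\mathcal{W}_{d_D}^{1,2}(\Omega)=\mathcal{W}_{d_D^{1}}^{1,2}(\Omega)
\end{align*}
with equivalent norms. Under the hypotheses of Corollary \ref{cor1.11} we have $sp=2\ne 1$ at $s=1$, so the integer version of Remark \ref{rem1.9} (equivalently, the integer $D$-adapted Hardy inequality \eqref{eqn-IHI} combined with \eqref{eqn-W1p}) yields $\mathcal{W}_{d_D^{1}}^{1,2}(\Omega)=\mathring{W}_D^{1,2}(\Omega)$. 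This simultaneously settles $s=1$ and identifies the form domain with $\mathring{W}_D^{1,2}(\Omega)$, ready for use in the interpolation step.

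For $s\in(0,1)$ I would then invoke the classical identification of domains of fractional powers of a nonnegative self-adjoint operator with real interpolation spaces: if $A$ is such an operator on a Hilbert space $H$ and $\theta\in(0,1)$, then $\mathrm{dom}(A^\theta)=(H,\mathrm{dom}(A))_{\theta,2}$ with equivalent norms, which is immediate from the spectral representation (or, abstractly, a special case of Komatsu's theorem). Taking $A=\mathcal{L}_D^{1/2}$, $H=L^2(\Omega)$, and $\theta=s$, and substituting the identification from the previous paragraph, I obtain
\begin{align*}
\mathrm{dom}_2(\mathcal{L}_D^{s/2})=\bigl(L^2(\Omega),\mathring{W}_D^{1,2}(\Omega)\bigr)_{s,2}.
\end{align*}
Applying Proposition \ref{t5} with $p=2$ (so that the blanket $n$-set hypothesis on $\Omega$ already covers the critical value $sp=1$, i.e.\ $s=1/2$) recognises the right-hand side as $\mathcal{W}_{d_D^s}^{s,2}(\Omega)$, finishing the proof.

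The main substantive content lies in the form-domain identification and its upgrade to $\mathring{W}_D^{1,2}(\Omega)$; everything else is abstract interpolation combined with Proposition \ref{t5}. The only potential pitfall, which is mild, is to make sure that the Komatsu-type identity is applied to the \emph{half-power} $\mathcal{L}_D^{1/2}$ (so that $\theta=s$ lands precisely on $\mathcal{L}_D^{s/2}$), rather than to $\mathcal{L}_D$ itself, where one would need $\theta=s/2$ and an independent description of $\mathrm{dom}_2(\mathcal{L}_D)$; interpolating between $L^2(\Omega)$ and the form domain avoids that detour and keeps the argument entirely within objects already described in the paper.
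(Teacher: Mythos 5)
Your proposal is correct and follows the same overall strategy as the paper: identify $\mathrm{dom}_2(\mathcal{L}_D^{1/2})$ with the form domain $\mathcal{W}_{d_D}^{1,2}(\Omega)$, interpolate between $L^2(\Omega)$ and that space, and then invoke Proposition~\ref{t5} to recognise the interpolation space as $\mathcal{W}_{d_D^s}^{s,2}(\Omega)$. The one technical divergence is in how the interpolation identity for fractional powers is obtained: the paper first applies the \emph{complex} interpolation formula $[L^2(\Omega),\mathrm{dom}_2(\mathcal{L}_D^{1/2})]_s=\mathrm{dom}_2(\mathcal{L}_D^{s/2})$ (valid for sectorial operators with bounded $H^\infty$-calculus) and then passes to real interpolation via the coincidence of real and complex interpolation in the Hilbert scale, whereas you use the spectral theorem (or Komatsu's theorem) for the nonnegative self-adjoint operator $\mathcal{L}_D^{1/2}$ to land directly on $(L^2(\Omega),\mathrm{dom}_2(\mathcal{L}_D^{1/2}))_{s,2}=\mathrm{dom}_2(\mathcal{L}_D^{s/2})$. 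The latter route is marginally more self-contained on a Hilbert space and avoids citing the boundedness of the $H^\infty$-calculus; both deliver the same identity. You are also slightly more careful than the paper in making explicit the identification $\mathcal{W}_{d_D}^{1,2}(\Omega)=\mathring{W}_D^{1,2}(\Omega)$ (which is the form appearing in Proposition~\ref{t5}) via Remark~\ref{rem1.9} at $sp=2\ne1$, a step the paper elides.
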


Our second concern is to study the parabolic maximal regularity of  $\mathcal{L}_D$. Let us consider the following
inhomogeneous parabolic equation
\begin{align}\label{eqn-pe}
\begin{cases}
u_t(t,x)+\mathcal{L}_D u(t,x)=f(t,x),\ &(x,t)\in \Omega\times(0,T),\\
u(0,x)=0,   \ &x\in \Omega
\end{cases}
\end{align}
with $T\in (0,\infty)$, $p\in (1,\infty)$, and $f\in L_{\rm loc}^1([0,T); L^p(\Omega))$ belonging to the Bochner space defined as in
\eqref{eqn-BS}.
It is known (see, e.g., \cite{Haa06}) that \eqref{eqn-pe} has a unique mild solution of the form:
for any $t\in [0,T)$,
\begin{align}\label{eqn-milds}
u(t,\cdot)=\int_0^t e^{-(t-s)\mathcal{L}_D}f(s,\cdot)\,ds\in C([0,T); L^p(\Omega)).
\end{align}
The problem of the maximal regularity asks for the additional regularity of the mild
solution, which plays a fundamental role in various problems such as the linearization of the nonlinear parabolic equation
(see, e.g., \cite{Den21,Haa06,HNVW16}).
In particular, a Banach space $E\subset L_{\rm loc}^1([0,T); \ L^p(\Omega))$ is called \emph{a space of the maximal regularity}
for $\mathcal{L}_D$ if,
for any $f\in E$, the corresponding mild solution $u$ in
\eqref{eqn-milds} satisfies
\begin{align}\label{eqn-mr}
u_t, \,\mathcal{L}_D u\in E.
\end{align}
As pointed out in \cite{Haa06}, this means that the mild solution should have the best regularity
property one can expect, with respect to both the operators
$\mathcal{L}_D$ and $\frac{d}{dt}$.

Now, let
$(p_-,q_+)$ be the internal of the maximal interval of $p$ such that
${\rm{dom}}_p(\mathcal{L}_D^{1/2})=\mathring W^{1,p}_D(\Omega)$ (see Remark
\ref{rem5.7} for more details on its definition). Our next result shows that the weighted Sobolev spaces
$W_{d_D^s}^{s,p}(\Omega)$ can be used to construct spaces of the maximal regularity for
$\mathcal{L}_D$ with $p\in (p_-,q_+)$.

\begin{corollary}\label{cor1.12}
Let $p\in (p_-,q_+)$ and
 $\Omega\subset\mathbb{R}^n$ be an $(\epsilon,\delta,D)$-domain
with $\Omega$ being an $n$-set and $D\subset\partial\Omega$ being a closed uniformly $(n-1)$-set.
Then, for any $s\in(0,1]$, the Bochner space $L^p([0,T);W_{d_D^s}^{s,p}(\Omega))$
is a space of the maximal regularity for $\mathcal{L}_D$.
\end{corollary}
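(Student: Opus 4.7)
The plan is to deduce maximal regularity on the weighted Bochner space $L^p([0,T);\mathcal{W}^{s,p}_{d_D^s}(\Omega))$ from the standard $L^p$-$L^p$ maximal regularity of $\mathcal{L}_D$ by means of a real-interpolation bootstrap. First I would establish that $\mathcal{L}_D$ enjoys maximal $L^p$-regularity on $L^p([0,T);L^p(\Omega))$. Since $(\mathfrak{a},\mathcal{W}^{1,p}_{d_D}(\Omega))$ is a Dirichlet form (Lemma \ref{lem-DF}) with real, symmetric, bounded, and uniformly elliptic coefficients, the semigroup $(e^{-t\mathcal{L}_D})_{t\ge 0}$ is sub-Markovian on $L^2(\Omega)$. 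Exploiting the $(\epsilon,\delta,D)$-domain structure and the $n$-set property of $\Omega$ to compare $\Omega$ locally with $\mathbb{R}^n$ through the extension operator $\mathcal{E}_D$ of Theorem \ref{thm1}, one then obtains Gaussian upper bounds for the associated heat kernel. By Coulhon--Duong and Hieber--Pr\"uss, these bounds imply bounded $H^\infty$-calculus of $\mathcal{L}_D$ on $L^p(\Omega)$ for every $p\in(1,\infty)$, so Weis's theorem on the UMD space $L^p(\Omega)$ yields maximal $L^p$-regularity on $L^p([0,T);L^p(\Omega))$.

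Next I would realize $\mathcal{W}^{s,p}_{d_D^s}(\Omega)$ as a real interpolation space between $L^p(\Omega)$ and ${\mathrm{dom}}_p(\mathcal{L}_D)$. Proposition \ref{t5} supplies
\begin{align*}
\mathcal{W}^{s,p}_{d_D^s}(\Omega)=\left(L^p(\Omega),\,\mathring{W}_D^{1,p}(\Omega)\right)_{s,p},
\end{align*}
while the defining property of the interval $(p_-,q_+)$ (the Kato square-root identity recalled in Remark \ref{rem5.7}) gives ${\mathrm{dom}}_p(\mathcal{L}_D^{1/2})=\mathring{W}_D^{1,p}(\Omega)$. The reiteration principle for the real interpolation of fractional powers of a BIP operator, which applies thanks to step 1, then yields
\begin{align*}
\left(L^p(\Omega),\,{\mathrm{dom}}_p(\mathcal{L}_D^{1/2})\right)_{s,p}=\left(L^p(\Omega),\,{\mathrm{dom}}_p(\mathcal{L}_D)\right)_{s/2,p},
\end{align*}
so that $\mathcal{W}^{s,p}_{d_D^s}(\Omega)=(L^p(\Omega),{\mathrm{dom}}_p(\mathcal{L}_D))_{s/2,p}$ with equivalent norms.

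Finally I would transfer maximal regularity to this interpolation space. Let $T\colon f\mapsto (\partial_t u,\mathcal{L}_D u)$ denote the solution map for \eqref{eqn-pe}. Step 1 says $T$ is bounded on $L^p([0,T);L^p(\Omega))$. Since $\mathcal{L}_D$ commutes with its own resolvent and semigroup, applying $\mathcal{L}_D$ to both sides of \eqref{eqn-pe} shows that $v:=\mathcal{L}_D u$ solves $v_t+\mathcal{L}_D v=\mathcal{L}_D f$ with zero initial datum; feeding $\mathcal{L}_D f\in L^p([0,T);L^p(\Omega))$ into step 1 therefore gives that $T$ is also bounded on $L^p([0,T);{\mathrm{dom}}_p(\mathcal{L}_D))$. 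The vector-valued real interpolation identity
\begin{align*}
\left(L^p([0,T);X_0),\,L^p([0,T);X_1)\right)_{s/2,p}=L^p\left([0,T);\,(X_0,X_1)_{s/2,p}\right),
\end{align*}
applied with $X_0=L^p(\Omega)$ and $X_1={\mathrm{dom}}_p(\mathcal{L}_D)$, combined with step 2, yields the boundedness of $T$ on $L^p([0,T);\mathcal{W}^{s,p}_{d_D^s}(\Omega))$, which is the claimed maximal regularity.

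The main obstacle will be the upstream input in step 1, namely obtaining Gaussian upper bounds (and hence bounded $H^\infty$-calculus on $L^p$) for $\mathcal{L}_D$ with mixed boundary data on the rough class of $(\epsilon,\delta,D)$-domains; the hypotheses that $\Omega$ is an $n$-set and $D$ a uniformly $(n-1)$-set are used precisely to make these classical heat-kernel estimates go through, via local comparison with $\mathbb{R}^n$ through the extension of Theorem \ref{thm1}. A secondary but delicate point is the reiteration identity in step 2 at the critical scale $sp=1$, where $\mathcal{W}^{s,p}_{d_D^s}(\Omega)$ no longer coincides with $\mathring{W}_D^{s,p}(\Omega)$ (cf.\ Remark \ref{rem1.9}) and the norm equivalences must be tracked with care.
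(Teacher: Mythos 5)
Your proof is correct in outline, but you take a genuinely different route from the paper. The paper disposes of the corollary in one line by citing the Da~Prato--Grisvard theorem (recalled as Lemma~\ref{lem-DG}): once one knows that $-\mathcal{L}_D$ generates a holomorphic semigroup with spectral angle $<\pi/2$ (Lemma~\ref{lem-Gauss} and Remark~\ref{rem5.7}(i)), Lemma~\ref{lem-DG} applied with $A=\mathcal{L}_D$, $\alpha=1/2$, $\theta=s$, $q=p$ says directly that $L^p\bigl([0,T);(L^p(\Omega),\mathrm{dom}_p(\mathcal{L}_D^{1/2}))_{s,p}\bigr)$ is a maximal-regularity space, and then Remark~\ref{rem5.7}(i) plus Proposition~\ref{t5} identify the interpolation space with $\mathcal{W}^{s,p}_{d_D^s}(\Omega)$. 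You instead re-derive a Da~Prato--Grisvard-type statement from scratch: your step~1 (Gaussian bounds $\Rightarrow$ $H^\infty$-calculus $\Rightarrow$ $L^p$--$L^p$ maximal regularity) is the analogue of the paper's Lemma~\ref{lem-Gauss} and Remark~\ref{rem5.7}(ii); your step~2 invokes a reiteration identity $(L^p,\mathrm{dom}_p(\mathcal{L}_D^{1/2}))_{s,p}=(L^p,\mathrm{dom}_p(\mathcal{L}_D))_{s/2,p}$, which is not needed at all if one uses Lemma~\ref{lem-DG} with $\alpha=1/2$; and your step~3 interpolates the solution map between $L^p([0,T);L^p(\Omega))$ and $L^p([0,T);\mathrm{dom}_p(\mathcal{L}_D))$ via the Lions--Peetre vector-valued interpolation identity. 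All of these steps are legitimate (your reiteration needs bounded imaginary powers, which you do obtain from the $H^\infty$-calculus, and you implicitly work with the shifted operator $\mathcal{L}_D+w$ to ensure invertibility), and the resulting argument is a perfectly valid, if more laborious, alternative. What citing Lemma~\ref{lem-DG} buys is that it collapses your steps~2 and~3 into a single invocation and avoids the reiteration lemma entirely; what your version buys is that it is self-contained and exhibits explicitly how maximal regularity transfers through interpolation. You might also note that the Da~Prato--Grisvard result works for any $\alpha$ with $\operatorname{Re}\alpha>0$, so the normalization $\alpha=1/2$ in the paper is just a convenience.
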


A key to prove Corollary \ref{cor1.12} is to show that $-\mathcal{L}_D$ generates a
Markovian semigroup which has a heat kernel $p_t$ satisfying the following Gaussian upper bound
(see Lemma \ref{lem4.7}): there exist constants $w_0\in \mathbb{R}$ and
$c$, $b\in (0,\infty)$ such that, for any $t\in (0,\infty)$ and
a.e. $x$, $y\in \Omega$, it holds that
\begin{align*}
0\le p_t(x,y)\le ct^{-n/2}\exp\left\{w_0t-\frac{b|x-y|^2}{t}\right\}.
\end{align*}

The remainder of this article is organized as follows. Section \ref{s2} is a preliminary,
in which we provide some fundamental properties of $(\epsilon,\delta,D)$-domains.
Section \ref{s3} is devoted to the proof of Theorem \ref{thm1}. We
prove Theorem \ref{thm-equiv} and Proposition
\ref{prop-Hcx} in  Section \ref{s4}.
Finally, we prove  Proposition \ref{t5} and Corollaries \ref{cor1.11} and \ref{cor1.12} in Section \ref{s5}.

We end this section by making some conventions on symbols. Throughout this article, let $\mathbb{N}:=\{1,2,...\}$ and
$\mathbb{N}_0:=\mathbb{N}\cup\{0\}$. We use $C$ to denote a positive constant that is independent of
the main parameters involved, whose value may differ from line to line. The symbol $A\lesssim B$ means
that $A\le CB$ for some positive constant $C$, while $A\sim B$ means $A\lesssim B$ and $B\lesssim A$.
By a domain, we mean a connected open set.
For any set $E\subset\mathbb{R}^n$, denote by $E^\complement$ the set $\mathbb{R}^n\setminus E$ and
$\overline{E}$ the closure of $E$, and we use $|E|$ to denote its Lebesgue measure;
for any $x\in\mathbb{R}^n$, let
\begin{align*}
{{\mathop\mathrm{dist\,}}}(x,E):=\inf_{y\in E}|x-y|\ \ \text{and} \ \ {{\mathop\mathrm{dist\,}}}(E,F):=\inf_{y\in E}{{\mathop\mathrm{dist\,}}}(y,F)
\end{align*}
denote, respectively, the Euclidean distance of $x$ and $E$ and the distance of $E$, $F\subset\mathbb{R}^n$;
we also use $\mathbf{1}_E$ to denote its characteristic function and, when $|E|\in (0,\infty)$, let
\begin{align*}
f_E:=\fint_Ef(x)\,dx=\frac{1}{|E|}\int_Ef(x)\,dx
\end{align*}
denote the integral mean of a function $f\in L_{\rm loc}^1(\mathbb{R}^n)$ (the set of all locally integrable functions on
$\mathbb{R}^n$) over $E$. Moreover, we denote
by $Q=Q(x,r)$ the cube with center $x$ and edge length $r$, whose edges parallel
to the coordinate axes, and by $\ell(Q)=2r$ the edge length of $Q$. For any positive constant $C$,
we use $CQ$ to denote the concentric cube of $Q$ with edge length $C\ell(Q)$.
Finally, in all proofs, we consistently retain the symbol introduced in the
original theorem (or related statement).

\section{\texorpdfstring{Preliminaries on $(\epsilon,\delta,D)$-domains}{Preliminaries on domains}}\label{s2}

In this section, we present some basic properties on $(\epsilon,\delta,D)$-domains.
Subsection \ref{s2.1} is devoted to a review of the reflection mechanism of $\Omega$.
Based on this mechanism, we establish some technical lemmas in Subsection \ref{s2.2}, which play important roles in the proofs of the main results of this article.

\subsection{\texorpdfstring{Reflection across domains}{Reflection across domains}}\label{s2.1}

A key feature of an $(\epsilon,\delta,D)$-domain $\Omega$ in Definition \ref{ass-1} is that there exists a reflection mechanism for Whitney cubes
related to $\Omega$. To illustrate it, let us recall the Whitney decomposition as follows  (see, e.g., \cite{Gra08,Ste70}).

\begin{lemma}[\cite{Gra08,Ste70}]\label{W}
Let $\Omega\subsetneqq\mathbb{R}^n$ be an open set. Then there exists a family of
cubes $\mathscr{W}:=\{Q_j\}_{j=1}^\infty$
with edges parallel to the coordinate axes and pairwise disjoint interiors such that
\begin{itemize}
\item[{\rm (i)}]$\bigcup_{j=1}^\infty Q_j=\Omega$,

\item[{\rm (ii)}]${\mathop\mathrm{diam\,}} Q_j\le{{\mathop\mathrm{dist\,}}}(Q_j,\partial\Omega)
\le4{\mathop\mathrm{\,diam\,}} Q_j$ for any $j\in\mathbb{N}$,

\item[{\rm (iii)}] $\{Q_j\}_{j=1}^\infty$ are dyadic cubes,

\item[{\rm (iv)}]$\frac{1}{4}{\mathop\mathrm{\,diam\,}}Q_j\le{\mathop\mathrm{diam\,}}Q_k\le
4{\mathop\mathrm{\,diam\,}}Q_j$ if $\overline{Q}_j\cap\overline{Q}_k\neq\emptyset$, and

\item[{\rm (v)}]each cube has at most $12^n$ intersecting cubes.
\end{itemize}
\end{lemma}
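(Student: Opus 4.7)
The plan is to execute the classical dyadic Whitney construction, following \cite{Gra08,Ste70}. Since $\Omega\subsetneqq\mathbb{R}^n$ is open, each $x\in\Omega$ satisfies $0<{\mathop\mathrm{dist\,}}(x,\partial\Omega)<\infty$, which assigns a canonical length scale. I would work inside the standard dyadic mesh: for $k\in\mathbb{Z}$, let $\mathcal{M}_k$ denote the collection of closed cubes of side length $2^{-k}$ with vertices in $2^{-k}\mathbb{Z}^n$, and set $\mathcal{M}:=\bigcup_{k\in\mathbb{Z}}\mathcal{M}_k$. Drawing cubes from $\mathcal{M}$ automatically yields property (iii).

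The first step is to form the candidate family
\[
\mathscr{W}_0:=\left\{Q\in\mathcal{M}:\ {\mathop\mathrm{diam\,}}Q\le{\mathop\mathrm{dist\,}}(Q,\partial\Omega)\le 4{\mathop\mathrm{diam\,}}Q\right\}.
\]
A scale-matching argument verifies coverage: for any $x\in\Omega$ with $d:={\mathop\mathrm{dist\,}}(x,\partial\Omega)$, choose $k$ so that $\sqrt{n}\,2^{-k}\le d\le 2\sqrt{n}\,2^{-k}$; the unique cube in $\mathcal{M}_k$ containing $x$ then belongs to $\mathscr{W}_0$. Since any two dyadic cubes are either nested or have disjoint interiors, I would pass to the family $\mathscr{W}$ of maximal elements of $\mathscr{W}_0$ with respect to inclusion. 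This $\mathscr{W}$ has pairwise disjoint interiors, still covers $\Omega$, and each of its members satisfies the two-sided inequality in (ii) by construction, so (i) and (ii) are established.

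For (iv), suppose $Q_j,Q_k\in\mathscr{W}$ satisfy $\overline{Q}_j\cap\overline{Q}_k\ne\emptyset$. Picking a point $p$ in the intersection and using the triangle inequality with (ii) applied to both cubes yields the rough comparison ${\mathop\mathrm{diam\,}}Q_j\le 5{\mathop\mathrm{diam\,}}Q_k$; the sharper factor $4$ follows from the dyadic rigidity that forces side-length ratios to be powers of two, so the bound $5$ tightens to $4$. Property (v) is then a volume count: by (iv), any cube in $\mathscr{W}$ touching $Q_j$ is contained in the concentric dilate $3Q_j$ and has side length at least $\ell(Q_j)/4$, so since these cubes have pairwise disjoint interiors their number is at most $(3\ell(Q_j))^n/(\ell(Q_j)/4)^n=12^n$.

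The main obstacle is purely bookkeeping: the constants in (ii) must be tuned precisely so that the factor in (iv) is exactly $4$, which is what delivers the bound $12^n$ in (v). Since Lemma \ref{W} is cited verbatim from the classical references \cite{Gra08,Ste70}, no new ideas are needed and the detailed arithmetic can be found there.
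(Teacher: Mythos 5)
The paper does not supply a proof of Lemma \ref{W}; it is quoted as a classical result from \cite{Gra08,Ste70} and used as a black box, so there is no in-paper argument to compare against. Your sketch recovers the standard dyadic Whitney construction, which is indeed the route taken in those references, and the global architecture (dyadic mesh, a two-sided sizing criterion, passage to maximal cubes, then a rigidity/volume count for the neighbor bounds) is correct.

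Two of your claims would, however, fail as written and need small repairs. First, in the coverage step you choose $k$ with $\sqrt{n}\,2^{-k}\le d\le 2\sqrt{n}\,2^{-k}$ and assert the dyadic cube $Q\in\mathcal{M}_k$ containing $x$ lies in $\mathscr{W}_0$; but ${{\mathop\mathrm{dist\,}}}(Q,\partial\Omega)\ge d-{\mathop\mathrm{diam\,}}Q$, which can be as small as $0$ under your window, so the lower inequality ${\mathop\mathrm{diam\,}}Q\le{{\mathop\mathrm{dist\,}}}(Q,\partial\Omega)$ need not hold. Shifting the window, e.g.\ to $2\sqrt{n}\,2^{-k}\le d<4\sqrt{n}\,2^{-k}$, gives ${\mathop\mathrm{diam\,}}Q\le{{\mathop\mathrm{dist\,}}}(Q,\partial\Omega)<4\,{\mathop\mathrm{diam\,}}Q$ as required. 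Second, for (v) you claim any cube touching $Q_j$ is contained in the concentric dilate $3Q_j$; that is false when the neighbor has side up to $4\ell(Q_j)$ (such a cube can reach out to roughly $4.5\,\ell(Q_j)$ from the center of $Q_j$ in the $\ell^\infty$ metric, far beyond $3Q_j$). So the naive volume ratio $(3\ell(Q_j))^n/(\ell(Q_j)/4)^n$ does not by itself justify $12^n$. The constant $12^n$ is of course correct, but one needs either a case analysis over the five admissible neighbor side-lengths with the grid structure of dyadic cubes, or a volume argument inside a larger dilate; as it stands, this step is a genuine gap in your sketch rather than just bookkeeping.
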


Let $\mathscr{W}$ be a Whitney decomposition of $\Omega$
as in Lemma \ref{W}.
We call $\{Q_1,\ldots,Q_m\}\subset\mathscr{W}$
a {\it touching chain} in $\mathscr{W}$ if, for any $i\in \{1,...,m-1\}$, it holds that
$\overline{Q}_i\cap\overline{Q}_{i+1}\ne\emptyset.$

\begin{remark}\label{remark2.3}
Assume that $\Omega$ is an $(\epsilon,\delta,D)$-domain.
Let $\mathscr{W}(\overline{\Gamma})$ and $\mathscr{W}(\overline{\Omega})$ be, respectively, the
Whitney decompositions of $\mathbb{R}^n\setminus\overline{\Gamma}$ and $\mathbb{R}^n\setminus\overline{\Omega}$ as in Lemma \ref{W}.
The {\it shadow} $\mathscr{W}_{\rm i}$ of $\Omega$ in $\mathscr{W}(\overline{\Gamma})$ is defined by setting
\begin{align}\label{Wi}
\mathscr{W}_{\rm i}:=\left\{Q\in\mathscr{W}(\overline{\Gamma}):\, Q\cap\Omega\neq\emptyset\right\},
\end{align}
and let
\begin{align}\label{We}
\mathscr{W}_{\rm e}:=\left\{Q\in\mathscr{W}(\overline\Omega):\, \ell(Q)\le A\delta\ \text{ and }
{{\mathop\mathrm{\,dist\,}}}(Q,\Gamma)<B{{\mathop\mathrm{\,dist\,}}}(Q,D)\right\}
\end{align}
be the truncated cone over $\Gamma$ in $\mathscr{W}(\overline\Omega)$
for some fixed parameters $A\in(0,\infty)$ and $B\in(2,\infty)$
with $A$ small enough and $B$ sufficiently large.
It is easy to verify that, if $D=\partial\Omega$, then $\mathscr{W}_{\rm e}\neq\emptyset$. If $D\neq\partial\Omega$, then,
for any $Q\subset\mathscr{W}_{\rm e}$, it holds that
${{\mathop\mathrm{dist\,}}}(Q,\partial\Omega)\le{{\mathop\mathrm{dist\,}}}(Q,\Gamma)\le B{{\mathop\mathrm{\,dist\,}}}(Q,\partial\Omega).$
\end{remark}

The following lemma shows that there exists a reflection between cubes in $\mathscr{W}_{\rm e}$ and $\mathscr{W}_{\rm i}$.

\begin{lemma}[\cite{Bec19-Ext}]\label{lem3.4.4}
There exist two positive constants $C,M$ such that the following assertions hold.
\begin{itemize}
\item[{\rm (i)}]For any $Q\in\mathscr{W}_{\rm e}$ as in \eqref{We}, there exists some $Q^*\in\mathscr{W}_{\rm i}$ satisfying
\begin{align}\label{2.3}
C^{-1}{\mathop\mathrm{diam\,}}Q\le{\mathop\mathrm{diam\,}}Q^*\le C{\mathop\mathrm{\,diam\,}}Q
\end{align}
and
\begin{align}\label{2.4}
{{\mathop\mathrm{dist\,}}}(Q,Q^*)\le C{\mathop\mathrm{\,diam\,}} Q.
\end{align}

\item[{\rm (ii)}]For any $Q^*\in\mathscr{W}_{\rm i}$ as in \eqref{Wi}, there are at most $M$ cubes $Q\in\mathscr{W}_{\rm e}$ satisfying \eqref{2.3} and \eqref{2.4}.

\item[{\rm (iii)}]For any $Q_1,Q_2\in\mathscr{W}_{\rm e}$ with $\overline{Q}_1\cap\overline{Q}_2\neq\emptyset$,
it holds that ${{\mathop\mathrm{dist\,}}}(Q_1^*,Q_2^*)\le C{\mathop\mathrm{\,diam\,}} Q_1.$
\end{itemize}
\end{lemma}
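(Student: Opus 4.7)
The plan is to follow the Jones-type reflection construction (see \cite{Jones81,JeKe82}), adapted to the $(\epsilon,\delta,D)$-setting as in \cite{Bec19-Ext}. The central geometric fact driving everything is the inequality $\operatorname{dist}(Q,\Gamma)<B\operatorname{dist}(Q,D)$ in the definition \eqref{We} of $\mathscr{W}_{\rm e}$: together with $\operatorname{dist}(Q,\partial\Omega)\le\operatorname{dist}(Q,\Gamma)\le B\operatorname{dist}(Q,\partial\Omega)$ (Remark \ref{remark2.3}) and the Whitney property $\operatorname{diam}Q\le\operatorname{dist}(Q,\partial\Omega)\le 4\operatorname{diam}Q$ of Lemma \ref{W}(ii), this forces
$$\operatorname{diam}Q\sim\operatorname{dist}(Q,\Gamma)\sim\operatorname{dist}(Q,\partial\Omega),$$
with implicit constants depending only on $n$ and $B$. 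This controls the geometry near such cubes and also ensures that the nearest boundary point lies on $\Gamma$, not on $D$, so the $(\epsilon,\delta,D)$-condition is available.

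For \textrm{(i)}, first fix the center $x_Q$ of $Q\in\mathscr{W}_{\rm e}$. By the comparability above I can pick $x\in\Omega$ with $|x-x_Q|\sim\operatorname{diam}Q$ and $\operatorname{dist}(x,\partial\Omega)\lesssim\operatorname{diam}Q$. I need a ``bulk'' companion point $y\in\Omega$ with $|y-x_Q|\lesssim\operatorname{diam}Q$ and $\operatorname{dist}(y,\overline\Gamma)\gtrsim\operatorname{diam}Q$. To produce $y$, I pick any auxiliary point $x'\in\Omega$ at distance of order $\operatorname{diam}Q/\epsilon$ from $x$, which is still less than $\delta$ because $\ell(Q)\le A\delta$ and $A$ is small. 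Applying Definition \ref{ass-1} to the pair $(x,x')$ yields a curve $\gamma\subset\Xi=\mathbb{R}^n\setminus\overline\Gamma$, and condition (ii) of that definition guarantees that any point $z\in\gamma$ at roughly the midpoint (so $|x-z|\sim|x'-z|\sim\operatorname{diam}Q$) satisfies $\operatorname{dist}(z,\Gamma)\ge\epsilon\operatorname{diam}Q/4$. I take $y:=z$; condition (iii) of Definition \ref{ass-1} and the quasihyperbolic bound $k_\Xi(y,\Omega)\le K$ guarantee that $y$ actually lies in $\Omega$ (follow $\gamma$ back into $\Omega$ by a bounded quasihyperbolic step). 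Letting $Q^*$ be the unique cube of $\mathscr{W}(\overline\Gamma)$ containing $y$, the Whitney property gives $\operatorname{diam}Q^*\sim\operatorname{dist}(y,\overline\Gamma)\sim\operatorname{diam}Q$, so \eqref{2.3} holds, and $\operatorname{dist}(Q,Q^*)\le|x_Q-y|\lesssim\operatorname{diam}Q$ yields \eqref{2.4}.

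For \textrm{(ii)}, a volume counting argument suffices: any $Q\in\mathscr{W}_{\rm e}$ assigned to a given $Q^*\in\mathscr{W}_{\rm i}$ satisfies $\operatorname{diam}Q\sim\operatorname{diam}Q^*$ by \eqref{2.3} and lies in the ball $B(x_{Q^*},C'\operatorname{diam}Q^*)$ by \eqref{2.4}, for a constant $C'$ depending only on $C$. Since such cubes are pairwise disjoint with comparable edge length, their number is bounded by a dimensional constant $M$. For \textrm{(iii)}, Lemma \ref{W}(iv) gives $\operatorname{diam}Q_1\sim\operatorname{diam}Q_2$ whenever $\overline{Q}_1\cap\overline{Q}_2\ne\emptyset$, whence \eqref{2.3} yields $\operatorname{diam}Q_i^*\sim\operatorname{diam}Q_1$, and the triangle inequality
$$\operatorname{dist}(Q_1^*,Q_2^*)\le \operatorname{dist}(Q_1^*,Q_1)+\operatorname{diam}Q_1+\operatorname{diam}Q_2+\operatorname{dist}(Q_2,Q_2^*)\lesssim\operatorname{diam}Q_1$$
delivers the claim after invoking \eqref{2.4}.

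The main obstacle is unquestionably the construction of the reflected point $y$ in \textrm{(i)}, because one has to choose the auxiliary point $x'$ and exploit all three clauses of Definition \ref{ass-1} simultaneously: the length bound (i) keeps everything within a ball of radius $\sim\operatorname{diam}Q$, the cigar condition (ii) produces the required distance from $\Gamma$, and the quasihyperbolic bound (iii) forces the reflected point to live in $\Omega$ itself rather than drifting across $D$ into the complement. Carefully calibrating the size of $x'$ relative to $\operatorname{diam}Q$ and $\delta$, and verifying that the nearest-boundary point of the eventually chosen $y$ really sits on $\overline\Gamma$ so that the Whitney cube in $\mathscr{W}(\overline\Gamma)$ containing $y$ has the correct diameter, is the delicate step.
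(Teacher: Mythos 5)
Note first that the paper does not give its own proof of this lemma: it is imported from \cite{Bec19-Ext} (the cited Lemma 5.4 there), and the remark immediately following it only sketches the mechanism — start from a point of $\Omega$ near the exterior cube $Q$, find a companion point of $\Omega$ at bounded quasihyperbolic distance in $\Xi=\mathbb{R}^n\setminus\overline\Gamma$, cover the quasihyperbolic geodesic between them by a touching chain in $\mathscr{W}_{\rm i}$ using the ball-covering property of Herron--Koskela \cite{HeKo96}, and declare $Q^*$ to be the last cube of that chain. Your proposal recovers the flavour of this argument but substitutes for the quasihyperbolic covering a direct application of Definition~\ref{ass-1} to a hand-picked pair $(x,x')$, and this is where a genuine gap appears.

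Concretely, you posit an auxiliary point $x'\in\Omega$ with $|x-x'|\sim\operatorname{diam} Q/\epsilon$, but such a point need not exist. The definition of $\mathscr{W}_{\rm e}$ only caps the cube size by $\ell(Q)\le A\delta$, and since $\delta\in(0,\infty]$ this is no cap at all when $\delta=\infty$; for a bounded $(\epsilon,\infty,D)$-domain the collection $\mathscr{W}_{\rm e}$ contains cubes with $\operatorname{diam} Q$ comparable to (or exceeding) $\epsilon\,\operatorname{diam}\Omega$, and then $\Omega$ contains no point at distance $\operatorname{diam} Q/\epsilon$ from $x$. The quasihyperbolic-geodesic route sketched in the remark does not require producing such an $x'$, which is precisely why the argument in \cite{Bec19-Ext} is organized around the covering of a geodesic rather than around the cigar curve of Definition~\ref{ass-1}. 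A second, more local, issue: after extracting the midpoint $z$ of $\gamma$ you write ``I take $y:=z$'' and then ``follow $\gamma$ back into $\Omega$ by a bounded quasihyperbolic step''; but $z\in\Xi$ need not lie in $\Omega$, so the cube of $\mathscr{W}(\overline\Gamma)$ containing $z$ is not automatically in $\mathscr{W}_{\rm i}$. You gesture at the correct fix — use Definition~\ref{ass-1}(iii) to replace $z$ by a genuine point of $\Omega$ at bounded $k_\Xi$-distance and hence with $\operatorname{dist}(\cdot,\overline\Gamma)\sim\operatorname{dist}(z,\overline\Gamma)$ — but this is exactly the place where the chain-of-touching-cubes machinery of \cite{HeKo96} is actually used in \cite{Bec19-Ext}, and it needs to be carried out rather than asserted. (Also, with your choice $|x-x'|\sim\operatorname{diam} Q/\epsilon$, a midpoint has $|x-z|\sim\operatorname{diam} Q/\epsilon$, not $\sim\operatorname{diam} Q$; harmless for the conclusion, since the constants are allowed to depend on $\epsilon$, but the displayed relation is off.) Parts (ii) (volume counting with comparable, pairwise-disjoint cubes) and (iii) (Lemma~\ref{W}(iv) plus \eqref{2.3}, \eqref{2.4} and the triangle inequality) are correct as stated.
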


\begin{remark}
The existence of the reflected cubes in Lemma \ref{lem3.4.4} is based on an important covering property
of quaihyperbolic geodesic curves from \cite{HeKo96},
which says that, for any $x$, $y\in\Omega$,
there exists an intersected chain of balls covering the geodesic curve connecting
$x$ and $y$. The length of this chain depends only on
the quaihyperbolic metric $k_{\mathbb{R}^n\setminus\Gamma}(x,y)$. In view of this, for any
$Q\in\mathscr{W}_{\rm e}$, let $x\in\Omega$ be a point near $Q$ and $y\in\Omega$
satisfy $k_{\mathbb{R}^n\setminus\Gamma}(x,y)<K$. The
reflected cube $Q^*$ can be chosen as the last cube $Q_m$ of the touching
cubes $\{Q_1,\ldots,Q_m\}\subset\mathscr{W}_{\rm i}$ that covers the above chain of balls.
We refer the proof of \cite[Lemma 5.4]{Bec19-Ext} for more details on this construction.
\end{remark}

The following lemma is precisely \cite[Lemmas 5.7 and 5.8]{Bec19-Ext}, which
provides two useful touching chains in $\mathscr{W}_{\rm i}$.

\begin{lemma}[\cite{Bec19-Ext}]\label{lem5.7}
There exists two constants $m\in\mathbb{N}$ and $C\in(0,\infty)$ such that the following statements hold.
\begin{itemize}
\item[{\rm (i)}]
For any $P,Q\in\mathscr{W}_{\rm e}$ satisfying $\overline{P}\cap\overline{Q}\neq\emptyset$, there exists a touching
chain $F_{P,Q}:=\{P^*=S_1,...,S_{m_1}=Q^*\}$ in
$\mathscr{W}_{\rm i}$ with $m_1\le m$. Moreover, let
\begin{align*}
F(Q):=\bigcup_{\genfrac{}{}{0pt}{}{P\in\mathscr{W}_{\rm e}}
{\overline{P}\cap\overline{Q}\neq\emptyset}}\bigcup_{S\in F_{P,Q}}2S.
\end{align*}
Then, for any $x\in\mathbb{R}^n$, it holds that
$\sum_{Q\in\mathscr{W}_{\rm e}}\mathbf{1}_{F(Q)}(x)\le C.$

\item[{\rm (ii)}]
Let $\mathscr{W}(\overline{\Omega})$  be as in Remark \ref{remark2.3} and
\begin{align}\label{We'}
 \mathscr{W}_{\rm e}':=&\left\{Q\in\mathscr{W}_{\rm e}:\ \text{for any }\  P\in \mathscr{W}(\overline{\Omega}) \
 \text{satisfying}\
 \overline{P}\cap\overline{Q}\neq\emptyset \right.\notag\\
 &\qquad\qquad\ \text{ and }\ P\neq Q,\ \text{it holds that}\ P\in\mathscr{W}_{\rm e}\Big\}
\end{align}
the inner part of $\mathscr{W}_{\rm e}$. For any $Q\in\mathscr{W}_{\rm e}
\setminus\mathscr{W}_{\rm e}'$ satisfying ${\mathop\mathrm{diam\,}}Q\le A\delta/4$,
there exists a touching chain $F_Q:=\{Q^*=S_1,...,S_{m_2}\}$ in
$\mathscr{W}_{\rm i}$ with $m_2\le m$
such that $S_{m_2}\cap Q$ is a dyadic cube and satisfies
$C^{-1}|Q|\le |S_{m_2}\cap Q|\le C|Q|.$
Moreover, for any $x\in\mathbb{R}^n$, it holds that
\begin{align*}
\sum_{\genfrac{}{}{0pt}{}{Q\in\mathscr{W}_{\rm e}\setminus\mathscr{W}_{\rm e}'}{\ell(Q)\le A\delta}}\sum_{i=1}^{m_2}\mathbf{1}_{2S_i}(x)\le C.
\end{align*}
\end{itemize}
\end{lemma}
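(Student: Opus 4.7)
The overall strategy is to exploit the reflection mechanism provided by Lemma \ref{lem3.4.4}: the operation $Q\mapsto Q^{\ast}$ sends adjacent cubes in the truncated cone $\mathscr{W}_{\rm e}$ to nearby cubes of comparable size in $\mathscr{W}_{\rm i}$, and this is essentially enough to construct short touching chains inside $\mathscr{W}_{\rm i}$ whose overlap can be controlled by a classical Whitney bounded overlap argument together with the finite multiplicity of the reflection. Throughout, I will freely use the properties of Whitney cubes listed in Lemma \ref{W}, in particular the fact that cubes with intersecting closures have comparable diameters and that each cube has at most $12^n$ neighbors.

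For part (i), first observe that, if $\overline{P}\cap\overline{Q}\ne\emptyset$ with $P,Q\in\mathscr{W}_{\rm e}\subset\mathscr{W}(\overline{\Omega})$, then $\mathop\mathrm{diam\,} P\sim\mathop\mathrm{diam\,} Q$, and Lemma \ref{lem3.4.4}(i), (iii) give $\mathop\mathrm{diam\,} P^{\ast}\sim\mathop\mathrm{diam\,} Q^{\ast}\sim\mathop\mathrm{diam\,} P$ with ${\rm dist}(P^{\ast},Q^{\ast})\lesssim\mathop\mathrm{diam\,} P^{\ast}$. Both $P^{\ast}$ and $Q^{\ast}$ are Whitney cubes in $\mathscr{W}_{\rm i}=\mathscr{W}(\overline{\Gamma})$, so walking along a straight segment from the center of $P^{\ast}$ to the center of $Q^{\ast}$ one meets only Whitney cubes whose diameters are comparable to $\mathop\mathrm{diam\,} P^{\ast}$; because the total length of this segment is bounded by a multiple of $\mathop\mathrm{diam\,} P^{\ast}$ and consecutive cubes on it touch, we obtain the required touching chain $F_{P,Q}$ with length bounded by a dimensional constant $m$. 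For the overlap bound, the chain plus the at most $12^n$ neighbors $P$ of $Q$ force $F(Q)\subset C_{0}Q^{\ast}$ for some absolute constant $C_0$, since every $2S$ appearing in the union is of diameter $\sim\mathop\mathrm{diam\,} Q^{\ast}$ and within distance $\lesssim\mathop\mathrm{diam\,} Q^{\ast}$ of $Q^{\ast}$. Thus
\begin{align*}
\sum_{Q\in\mathscr{W}_{\rm e}}\mathbf{1}_{F(Q)}(x)\le \sum_{Q\in\mathscr{W}_{\rm e}}\mathbf{1}_{C_{0}Q^{\ast}}(x)\le M\sum_{S\in\mathscr{W}_{\rm i}}\mathbf{1}_{C_{0}S}(x),
\end{align*}
where the last step uses Lemma \ref{lem3.4.4}(ii) to collapse the sum over $Q$ to a sum over $S=Q^{\ast}$, and the right-hand side is bounded by a dimensional constant by the standard bounded overlap of a fixed dilate of a Whitney family.

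For part (ii), the starting observation is that $Q\in\mathscr{W}_{\rm e}\setminus\mathscr{W}_{\rm e}'$ forces the existence of a neighbor $P\in\mathscr{W}(\overline{\Omega})$ of $Q$ with $P\notin\mathscr{W}_{\rm e}$. Since $\ell(P)\sim\ell(Q)\le A\delta/4$ rules out the size obstruction in the definition \eqref{We}, we must have ${\rm dist}(P,\Gamma)\ge B\,{\rm dist}(P,D)$, i.e., $P$ lies essentially closer to $D$ than to $\Gamma$. Combined with the fact that $D\subset\partial\Omega$ is where functions vanish and that $\mathscr{W}_{\rm i}$ is the Whitney decomposition of $\mathbb{R}^{n}\setminus\overline{\Gamma}$ (which contains $D$ only as part of the boundary of $\Omega$, so that cubes of $\mathscr{W}_{\rm i}$ freely cross into $Q$ from the $D$-side), one can build a touching chain in $\mathscr{W}_{\rm i}$ starting at $Q^{\ast}$ and walking, via cubes of comparable size, toward the interior of $Q$; terminating at a cube $S_{m_{2}}$ whose intersection with $Q$ is a dyadic subcube of mass $\sim|Q|$ is ensured by the dyadic nature of both decompositions and the geometric separation of $D$ from $\Gamma$ near $Q$. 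The overlap bound is then obtained exactly as in part (i): the chains $\{S_{1},\ldots,S_{m_{2}}\}$ lie in a bounded dilate of $Q^{\ast}$, so the sum reduces to the overlap of dilated Whitney cubes via Lemma \ref{lem3.4.4}(ii).

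The main obstacle is the construction of the chain in part (ii), where one must guarantee simultaneously that the chain stays in $\mathscr{W}_{\rm i}$ (that is, avoids $\overline{\Gamma}$), has bounded length, and terminates in a cube with a substantial dyadic overlap with $Q$ itself. This is delicate because $Q\in\mathscr{W}_{\rm e}$ lies in the exterior Whitney family $\mathscr{W}(\overline{\Omega})$, whereas the chain lives in $\mathscr{W}(\overline{\Gamma})$; only the hypothesis $Q\notin\mathscr{W}_{\rm e}'$ (forcing $Q$ to be near $D$ rather than near $\Gamma$) makes such a crossing possible, and the quantitative control on $|S_{m_2}\cap Q|$ requires a careful use of the dyadic structure together with the parameter $B$ chosen large in \eqref{We}. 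Once this step is accomplished, the rest of the lemma follows from the same covering-and-overlap template as in part (i).
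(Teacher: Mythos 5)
This lemma is cited verbatim from \cite[Lemmas 5.7 and 5.8]{Bec19-Ext}; the paper under review does not supply a proof, so your proposal is being compared against that source.

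Your bounded-overlap argument at the end of part (i) is correct and is indeed the standard ``template'': once you know that every cube $2S$ with $S\in F_{P,Q}$ has diameter $\sim\mathop\mathrm{diam\,}Q^{*}$ and lies within a bounded multiple of $\mathop\mathrm{diam\,}Q^{*}$ of $Q^{*}$, you get $F(Q)\subset C_{0}Q^{*}$; since each $S\in\mathscr{W}_{\rm i}$ serves as $Q^{*}$ for at most $M$ many $Q\in\mathscr{W}_{\rm e}$ (Lemma~\ref{lem3.4.4}(ii)), and a fixed dilate of a Whitney family has bounded overlap, the sum $\sum_{Q}\mathbf{1}_{F(Q)}$ is bounded. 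That part is fine.

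The gap is in the construction of the chain itself. You write that because $\mathop\mathrm{diam\,}P^{*}\sim\mathop\mathrm{diam\,}Q^{*}$ and ${\rm dist}(P^{*},Q^{*})\lesssim\mathop\mathrm{diam\,}P^{*}$, ``walking along a straight segment from the center of $P^{*}$ to the center of $Q^{*}$ one meets only Whitney cubes whose diameters are comparable to $\mathop\mathrm{diam\,}P^{*}$.'' This does not hold. From the Whitney lemma one only knows ${\rm dist}(P^{*},\overline{\Gamma})\ge\mathop\mathrm{diam\,}P^{*}$, and from Lemma~\ref{lem3.4.4}(iii) only ${\rm dist}(P^{*},Q^{*})\le C\,\mathop\mathrm{diam\,}P^{*}$ with a constant $C$ that is typically much larger than $1$. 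Hence the segment can pass far closer to $\overline{\Gamma}$ than $\mathop\mathrm{diam\,}P^{*}$, and in fact may exit $\mathbb{R}^{n}\setminus\overline{\Gamma}$ entirely, so the Whitney cubes it meets need not stay comparable in size, and there is no a priori bound $m$ on the chain length obtainable this way. Moreover, even for cubes it does meet in $\mathscr{W}(\overline{\Gamma})$, you would still need them to lie in $\mathscr{W}_{\rm i}$, i.e., to intersect $\Omega$; a straight segment gives no such guarantee. The actual construction in \cite{Bec19-Ext} does not use a straight segment. It crucially relies on the quasihyperbolic condition (iii) of Definition~\ref{ass-1} via the covering property of quasihyperbolic geodesics from \cite{HeKo96} (this is precisely what the remark after Lemma~\ref{lem3.4.4} in the paper alludes to): the reflected cubes $P^{*}$, $Q^{*}$ are each the terminal cube of a chain of $\mathscr{W}_{\rm i}$-cubes of bounded length that covers a bounded-length quasihyperbolic geodesic inside $\mathbb{R}^{n}\setminus\overline{\Gamma}$, and it is by gluing two such geodesic chains (for $P$ and for $Q$) that one obtains $F_{P,Q}$ with uniformly bounded length, staying in $\mathscr{W}_{\rm i}$ by construction since the geodesics are chosen to end in $\Omega$.

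Part (ii) has the same issue, compounded: you essentially assert, rather than prove, that a chain exists from $Q^{*}$ into $Q$ with a terminal cube of substantial overlap; the phrase ``one can build a touching chain \dots ensured by the dyadic nature of both decompositions and the geometric separation of $D$ from $\Gamma$'' is the conclusion, not an argument. What makes this step delicate is exactly that $Q\in\mathscr{W}(\overline{\Omega})$ and the chain lives in $\mathscr{W}(\overline{\Gamma})$; the proof in \cite{Bec19-Ext} needs both the quasihyperbolic geodesic mechanism to bound the chain length and the inequality $B^{-1}{\rm dist}(Q,\Gamma)<{\rm dist}(Q,D)\lesssim{\rm dist}(Q,\Gamma)$ from your Lemma~\ref{bound} (which holds precisely because $Q\in\mathscr{W}_{\rm e}\setminus\mathscr{W}_{\rm e}'$ and $\ell(Q)\le A\delta/4$) to push the chain across $D$ into $Q$. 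Without invoking the quasihyperbolic structure, the chain existence (and therefore both parts of the lemma) is not established.
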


\begin{remark}
\begin{itemize}
\item[{\rm(i)}]Since the constant $m$ in Lemma \ref{lem5.7} is fixed,  we deduce from Lemma \ref{W}(iv) that
any two cubes in the touching chains of Lemma \ref{lem5.7} are comparable.

\item[{\rm(ii)}]Let
\begin{align}\label{we''}
\mathscr{W}_{\rm e}'':=&\left\{Q\in\mathscr{W}_{\rm e}:\ \text{for any }\  P\in \mathscr{W}(\overline{\Omega}) \
 \text{satisfying} \ \overline{P}\cap\overline{Q}\neq\emptyset\right.\notag\\
 &\qquad\qquad
\ \text{ and }\ P\neq Q,\ \text{it holds that}\ P\in\mathscr{W}_{\rm e}'\Big\},
\end{align}
where  $\mathscr{W}_{\rm e}'$ is as in \eqref{We'}.
From the proof of \cite[Lemma 5.8]{Bec19-Ext}, it follows that,
for any $Q\in\mathscr{W}_{\rm e}'\setminus
\mathscr{W}_{\rm e}''$ satisfying ${\mathop\mathrm{diam\,}}Q\le A\delta/16$,
there exists also a touching chain $F_Q$ in $\mathscr{W}_{\rm i}$ satisfying all the assertions of
Lemma \ref{lem5.7}(ii) when the constant $B$ in \eqref{We} is large enough.
\end{itemize}
\end{remark}

\subsection{Several technical lemmas}\label{s2.2}

In this subsection, we collect several technical lemmas on $(\epsilon,\delta,D)$-domains which are used later.

For any cubes $P$ and $Q$, recall that their {\it long distance} $D(P,Q)$ is defined by setting
\begin{align}\label{l-dis}
D(P,Q):={\mathop\mathrm{diam\,}}P+{\mathop\mathrm{diam\,}}Q+{{\mathop\mathrm{dist\,}}}(P,Q).
\end{align}

\begin{lemma}\label{lem4.5-1}
There exists a constant $C\in (0,\infty)$ such that,
for any $P,Q,S\in\mathscr{W}_{\rm e}$ satisfying $\overline{P}\cap\overline{Q}\neq\emptyset$
and for any $x\in Q$ and
$y\in c_nS\setminus B(x,\ell(Q)/10)$ with
$c_n:=1+\frac{1}{16\sqrt{n}}$, it holds that
\begin{align}\label{eqn4.5.1-1}
D(P^*,S^*)\le C|x-y|.
\end{align}
\end{lemma}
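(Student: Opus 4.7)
The plan is to reduce the estimate on the reflected cubes $P^*$, $S^*$ to an analogous estimate on the original cubes $P$, $S$, then estimate $D(P,S)$ directly using the given positions of $x$ and $y$ together with the Whitney property.

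First, I would invoke Lemma \ref{lem3.4.4} to reduce to the exterior cubes. Specifically, since $P^*$, $S^*$ satisfy $\mathop{\rm diam}P^*\sim\mathop{\rm diam}P$, $\mathop{\rm diam}S^*\sim\mathop{\rm diam}S$, $\mathop{\rm dist}(P,P^*)\lesssim\mathop{\rm diam}P$, and $\mathop{\rm dist}(S,S^*)\lesssim\mathop{\rm diam}S$ by Lemma \ref{lem3.4.4}(i), the triangle inequality gives
\begin{align*}
D(P^*,S^*)&=\mathop{\rm diam}P^*+\mathop{\rm diam}S^*+\mathop{\rm dist}(P^*,S^*)\\
&\lesssim \mathop{\rm diam}P+\mathop{\rm diam}S+\mathop{\rm dist}(P,S)=D(P,S).
\end{align*}
It therefore suffices to show $D(P,S)\lesssim|x-y|$.

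For $\mathop{\rm diam}P$: since $\overline{P}\cap\overline{Q}\neq\emptyset$, Lemma \ref{W}(iv) yields $\mathop{\rm diam}P\sim\mathop{\rm diam}Q\sim\ell(Q)$. The hypothesis $y\notin B(x,\ell(Q)/10)$ then forces $\mathop{\rm diam}P\lesssim|x-y|$.

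The main obstacle, and the heart of the argument, is bounding $\mathop{\rm diam}S$ by $|x-y|$, since in principle $\mathop{\rm diam}S$ could be much larger than $\ell(Q)$. I would split into two cases. If $\mathop{\rm diam}S\le C_0\mathop{\rm diam}Q$ for a suitably chosen $C_0$, then the previous step handles it directly. Otherwise, I use the Whitney property (ii) of Lemma \ref{W} applied to the decomposition $\mathscr{W}(\overline{\Omega})$: both $Q$ and $S$ satisfy $\mathop{\rm diam}\cdot\le\mathop{\rm dist}(\cdot,\overline{\Omega})\le 4\mathop{\rm diam}\cdot$. Because $y\in c_nS$ with $c_n-1=\frac{1}{16\sqrt n}$ very small, one computes $\mathop{\rm dist}(y,S)\le\frac{1}{32\sqrt n}\ell(S)$, so $\mathop{\rm dist}(y,\overline\Omega)\ge \mathop{\rm dist}(S,\overline\Omega)-\mathop{\rm dist}(y,S)\gtrsim \ell(S)$, whereas $\mathop{\rm dist}(x,\overline\Omega)\le \mathop{\rm dist}(Q,\overline\Omega)+\mathop{\rm diam}Q\lesssim \ell(Q)$. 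Choosing $C_0$ large enough that $\ell(S)\gg\ell(Q)$ in this case, the reverse triangle inequality
\[
|x-y|\ge \mathop{\rm dist}(y,\overline\Omega)-\mathop{\rm dist}(x,\overline\Omega)\gtrsim \ell(S)\sim \mathop{\rm diam}S
\]
gives the desired bound.

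Finally, for $\mathop{\rm dist}(P,S)$ I would use $\mathop{\rm dist}(x,P)\le\mathop{\rm diam}P+\mathop{\rm diam}Q\lesssim\mathop{\rm diam}P$ (since $\overline P\cap\overline Q\neq\emptyset$) and $\mathop{\rm dist}(y,S)\le \frac{1}{32\sqrt n}\ell(S)\lesssim \mathop{\rm diam}S$, and then the triangle inequality
\[
\mathop{\rm dist}(P,S)\le\mathop{\rm dist}(x,P)+|x-y|+\mathop{\rm dist}(y,S)\lesssim \mathop{\rm diam}P+\mathop{\rm diam}S+|x-y|\lesssim|x-y|
\]
using the two bounds already established. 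Summing these three estimates completes the proof. The only delicate point is the case analysis for $\mathop{\rm diam}S$, where the small-expansion factor $c_n$ (strictly less than the ratio $4$ governing the Whitney property) is what makes the argument go through.
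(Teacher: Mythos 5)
Your proof is correct, and while it reaches the same target as the paper — reducing via Lemma \ref{lem3.4.4}(i) to bounding $\mathop{\mathrm{diam\,}}P + \mathop{\mathrm{diam\,}}S + \mathop{\mathrm{dist\,}}(P,S)$ by $|x-y|$, and handling the $\mathop{\mathrm{diam\,}}P\sim\ell(Q)\lesssim|x-y|$ piece identically — the key step differs. The paper splits cases on whether $\overline{S}\cap\overline{Q}\neq\emptyset$; in the non-touching case it invokes the Whitney separation fact that non-adjacent dyadic Whitney cubes satisfy $\mathop{\mathrm{dist\,}}(Q,S)\gtrsim\max\{\ell(Q),\ell(S)\}$, yielding $\ell(S)\lesssim|x-y|$ directly. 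You instead split on whether $\ell(S)\lesssim\ell(Q)$, and in the hard case derive $|x-y|\gtrsim\ell(S)$ from Lemma \ref{W}(ii) via the reverse triangle inequality for $\mathop{\mathrm{dist\,}}(\cdot,\overline{\Omega})$, using that $\mathop{\mathrm{dist\,}}(y,\overline{\Omega})\gtrsim\ell(S)$ (since $c_n-1=\frac{1}{16\sqrt n}$ is small enough that $c_nS$ stays well inside $\mathbb{R}^n\setminus\overline\Omega$) while $\mathop{\mathrm{dist\,}}(x,\overline{\Omega})\lesssim\ell(Q)\ll\ell(S)$. Your route avoids the non-touching-cube separation lemma and relies only on Lemma \ref{W}(ii), making it marginally more elementary and self-contained; the paper's is a bit shorter since the touching/non-touching dichotomy kills $\mathop{\mathrm{dist\,}}(Q,S)$ and $\mathop{\mathrm{diam\,}}S$ in one stroke. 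Both are sound.
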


\begin{proof}
By \eqref{l-dis} and Lemmas \ref{W} and \ref{lem3.4.4}(i), we have
\begin{align}\label{eqn-DPS}
D(P^*,S^*)
&\sim D(P,S)\sim {\mathop\mathrm{diam\,}}P+{\mathop\mathrm{diam\,}}S
+{{\mathop\mathrm{dist\,}}}(P,S)\notag\\ \notag
&\lesssim {\mathop\mathrm{diam\,}}P+{\mathop\mathrm{diam\,}}S
+{\mathop\mathrm{diam\,}}P+{\mathop\mathrm{diam\,}}Q
+{{\mathop\mathrm{dist\,}}}(Q,S)+{\mathop\mathrm{diam\,}}S\\
&\lesssim {\mathop\mathrm{diam\,}}Q+{\mathop\mathrm{diam\,}}S
+{{\mathop\mathrm{dist\,}}}(Q,S),
\end{align}
where ${\mathop\mathrm{diam\,}} Q\lesssim|x-y|$ because $y\in c_nS\setminus B(x,\ell(Q)/10)$.

To complete the proof of the present lemma, we still need to prove that ${\mathop\mathrm{diam\,}}S$
and ${{\mathop\mathrm{dist\,}}}(Q,S)$ can be bounded by a positive constant multiple of $|x-y|$.
Indeed, if $\overline{S}\cap\overline{Q}\neq\emptyset$, then ${{\mathop\mathrm{dist\,}}}(Q,S)=0$ and
${\mathop\mathrm{diam\,}}S\lesssim{\mathop\mathrm{diam\,}} Q\lesssim|x-y|,$
which, combined with \eqref{eqn-DPS}, shows \eqref{eqn4.5.1-1} in this case.

If $S$ and $Q$ are not touching, then
\begin{align*}
|x-y|\ge{{\mathop\mathrm{dist\,}}}(Q,c_nS)
\ge\max\left\{\frac{1}{4}\ell(Q),\frac{1}{4}\ell(S)\right\}-\frac{1}{32}\ell(S)
\ge\frac{7}{32}\ell(S),
\end{align*}
which implies ${{\mathop\mathrm{dist\,}}}(Q,S)\le {{\mathop\mathrm{dist\,}}}(Q,c_nS)+c_n{\mathop\mathrm{\,diam\,}} S\lesssim|x-y|.$
This also proves \eqref{eqn4.5.1-1} in this case and hence finishes the
proof of Lemma \ref{lem4.5-1}.
\end{proof}

\begin{lemma}\label{dist-2}
Let $x\in\Omega$ and $Q\in\mathscr{W}_{\rm i}$ be as in \eqref{Wi} satisfying $Q\setminus\Omega\neq\emptyset$.
Then, for any $y\in Q\setminus\Omega$, one has ${{\mathop\mathrm{dist\,}}}(x,D)\le2|x-y|.$
\end{lemma}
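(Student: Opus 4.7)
The plan is to trace the line segment from $x$ to $y$ and split into two cases depending on whether this segment first meets $\partial\Omega$ in $D$ or in $\Gamma$. The key geometric input is the Whitney separation ${\mathop\mathrm{dist\,}}(Q,\overline{\Gamma})\ge{\mathop\mathrm{diam\,}}Q$, coming from Lemma \ref{W}(ii) applied to the decomposition of $\mathbb{R}^n\setminus\overline{\Gamma}$. Because $\partial\Omega=\Gamma\cup D$, this separation gives at once the crucial containment
\begin{align*}
\overline{Q}\cap\partial\Omega\subset\partial\Omega\setminus\overline{\Gamma}\subset D,
\end{align*}
and also that every point of $\overline{Q}$ is at distance at least ${\mathop\mathrm{diam\,}}Q$ from $\overline{\Gamma}$.

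Next I would examine the segment $[x,y]$. Since $\Omega$ is open, $x\in\Omega$, and $y\notin\Omega$, the compact set $[x,y]\cap\partial\Omega$ is nonempty; let $z$ be one of its points closest to $x$. If $z\in D$, then $d_D(x)\le|x-z|\le|x-y|$, which already yields the desired bound with constant $1$. If instead $z\in\Gamma\subset\overline{\Gamma}$, I would first argue that $|x-y|\ge{\mathop\mathrm{diam\,}}Q$: since $y\in Q$, the Whitney separation gives ${\mathop\mathrm{dist\,}}(y,\overline{\Gamma})\ge{\mathop\mathrm{diam\,}}Q$, and since $z\in\overline{\Gamma}\cap[x,y]$, we obtain
\begin{align*}
|x-y|\ge|z-y|\ge{\mathop\mathrm{dist\,}}(y,\overline{\Gamma})\ge{\mathop\mathrm{diam\,}}Q.
\end{align*}

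To handle this case it remains to produce a point of $D$ close to $x$. I would pick any $x_0\in Q\cap\Omega$, which is nonempty by the definition \eqref{Wi} of $\mathscr{W}_{\rm i}$. Convexity of the cube $Q$ gives $[x_0,y]\subset Q$, and the same continuity argument as above yields a crossing $z_0\in[x_0,y]\cap\partial\Omega\subset\overline{Q}\cap\partial\Omega\subset D$. Combining with the lower bound on $|x-y|$ in this case,
\begin{align*}
d_D(x)\le|x-z_0|\le|x-y|+|y-z_0|\le|x-y|+{\mathop\mathrm{diam\,}}Q\le 2|x-y|.
\end{align*}

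The only nontrivial step is Case 2: here the factor $2$ is sharp and arises exactly because we must pay an extra ${\mathop\mathrm{diam\,}}Q$ to reach a point of $D\cap\overline{Q}$ from the exit point $y$. The subtle but decisive input is precisely the Whitney separation ${\mathop\mathrm{dist\,}}(y,\overline{\Gamma})\ge{\mathop\mathrm{diam\,}}Q$, which forces ${\mathop\mathrm{diam\,}}Q\le|x-y|$ in this case and thereby absorbs the extra summand into $|x-y|$. Modulo this observation the argument is elementary convexity plus continuity, with no further technical difficulty.
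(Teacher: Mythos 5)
Your proof is correct and follows essentially the same strategy as the paper's: a two-case argument driven by the Whitney separation $\mathop\mathrm{dist\,}(Q,\overline{\Gamma})\ge\mathop\mathrm{diam\,}Q$, where in one case the exit point from $\Omega$ on $[x,y]$ must already lie in $D$ (giving $d_D(x)\le|x-y|$), and in the other the extra $\mathop\mathrm{diam\,}Q$ paid to reach a point of $D\cap\overline{Q}$ is absorbed because $|x-y|\ge\mathop\mathrm{diam\,}Q$. The only cosmetic differences are that the paper parameterizes the case split by the size of $|x-y|$ relative to $\mathop\mathrm{diam\,}Q$ (rather than by whether the exit point lands in $D$ or $\Gamma$), and the paper takes $z'\in Q\cap D$ as given whereas you explicitly produce it via convexity of $Q$ and the intermediate value theorem; both of those observations are genuinely equivalent.
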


\begin{proof}
We consider two cases based on the relative position of $x$ and $y$.
If $|x-y|<{\mathop\mathrm{diam\,}} Q$, let $z\in\partial\Omega$ satisfy $|x-y|=|x-z|+|y-z|$.
If $z\in\Gamma$, then,
by the assumption $Q\in\mathscr{W}_{\rm i}$ and Lemma \ref{W}, we conclude that
\begin{align*}
{{\mathop\mathrm{dist\,}}}(Q,\Gamma)\le|y-z|\le|x-y|<{\mathop\mathrm{diam\,}}Q\le{{\mathop\mathrm{dist\,}}}(Q,\Gamma),
\end{align*}
which implies $z\in D$ and hence $|x-y|\ge|x-z|\ge{{\mathop\mathrm{dist\,}}}(x,D)$.

On the other hand, if $|x-y|\ge{\mathop\mathrm{diam\,}}Q$, we can take $z'\in Q\cap D$ and then obtain
\begin{align*}
{{\mathop\mathrm{dist\,}}}(x,D)\le|x-z'|\le|x-y|+|y-z'|\le|x-y|+{\mathop\mathrm{diam\,}}Q\le2|x-y|,
\end{align*}
which completes the proof of Lemma \ref{dist-2}.
\end{proof}

\begin{lemma}\label{bound}
For any $Q\in\mathscr{W}_{\rm e}\setminus\mathscr{W}_{\rm e}'$ with $\mathscr{W}_{\rm e}$ and
$\mathscr{W}_{\rm e}'$, respectively,  as in \eqref{We} and \eqref{We'},
it holds that either $\ell(Q)>A\delta/4$ or
\begin{align}\label{lem-1}
B^{-1}{{\mathop\mathrm{dist\,}}}(Q,\Gamma)<{{\mathop\mathrm{dist\,}}}(Q,D)
\le21{{\mathop\mathrm{\,dist\,}}}(Q,\Gamma),
\end{align}
where the constants $A$ and $B$ are as in \eqref{We}.
\end{lemma}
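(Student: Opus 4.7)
The plan is to split the claim into its two inequalities: the left one follows directly from membership $Q\in\mathscr{W}_{\rm e}$, while the right one is obtained by transferring a failure of the defining condition of $\mathscr{W}_{\rm e}$ from a neighboring Whitney cube back to $Q$. The key observation that drives the argument is that, under the size hypothesis $\ell(Q)\le A\delta/4$, the Whitney comparability forces any neighbor $P$ of $Q$ to meet the size constraint in \eqref{We}, so the only remaining way for $P$ to lie outside $\mathscr{W}_{\rm e}$ is through the distance condition.

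For the left inequality, I would just read off from \eqref{We} that $Q\in\mathscr{W}_{\rm e}$ gives ${{\mathop\mathrm{dist\,}}}(Q,\Gamma)<B\,{{\mathop\mathrm{dist\,}}}(Q,D)$, which is precisely $B^{-1}{{\mathop\mathrm{dist\,}}}(Q,\Gamma)<{{\mathop\mathrm{dist\,}}}(Q,D)$. For the right inequality, I would assume $\ell(Q)\le A\delta/4$ (otherwise the first alternative of the claim already holds). Since $Q\in\mathscr{W}_{\rm e}\setminus\mathscr{W}_{\rm e}'$, the definition \eqref{We'} supplies a cube $P\in\mathscr{W}(\overline{\Omega})$ with $P\ne Q$, $\overline{P}\cap\overline{Q}\ne\emptyset$, and $P\notin\mathscr{W}_{\rm e}$. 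Lemma \ref{W}(iv) then gives ${\mathop\mathrm{diam\,}}P\le 4\,{\mathop\mathrm{diam\,}}Q$, hence $\ell(P)\le 4\ell(Q)\le A\delta$; so $P$ does satisfy the size restriction in \eqref{We}. Therefore its exclusion from $\mathscr{W}_{\rm e}$ must be caused by the failure of the distance condition, namely ${{\mathop\mathrm{dist\,}}}(P,\Gamma)\ge B\,{{\mathop\mathrm{dist\,}}}(P,D)$.

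It remains to transfer this inequality from $P$ to $Q$ using the elementary fact that, for any set $E\subset\mathbb{R}^n$ and any two cubes $X,Y$ with $\overline{X}\cap\overline{Y}\ne\emptyset$, one has ${{\mathop\mathrm{dist\,}}}(X,E)\le{{\mathop\mathrm{dist\,}}}(Y,E)+{\mathop\mathrm{diam\,}}Y$ (take a common boundary point and a near-optimal point for $Y$). Applying this to the pairs $(Q,P;D)$ and $(P,Q;\Gamma)$, together with ${\mathop\mathrm{diam\,}}P\le 4\,{\mathop\mathrm{diam\,}}Q$, gives
\begin{align*}
{{\mathop\mathrm{dist\,}}}(Q,D)
&\le {{\mathop\mathrm{dist\,}}}(P,D)+{\mathop\mathrm{diam\,}}P\\
&\le B^{-1}{{\mathop\mathrm{dist\,}}}(P,\Gamma)+4\,{\mathop\mathrm{diam\,}}Q\\
&\le B^{-1}\bigl[{{\mathop\mathrm{dist\,}}}(Q,\Gamma)+{\mathop\mathrm{diam\,}}Q\bigr]+4\,{\mathop\mathrm{diam\,}}Q.
\end{align*}
Since $\Gamma\subset\partial\Omega$, Lemma \ref{W}(ii) yields ${\mathop\mathrm{diam\,}}Q\le{{\mathop\mathrm{dist\,}}}(Q,\partial\Omega)\le{{\mathop\mathrm{dist\,}}}(Q,\Gamma)$, which turns the right-hand side into $(4+2B^{-1}){{\mathop\mathrm{dist\,}}}(Q,\Gamma)$; this is well below $21\,{{\mathop\mathrm{dist\,}}}(Q,\Gamma)$ for $B>2$ (as in Remark \ref{remark2.3}).

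The argument is essentially routine bookkeeping once the dichotomy at a neighbor $P$ is identified; there is no serious obstacle. The only subtle point is to verify that the smallness hypothesis $\ell(Q)\le A\delta/4$ rules out the size obstruction for $P$, thereby pinning the failure onto the geometric comparison. The constant $21$ appearing in the statement is plainly non-optimal and only serves as convenient slack.
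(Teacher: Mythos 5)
Your proof is correct and follows essentially the same route as the paper: you locate a touching Whitney cube $P\notin\mathscr{W}_{\rm e}$ (the paper calls it $S$), rule out the size obstruction using $\ell(Q)\le A\delta/4$ and Lemma \ref{W}(iv), and conclude $P$ must violate the distance condition. The one difference is how the estimate is closed: the paper observes that ${{\mathop\mathrm{dist\,}}}(S,\Gamma)\ge B{{\mathop\mathrm{dist\,}}}(S,D)$ with $B>2$ forces ${{\mathop\mathrm{dist\,}}}(S,\partial\Omega)={{\mathop\mathrm{dist\,}}}(S,D)$ and then invokes the Whitney bound $\mathop\mathrm{dist\,}(S,\partial\Omega)\le 4\mathop\mathrm{diam\,}S$, whereas you keep the $B^{-1}$ and transfer $\Gamma$-distances with the elementary touching-cubes inequality, arriving at $(4+2B^{-1}){{\mathop\mathrm{dist\,}}}(Q,\Gamma)$, which is even sharper than the stated $21{{\mathop\mathrm{\,dist\,}}}(Q,\Gamma)$. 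Both closings are valid; yours avoids identifying ${{\mathop\mathrm{dist\,}}}(S,\partial\Omega)$ with ${{\mathop\mathrm{dist\,}}}(S,D)$ at the price of a second application of the transfer lemma.
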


\begin{proof}
We only need to show that every cube $Q\in\mathscr{W}_{\rm e}\setminus \mathscr{W}_{\rm e}'$ with
$\ell(Q)\le A\delta/4$ satisfies \eqref{lem-1}. From the definitions of $\mathscr{W}_{\rm e}$ and $\mathscr{W}_{\rm e}'$,
we infer that there exists some $S\in\mathscr{W}(\overline{\Omega})\setminus \mathscr{W}_{\rm e}$ such that $\overline{Q}\cap\overline{S}\neq\emptyset$. Thus, by \eqref{We},
we have either ${{\mathop\mathrm{dist\,}}}(S,\Gamma)\ge B{{\mathop\mathrm{\,dist\,}}}(S,D)$ or $\ell(S)>A\delta$.

If $\ell(S)>A\delta$, then, using (iv) of Lemma \ref{W}, it holds that
$\ell(Q)\ge\frac{1}{4}\ell(S)>\frac{1}{4}A\delta,$
which contradicts the assumption $\ell(Q)\le A\delta/4$.
This implies that ${{\mathop\mathrm{dist\,}}}(S,\Gamma)\ge B{{\mathop\mathrm{\,dist\,}}}(S,D)$ and hence
\begin{align}\label{lem2.1-2}
{{\mathop\mathrm{dist\,}}}(S,\partial \Omega) = {{\mathop\mathrm{dist\,}}}(S,D)
\end{align}
because $B>2$. By Lemma \ref{W} again and \eqref{lem2.1-2}, we further find that
\begin{align*}
{{\mathop\mathrm{dist\,}}}(Q,D)&\le{{\mathop\mathrm{dist\,}}}(S,D)
+{\mathop\mathrm{diam\,}}Q+{\mathop\mathrm{diam\,}}S\\
&={{\mathop\mathrm{dist\,}}}(S,\partial\Omega)+{\mathop\mathrm{diam\,}}Q+{\mathop\mathrm{diam\,}}S\\
&\le 4{\mathop\mathrm{\,diam\,}}S+{\mathop\mathrm{diam\,}}Q+{\mathop\mathrm{diam\,}}S\\
&\le 21{\mathop\mathrm{\,diam\,}} Q\le 21{{\mathop\mathrm{\,dist\,}}}(Q,\partial\Omega)\le 21{{\mathop\mathrm{\,dist\,}}}(Q,\Gamma),
\end{align*}
which, together with the definition of $\mathscr{W}_{\rm e}$, implies \eqref{lem-1} and hence completes the proof of Lemma \ref{bound}.
\end{proof}

\begin{lemma}\label{dist-1}
If $\mathscr{W}_{\rm e}'$ is as in \eqref{We'} and
\begin{align}\label{eqn-boze'}
\Omega_{\rm e}':=\bigcup_{Q\in\mathscr{W}_{\rm e}'}Q,
\end{align}
then there exists a constant $C_{\rm e}\in (0,\infty)$ such that, for any $x\in\Omega$ and
$y\in\Omega^{\complement}\setminus\Omega_{\rm e}'$, either $|x-y|\ge C_{\rm e}$ or
$|x-y|\ge C_{\rm e}{{\mathop\mathrm{\,dist\,}}}(x,D)$.
\end{lemma}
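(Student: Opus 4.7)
The plan is to exploit the cube-by-cube dichotomy intrinsic to the definitions of $\mathscr{W}_{\rm e}$ and $\mathscr{W}_{\rm e}'$, combined with Lemma \ref{bound}. Since $\Omega_{\rm e}'\subset\mathbb{R}^n\setminus\overline{\Omega}$ and the lemma will be applied inside integrals where $\partial\Omega$ is negligible, I first reduce to the generic situation $y\in(\mathbb{R}^n\setminus\overline{\Omega})\setminus\Omega_{\rm e}'$, so that $y$ lies in some closed Whitney cube $Q\in\mathscr{W}(\overline{\Omega})$ with $Q\notin\mathscr{W}_{\rm e}'$. Since $x\in\Omega$ and $y\notin\overline{\Omega}$, the segment joining them must cross $\partial\Omega$, and together with Lemma \ref{W}(ii) this gives the key Whitney estimate ${\mathop\mathrm{diam\,}}Q\le{{\mathop\mathrm{dist\,}}}(Q,\partial\Omega)\le|x-y|$, which is the geometric input used throughout.

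I would then split according to whether $Q\notin\mathscr{W}_{\rm e}$ or $Q\in\mathscr{W}_{\rm e}\setminus\mathscr{W}_{\rm e}'$. In the first case, \eqref{We} gives either $\ell(Q)>A\delta$---which forces $|x-y|\ge{\mathop\mathrm{diam\,}}Q>\sqrt{n}\,A\delta$ and yields the first alternative---or ${{\mathop\mathrm{dist\,}}}(Q,\Gamma)\ge B{{\mathop\mathrm{dist\,}}}(Q,D)$. In this latter sub-case, since $B>2$ one checks exactly as in the proof of Lemma \ref{bound} that ${{\mathop\mathrm{dist\,}}}(Q,\partial\Omega)={{\mathop\mathrm{dist\,}}}(Q,D)$, hence ${{\mathop\mathrm{dist\,}}}(Q,D)\le 4{\mathop\mathrm{\,diam\,}}Q\le 4|x-y|$, and the triangle inequality ${{\mathop\mathrm{dist\,}}}(x,D)\le|x-y|+{\mathop\mathrm{diam\,}}Q+{{\mathop\mathrm{dist\,}}}(Q,D)$ then gives ${{\mathop\mathrm{dist\,}}}(x,D)\le 6|x-y|$, i.e.\ the second alternative with constant $1/6$.

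For $Q\in\mathscr{W}_{\rm e}\setminus\mathscr{W}_{\rm e}'$, Lemma \ref{bound} directly yields either $\ell(Q)>A\delta/4$ (treated as in the previous paragraph) or ${{\mathop\mathrm{dist\,}}}(Q,D)\le 21{{\mathop\mathrm{\,dist\,}}}(Q,\Gamma)$. To convert this relative bound into an absolute one, I would write ${{\mathop\mathrm{dist\,}}}(Q,\partial\Omega)=\min\{{{\mathop\mathrm{dist\,}}}(Q,\Gamma),{{\mathop\mathrm{dist\,}}}(Q,D)\}$ and exploit $Q\in\mathscr{W}_{\rm e}$ (so that ${{\mathop\mathrm{dist\,}}}(Q,\Gamma)<B{{\mathop\mathrm{dist\,}}}(Q,D)$); a short case split combined with ${{\mathop\mathrm{dist\,}}}(Q,\partial\Omega)\le 4{\mathop\mathrm{\,diam\,}}Q$ forces ${{\mathop\mathrm{dist\,}}}(Q,\Gamma)\le 4B{\mathop\mathrm{\,diam\,}}Q$, and hence ${{\mathop\mathrm{dist\,}}}(Q,D)\le 84B|x-y|$, which gives ${{\mathop\mathrm{dist\,}}}(x,D)\lesssim|x-y|$ via the same triangle inequality as above. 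Taking $C_{\rm e}$ to be the minimum of the resulting constants (with the $\delta$-dependent bound $\sqrt{n}\,A\delta/4$ participating only when $\delta<\infty$) then yields the desired dichotomy.

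The main obstacle is precisely the sub-case $Q\in\mathscr{W}_{\rm e}\setminus\mathscr{W}_{\rm e}'$: Lemma \ref{bound} only compares ${{\mathop\mathrm{dist\,}}}(Q,D)$ with ${{\mathop\mathrm{dist\,}}}(Q,\Gamma)$ and provides no direct bound in terms of ${\mathop\mathrm{diam\,}}Q$, so one must carefully combine this relative estimate with the defining inequality of $\mathscr{W}_{\rm e}$ and the Whitney property in order to extract the absolute bound ${{\mathop\mathrm{dist\,}}}(Q,\Gamma)\lesssim{\mathop\mathrm{diam\,}}Q$ that drives the rest of the argument.
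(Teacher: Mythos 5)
Your proof is correct and follows essentially the same strategy as the paper's: locate $y$ in a Whitney cube $Q\in\mathscr{W}(\overline\Omega)\setminus\mathscr{W}_{\rm e}'$, get $\mathop\mathrm{diam\,}Q\lesssim|x-y|$, then split according to $Q\notin\mathscr{W}_{\rm e}$ (using the $\mathscr{W}_{\rm e}$-defining condition to get $\mathop\mathrm{dist\,}(Q,\partial\Omega)=\mathop\mathrm{dist\,}(Q,D)$) versus $Q\in\mathscr{W}_{\rm e}\setminus\mathscr{W}_{\rm e}'$ (using Lemma \ref{bound}). The only difference is one of detail: the paper spells out the first case and dismisses the second with ``similar, use Lemma \ref{bound}, details omitted,'' whereas you actually carry out the second case, correctly combining the two-sided comparison $\mathop\mathrm{dist\,}(Q,\Gamma)<B\,\mathop\mathrm{dist\,}(Q,D)\le 21B\,\mathop\mathrm{dist\,}(Q,\Gamma)$ with the Whitney bound $\mathop\mathrm{dist\,}(Q,\partial\Omega)\le 4\mathop\mathrm{diam\,}Q$ to obtain $\mathop\mathrm{dist\,}(Q,D)\lesssim\mathop\mathrm{diam\,}Q$.
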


\begin{proof}
Since $y\in\Omega^{\complement}\setminus\Omega_{\rm e}'$, it follows that there
exists $Q\in\mathscr{W}(\overline{\Omega})\setminus \mathscr{W}_{\rm e}'$ such that $y\in Q$.
We continue the proof by considering the following two cases based on the position of $Q$.

{\bf Case 1}: $Q\in\mathscr{W}(\overline{\Omega})\setminus \mathscr{W}_{\rm e}$
with $\mathscr{W}_{\rm e}$ as in \eqref{We}.
In this case, if
$\ell(Q)>A\delta$, then, by the assumption $x\in\Omega$, $Q\cap\Omega=\emptyset$, and
Lemma \ref{W}(iv), we have
$|x-y|\ge{{\mathop\mathrm{dist\,}}}(y,\partial\Omega)\ge{\mathop\mathrm{diam\,}}Q\ge\sqrt{n}A\delta.$
On the other hand, if $\ell(Q)\le A\delta$, then, using \eqref{We}, we conclude that
${{\mathop\mathrm{dist\,}}}(Q,\Gamma)\ge B{{\mathop\mathrm{\,dist\,}}}(Q,D)$.
Since $B>2$, it is easy to infer ${{\mathop\mathrm{dist\,}}}(Q,\partial\Omega)={{\mathop\mathrm{dist\,}}}(Q,D)$. Thus,
by Lemma \ref{W}(iv) again, we obtain
\begin{align*}
{{\mathop\mathrm{dist\,}}}(y,D)\le{{\mathop\mathrm{dist\,}}}(Q,D)+{\mathop\mathrm{diam\,}} Q\le 2{{\mathop\mathrm{\,dist\,}}}(Q,\partial\Omega)\le 2|x-y|,
\end{align*}
and hence ${{\mathop\mathrm{dist\,}}}(x,D)\le |x-y|+{{\mathop\mathrm{dist\,}}}(y,D)\le 3|x-y|$,
which proves the present lemma in Case 1 by letting $C_{\rm e}:=\min\{3,\sqrt{n}A\delta\}$.

{\bf Case 2}: $Q\in\mathscr{W}_{\rm e}\setminus\mathscr{W}_{\rm e}'$.
The argument in this case is similar to that used in the proof of Case 1 in view of Lemma \ref{bound}, the details being omitted.

Combining the above two cases then completes the proof of  Lemma \ref{dist-1}.
\end{proof}

Let
\begin{align}\label{eqn-boze}
\Omega_{\rm e}:=\bigcup_{Q\in\mathscr{W}_{\rm e}}c_nQ
\end{align}
with $c_n:=1+\frac{1}{16\sqrt{n}}$.

\begin{corollary}\label{dist-1xx}
There exists a constant $C\in(0,\infty)$ such that, for any $x\in Q\in\mathscr{W}_{\rm e}$,
$y\in\Omega^{\complement}\setminus\Omega_{\rm e}$, and $\xi\in P_j^*\cap\Omega$ for some $P_j\in\mathscr{W}_{\rm e}$
satisfying $\overline{P}_j\cap\overline Q\neq\emptyset$,
it holds that either $|x-y|\ge C$ or $|x-y|\ge C{{\mathop\mathrm{\,dist\,}}}(\xi,D)$.
\end{corollary}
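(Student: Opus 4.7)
The plan is to reduce the claim to Lemma \ref{dist-1} applied to the interior point $\xi\in\Omega$ (used in place of the variable $x$ there), after verifying that the hypotheses of Lemma \ref{dist-1} are still met and that the three quantities $|x-\xi|$, $|\xi-y|$, and $|x-y|$ are all comparable up to uniform constants.

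First I would observe the inclusion
$$\Omega_{\rm e}'=\bigcup_{P\in\mathscr{W}_{\rm e}'}P\subset\bigcup_{P\in\mathscr{W}_{\rm e}}c_nP=\Omega_{\rm e},$$
which follows from $\mathscr{W}_{\rm e}'\subset\mathscr{W}_{\rm e}$ and $Q\subset c_nQ$. Hence the assumption $y\in\Omega^\complement\setminus\Omega_{\rm e}$ gives $y\in\Omega^\complement\setminus\Omega_{\rm e}'$, so that Lemma \ref{dist-1} is applicable whenever we pair this $y$ with any point of $\Omega$.

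Next I would establish the geometric estimate $|x-\xi|\lesssim \ell(Q)\lesssim |x-y|$. For the second inequality, since $x\in Q$ and $y\notin c_nQ$ with $c_n=1+\tfrac{1}{16\sqrt n}$, the cube $c_nQ$ has uniform margin $\tfrac{1}{32\sqrt n}\ell(Q)$ around $Q$, which forces $|x-y|\ge \tfrac{1}{32\sqrt n}\ell(Q)$. For the first inequality, $\overline{P_j}\cap\overline{Q}\neq\emptyset$ combined with Lemma \ref{W}(iv) yields $\ell(P_j)\sim\ell(Q)$, while Lemma \ref{lem3.4.4}(i) supplies $\mathrm{diam}(P_j^*)\lesssim \mathrm{diam}(P_j)$ and $\mathrm{dist}(P_j,P_j^*)\lesssim \mathrm{diam}(P_j)$. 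Picking any $w\in\overline{P_j}\cap\overline{Q}$ and applying the triangle inequality via $w$, $P_j$, $P_j^*$ produces
$$|x-\xi|\le \mathrm{diam}(Q)+\mathrm{diam}(P_j)+\mathrm{dist}(P_j,P_j^*)+\mathrm{diam}(P_j^*)\lesssim \ell(Q).$$

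Finally, applying Lemma \ref{dist-1} to the pair $(\xi,y)\in\Omega\times(\Omega^\complement\setminus\Omega_{\rm e}')$ produces the dichotomy $|\xi-y|\ge C_{\rm e}$ or $|\xi-y|\ge C_{\rm e}\,{{\mathop\mathrm{\,dist\,}}}(\xi,D)$. Combining this with $|\xi-y|\le |\xi-x|+|x-y|\lesssim |x-y|$ from the previous step and adjusting the constant, I recover either $|x-y|\ge C$ or $|x-y|\ge C\,{{\mathop\mathrm{\,dist\,}}}(\xi,D)$, which is exactly the asserted dichotomy. The only mildly delicate point is the chain of comparabilities in the second step: one has to keep track that $P_j$ sits in the Whitney decomposition of $\mathbb{R}^n\setminus\overline{\Omega}$ while its reflection $P_j^*$ sits in the Whitney decomposition of $\mathbb{R}^n\setminus\overline{\Gamma}$, but this is precisely what Lemma \ref{lem3.4.4}(i) encodes.
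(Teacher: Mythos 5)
Your proposal is correct and follows essentially the same route as the paper: both estimate $|x-\xi|\lesssim \operatorname{diam}Q\lesssim|x-y|$ (hence $|\xi-y|\lesssim|x-y|$) via Lemma \ref{W} and Lemma \ref{lem3.4.4}(i), and then invoke Lemma \ref{dist-1} with $\xi$ playing the role of the interior point. You are slightly more explicit than the paper in recording the inclusion $\Omega_{\rm e}'\subset\Omega_{\rm e}$ (which the paper uses silently when applying Lemma \ref{dist-1} to a point $y\in\Omega^\complement\setminus\Omega_{\rm e}$), and you route the triangle inequality for $|x-\xi|$ through a point of $\overline{P}_j\cap\overline{Q}$ and then $P_j$ rather than directly through $\operatorname{dist}(Q,P_j^*)$, but these are cosmetic differences.
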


\begin{proof}
By Lemmas \ref{lem3.4.4}(i) and \ref{W}(ii) and the definition of $\Omega_{\rm e}$, we have
\begin{align*}
|x-\xi|\le{\mathop\mathrm{diam\,}}Q+{{\mathop\mathrm{dist\,}}}(Q,P_j^*)+{\mathop\mathrm{diam\,}} P_j^*\lesssim {\mathop\mathrm{diam\,}} Q \lesssim |x-y|,
\end{align*}
and hence
\begin{align}\label{eqn-xyd}
|y-\xi|\le |y-x|+|x-\xi|\lesssim |x-y|.
\end{align}
On the other hand, since $\xi\in\Omega$ and $y\in\Omega^{\complement}\setminus\Omega_{\rm e}$,
we deduce from Lemma \ref{dist-1} that there exists a constant $C\in (0,\infty)$ such that either $|y-\xi|\ge C$ or $|y-\xi|\ge C{{\mathop\mathrm{\,dist\,}}}(\xi,D)$,
which, combined with \eqref{eqn-xyd}, then completes the proof of Corollary \ref{dist-1xx}.
\end{proof}

\begin{lemma}\label{lem4.6-1}
Let $\mathscr{W}_{\rm e}$ and $\mathscr{W}_{\rm e}''$ be defined as, respectively, in \eqref{We} and \eqref{we''}.
Then there exists a constant $C\in (0,\infty)$ such that, for any $Q\in \mathscr{W}(\overline{\Omega})\setminus \mathscr{W}_{\rm e}''$
satisfying $\ell(Q)\le A\delta/16$, $P\in\mathscr{W}_{\rm e}$ satisfying $\overline{P}\cap\overline Q\neq\emptyset$, and
$x\in P^*\cap\Omega$, it holds that
\begin{align}\label{lem4.6-2}
{{\mathop\mathrm{dist\,}}}(x,D)\le C{\mathop\mathrm{\,diam\,}}Q.
\end{align}
\end{lemma}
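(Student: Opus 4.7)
The plan is to split the estimate $\mathrm{dist}(x,D) \le C\,\mathrm{diam}\,Q$ into two pieces via the triangle inequality: first show that $x$ lies within distance $\lesssim \mathrm{diam}\,Q$ of $Q$, and then show that $Q$ itself lies within distance $\lesssim \mathrm{diam}\,Q$ of $D$. Adding these together gives the conclusion.

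For the first piece, I would use the Whitney property \ref{W}(iv) together with the touching hypothesis $\overline{P}\cap\overline{Q}\neq\emptyset$ to obtain $\mathrm{diam}\,P\sim\mathrm{diam}\,Q$, and then Lemma \ref{lem3.4.4}(i) to deduce $\mathrm{diam}\,P^*\sim\mathrm{diam}\,P$ and $\mathrm{dist}(P,P^*)\lesssim\mathrm{diam}\,P$. So for any $x\in P^*$ and any point $q\in Q$,
\begin{align*}
|x-q|\le\mathrm{diam}\,P^*+\mathrm{dist}(P^*,P)+\mathrm{diam}\,P+\mathrm{diam}\,Q\lesssim\mathrm{diam}\,Q,
\end{align*}
which is the desired local bound.

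For the second piece, the hypothesis $Q\in\mathscr{W}(\overline{\Omega})\setminus\mathscr{W}_{\rm e}''$ together with the size bound $\ell(Q)\le A\delta/16$ forces $Q$ to be close to $D$, which I would verify by a case analysis on whether $Q\in\mathscr{W}_{\rm e}$ or not. If $Q\notin\mathscr{W}_{\rm e}$, then $\ell(Q)\le A\delta/16<A\delta$ combined with the definition \eqref{We} yields $\mathrm{dist}(Q,\Gamma)\ge B\,\mathrm{dist}(Q,D)$; since $B>2$ this forces $\mathrm{dist}(Q,\partial\Omega)=\mathrm{dist}(Q,D)\le 4\,\mathrm{diam}\,Q$ by Lemma \ref{W}(ii). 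If instead $Q\in\mathscr{W}_{\rm e}\setminus\mathscr{W}_{\rm e}''$, then by \eqref{we''} there exists a touching neighbor $R\in\mathscr{W}(\overline{\Omega})$ with $R\neq Q$ and $R\notin\mathscr{W}_{\rm e}'$; by Lemma \ref{W}(iv) we have $\ell(R)\le 4\ell(Q)\le A\delta/4$, and then either $R\notin\mathscr{W}_{\rm e}$ (apply the previous argument to $R$) or $R\in\mathscr{W}_{\rm e}\setminus\mathscr{W}_{\rm e}'$ (apply Lemma \ref{bound} to get $\mathrm{dist}(R,D)\le 21\,\mathrm{dist}(R,\Gamma)\le 84\,\mathrm{diam}\,R$). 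In either subcase $\mathrm{dist}(R,D)\lesssim\mathrm{diam}\,R\sim\mathrm{diam}\,Q$, and transferring from $R$ back to $Q$ via $\mathrm{dist}(Q,D)\le\mathrm{dist}(R,D)+\mathrm{diam}\,R+\mathrm{diam}\,Q$ completes the estimate.

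The combination $\mathrm{dist}(x,D)\le|x-q|+\mathrm{dist}(q,D)\lesssim\mathrm{diam}\,Q$ finishes the proof. The main obstacle I anticipate is just bookkeeping in the nested case split — in particular, making sure the size threshold $\ell(Q)\le A\delta/16$ is strong enough, after multiplying by the Whitney factor of $4$, to land in the regime where Lemma \ref{bound} or the definition of $\mathscr{W}_{\rm e}$ can be applied to the neighbor $R$, which is precisely why the factor $16$ (rather than $4$) appears in the hypothesis.
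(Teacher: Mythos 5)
Your argument is correct and follows essentially the same route as the paper's proof: reduce to bounding the distance from $Q$ to $D$ via Lemma \ref{bound} or the failure of the $\mathscr{W}_{\rm e}$ condition on a touching neighbor, then transfer the bound to $x\in P^*$ using the Whitney comparability in Lemma \ref{W} and the reflection estimates in Lemma \ref{lem3.4.4}. The only cosmetic difference is that you run the triangle inequality through a point of $Q$ and estimate ${\mathop\mathrm{dist\,}}(Q,D)$ once, rather than the paper's three-way case split ($P^*\cap D\neq\emptyset$; $P^*\subset\Omega$ with $Q\notin\mathscr{W}_{\rm e}$; $P^*\subset\Omega$ with $Q\in\mathscr{W}_{\rm e}\setminus\mathscr{W}_{\rm e}''$), which bounds ${\mathop\mathrm{dist\,}}(P^*,D)$ directly.
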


\begin{proof}
We consider the following three cases for $P$ and $Q$.

{\bf Case 1}: $P^*\cap D\neq\emptyset$. In this case, from Lemmas \ref{W} and \ref{lem3.4.4}, we immediately infer that
\begin{align*}
{{\mathop\mathrm{dist\,}}}(x,D)\le{\mathop\mathrm{diam\,}}P^*\lesssim {\mathop\mathrm{diam\,}}Q,
\end{align*}
which is desired.

{\bf Case 2}: $P^*\subset\Omega$ and $Q\in\mathscr{W}(\overline{\Omega})\setminus \mathscr{W}_{\rm e}$.
In this case, by the definition of $\mathscr{W}_{\rm e}'$ as in \eqref{We'}, it is easy to conclude
$P\in\mathscr{W}_{\rm e}\setminus \mathscr{W}_{\rm e}'$ and $\ell(P)\le 4\ell(Q)\le A\delta/4$.
Using Lemmas \ref{bound} and \ref{lem3.4.4}, we obtain
\begin{align*}
{{\mathop\mathrm{dist\,}}}(P^*,D)&\le{\mathop\mathrm{diam\,}}P^*
+{{\mathop\mathrm{dist\,}}}(P^*,P)+{\mathop\mathrm{diam\,}}P
+{{\mathop\mathrm{dist\,}}}(P,D)\\
&\lesssim {\mathop\mathrm{diam\,}}P+{{\mathop\mathrm{dist\,}}}(P,D)
\lesssim {{\mathop\mathrm{dist\,}}}(P,\partial\Omega)
\lesssim {\mathop\mathrm{diam\,}}P,
\end{align*}
and hence, from Lemmas \ref{W} and \ref{lem3.4.4}
and the assumption $\overline{P}\cap\overline Q\neq\emptyset$, it follows that
\begin{align*}
{{\mathop\mathrm{dist\,}}}(x,D)\le{{\mathop\mathrm{dist\,}}}(P^*,D)+{\mathop\mathrm{diam\,}}
P^*\lesssim {\mathop\mathrm{diam\,}}Q.
\end{align*}

{\bf Case 3}: $P^*\subset\Omega$ and $Q\in\mathscr{W}_{\rm e}\setminus \mathscr{W}_{\rm e}''$.
In this case, by the definition of $\mathscr{W}_{\rm e}''$ as in \eqref{we''}, there exists
$S\in\mathscr{W}_{\rm e}\setminus \mathscr{W}_{\rm e}'$ such that $\overline{S}\cap\overline Q\neq\emptyset$ and consequently
$\ell(S)\le 4\ell(Q)\le A\delta/4$ due to Lemma \ref{W}(iv). From Lemma \ref{bound} and
an argument similar to that used in the proof of Case 2, we deduce that
\begin{align*}
{{\mathop\mathrm{dist\,}}}(x,D)&\le{{\mathop\mathrm{dist\,}}}(P^*,D)+{\mathop\mathrm{diam\,}}P^*\\
&\le{\mathop\mathrm{diam\,}}P^*+{{\mathop\mathrm{dist\,}}}(P^*,P)+{\mathop\mathrm{diam\,}}
P+{\mathop\mathrm{diam\,}}Q\\
&\quad+{\mathop\mathrm{diam\,}}S+{{\mathop\mathrm{dist\,}}}(S,D)+{\mathop\mathrm{diam\,}}P^* \\
&\lesssim {\mathop\mathrm{diam\,}}Q+{{\mathop\mathrm{dist\,}}}(S,D)
\lesssim {\mathop\mathrm{diam\,}}Q.
\end{align*}
Altogether, we obtain \eqref{lem4.6-2}, which hence
completes the proof of Lemma \ref{lem4.6-1}.
\end{proof}

\section{Boundedness of linear extensions}\label{s3}

In this section, we prove Theorem \ref{thm1}, which is a key step to prove our
main results. To this end, we first decompose the norm
$\|\mathcal{E}_Df\|_{\mathcal{W}^{s,p}(\Omega)}$, for any $f\in\mathcal{W}^{s,p}(\Omega)$,
into two types of integrals in Subsection \ref{s3.1}. The estimations of these
two integrals are presented respectively in Subsections \ref{sei} and \ref{seii}.

\subsection{\texorpdfstring{The extension operator $\mathcal{E}_D$}{The extension operator}}\label{s3.1}

Let $\mathscr{W}(\overline{\Omega})=\{Q_j\}_{j=1}^\infty$ be the Whitney decomposition of
$\mathbb{R}^n\setminus \overline{\Omega}$ as in Remark \ref{remark2.3}.
Suppose $\{\psi_j\}_{j=1}^\infty\subset C_{\rm c}^\infty(\mathbb{R}^n)$ (the set of all infinitely
differentiable functions with compact support) is the {\it partition of unity} associated with
$\mathscr{W}({\overline{\Omega}})$, that is, for any $j\in\mathbb{N}$,
\begin{itemize}
\item[{\rm (i)}]
$\psi_j\equiv1$ on $Q_j$ and ${\mathop\mathrm{\,supp\,}}\psi_j\subset c_nQ_j$ with $c_n:=1+\frac{1}{16\sqrt{n}}$,

\item[{\rm (ii)}]
for any $x\in\mathbb{R}^n\setminus \overline{\Omega}$, it holds that
\begin{align}\label{eqn-puy}
\sum_{j=1}^\infty\psi_j(x)=1,
\end{align}

\item[{\rm (iii)}]
there exists a constant $C\in (0,\infty)$ such that, for any $j\in\mathbb{N}$, it holds that
\begin{align}\label{pro-psi-3}
\left\|\nabla\psi_j\right\|_{L^\infty(\mathbb{R}^n)}\le\frac{C}{\ell(Q_j)}.
\end{align}
\end{itemize}

Let $\mathscr{W}_{\rm e}\subset\mathscr{W}(\overline{\Omega})$ be defined as in \eqref{We}.
For any $Q\in\mathscr{W}_{\rm e}$, let $Q^*$ be the reflected cube of $Q$ as in Lemma \ref{lem3.4.4}.
Then, for any $f\in L_{\rm loc}^1(\Omega)$  (the set of all locally integrable functions on $\Omega$),
define the mapping
$Ef:=\{E_{Q_j^*}f\}_{Q_j\in\mathscr{W}_{\rm e}}$ by letting
$E_{Q_j^*}f\in L^1(Q_j^*)$ be the zero extension of $f$
from $\Omega$ to $Q_j^*$, that is, for any $x\in Q_j^*$,
\begin{align}\label{eqn-EQj}
E_{Q_j^*}f(x):=
\begin{cases}
f(x) &\mathrm{if}\ x\in Q_j^*\cap\Omega,\\
0 &\mathrm{if}\ x\in Q_j^*\cap\Omega^{\complement}.
\end{cases}
\end{align}

The following definition of the extension operator is a slight
modification of the one from \cite[p.18]{Bec19-Ext}.

\begin{definition}\label{def-extension1}
Let $\Omega$ be an $(\epsilon,\delta,D)$-domain with $D\subset \partial\Omega$ being
closed and let $f\in L_{\rm loc}^1(\Omega)$. Define the extension $\mathcal{E}_Df$ of $f$ by setting
\begin{align}\label{ext}
\mathcal{E}_Df(x):=
\begin{cases}
f(x) &\mathrm{if}\ x\in\Omega,\\
0 &\mathrm{if}\ x\in D,\\
\displaystyle \sum_{Q_j\in\mathscr{W}_{\rm e}}\left(E_{Q_j^*}f\right)_{Q_j^*}\psi_j(x) &\mathrm{if}\ x\in\overline{\Omega}^{\complement},
\end{cases}
\end{align}
where, for any $Q_j\in\mathscr{W}_{\rm e}$ and $g\in L^1(Q_j^*)$, $g_{Q_j^*}$ denotes its
integral mean over $Q_j^*$ defined by setting
\begin{align*}
g_{Q_j^*}:=\frac{1}{|Q_j^*|}\int_{Q_j^*} g(x)\, dx.
\end{align*}
\end{definition}

\begin{remark}
Although $\mathcal{E}_Df$ is not defined on $\Gamma$, it was proven in
\cite[Lemma 5.1]{Bec19-Ext} that $\Gamma=\partial\Omega\setminus D$
is a Lebesgue null set if $\Omega\subset\mathbb{R}^n$ is an $(\epsilon,\delta,D)$-domain
with $D\subset\partial\Omega$ being closed. Thus, $\mathcal{E}_Df$ is well-defined almost
everywhere in $\mathbb{R}^n$. Moreover, it was proven in \cite[Proposition 6.10]{Bec19-Ext}
that, if $f\in C_D^\infty(\Omega)$, then $\mathcal{E}_Df\in C^\infty(\mathbb{R}^n)$.
\end{remark}

The following proposition shows the boundedness of
$\mathcal{E}_D$ from $\mathcal W^{s,p}(\Omega)$ to $W^{s,p}(\mathbb{R}^n)$.

\begin{proposition}\label{thm1x}
Let $\epsilon\in(0,1]$, $\delta\in(0,\infty]$, and $\Omega\subset \mathbb{R}^n$ be an
$(\epsilon,\delta,D)$-domain with $D\subset \partial\Omega$ being closed.
Assume $s\in(0,1)$, $p\in[1,\infty)$, and $\mathcal{W}^{s,p}(\Omega)$ is the fractional
Sobolev space as in \eqref{eqn0iss}. If $\Omega$ supports the $D$-adapted Hardy
inequality \eqref{eqn-HI}, then the extension operator $\mathcal{E}_D$ defined in
Definition \ref{def-extension1} is bounded from $\mathcal{W}^{s,p}(\Omega)$ to $W^{s,p}(\mathbb{R}^n)$.
\end{proposition}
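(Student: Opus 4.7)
The plan is to verify $\|\mathcal{E}_Df\|_{L^p(\mathbb{R}^n)}$ plus the Gagliardo seminorm of $\mathcal{E}_Df$ on $\mathbb{R}^n$ is controlled by $\|f\|_{\mathcal{W}^{s,p}(\Omega)}$. Since $\mathcal{E}_Df=f$ on $\Omega$, $\mathcal{E}_Df=0$ on $D$, and $\Gamma=\partial\Omega\setminus D$ is Lebesgue null by \cite[Lemma 5.1]{Bec19-Ext}, only the region $\overline{\Omega}^{\complement}$ contributes non-trivially outside $\Omega$. For the $L^p$-part I would exploit the bounded overlap of $\{c_nQ_j\}_{Q_j\in\mathscr{W}_{\rm e}}$ together with Jensen's inequality to reduce $\int_{\overline{\Omega}^{\complement}}|\mathcal{E}_Df|^p$ to a sum of $L^p(Q_j^*\cap\Omega)$ pieces of $f$, whose total is controlled by $\|f\|_{L^p(\Omega)}^p$ by virtue of the bounded multiplicity of the reflection $Q_j\mapsto Q_j^*$ guaranteed by Lemma \ref{lem3.4.4}(ii).

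For the Gagliardo seminorm, split the double integral on $\mathbb{R}^n\times\mathbb{R}^n$ according to whether $x,y$ lie in $\Omega$ or in $\overline{\Omega}^{\complement}$ (the residual pieces on $D$ and $\Gamma$ are Lebesgue null). The $\Omega\times\Omega$ piece is bounded trivially by $\|f\|_{\dot{\mathcal{W}}^{s,p}(\Omega)}^p$, and by symmetry together with the partition-of-unity identity \eqref{eqn-puy} the remaining cases reduce to the mixed integral
\begin{align*}
J(f):=\iint_{\overline{\Omega}^{\complement}\times\Omega}\frac{|\mathcal{E}_Df(x)-f(y)|^p}{|x-y|^{n+sp}}\,dx\,dy.
\end{align*}
I would further split $J(f)$ in the $x$-variable according to whether $x\in\Omega_{\rm e}$ (see \eqref{eqn-boze}) or $x\in\overline{\Omega}^{\complement}\setminus\Omega_{\rm e}$, producing the two integrals ${\rm I}(f)$ and ${\rm II}(f)$ already advertised in the introduction.

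For ${\rm II}(f)$, where $x$ lies outside the reflection cone, Corollary \ref{dist-1xx} ensures that $|x-y|$ is bounded from below either by a fixed constant or by a multiple of ${{\mathop\mathrm{\,dist\,}}}(\xi,D)$ for a suitable chain point $\xi\in\Omega$. Unfolding $\mathcal{E}_Df(x)=\sum_j(E_{Q_j^*}f)_{Q_j^*}\psi_j(x)$ and applying Fubini, ${\rm II}(f)$ is bounded by $\|f\|_{L^p(\Omega)}^p+\int_\Omega|f(y)|^p/[{{\mathop\mathrm{\,dist\,}}}(y,D)]^{sp}\,dy$, which the $D$-adapted Hardy inequality \eqref{eqn-HI} turns into $\|f\|_{\mathcal{W}^{s,p}(\Omega)}^p$. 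For ${\rm I}(f)$, where $x\in c_nQ\cap\Omega_{\rm e}$ for some $Q\in\mathscr{W}_{\rm e}$, I would insert \eqref{eqn-puy} and, adapting the telescoping used in \cite[Proof of (2.4)]{ZY2015}, rewrite $\mathcal{E}_Df(x)-f(y)$ as a finite sum of differences of integral means of $f$ over consecutive cubes in the touching chain $F_{P,Q}$ of Lemma \ref{lem5.7}(i). A Poincar\'e-type bound on each difference, combined with the conversion $D(P^*,S^*)\lesssim|x-y|$ from Lemma \ref{lem4.5-1} and the bounded-overlap property in Lemma \ref{lem5.7}(i), collapses the resulting double sum into $\|f\|_{\dot{\mathcal{W}}^{s,p}(\Omega)}^p$.

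The delicate case, and the main obstacle, is the subcase of ${\rm I}(f)$ in which $Q\in\mathscr{W}_{\rm e}\setminus\mathscr{W}_{\rm e}''$ lies in the boundary layer near $D$. In that regime the reflected cube $Q^*$ need not be contained in $\Omega$, and the zero extension in \eqref{eqn-EQj} makes $(E_{Q_j^*}f)_{Q_j^*}$ differ from the genuine average of $f$ over $Q_j^*\cap\Omega$ by a defect of order $|Q_j^*\cap\Omega^{\complement}|/|Q_j^*|$ times that average, which cannot be reabsorbed into any Gagliardo seminorm. Here I would invoke the extended touching chain $F_Q$ of Lemma \ref{lem5.7}(ii) and apply Lemma \ref{lem4.6-1} to obtain ${{\mathop\mathrm{\,dist\,}}}(\xi,D)\lesssim{\mathop\mathrm{diam\,}}Q\sim|x-y|$ on the relevant intermediate cubes, so that this defect integral folds into $\int_\Omega|f|^p/[{{\mathop\mathrm{\,dist\,}}}(\cdot,D)]^{sp}$, and \eqref{eqn-HI} finishes the estimate. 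Managing this partially-vanishing-trace bookkeeping, which is absent from both \cite{ZY2015} (the case $D=\emptyset$) and \cite{Bec19-Ext} (the case $s=1$), is the technical heart of the argument.
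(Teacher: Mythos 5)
Your plan starts on the right foot: the $L^p$-estimate, the observation that $\Gamma$ is Lebesgue null, and the identification of the $\Omega\times\Omega$ piece with $\|f\|_{\dot{\mathcal W}^{s,p}(\Omega)}^p$ all match the paper. But there is a structural error that sinks the rest.

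You claim that "by symmetry together with the partition-of-unity identity \eqref{eqn-puy} the remaining cases reduce to the mixed integral" $J(f)$, and then that $\mathrm{I}(f)$ and $\mathrm{II}(f)$ arise from splitting $J(f)$ in the $x$-variable. Neither holds. The exterior--exterior contribution
\begin{align*}
\int_{\overline\Omega^{\complement}}\int_{\overline\Omega^{\complement}}
\frac{|\mathcal{E}_Df(x)-\mathcal{E}_Df(y)|^p}{|x-y|^{n+sp}}\,dx\,dy
\end{align*}
cannot be dominated by the mixed integral: the pair $(x,y)$ can have $|x-y|$ arbitrarily small while both $x$ and $y$ are at bounded distance from $\Omega$, so inserting any bridge point $z\in\Omega$ replaces a large factor $|x-y|^{-(n+sp)}$ with two much smaller factors $|x-z|^{-(n+sp)}$ and $|y-z|^{-(n+sp)}$, and the inequality goes the wrong way. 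The identity \eqref{eqn-puy} does not help, because $\mathcal{E}_Df$ involves a sum over $\mathscr{W}_{\rm e}$ only, not the full Whitney family, so there is no algebraic cancellation that turns an exterior--exterior difference into a mixed one. In the paper, $\mathrm{I}(f)$ (display \eqref{eqn-integralI}) \emph{is} the full mixed integral over $\Omega\times\Omega^{\complement}$, and $\mathrm{II}(f)$ (display \eqref{eqn-integralII}) \emph{is} the exterior--exterior integral; they are not a split of $J(f)$ by whether $x\in\Omega_{\rm e}$.

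Because of this misidentification, your proposal implicitly discards the entire exterior--exterior estimate, which is where most of the technical difficulty actually lives: the paper devotes Subsection \ref{seii}, with a near-diagonal/off-diagonal split $\mathrm{II}_1+\mathrm{II}_2$, a further six-way decomposition of $\mathrm{II}_2$ according to membership of $Q$ and $R$ in $\mathscr{W}_{\rm e}'$, $\mathscr{W}_{\rm e}\setminus\mathscr{W}_{\rm e}'$, or $\mathscr{W}(\overline\Omega)\setminus\mathscr{W}_{\rm e}$, and the algebraic identity of Lemma \ref{lem-Gxy}, precisely to control it. The tools you invoke (Lemma \ref{lem4.5-1}, Corollary \ref{dist-1xx}, Lemmas \ref{lem5.7} and \ref{lem4.6-1}) are indeed the ones used there, so your instincts about which geometric facts matter are sound; but without a correct decomposition there is no argument to attach them to. The "delicate sub-case of $\mathrm{I}(f)$ near $D$" you single out as the main obstacle is in fact handled inside the mixed estimate (Lemma \ref{lem-type-J2}) via Lemma \ref{dist-2}, and the defect term you worry about does not arise in the form you describe because one never compares $(E_{Q^*}f)_{Q^*}$ with the genuine average over $Q^*\cap\Omega$; one compares directly with $f(x)$ for $x\in\Omega$ and absorbs the zero-set contribution using Lemma \ref{dist-2}.

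In short: the decomposition is wrong, the exterior--exterior integral is not addressed, and the identification of $\mathrm{I}(f)$ and $\mathrm{II}(f)$ does not match the paper. This is a genuine gap, not a cosmetic difference.
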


\begin{proof}
We first show that $\mathcal{E}_D$ is bounded from $L^p(\Omega)$ to $L^p(\mathbb{R}^n)$.
To this end, for any $f\in L^p(\Omega)$, by Definition \ref{def-extension1},
Lemmas \ref{W} and \ref{lem3.4.4}, and \eqref{eqn-EQj}, we find that
\begin{align*}
\left\|\mathcal{E}_Df\right\|_{L^p(\mathbb{R}^n)}^p
&\lesssim \|f\|_{L^p(\Omega)}^p+\sum_{Q_j\in\mathscr{W}_{\rm e}} \int_{c_nQ_j}\left|\psi_j(x)\right|^p\left|\left(E_{Q_j^*}
f\right)_{Q_j^*}\right|^pdx\\ \notag
&\lesssim \|f\|_{L^p(\Omega)}^p+\sum_{Q_j\in\mathscr{W}_{\rm e}} \ell(Q_j)^n\fint_{Q_j^*}\left|E_{Q_j^*}f(x)\right|^pdx\\ \notag
&\lesssim \|f\|_{L^p(\Omega)}^p+\sum_{Q_j\in \mathscr{W}_{\rm e}} \int_{Q_j^*\cap\Omega}|f(\xi)|^p\,d\xi
\lesssim \|f\|_{L^p(\Omega)}^p,
\end{align*}
which implies the boundedness of $\mathcal{E}_D$ from $L^p(\Omega)$ to $L^p(\mathbb{R}^n)$. Thus,
using \eqref{eqn-normfs} and the Hardy inequality \eqref{eqn-HI}, we conclude that,
to finish the proof of the present proposition,  it suffices to verifies that,
for any $f\in\mathcal W^{s,p}(\Omega)$,
\begin{align}\label{main-2}
\left\|\mathcal{E}_Df\right\|_{\dot{W}^{s,p}
(\mathbb{R}^n)}:=&\,\left[\int_{\mathbb{R}^n}\int_{\mathbb{R}^n}
\frac{\left|\mathcal{E}_Df(x)-\mathcal{E}_Df(y)\right|^p}
{|x-y|^{n+sp}}\,dydx\right]^{\frac 1p}\notag\\
\lesssim&\,\|f\|_{L^p(\Omega)}+\|f\|_{\dot{\mathcal{W}}^{s,p}(\Omega)}
+\left\{\int_\Omega\frac{|f(x)|^p}
{[{{\mathop\mathrm{dist\,}}}(x,D)]^{sp}}\,dx\right\}^\frac{1}{p},
\end{align}
where $\|\cdot\|_{\dot W^{s,p}(\mathbb{R}^n)}$
and $\|\cdot\|_{\dot{\mathcal W}^{s,p}(\Omega)}$ are as,
respectively, in \eqref{eqn-normfs} and \eqref{eqn-normfsdm}.

To prove \eqref{main-2},  let $f\in\mathcal{W}^{s,p}(\Omega)$. From \eqref{ext}
and \eqref{main-2}, we first infer that
\begin{align*}
\left\|\mathcal{E}_Df\right\|_{\dot W^{s,p}(\mathbb{R}^n)}^p
&=\int_{\mathbb{R}^n}\int_{\mathbb{R}^n}
\frac{\left|\mathcal{E}_Df(x)-\mathcal{E}_Df(y)\right|^p}{|x-y|^{n+sp}}\,dydx\\ \notag
&\lesssim \int_\Omega\int_\Omega\frac{|f(x)-f(y)|^p}{|x-y|^{n+sp}}\,dydx\\ \notag
&\quad+2\int_\Omega\int_{\Omega^{\complement}}\left|f(x)-\mathop{\sum}
\limits_{Q_j\in\mathscr{W}_{\rm e}}\left(E_{Q_j^*}f\right)_{Q_j^*}
\psi_j(y)\right|^p\frac{1}{|x-y|^{n+sp}}\,dydx\\ \notag
&\quad+\int_{\Omega^{\complement}}\int_{\Omega^{\complement}}
{\left|\mathop{\sum}\limits_{P_j\in\mathscr{W}_{\rm e}}
\left(E_{P_j^*}f\right)_{P_j^*}\psi_j(x)-\mathop{\sum}
\limits_{S_k\in\mathscr{W}_{\rm e}}
\left(E_{S_k^*}f\right)_{S_k^*}\psi_k(y)\right|^p}\frac{1}{|x-y|^{n+sp}}\,dydx.
\end{align*}
Notice that, by \eqref{eqn-normfsdm}, we have
\begin{align*}
\int_\Omega\int_\Omega\frac{|f(x)-f(y)|^p}{|x-y|^{n+sp}}\,dydx
=\|f\|_{\dot{\mathcal W}^{s,p}(\Omega)}^p.
\end{align*}
Thus, to complete the proof of the present proposition,
it suffices to estimate the following two types of integrals:
\begin{align}\label{eqn-integralI}
{\rm{I}}(f):=\int_\Omega\int_{\Omega^{\complement}}
\left|f(x)-\mathop{\sum}\limits_{Q_j\in\mathscr{W}_{\rm e}}
\left(E_{Q_j^*}f\right)_{Q_j^*}\psi_j(y)\right|^p\frac{1}{|x-y|^{n+sp}}\,dydx
\end{align}
and
\begin{align}\label{eqn-integralII}
{\rm{II}}(f):=\int_{\Omega^{\complement}}
\int_{\Omega^{\complement}}\left|\mathop{\sum}\limits_{P_j\in \mathscr{W}_{\rm e}}\left(E_{P_j^*}f\right)_{P_j^*}\psi_j(x)-\mathop{\sum}
\limits_{S_k\in\mathscr{W}_{\rm e}}\left(E_{S_k^*}f\right)_{S_k^*}
\psi_k(y)\right|^p\frac{1}{|x-y|^{n+sp}}\,dydx.
\end{align}
The desired estimations of ${\rm{I}}(f)$ and ${\rm{II}}(f)$ are given
respectively in the following Subsections \ref{sei} and \ref{seii}.
This finishes the proof of Proposition \ref{thm1x}.
\end{proof}

The following proposition shows that
$\mathcal{E}_D$ keeps the vanishing trace condition at $D$ for functions
in $\mathcal{W}^{s,p}(\Omega)$.

\begin{proposition}\label{lem-extensionOP}
Let $s\in(0,1)$, $p\in[1,\infty)$, and $\Omega\subset\mathbb{R}^n$ be an $(\epsilon,\delta,D)$-domain with $\epsilon\in(0,1]$,
$\delta\in(0,\infty]$, and $D\subset \partial\Omega$ being closed.
Let $\mathcal{E}_D$ be the extension operator
defined in Definition \ref{def-extension1}. Then, for any $f\in\mathcal{W}^{s,p}(\Omega)$
satisfying ${{\mathop\mathrm{dist\,}}}({\rm supp}\, f,D)>0$, it holds that
\begin{align}\label{eqn-lem1}
\text{\rm dist}\left({\rm supp}\, \mathcal{E}_Df,D\right)>0.
\end{align}
\end{proposition}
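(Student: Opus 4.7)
The plan is to show that $\mathcal{E}_Df$ vanishes identically on an explicit open neighborhood of $D$; once that is established, the identity \eqref{eqn-lem1} follows by taking closures. Set $r:={\rm dist}({\rm supp}\,f,D)>0$, and I will exhibit $r'\in(0,r]$ such that $\mathcal{E}_Df\equiv 0$ almost everywhere on $N_{r'}(D):=\{x\in\mathbb{R}^n:\ {\rm dist}(x,D)<r'\}$. Since $\Gamma$ is a Lebesgue null set, I may disregard $\Gamma$ and split the analysis according to the three branches of \eqref{ext}. For $x\in\Omega\cap N_{r'}(D)$, the choice $r'\le r$ forces $x\notin{\rm supp}\,f$, hence $\mathcal{E}_Df(x)=f(x)=0$. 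For $x\in D$, the middle branch of \eqref{ext} yields $\mathcal{E}_Df(x)=0$ directly. The core case is $x\in\overline{\Omega}^{\complement}\cap N_{r'}(D)$, and by \eqref{ext} together with the support property ${\rm supp}\,\psi_j\subset c_nQ_j$ it suffices to show that every $Q_j\in\mathscr{W}_{\rm e}$ with $x\in c_nQ_j$ has vanishing coefficient $(E_{Q_j^*}f)_{Q_j^*}=0$.

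Arguing by contradiction, assume some such $Q_j$ yields a nonzero coefficient. Then $Q_j^*\cap\Omega\cap{\rm supp}\,f$ has positive measure, so there exists $z\in Q_j^*\cap{\rm supp}\,f$ with ${\rm dist}(z,D)\ge r$. Invoking Lemma \ref{lem3.4.4}(i), both ${\rm diam}\,Q_j^*$ and ${\rm dist}(Q_j,Q_j^*)$ are dominated by a dimensional multiple of ${\rm diam}\,Q_j$, and the inclusion $x\in c_nQ_j$ with $c_n=1+\frac{1}{16\sqrt n}$ gives the elementary bound ${\rm dist}(x,Q_j)\le\ell(Q_j)/32$. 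Summing these contributions, there exists a constant $C_1$, depending only on $n$ and the reflection constant of Lemma \ref{lem3.4.4}, such that $|x-z|\le C_1{\rm diam}\,Q_j$, producing the first lower estimate ${\rm dist}(x,D)\ge r-C_1{\rm diam}\,Q_j$. Independently, since $D\subset\partial\Omega$ and $Q_j$ is a Whitney cube of $\mathbb{R}^n\setminus\overline{\Omega}$, Lemma \ref{W}(ii) yields ${\rm dist}(Q_j,\partial\Omega)\ge{\rm diam}\,Q_j$, which combined with ${\rm dist}(x,Q_j)\le\ell(Q_j)/32$ gives the second lower estimate ${\rm dist}(x,D)\ge{\rm dist}(x,\partial\Omega)\ge\ell(Q_j)/2$.

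I then balance the two estimates. If ${\rm diam}\,Q_j\le r/(2C_1)$, the first bound gives ${\rm dist}(x,D)\ge r/2$; otherwise $\ell(Q_j)>r/(2C_1\sqrt n)$ and the second bound gives ${\rm dist}(x,D)\ge r/(4C_1\sqrt n)$. In either regime ${\rm dist}(x,D)\ge r':=\min\{r/2,\,r/(4C_1\sqrt n)\}>0$, contradicting $x\in N_{r'}(D)$. Hence every contributing cube has zero coefficient, so $\mathcal{E}_Df\equiv 0$ on $N_{r'}(D)$ and \eqref{eqn-lem1} follows with the same $r'$. My expected main obstacle is precisely this two-regime split: the small-cube estimate alone fails when ${\rm diam}\,Q_j$ is large, while the large-cube estimate alone fails when ${\rm diam}\,Q_j$ is small, so both ingredients (the Whitney property of $\mathbb{R}^n\setminus\overline{\Omega}$ and the reflection control from Lemma \ref{lem3.4.4}) are genuinely needed to close the argument uniformly in $j$.
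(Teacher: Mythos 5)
Your proof is correct and uses the same two ingredients as the paper, namely the reflection control of Lemma \ref{lem3.4.4}(i) together with the Whitney property of Lemma \ref{W}(ii), to show that every cube $Q_j\in\mathscr{W}_{\rm e}$ with $x\in c_nQ_j$ has $Q_j^*$ disjoint from ${\rm supp}\,f$ once $x$ lies in a sufficiently thin tubular neighborhood of $D$. The only real difference is organizational: where you argue by contradiction and resolve the dichotomy between small and large $Q_j$ via a two-regime split, the paper simply chains the bounds ${\rm diam}\,Q_j\le{\rm dist}(Q_j,\partial\Omega)\le{\rm dist}(Q_j,D)\lesssim{\rm dist}(x,D)$ and then $|x-z|\lesssim{\rm diam}\,Q_j\lesssim{\rm dist}(x,D)$, which gives ${\rm dist}(z,D)\le{\rm dist}(x,D)+|x-z|\lesssim{\rm dist}(x,D)$ for every $z\in Q_j^*$ in one stroke, so the case distinction you highlight as the main obstacle is in fact avoidable. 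Both routes are valid and rest on the same lemmas; yours merely replaces a direct chain of inequalities with a contrapositive balance argument.
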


\begin{proof}
Let $f\in\mathcal{W}^{s,p}(\Omega)$ satisfy ${{\mathop\mathrm{dist\,}}}({\rm supp}\, f,D)>0$ and
\begin{align}\label{ND}
N(D):=\left\{x\in\mathbb{R}^n:{{\mathop\mathrm{\,dist\,}}}(x,D)
<\frac{1}{C_1}{{\mathop\mathrm{\,dist\,}}}({\rm supp}\, f,D) \right\}
\end{align}
for some constant $C_1\in (0,\infty)$ to be determined later. To prove \eqref{eqn-lem1},
it suffices to show that, for any $x\in N(D)$, it holds that
\begin{align}\label{eqn-lem2}
\mathcal{E}_Df(x)=0.
\end{align}
Indeed, for any $x\in N(D)$, we may assume that there exists $Q\in\mathscr{W}_{\rm e}$
satisfying $x\in c_nQ$ in view of Definition \ref{def-extension1}. Thus,
we have, for any $x\in N(D)$,
\begin{align*}
\mathcal{E}_Df(x)=\sum_{P\in \mathscr{W}_{\rm e},\overline{P}\cap \overline{Q}\neq\emptyset}\left(E_{P^*}f\right)_{P^*}\psi_{P}(x).
\end{align*}
We now only need to show that $P^*\cap{\rm supp} f=\emptyset$ holds for any
$P\in\mathscr{W}_{\rm e}$ satisfying $\overline{P}\cap\overline{Q}\neq \emptyset$. To this end,
we deduce from Lemmas \ref{W} and \ref{lem3.4.4} that, for any $y\in P^*$,
\begin{align*}
{{\mathop\mathrm{dist\,}}}(y,D)&\le{{\mathop\mathrm{dist\,}}}(P^*,D)+{\mathop\mathrm{diam\,}}P^*\\
&\le{\mathop\mathrm{diam\,}}P^*+{{\mathop\mathrm{dist\,}}}(P^*,P)+{\mathop\mathrm{diam\,}}P
+{{\mathop\mathrm{dist\,}}}(P,D)+{\mathop\mathrm{diam\,}}P^*\\
&\lesssim {\mathop\mathrm{diam\,}}P+{\mathop\mathrm{diam\,}}Q+{{\mathop\mathrm{dist\,}}}(Q,D)
\lesssim {{\mathop\mathrm{dist\,}}}(Q,D)\lesssim {{\mathop\mathrm{dist\,}}}(x,D).
\end{align*}
Hence, $P^*\cap{\rm supp} f=\emptyset$ holds for any such $P$ by letting the constant
$C_1$ in \eqref{ND} be large enough. This implies \eqref{eqn-lem2} and hence
finishes the proof of Proposition \ref{lem-extensionOP}.
\end{proof}

\begin{remark}\label{rem3.5}
In the proof of Theorem \ref{thm-equiv}, Propositions \ref{thm1x} and \ref{lem-extensionOP} are enough.
Thus, we can use Theorem \ref{thm-equiv} in the proof of
Theorem \ref{thm1}, once Propositions \ref{thm1x} and \ref{lem-extensionOP}
are obtained.
\end{remark}

\begin{proof}[Proof of Theorem \ref{thm1}]
By Proposition \ref{thm1x}, we only need to show that, for any $f\in \mathcal W^{s,p}(\Omega)$,
$\mathcal{E}_D(f)$ can be approximated by functions in $C_D^\infty(\mathbb{R}^n)$ with respect to the norm
$\|\cdot\|_{W^{s,p}(\mathbb{R}^n)}$. Indeed, for any $f\in \mathcal W^{s,p}(\Omega)$, by
Remark \ref{rem3.5}, we find that Theorem \ref{thm-equiv} holds.
Thus, $\mathcal W^{s,p}(\Omega)=\mathring{W}_D^{s,p}(\Omega)$ and hence there exist a sequence
$\{f_k\}_k$ in $C_D^\infty(\Omega)$ such that $\lim_{k\to\infty}f_k=f$
according to $\|\cdot\|_{\mathcal{W}^{s,p}(\Omega)}$. Moreover, from Definition \ref{def-extension1}
and Proposition \ref{lem-extensionOP}, we infer that, for every $k$, $\mathcal{E}_Df_k\in C_D^\infty(\mathbb{R}^n)$.
This, together with Proposition \ref{thm1x} again, implies
\begin{align*}
\lim_{k\to\infty}\left\|\mathcal{E}_Df_k
-\mathcal{E}_Df\right\|_{W^{s,p}(\mathbb{R}^n)}
\lesssim \lim_{k\to\infty}\left\|f_k
-f\right\|_{\mathcal{W}^{s,p}(\Omega)}=0,
\end{align*}
which completes the proof of Theorem \ref{thm1}.
\end{proof}

\subsection{Estimation of I-type integral}\label{sei}

This subsection is devoted to the estimation of the integral ${\rm I}(f)$ in \eqref{eqn-integralI}.
Our main result is the following proposition.

\begin{proposition}\label{prop411}
Let $s\in(0,1)$, $p\in[1,\infty)$, $\epsilon\in(0,1]$,
$\delta\in(0,\infty]$, and
$\Omega\subset\mathbb{R}^n$ be an $(\epsilon,\delta,D)$-domain with
$D\subset \partial\Omega$ being closed. Then,
for any $f\in\mathcal{W}^{s,p}(\Omega)$, it holds that
\begin{align*}
{\rm I}(f)\lesssim \|f\|_{L^p(\Omega)}^p+\|f\|_{\dot{\mathcal{W}}^{s,p}
(\Omega)}^p+\int_\Omega\frac{|f(x)|^p}{[{{\mathop\mathrm{dist\,}}}(x,D)]^{sp}}\,dx,
\end{align*}
where the implicit positive constant is independent of $f$.
\end{proposition}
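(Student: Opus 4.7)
The plan is to split the $y$-integration region $\Omega^{\complement}$ in ${\rm I}(f)$ according to whether $y$ lies in the reflection hull $\Omega_{\rm e}=\bigcup_{Q\in\mathscr{W}_{\rm e}}c_nQ$ of \eqref{eqn-boze} or in its complement; write ${\rm I}(f)={\rm I}_1(f)+{\rm I}_2(f)$ accordingly. For ${\rm I}_2(f)$ (the ``far'' piece), every $\psi_j$ with $Q_j\in\mathscr{W}_{\rm e}$ vanishes at $y\in\Omega^{\complement}\setminus\Omega_{\rm e}$, so the integrand collapses to $|f(x)|^p/|x-y|^{n+sp}$. Lemma \ref{dist-1} gives, for each fixed $x\in\Omega$, a lower bound $|x-y|\gtrsim\min(1,{{\mathop\mathrm{dist\,}}}(x,D))$, so the standard tail bound $\int_{|z|\ge R}|z|^{-n-sp}\,dz\sim R^{-sp}$ yields
\begin{align*}
{\rm I}_2(f)\lesssim \int_\Omega|f(x)|^p\left[1+\frac{1}{[{{\mathop\mathrm{dist\,}}}(x,D)]^{sp}}\right]dx,
\end{align*}
which is already exactly two of the three terms on the right-hand side.

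For ${\rm I}_1(f)$ I would use the partition of unity $\sum_{Q_j\in\mathscr{W}(\overline\Omega)}\psi_j(y)=1$ and the bounded overlap of $\{c_nQ\}$ (Lemma \ref{W}(v)) to decompose, for $y\in c_nQ$ with $Q\in\mathscr{W}_{\rm e}$,
\begin{align*}
f(x)-\sum_{Q_j\in\mathscr{W}_{\rm e}}\left(E_{Q_j^*}f\right)_{Q_j^*}\psi_j(y)
=\sum_{\overline{Q}_j\cap\overline Q\ne\emptyset,\,Q_j\in\mathscr{W}_{\rm e}}
\left[f(x)-\left(E_{Q_j^*}f\right)_{Q_j^*}\right]\psi_j(y)
+f(x)\!\!\sum_{\overline{Q}_j\cap\overline Q\ne\emptyset,\,Q_j\notin\mathscr{W}_{\rm e}}\!\!\psi_j(y).
\end{align*}
The ``defect'' sum on the right is supported on those $Q\in\mathscr{W}_{\rm e}\setminus\mathscr{W}_{\rm e}'$; Lemma \ref{bound} then forces ${{\mathop\mathrm{dist\,}}}(Q,D)\sim{{\mathop\mathrm{dist\,}}}(Q,\Gamma)\sim\ell(Q)$, and via Lemma \ref{lem4.6-1} the corresponding reflected region $P^*\cap\Omega$ is at distance $\lesssim\ell(Q)$ from $D$. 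Integrating this defect contribution in $y$ over $c_nQ$ and collecting over $Q\in\mathscr{W}_{\rm e}\setminus\mathscr{W}_{\rm e}'$ (using the bounded-overlap assertion of Lemma \ref{lem5.7}(ii)) folds it into $\int_\Omega|f(x)|^p/[{{\mathop\mathrm{dist\,}}}(x,D)]^{sp}\,dx$.

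For the ``matched'' piece, I expand $(E_{Q_j^*}f)_{Q_j^*}=|Q_j^*|^{-1}\int_{Q_j^*\cap\Omega}f\,d\xi$ and bound
\begin{align*}
\left|f(x)-\left(E_{Q_j^*}f\right)_{Q_j^*}\right|^p
\lesssim \fint_{Q_j^*\cap\Omega}|f(x)-f(\xi)|^p\,d\xi
+\left(\frac{|Q_j^*\setminus\Omega|}{|Q_j^*|}\right)^{\!p-1}\!
\fint_{Q_j^*\setminus\Omega}|f(x)|^p\,d\eta.
\end{align*}
For the last term I apply Lemma \ref{dist-2} to replace $|x-y|$ by (a multiple of) ${{\mathop\mathrm{dist\,}}}(x,D)$ and throw it into the Hardy-weighted contribution. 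For the main average-Gagliardo term I first invoke Lemma \ref{lem4.5-1} to get $|x-\xi|\sim|x-y|$ whenever $\xi\in P^*$ with $\overline P\cap\overline Q\ne\emptyset$ and $y\in c_nQ$, which lets me swap the $y$-integral for a $\xi$-integral at the cost of $\ell(Q)^n/|Q_j^*|\sim 1$ (Lemma \ref{lem3.4.4}(i)); after summing over $Q$ with the bounded overlap of the reflection map (Lemma \ref{lem3.4.4}(ii)), this contribution is controlled by the intrinsic seminorm $\|f\|_{\dot{\mathcal W}^{s,p}(\Omega)}^p$.

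The main obstacle I foresee is the simultaneous bookkeeping required to close the argument: one must (i) keep the comparison $|x-y|\sim|x-\xi|$ valid uniformly along the Whitney chain connecting $Q$ and its reflection $Q^*$, (ii) ensure the partition-of-unity ``defect'' generated by cubes $Q_j\notin\mathscr{W}_{\rm e}$ is localized to regions where Lemmas \ref{bound} and \ref{lem4.6-1} force ${{\mathop\mathrm{dist\,}}}(\cdot,D)\lesssim\ell(Q)$ so that it is absorbed into the Hardy-weighted term, and (iii) do all of this with overlap constants that are independent of $Q$, for which Lemma \ref{lem5.7} is the crucial ingredient. It is precisely at this step that the $D$-adapted structure (as opposed to the plain $(\epsilon,\delta)$-case) is felt, since without the Hardy-weighted right-hand side the defect piece would be uncontrollable.
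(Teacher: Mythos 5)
Your proposal is sound in spirit, but it takes a somewhat different decomposition than the paper and introduces unnecessary complications along the way.

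The paper does not split the $y$-integral geometrically as $y\in\Omega_{\rm e}$ versus $y\in\Omega^{\complement}\setminus\Omega_{\rm e}$; instead it performs a single \emph{algebraic} split of the integrand,
\begin{align*}
f(x)-\sum_{Q_j\in\mathscr{W}_{\rm e}}\left(E_{Q_j^*}f\right)_{Q_j^*}\psi_j(y)
= f(x)\Bigl[1-\sum_{Q_j\in\mathscr{W}_{\rm e}}\psi_j(y)\Bigr]
+\sum_{Q_j\in\mathscr{W}_{\rm e}}\psi_j(y)\bigl[f(x)-(E_{Q_j^*}f)_{Q_j^*}\bigr],
\end{align*}
giving ${\rm I}_1+{\rm I}_2$. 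The key observation is that the first bracket vanishes for $y\in\Omega_{\rm e}'$, so the ${\rm I}_1$ integrand is supported on $\Omega^{\complement}\setminus\Omega_{\rm e}'$, which is \emph{exactly} the set for which Lemma \ref{dist-1} was engineered. That single lemma then handles your ``far'' region $\Omega^{\complement}\setminus\Omega_{\rm e}$ and your ``defect'' region $\Omega_{\rm e}\setminus\Omega_{\rm e}'$ in one shot. Your geometric split first isolates the far region and then creates a separate partition-of-unity defect term for the transition zone, which you propose to control via Lemmas \ref{bound} and \ref{lem4.6-1}; this is not wrong, but it is more work than needed — since $\Omega_{\rm e}\setminus\Omega_{\rm e}'\subset\Omega^{\complement}\setminus\Omega_{\rm e}'$, Lemma \ref{dist-1} already applies to the defect term and Lemma \ref{lem4.6-1} in particular is not relevant there (the defect piece involves only $f(x)$, not the reflected averages, so no $P^*$ enters).

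Two further points. First, your citation of Lemma \ref{lem4.5-1} for the comparison $|x-\xi|\sim|x-y|$ is misdirected: that lemma treats the configuration $x\in Q\in\mathscr{W}_{\rm e}$ (so $x\in\Omega^{\complement}$) and $y\in c_nS$, which is the ${\rm II}$-type situation. Here $x\in\Omega$, and the comparison is elementary and one-sided — for $y\in c_nQ_j$ and $\xi\in Q_j^*$ one has $|y-\xi|\lesssim{\mathop\mathrm{diam\,}}Q_j\lesssim{\mathop\mathrm{dist\,}}(Q_j,\partial\Omega)\lesssim|x-y|$, hence $|x-\xi|\lesssim|x-y|$, which is all that is needed to pass from the $y$-integral to the $\xi$-integral. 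Second, your Jensen-type expansion of $|f(x)-(E_{Q_j^*}f)_{Q_j^*}|^p$ has an odd second term with a $\fint_{Q_j^*\setminus\Omega}|f(x)|^p\,d\eta$ in it; the clean bound is $|f(x)-(E_{Q_j^*}f)_{Q_j^*}|^p\le\fint_{Q_j^*}|f(x)-E_{Q_j^*}f(\xi)|^p\,d\xi$, after which one splits the $\xi$-range into $Q_j^*\cap\Omega$ (absorbed by $\|f\|_{\dot{\mathcal W}^{s,p}(\Omega)}^p$) and $Q_j^*\setminus\Omega$ (absorbed by the Hardy term via Lemma \ref{dist-2}), exactly as you suggest for the tail. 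With these simplifications your plan reproduces the paper's Lemmas \ref{lem-type-J1} and \ref{lem-type-J2}.
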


To this end, for any $f\in L_{\rm loc}^1(\Omega)$, $x\in\Omega$, and $y\in\Omega^{\complement}$, let
\begin{align*}
F(x,y):=f(x)-\sum_{Q_j\in \mathscr{W}_{\rm e}}\left(E_{Q_j^*}f\right)_{Q_j^*}\psi_j(y).
\end{align*}
It is easy to find
\begin{align}\label{eqn-Fxy}
F(x,y)&=f(x)\left[1-\sum_{Q_j\in\mathscr{W}_{\rm e}}\psi_j(y)\right]
+\sum_{Q_j\in\mathscr{W}_{\rm e}}\psi_j(y)
\left[f(x)-\left(E_{Q_j^*}f\right)_{Q_j^*}\right]\notag\\
&=:F_1(x,y)+F_2(x,y).
\end{align}
Now, let
\begin{align}\label{eqnI1x}
{\rm I}_1(f):=\int_\Omega \int_{\Omega^{\complement}}\frac{\left|F_1(x,y)\right|^p}{|x-y|^{n+sp}}\,dydx
\end{align}
and
\begin{align}\label{eqnI2}
{\rm I}_2(f):=\int_\Omega\int_{\Omega^{\complement}}\frac{\left|F_2(x,y)\right|^p}{|x-y|^{n+sp}}\,dydx.
\end{align}

\begin{lemma}\label{lem-type-J1}
If $\Omega\subset\mathbb{R}^n$ is an $(\epsilon,\delta,D)$-domain with
$D\subset \partial\Omega$ being closed, then, for any $s\in(0,1)$, $p\in(1,\infty)$,
and $f\in L_{\rm loc}^1(\Omega)$, it holds that
\begin{align*}
{\rm I}_1(f)\lesssim \|f\|_{L^p(\Omega)}^p+\int_\Omega\dfrac{|f(x)|^p}{[{{\mathop\mathrm{dist\,}}}(x,D)]^{sp}}\,dx,
\end{align*}
where ${\rm I}_1(f)$ is as in \eqref{eqnI1x} and the implicit positive constant is independent of $f$.
\end{lemma}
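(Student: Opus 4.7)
The plan is to combine two observations. First, the function $y \mapsto 1 - \sum_{Q_j \in \mathscr{W}_{\rm e}} \psi_j(y)$ is not only bounded by $1$ but actually vanishes on the ``inner part'' $\Omega_{\rm e}' = \bigcup_{Q\in\mathscr{W}_{\rm e}'}Q$ of \eqref{eqn-boze'}. Indeed, by \eqref{eqn-puy} one has $1 - \sum_{Q_j \in \mathscr{W}_{\rm e}} \psi_j(y) = \sum_{Q_j \notin \mathscr{W}_{\rm e}} \psi_j(y) \in [0,1]$ for a.e.\ $y \in \Omega^{\complement}$, and for any $y \in Q$ with $Q \in \mathscr{W}_{\rm e}'$ the small collar constant $c_n = 1 + 1/(16\sqrt{n})$, together with the comparability of touching Whitney cubes in Lemma~\ref{W}(iv)--(v), forces every $Q_j$ with $y \in c_n Q_j$ to either coincide with $Q$ or satisfy $\overline{Q_j}\cap\overline{Q}\neq\emptyset$; the defining property \eqref{We'} of $\mathscr{W}_{\rm e}'$ then places all such $Q_j$ inside $\mathscr{W}_{\rm e}$, so the sum collapses to zero. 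Consequently
\begin{align*}
|F_1(x,y)|^p \le |f(x)|^p\,\mathbf{1}_{\Omega^{\complement}\setminus\Omega_{\rm e}'}(y).
\end{align*}

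Second, I would invoke Lemma~\ref{dist-1}: for every $x \in \Omega$ and $y \in \Omega^{\complement}\setminus\Omega_{\rm e}'$, either $|x-y| \ge C_{\rm e}$ or $|x-y| \ge C_{\rm e}\,\mathrm{dist}(x,D)$, so in particular
\begin{align*}
|x-y| \ge C_{\rm e}\,\min\{1,\mathrm{dist}(x,D)\}.
\end{align*}
A standard polar-coordinate computation then yields
\begin{align*}
\int_{\Omega^{\complement}\setminus\Omega_{\rm e}'} \frac{dy}{|x-y|^{n+sp}} \lesssim [\min\{1,\mathrm{dist}(x,D)\}]^{-sp} \lesssim 1 + [\mathrm{dist}(x,D)]^{-sp}.
\end{align*}

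Combining these two ingredients via Fubini's theorem in \eqref{eqnI1x} gives
\begin{align*}
{\rm I}_1(f) \lesssim \int_\Omega |f(x)|^p \left(1 + [\mathrm{dist}(x,D)]^{-sp}\right) dx = \|f\|_{L^p(\Omega)}^p + \int_\Omega \frac{|f(x)|^p}{[\mathrm{dist}(x,D)]^{sp}}\,dx,
\end{align*}
which is the desired estimate. (The restriction $p > 1$ plays no role in the argument and is inherited from the ambient hypotheses.)

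The main obstacle is the first step, namely the support localization of $\sum_{Q_j \notin \mathscr{W}_{\rm e}} \psi_j$ inside $\Omega^{\complement}\setminus\Omega_{\rm e}'$; this rests on the precise choice of the collar parameter $c_n$ and the combinatorial definition of $\mathscr{W}_{\rm e}'$ in \eqref{We'}, both of which are set up earlier precisely so that non-$\mathscr{W}_{\rm e}$ Whitney cubes cannot reach into $\Omega_{\rm e}'$. Once this localization is in hand, Lemma~\ref{dist-1} together with a one-line polar estimate produces the weight $[\mathrm{dist}(x,D)]^{-sp}$ and finishes the proof.
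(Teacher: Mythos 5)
Your proposal is correct and follows essentially the same route as the paper: both reduce the $y$-integral to $\Omega^{\complement}\setminus\Omega_{\rm e}'$ by noting $\sum_{Q_j\in\mathscr{W}_{\rm e}}\psi_j\equiv 1$ on $\Omega_{\rm e}'$, then apply Lemma~\ref{dist-1} and a polar-coordinate estimate. The only difference is cosmetic: you spell out the combinatorial reason (via $c_n$ and \eqref{We'}) why the partition of unity collapses on $\Omega_{\rm e}'$, where the paper simply cites the definition of $\psi_j$; your remark that $p>1$ is not actually used is also accurate.
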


\begin{proof}
Let $\Omega_{\rm e}'$ be as in \eqref{eqn-boze'}. By the definition of $\psi_j$, we have,
for any $y\in\Omega_{\rm e}'$,
\begin{align*}
\sum_{Q_j\in \mathscr{W}_{\rm e}} \psi_j(y)=1.
\end{align*}
Hence, from \eqref{eqnI1x} and Lemma \ref{dist-1}, it follows
that there exists a positive constant $C_{\rm e}$ such that
\begin{align}\label{b1-10}
\notag{\rm I_1}(f)
&\sim \int_\Omega\int_{\Omega^{\complement}\setminus\Omega_{\rm e}'} \frac{|f(x)|^p\left|1-\sum_{Q_j\in\mathscr{W}_{\rm e}}\psi_j(y)\right|^p}
{|x-y|^{n+sp}}\,dydx\\ \notag
&\lesssim \int_\Omega\int_{\left\{y\in\mathbb{R}^n:\,|x-y|\ge C_{\rm e}\right\}} \frac{|f(x)|^p}{|x-y|^{n+sp}}\,dydx
+\int_\Omega\int_{\left\{y\in\mathbb{R}^n:\,|x-y|\ge C_{\rm e}{{\mathop\mathrm{\,dist\,}}}(x,D)\right\}} \frac{|f(x)|^p}{|x-y|^{n+sp}}\,dydx\\ \notag
&\lesssim \int_\Omega |f(x)|^p \int_{C_{\rm e}}^\infty r^{n-1} \frac{1}{r^{n+sp}}\,drdx
+\int_\Omega |f(x)|^p \int_{C_{\rm e}{{\mathop\mathrm{\,dist\,}}}(x,D)}^\infty r^{n-1}\frac{1}{r^{n+sp}}\,drdx\\
&\lesssim \|f\|_{L^p(\Omega)}^p+\int_\Omega \frac{|f(x)|^p}{[{{\mathop\mathrm{dist\,}}}(x,D)]^{sp}}\,dx,
\end{align}
which completes the proof of Lemma \ref{lem-type-J1}.
\end{proof}

\begin{lemma}\label{lem-type-J2}
If $\Omega\subset\mathbb{R}^n$ is an
$(\epsilon,\delta,D)$-domain with
$D\subset \partial\Omega$ being closed,
then, for any $s\in(0,1)$, $p\in(1,\infty)$,
and $f\in L_{\rm loc}^1(\Omega)$, it holds that
\begin{align*}
{\rm I}_2(f)\lesssim \|f\|_{\dot{\mathcal{W}}^{s,p}
(\Omega)}^p+\int_\Omega\dfrac{|f(x)|^p}
{[{{\mathop\mathrm{dist\,}}}(x,D)]^{sp}}\,dx,
\end{align*}
where ${\rm I}_2(f)$ is as in \eqref{eqnI2} and the implicit
positive constant is independent of $f$.
\end{lemma}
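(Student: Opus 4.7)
The plan is to split $I_2(f)$ according to whether the reflected cube $Q_j^*$ lies entirely in $\Omega$ or reaches into $\Omega^{\complement}$. First I would use the convexity of $|\cdot|^p$ (extending the weights $\{\psi_j\}$ by the nonnegative weight $1-\sum_j\psi_j$ attached to the value $0$) to bound pointwise
\[
|F_2(x,y)|^p\le\sum_{Q_j\in\mathscr{W}_{\rm e}}\psi_j(y)\left|f(x)-\left(E_{Q_j^*}f\right)_{Q_j^*}\right|^p,
\]
and then, writing $(E_{Q_j^*}f)_{Q_j^*}=\fint_{Q_j^*}E_{Q_j^*}f(z)\,dz$ together with the definition of $E_{Q_j^*}$ in \eqref{eqn-EQj}, apply Jensen's inequality once more to obtain
\[
\left|f(x)-\left(E_{Q_j^*}f\right)_{Q_j^*}\right|^p\lesssim\fint_{Q_j^*}\mathbf{1}_{\Omega}(z)|f(x)-f(z)|^p\,dz+\frac{|Q_j^*\cap\Omega^{\complement}|}{|Q_j^*|}|f(x)|^p.
\]
This splits $I_2(f)\lesssim I_2^{(g)}(f)+I_2^{(H)}(f)$, where $I_2^{(g)}$ is a Gagliardo-type contribution and $I_2^{(H)}$ is supported only on those $Q_j$ whose reflection crosses into $\Omega^{\complement}$.

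For $I_2^{(g)}(f)$, I would exchange the order of integration so that $\int_{c_nQ_j}|x-y|^{-n-sp}\,dy$ is evaluated first. By Lemma \ref{lem3.4.4}, for any $z\in Q_j^*$ and $y\in c_nQ_j$, $|y-z|\lesssim\ell(Q_j)$; while for $x\in\Omega$ and $y\in c_nQ_j$, the Whitney property forces $|x-y|\gtrsim\ell(Q_j)$. A short case analysis on whether $|x-z|$ is large or comparable to $\ell(Q_j)$ then yields the key estimate $\mathrm{dist}(x,c_nQ_j)+\ell(Q_j)\gtrsim|x-z|$, so
\[
\int_{c_nQ_j}\frac{dy}{|x-y|^{n+sp}}\lesssim\frac{|Q_j^*|}{|x-z|^{n+sp}}.
\]
Combining this with the bounded multiplicity of $\{Q_j^*\}_{Q_j\in\mathscr{W}_{\rm e}}$ from Lemma \ref{lem3.4.4}(ii) collapses the sum into $\int_\Omega\int_\Omega|f(x)-f(z)|^p/|x-z|^{n+sp}\,dz\,dx$, giving $I_2^{(g)}(f)\lesssim\|f\|_{\dot{\mathcal{W}}^{s,p}(\Omega)}^p$.

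For $I_2^{(H)}(f)$, only cubes with $Q_j^*\cap\Omega^{\complement}\ne\emptyset$ contribute. Since $Q_j^*\in\mathscr{W}_{\rm i}$ is a Whitney cube of $\mathbb{R}^n\setminus\overline{\Gamma}$, any point of $Q_j^*\setminus\Omega$ must lie in $D$, so Lemma \ref{dist-2} provides $\mathrm{dist}(x,D)\le 2|x-y_0|$ for some $y_0\in Q_j^*\cap\Omega^{\complement}$; combining this with $|y-y_0|\lesssim\ell(Q_j)$ and $|x-y|\gtrsim\ell(Q_j)$ produces $\mathrm{dist}(x,D)\lesssim|x-y|$ uniformly for $y\in c_nQ_j$. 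Exploiting the bounded overlap of $\{c_nQ_j\}$, this reduces the estimate to
\[
I_2^{(H)}(f)\lesssim\int_\Omega|f(x)|^p\int_V\frac{dy}{|x-y|^{n+sp}}\,dx,\qquad V:=\bigcup_{Q_j\colon Q_j^*\cap\Omega^{\complement}\ne\emptyset}c_nQ_j,
\]
and since $V\subset\{y\in\mathbb{R}^n\colon|x-y|\gtrsim\mathrm{dist}(x,D)\}$ for every $x\in\Omega$, a direct polar-coordinate integration in the region $|x-y|\ge c\,\mathrm{dist}(x,D)$ bounds the inner integral by $C\,\mathrm{dist}(x,D)^{-sp}$, which is precisely the Hardy term.

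The main technical obstacle I anticipate is in $I_2^{(g)}$, in the ``near'' regime where $x\in\Omega$ lies close to $\partial\Omega\cap\overline{c_nQ_j}$: proving $\mathrm{dist}(x,c_nQ_j)+\ell(Q_j)\gtrsim|x-z|$ there requires the subtle observation that $c_nQ_j$ stays uniformly separated from $\Omega$ by a distance $\gtrsim\ell(Q_j)$ (because the Whitney condition forces $\mathrm{dist}(Q_j,\partial\Omega)\gtrsim\ell(Q_j)$ while $c_n$ is only a mild dilation), so that $|x-y|$ dominates $|x-z|$ even when both $x$ and $z$ are close to the boundary. Once this is established, the reduction to $\|f\|_{\dot{\mathcal W}^{s,p}(\Omega)}^p$ via Fubini and Lemma \ref{lem3.4.4}(ii), together with the neighborhood-of-$D$ argument for $I_2^{(H)}$, are the routine parts.
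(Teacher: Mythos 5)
Your proposal takes essentially the same route as the paper: split $|f(x)-(E_{Q_j^*}f)_{Q_j^*}|^p$ by Jensen into a Gagliardo term (over $Q_j^*\cap\Omega$) and a Hardy term (over $Q_j^*\setminus\Omega$), pass the $y$-integral over $c_nQ_j$ to the $\xi$-integral over $Q_j^*$ via the comparison $|x-\xi|\lesssim|x-y|$ coming from Lemma \ref{W}(ii) and Lemma \ref{lem3.4.4}, then invoke the bounded multiplicity of $\{Q_j^*\}$ for the Gagliardo piece and Lemma \ref{dist-2} plus a polar-coordinate integral for the Hardy piece. One small correction: your assertion that every point of $Q_j^*\setminus\Omega$ lies in $D$ is not true (since $Q_j^*\in\mathscr{W}_{\rm i}$ only guarantees $Q_j^*\cap\overline{\Gamma}=\emptyset$, the set $Q_j^*\setminus\Omega$ can also meet $\overline{\Omega}^{\complement}$), but this is immaterial because Lemma \ref{dist-2} needs only $Q_j^*\setminus\Omega\neq\emptyset$, not $Q_j^*\setminus\Omega\subset D$, so the estimate $\mathrm{dist}(x,D)\le 2|x-y_0|$ holds as you use it.
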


\begin{proof}
Using the fact $\text{supp\,}\psi_j\subset c_nQ_j$, we obtain
\begin{align*}
{\rm I_2}(f)&=\int_\Omega \sum_{Q_j\in\mathscr{W}_{\rm e}} \left|f(x)-\left(E_{Q_j^*}f\right)_{Q_j^*}\right|^p
\int_{\Omega^{\complement}} \frac{\left|\psi_j(y)\right|^p}
{|x-y|^{n+sp}}\,dydx\\
&\lesssim \int_\Omega \left[\sum_{Q_j\in\mathscr{W}_{\rm e}} \fint_{Q_j^*}\left|f(x)-E_{Q_j^*}f(\xi)\right|^p\,d\xi
\int_{c_nQ_j}\frac{\left|\psi_j(y)\right|^p}{|x-y|^{n+sp}}\, dy\right]dx.
\end{align*}
By Lemmas \ref{W}(ii) and  \ref{lem3.4.4},
we conclude that, for any $x\in \Omega$, $y\in c_n Q_j$, and
$\xi \in Q_j^*$, it holds that
\begin{align*}
|y-\xi|&\le c_n{\mathop\mathrm{diam\,}}Q_j+{{\mathop\mathrm{dist\,}}}(c_nQ_j,Q_j^*)+{\mathop\mathrm{diam\,}}Q_j^*\\
&\lesssim {\mathop\mathrm{diam\,}}Q_j\lesssim {{\mathop\mathrm{dist\,}}}(Q_j,\partial\Omega)\lesssim |x-y|,
\end{align*}
and hence $|x-\xi|\le |x-y|+|y-\xi|\lesssim |x-y|$.
This, combined with Lemma \ref{dist-2}, implies
\begin{align*}
{\rm I_2}(f)&\lesssim \int_\Omega\sum_{Q_j\in\mathscr{W}_{\rm e}}\int_{Q_j^*} \frac{\left|f(x)-E_{Q_j^*}f(\xi)\right|^p}
{|x-\xi|^{n+sp}}\,d\xi dx\\ \notag
&\lesssim \int_\Omega\sum_{Q_j\in\mathscr{W}_{\rm e}}\int_{Q_j^*\cap\Omega}
\frac{|f(x)-f(\xi)|^p}{|x-\xi|^{n+sp}}\,d\xi dx
+\int_\Omega\sum_{Q_j\in\mathscr{W}_{\rm e}}\int_{Q_j^*\setminus\Omega} \frac{|f(x)|^p}{|x - \xi|^{n+sp}}\,d\xi dx\\ \notag
&\lesssim\|f\|_{\dot{\mathcal{W}}^{s,p}
(\Omega)}^p+\int_\Omega |f(x)|^p \int_{\frac{1}{2} {{\mathop\mathrm{\,dist\,}}}(x,D)}^\infty
r^{n-1} \frac{1}{r^{n+sp}}\,drdx\\ \notag
&\lesssim \|f\|_{\dot{\mathcal{W}}^{s,p}
(\Omega)}^p+\int_\Omega \frac{|f(x)|^p}{[{{\mathop\mathrm{dist\,}}}(x,D)]^{sp}}\,dx,
\end{align*}
which completes the proof of Lemma \ref{lem-type-J2}.
\end{proof}

\begin{proof}[Proof of Proposition \ref{prop411}]
The present proposition follows immediately from \eqref{eqn-Fxy} and Lemmas
\ref{lem-type-J1} and \ref{lem-type-J2}.
\end{proof}

\subsection{Estimation of  II-type integral}\label{seii}

This subsection is devoted to the estimation of the integral ${\rm II}(f)$ in \eqref{eqn-integralII}.
Our main result is the following proposition.

\begin{proposition}
If $\Omega\subset\mathbb{R}^n$ is an $(\epsilon,\delta,D)$-domain with
$D\subset \partial\Omega$ being closed, then, for any $s\in(0,1)$, $p\in(1,\infty)$,
and $f\in \mathcal{W}^{s,p}(\Omega)$, it holds that
\begin{align*}
{\rm II}(f)\lesssim \|f\|_{L^p(\Omega)}^p+\|f\|_{\dot{\mathcal{W}}^{s,p}
(\Omega)}^p
+\int_\Omega\frac{|f(x)|^p}{[{{\mathop\mathrm{dist\,}}}(x,D)]^{sp}}\,dx,
\end{align*}
where ${\rm II}(f)$ is as in \eqref{eqn-integralII} and the implicit positive constant is independent of $f$.
\end{proposition}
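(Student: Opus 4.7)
The plan is to mirror the treatment of $\mathrm{I}(f)$ from Subsection \ref{sei}, now adapted to the fact that both points lie in $\Omega^{\complement}$, and to exploit the partition-of-unity cancellation $\sum_j\psi_j\equiv 1$ combined with the reflection chains from Lemma \ref{lem5.7}. Write $G(x):=\sum_{Q_j\in\mathscr{W}_{\rm e}}(E_{Q_j^*}f)_{Q_j^*}\psi_j(x)$, so that ${\rm supp\,}G\subset\Omega_{\rm e}$ by the support property of $\psi_j$. I first split $\Omega^{\complement}\times\Omega^{\complement}$ into an off-support piece, where at least one of $x,y$ lies outside $\Omega_{\rm e}$, and a core piece, where both lie in $\Omega_{\rm e}\cap\Omega^{\complement}$. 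On the off-support piece one of $G(x)$, $G(y)$ vanishes, so the integrand reduces to $|G(\cdot)|^p/|x-y|^{n+sp}$; an application of Corollary \ref{dist-1xx} in the spirit of the derivation of \eqref{b1-10} yields the contributions $\|f\|_{L^p(\Omega)}^p$ and $\int_\Omega|f|^p d_D^{-sp}\,dx$.

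On the core piece I fix $x\in c_nQ$ and $y\in c_nS$ with $Q,S\in\mathscr{W}_{\rm e}$ and separate the \emph{close} subcase $\overline Q\cap\overline S\neq\emptyset$ from the \emph{far} subcase $\overline Q\cap\overline S=\emptyset$. In the close subcase, using $\sum_P\psi_P\equiv 1$ on $\Omega_{\rm e}'$ (with the defect on $\Omega_{\rm e}\setminus\Omega_{\rm e}'$ treated separately by the auxiliary chain of Lemma \ref{lem5.7}(ii) together with Lemmas \ref{bound} and \ref{lem4.6-1}, which route the missing mass into the Hardy integral), I insert the reference constant $(E_{Q^*}f)_{Q^*}$ to obtain
\[
G(x)-G(y)=\sum_{P\in\mathscr{W}_{\rm e}}\bigl[(E_{P^*}f)_{P^*}-(E_{Q^*}f)_{Q^*}\bigr]\bigl(\psi_P(x)-\psi_P(y)\bigr).
\]
Only $O(1)$ terms are nonzero, namely those $P$ with $\overline P\cap(\overline Q\cup\overline S)\neq\emptyset$, and \eqref{pro-psi-3} gives $|\psi_P(x)-\psi_P(y)|\lesssim|x-y|/\ell(Q)$. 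For each such $P$, the touching chain $F_{P,Q}=\{P^*=S_1,\dots,S_{m_1}=Q^*\}\subset\mathscr{W}_{\rm i}$ from Lemma \ref{lem5.7}(i) has bounded length and comparable cube sizes, so telescoping reduces matters to $O(1)$ consecutive differences $|(E_{S_i}f)_{S_i}-(E_{S_{i+1}}f)_{S_{i+1}}|$.

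For each consecutive pair I expand the average difference as a double integral over $S_i\times S_{i+1}$ and split according to $\Omega$: when both cubes lie in $\Omega$, the integrand is $|f(u)-f(v)|$ with $|u-v|\lesssim\ell(Q)\sim|x-y|$, and integrating in $(x,y)$ against $|x-y|^{-(n+sp)}$ together with the bounded-overlap property of Lemma \ref{lem5.7}(i) produces $\|f\|_{\dot{\mathcal W}^{s,p}(\Omega)}^p$; when one of the cubes meets $\Omega^{\complement}$, Lemma \ref{dist-2} yields $d_D(u)\lesssim\ell(Q)\sim|x-y|$ for $u\in S_i\cap\Omega$, so the contribution is absorbed into $\int_\Omega|f|^p d_D^{-sp}\,dx$. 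In the far subcase I decompose
\[
G(x)-G(y)=\bigl[G(x)-(E_{Q^*}f)_{Q^*}\bigr]+\bigl[(E_{Q^*}f)_{Q^*}-(E_{S^*}f)_{S^*}\bigr]-\bigl[G(y)-(E_{S^*}f)_{S^*}\bigr];
\]
the two outer terms are handled by the close-case chain estimate centered respectively at $Q$ and $S$, while the middle term is controlled via Lemma \ref{lem4.5-1}, which gives $D(Q^*,S^*)\lesssim|x-y|$, allowing a long chain in $\mathscr{W}_{\rm i}$ with step size $\sim|x-y|$ and guaranteeing convergence through the tail bound $\int_{|y|\gtrsim r}|y|^{-n-sp}\,dy\lesssim r^{-sp}$.

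\textbf{Main obstacle.} The principal difficulty is the chain-link estimate when an intermediate cube $S_i\in\mathscr{W}_{\rm i}$ protrudes into $\Omega^{\complement}$: in the classical $(\epsilon,\delta)$-domain setting of \cite{ZY2015} every reflected Whitney cube sits inside $\Omega$ so that only the Gagliardo term is generated, whereas here the $D$-adapted Hardy inequality \eqref{eqn-HI} is precisely what is needed, via Lemma \ref{dist-2}, to absorb the chain cubes that touch $D$. A secondary subtlety is the bookkeeping for the partition-of-unity defect on $\Omega_{\rm e}\setminus\Omega_{\rm e}'$, which forces the use of the auxiliary chain of Lemma \ref{lem5.7}(ii) together with Lemma \ref{lem4.6-1} and makes the fractional-order argument more delicate than its integer-order counterpart in \cite{Bec19-Ext}.
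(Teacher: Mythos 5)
Your overall strategy is sound and captures the essential ingredients (Corollary \ref{dist-1xx}, Lemma \ref{lem4.6-1}, Lemma \ref{lem4.5-1}, Lemma \ref{dist-2}, and the routing of partition-of-unity defects into the Hardy term), but it is organized quite differently from the paper and uses a tool the paper deliberately avoids. The paper splits $\mathrm{II}(f)$ first by whether $|x-y|<\ell(Q)/10$ (local vs.\ non-local) and then into eight sub-sums according to whether $Q$ and $R$ lie in $\mathscr{W}_{\rm e}'$, $\mathscr{W}_{\rm e}\setminus\mathscr{W}_{\rm e}'$, or $\mathscr{W}(\overline\Omega)\setminus\mathscr{W}_{\rm e}$; you instead split by an ``off-support vs.\ core'' dichotomy and ``close vs.\ far'' pairs of Whitney cubes. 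Both decompositions are viable. The more substantive difference is that you invoke the touching chains of Lemma \ref{lem5.7}(i) and telescope across them; the paper never uses Lemma \ref{lem5.7} in the $\mathrm{II}$-estimation. Instead, it dominates each average difference directly: for the local pieces it writes
$\bigl|(E_{P_j^*}f)_{P_j^*}-(E_{Q^*}f)_{Q^*}\bigr|^p\le\fint_{Q^*}\fint_{P_j^*}|E_{Q^*}f(\xi)-E_{P_j^*}f(\eta)|^p\,d\eta\,d\xi$
and uses $|\xi-\eta|\lesssim\ell(Q)$ (Lemma \ref{lem3.4.4}) to convert $\ell(Q)^{-sp}$ into the Gagliardo kernel $|\xi-\eta|^{-n-sp}$, and for the non-local pieces it uses Lemma \ref{lem4.5-1} to get $|\xi-\eta|\le D(P_j^*,S_k^*)\lesssim|x-y|$ and the same domination. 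This is cleaner than chaining: it requires no overlap count along a chain and no handling of chain links protruding into $\Omega^{\complement}$ separately, since the $\Omega$-/$\Omega^{\complement}$-split of $E_{Q^*}f$ and Lemma \ref{dist-2} are applied once directly. Your ``far'' subcase description --- invoking ``a long chain in $\mathscr{W}_{\rm i}$ with step size $\sim|x-y|$'' --- is the one genuinely under-justified point: Lemma \ref{lem5.7}(i) only furnishes short chains between reflections of \emph{touching} exterior cubes, and nothing in the paper provides a chain across a gap of size $|x-y|$. That step should be replaced by the direct double-integral argument just described (which is what Lemma \ref{lem4.5-1} is for), after which the convergence is immediate. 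Once that substitution is made, your argument closes, but it is not the paper's proof: the paper takes a chain-free route.
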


\begin{proof}
For any $x$, $y\in\Omega^{\complement}$, let
\begin{align}\label{eqn-Gxy}
G(x,y):=\mathop{\sum}\limits_{P_j\in \mathscr{W}_{\rm e}} \left(E_{P_j^*}f\right)_{P_j^*}\psi_j(x)-\mathop{\sum}
\limits_{S_k\in\mathscr{W}_{\rm e}}
\left(E_{S_k^*}f\right)_{S_k^*}\psi_k(y).
\end{align}
We then have
\begin{align*}
{\rm II}(f)&=\int_{\Omega^{\complement}} \int_{\Omega^{\complement}} \frac{|G(x,y)|}{|x-y|^{n+sp}}\,dydx\\ \notag
&=\sum_{Q\in \mathscr{W}(\overline{\Omega})}\sum_{R\in \mathscr{W}(\overline{\Omega})} \int_Q\int_R
\frac{|G(x,y)|}{|x-y|^{n+sp}}\,dydx\\ \notag
&=\sum_{Q\in \mathscr{W}(\overline{\Omega})}\sum_{R\in \mathscr{W}(\overline{\Omega})} \int_Q\int_{R\cap B\left(x,\frac{\ell(Q)}{10}\right)}
\frac{|G(x,y)|}{|x-y|^{n+sp}}\,dydx
+\sum_{Q\in \mathscr{W}(\overline{\Omega})}\sum_{R\in \mathscr{W}(\overline{\Omega})} \int_Q\int_{R\setminus B\left(x,\frac{\ell(Q)}{10}\right)}\ldots\\ \notag
&=:{\rm II}_{1}(f)+{\rm II}_{2}(f).
\end{align*}

For ${\rm II}_{1}(f)$, we further write
\begin{align}\label{eqn-decom-II-1}
{\rm II}_1(f)&=\sum_{Q\in \mathscr{W}(\overline{\Omega})\setminus \mathscr{W}'_{\rm e}}
\sum_{R\in \mathscr{W}(\overline{\Omega})}\int_{Q} \int_{R\cap B\left(x,\frac{\ell(Q)}{10}\right)} \frac{|G(x,y)|}{|x-y|^{n+sp}}\,dydx
+\sum_{Q\in \mathscr{W}'_{\rm e}}\sum_{R\in \mathscr{W}
(\overline{\Omega})} \ldots \notag\\
&=:{\rm II}_{1,1}(f)+{\rm II}_{1,2}(f).
\end{align}

For ${\rm II}_{2}(f)$, we further write
\begin{align}\label{eqn-decom-II-2}
{\rm II}_2(f)&=\sum_{Q\in \mathscr{W}_{\rm e}'}\sum_{R\in \mathscr{W}_{\rm e}'} \int_Q\int_{R\setminus B\left(x,\frac{\ell(Q)}{10}\right)}
\ldots\, dydx+\sum_{Q\in \mathscr{W}_{\rm e}'}\sum_{R\in \mathscr{W}(\overline{\Omega})\setminus \mathscr{W}_{\rm e}}\ldots
+\sum_{Q\in \mathscr{W}_{\rm e}'}\sum_{R\in \mathscr{W}_{\rm e}\setminus \mathscr{W}'_{\rm e}}\ldots \notag \\ \notag
&\quad+\sum_{Q\in \mathscr{W}(\overline{\Omega})\setminus\mathscr{W}_{\rm e}'}\sum_{R\in \mathscr{W}_{\rm e}'}\ldots
+\sum_{Q\in \mathscr{W}(\overline{\Omega})\setminus\mathscr{W}_{\rm e}'}
\sum_{R\in \mathscr{W}(\overline{\Omega})\setminus \mathscr{W}_{\rm e}}
\ldots+\sum_{Q\in \mathscr{W}(\overline{\Omega})\setminus\mathscr{W}_{\rm e}'} \sum_{R\in \mathscr{W}_{\rm e}\setminus \mathscr{W}'_{\rm e}}\ldots\\
&=:\sum_{j=1}^6{\rm II}_{2,j}(f).
\end{align}
Thus, to finish the proof of the present proposition,
it suffices to obtain the desired estimates
for ${\rm II}_{1,1}(f)$, ${\rm II}_{1,2}(f)$, and $\{{\rm II}_{2,j}(f)\}_{j=1}^6$, which are given in the next several lemmas.
\end{proof}

\begin{lemma}\label{lem-Gxy}
Let $G$ be as in \eqref{eqn-Gxy}. Then the following two assertions hold:
\begin{itemize}
\item [{\rm (i)}]
For any $x\in Q\in\mathscr{W}(\overline{\Omega})$ and $y\in R\in\mathscr{W}(\overline{\Omega})$,
\begin{align*}
G(x,y)&=\sum_{\genfrac{}{}{0pt}{}{P_j\in \mathscr{W}_{\rm e}}{
\overline P_j\cap\overline Q\neq\emptyset}}
\sum_{\genfrac{}{}{0pt}{}{S_k\in \mathscr{W}_{\rm e}}{
\overline{S}_k\cap\overline{R}\neq\emptyset}}\left[\left(E_{P_j^*}
f\right)_{P_j^*}-\left(E_{S_k^*}f\right)_{S_k^*}\right]\psi_j(x)\ \psi_k(y)\\
&\quad+\sum_{\genfrac{}{}{0pt}{}{P_j\in\mathscr{W}_{\rm e}}{ \overline{P}_j\cap\overline{Q}\neq\emptyset}}
\left(E_{P_j^*}f\right)_{P_j^*}\psi_j(x)\
\left[1-\sum_{\genfrac{}{}{0pt}{}{S_k\in \mathscr{W}_{\rm e}}{
\overline{S}_k\cap \overline{R}\neq\emptyset}}\psi_k(y)\right]\\
&\quad-\sum_{\genfrac{}{}{0pt}{}{S_k\in \mathscr{W}_{\rm e}}{\overline{S}_k\cap \overline{R}\neq\emptyset}}\left(E_{S_k^*}f\right)_{S_k^*}\psi_k(y)\
\left[1-\sum_{\genfrac{}{}{0pt}{}{P_j\in\mathscr{W}_{\rm e}}{\overline{P}_j\cap \overline{Q}\neq\emptyset}}\psi_j(x)\right]\\
&=:G_1(x,y)+G_2(x,y)+G_3(x,y).
\end{align*}

\item [{\rm (ii)}] If $x$ and $y$ are as in (i) and, in addition, $|x-y|<\ell(Q)$, then
\begin{align*}
G(x,y)&=\sum_{\genfrac{}{}{0pt}{}{P_j\in \mathscr{W}_{\rm e}}{\overline P_j \cap
\overline Q\ne\emptyset}}
\left(E_{P_j^*}f\right)_{P_j^*}\left[\psi_j(x)-\psi_j(y)\right].
\end{align*}
\end{itemize}
\end{lemma}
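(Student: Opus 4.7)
I would tackle the two parts sequentially. For part~(i), the key step is a support reduction followed by an algebraic rearrangement. Since $\operatorname{supp}\psi_j\subset c_nQ_j$ with $c_n=1+\frac{1}{16\sqrt{n}}$, I would first show that $\psi_j(x)\neq 0$ for $x\in Q$ forces $\overline{P}_j\cap\overline{Q}\neq\emptyset$: indeed $x\in c_nP_j\cap Q$, and the shell $c_nP_j\setminus P_j$ has $\ell^\infty$-thickness $\ell(P_j)/(32\sqrt{n})$, which is strictly smaller than the minimum separation forced between two dyadic Whitney cubes of comparable size whose closures are disjoint. The analogous reduction applies to $\psi_k(y)$ with $y\in R$. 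Restricting both sums in \eqref{eqn-Gxy} accordingly and then inserting the partition-of-unity identities $1=\bigl[\sum_k\psi_k(y)\bigr]+\bigl[1-\sum_k\psi_k(y)\bigr]$ into the first sum and $1=\bigl[\sum_j\psi_j(x)\bigr]+\bigl[1-\sum_j\psi_j(x)\bigr]$ into the second, I would expand and regroup: the two cross-product pieces combine to give $G_1(x,y)$, while the two single-sum remainders produce $G_2(x,y)$ and $G_3(x,y)$ respectively.

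For part~(ii), I would first relabel the summation index in the second sum of \eqref{eqn-Gxy}; since both sums run over the common set $\mathscr{W}_{\rm e}$, this immediately yields
\[
G(x,y)=\sum_{P_j\in\mathscr{W}_{\rm e}}\left(E_{P_j^*}f\right)_{P_j^*}\bigl[\psi_j(x)-\psi_j(y)\bigr].
\]
It remains to show that the sum is effectively supported on indices with $\overline{P}_j\cap\overline{Q}\neq\emptyset$. For such indices $\psi_j(x)=0$ by the support reduction of part~(i). To see $\psi_j(y)=0$ as well, note that $\psi_j(y)\neq 0$ gives $y\in c_nP_j$, so ${{\mathop\mathrm{dist\,}}}(y,P_j)\le\ell(P_j)/(32\sqrt{n})$; combined with $|x-y|<\ell(Q)$ and $x\in Q$, one gets ${{\mathop\mathrm{dist\,}}}(P_j,Q)\le\ell(P_j)/(32\sqrt{n})+\ell(Q)$. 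Applying Lemma~\ref{W}(ii) to compare ${{\mathop\mathrm{dist\,}}}(\cdot,\partial\Omega)$ near $y$ forces $\ell(P_j)\sim\ell(Q)$, and then the same dyadic-separation argument used in part~(i) precludes $\overline{P}_j\cap\overline{Q}=\emptyset$.

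The main obstacle is the Whitney-geometry step in part~(ii). Everything hinges on the sharp tuning $c_n-1=\frac{1}{16\sqrt{n}}$, which ensures that the thin shell $c_nP_j\setminus P_j$ cannot meet a dyadic Whitney cube of comparable size whose closure does not touch $\overline{P}_j$. Establishing that $y\in R$ together with $|x-y|<\ell(Q)$ really does force $\ell(P_j)\sim\ell(Q)$ for every $P_j$ with $\psi_j(y)\neq 0$ is the geometrically delicate point; once it is in hand, the conclusion of part~(ii) follows immediately. By contrast, part~(i) is essentially routine algebraic bookkeeping once the support reduction is set up.
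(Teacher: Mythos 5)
Your treatment of part (i) is correct and is essentially what the paper means by ``a straightforward calculation'': since $\operatorname{supp}\psi_j\subset c_nP_j$ and $x\in Q$ force $\overline P_j\cap\overline Q\neq\emptyset$ whenever $\psi_j(x)\neq0$ (and analogously for $\psi_k(y)$ with $y\in R$), both sums in \eqref{eqn-Gxy} collapse to touching indices, and inserting $1=\sum_k\psi_k(y)+\bigl[1-\sum_k\psi_k(y)\bigr]$ and $1=\sum_j\psi_j(x)+\bigl[1-\sum_j\psi_j(x)\bigr]$ then yields $G_1+G_2+G_3$ after trivial regrouping.

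For part (ii) your route (relabel the two sums to a common index, then argue that the sum is effectively supported on indices touching $Q$) is the same as the paper's, just reorganized. But the concluding geometric step has a gap. You take the stated hypothesis $|x-y|<\ell(Q)$ and claim that ``deriving $\ell(P_j)\sim\ell(Q)$, then applying the dyadic-separation argument of part (i)'' precludes $\overline P_j\cap\overline Q=\emptyset$. This does not close. The dyadic separation gives ${\rm dist}(P_j,Q)\ge\min(\ell(Q),\ell(P_j))$, while your distance bound is ${\rm dist}(P_j,Q)\le\ell(P_j)/(32\sqrt n)+|x-y|$. In part (i) there is no $|x-y|$ term, and the shell width $\ell(P_j)/(32\sqrt n)$ is strictly below $\min(\ell(Q),\ell(P_j))$ once Whitney comparability is in hand, so a contradiction obtains. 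In part (ii) the extra $|x-y|$, allowed to be as large as $\ell(Q)$, swamps this gap: with $\ell(P_j)\sim\ell(Q)$, both the upper and the lower bound are $\sim\ell(Q)$ and no contradiction follows. Worse, with $|x-y|$ that large, $y$ can lie in an adjacent Whitney cube $R\neq Q$, and a cube $P_j\in\mathscr W_{\rm e}$ can touch $R$ but not $Q$ while $\psi_j(y)\neq0$; the displayed identity in (ii) would then fail, since the left side carries the term $-(E_{P_j^*}f)_{P_j^*}\psi_j(y)$ and the right side does not.

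The resolution is that the hypothesis in the lemma should read $|x-y|<\ell(Q)/10$, which is what the paper's own proof assumes ($y\in B(x,\ell(Q)/10)$) and what is used when the lemma is applied to $\mathrm{II}_1(f)$. Under that tighter bound your argument does close: first establish, via Lemma \ref{W}(ii) applied to both $x$ and $y$ with $\lvert{\rm dist}(x,\partial\Omega)-{\rm dist}(y,\partial\Omega)\rvert\le|x-y|<\ell(Q)/10$, that $\ell(P_j)$ and $\ell(Q)$ are comparable with explicit dyadic constants, and then check that $\ell(P_j)/(32\sqrt n)+\ell(Q)/10$ falls strictly below $\min(\ell(Q),\ell(P_j))$ for that comparable range. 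Rewrite the hypothesis accordingly and part (ii) is fine.
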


\begin{proof}
(i) follows from a straightforward calculation. We only prove (ii).
For any $x\in Q\in \mathscr{W}(\overline{\Omega})$,
$y\in B(x,\ell(Q)/10)$, and $P_j\in\mathscr{W}_{\rm e}$,
if $P_j$ and $Q$ do not touch, by the fact
$\text{supp\,}\psi_j\subset c_nP_j$ and Lemma \ref{W},
we find that $\psi_j(x)=\psi_j(y)=0,$
which, together with \eqref{eqn-Gxy}, implies that
\begin{align*}
G(x,y)&=\mathop{\sum}\limits_{\genfrac{}{}{0pt}{}{P_j\in
\mathscr{W}_{\rm e}}{P_j\approx Q}}\left(E_{P_j^*}f\right)_{P_j^*}\psi_j(x)-\mathop{\sum}
\limits_{\genfrac{}{}{0pt}{}{S_k\in \mathscr{W}_{\rm e}}{\overline{S}_k\cap \overline{Q}\neq\emptyset}}\left(E_{S_k^*}f\right)_{S_k^*}\psi_k(y)\\
&=\sum_{\genfrac{}{}{0pt}{}{P_j\in \mathscr{W}_{\rm e}}{\overline{P}_j\cap \overline{Q}\neq\emptyset}}\left(E_{P_j^*}f\right)_{P_j^*}
\left[\psi_j(x)-\psi_j(y)\right].
\end{align*}
This verifies (ii) and hence finishes the proof of Lemma \ref{lem-Gxy}.
\end{proof}

\begin{lemma}
If $\epsilon\in(0,1]$, $\delta\in(0,\infty]$, and
$\Omega\subset\mathbb{R}^n$ is an
$(\epsilon,\delta,D)$-domain with $D\subset \partial\Omega$ being closed, then, for any
$s\in(0,1)$, $p\in(1,\infty)$, and $f\in L_{\rm loc}^1(\Omega)$, it holds that
\begin{align*}
{\rm II}_{1,1}(f)\lesssim \|f\|_{L^{p}(\Omega)}^p+\int_\Omega\dfrac{|f(x)|^p}{[{{\mathop\mathrm{dist\,}}}(x,D)]^{sp}}\,dx
\end{align*}
with the implicit positive constant independent of $f$, where ${\rm II}_{1,1}(f)$
is as in \eqref{eqn-decom-II-1}.
\end{lemma}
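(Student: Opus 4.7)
The plan is to exploit the locality $|x-y|<\ell(Q)/10$, which forces $y$ to lie in $Q$ or one of its Whitney neighbors, together with Lemma~\ref{lem-Gxy}(ii), to reduce $G$ to a finite sum over the touching cubes $P_j\in\mathscr{W}_{\rm e}$ with $\overline{P}_j\cap\overline{Q}\neq\emptyset$. Using \eqref{pro-psi-3} and the comparability $\ell(P_j)\sim\ell(Q)$ (Lemma~\ref{W}(iv)), the Lipschitz bound $|\psi_j(x)-\psi_j(y)|\lesssim |x-y|/\ell(Q)$ together with the cardinality bound $12^n$ on neighbors gives
\begin{align*}
|G(x,y)|^p\lesssim \frac{|x-y|^p}{\ell(Q)^p}\sum_{\substack{P_j\in\mathscr{W}_{\rm e}\\ \overline{P}_j\cap\overline{Q}\neq\emptyset}}\left|\left(E_{P_j^*}f\right)_{P_j^*}\right|^p.
\end{align*}
Carrying out the $y$-integral over $B(x,\ell(Q)/10)$ (and noting that only finitely many $R$ intersect this ball), then the $x$-integral over $Q$, produces
\begin{align*}
{\rm II}_{1,1}(f)\lesssim \sum_{Q\in\mathscr{W}(\overline{\Omega})\setminus\mathscr{W}_{\rm e}'}\ \sum_{\substack{P_j\in\mathscr{W}_{\rm e}\\ \overline{P}_j\cap\overline{Q}\neq\emptyset}}\left|\left(E_{P_j^*}f\right)_{P_j^*}\right|^p\ell(Q)^{n-sp},
\end{align*}
where the factor $\ell(Q)^{n-sp}$ arises from $\int_{B(x,\ell(Q)/10)}|x-y|^{p-n-sp}\,dy\lesssim \ell(Q)^{p-sp}$ combined with $|Q|\sim\ell(Q)^n$.

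Next, by H\"older's inequality and \eqref{eqn-EQj}, one has $|(E_{P_j^*}f)_{P_j^*}|^p\le |P_j^*|^{-1}\int_{P_j^*\cap\Omega}|f|^p$, and Lemma~\ref{lem3.4.4}(i) yields $\ell(Q)^{n-sp}/|P_j^*|\sim \ell(P_j^*)^{-sp}$. Split the outer sum according to whether $\ell(Q)\le A\delta/16$ or $\ell(Q)>A\delta/16$. In the latter (large-scale) regime, $\ell(Q)^{-sp}$ is uniformly bounded, so the contribution is dominated by $\|f\|_{L^p(\Omega)}^p$ after summing with finite overlap. In the former (small-scale) regime, we invoke the inclusion $\mathscr{W}_{\rm e}''\subset\mathscr{W}_{\rm e}'$: since $Q\notin\mathscr{W}_{\rm e}'$ forces $Q\notin\mathscr{W}_{\rm e}''$, Lemma~\ref{lem4.6-1} applies and gives ${{\mathop\mathrm{dist\,}}}(\xi,D)\lesssim{\mathop\mathrm{diam\,}}Q\sim\ell(P_j^*)$ for every $\xi\in P_j^*\cap\Omega$, whence
\begin{align*}
\ell(P_j^*)^{-sp}\int_{P_j^*\cap\Omega}|f(\xi)|^p\,d\xi\lesssim \int_{P_j^*\cap\Omega}\frac{|f(\xi)|^p}{[{{\mathop\mathrm{dist\,}}}(\xi,D)]^{sp}}\,d\xi.
\end{align*}

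Finally, the sum is collapsed using finite overlap: each reflected cube $P_j^*$ is associated with at most $M$ cubes in $\mathscr{W}_{\rm e}$ by Lemma~\ref{lem3.4.4}(ii), and each such cube has at most $12^n$ neighbors in $\mathscr{W}(\overline{\Omega})$ by Lemma~\ref{W}(v). This bounded multiplicity allows the sum $\sum_Q\sum_{P_j}$ of the local integrals to be controlled by a single integral over $\Omega$, yielding the claimed bound. The main obstacle is the precise bookkeeping at the interface between $\mathscr{W}_{\rm e}'$ and the rest of $\mathscr{W}(\overline{\Omega})$: one must verify that the partial-vanishing trace geometry is genuinely captured by the reflected cubes $P_j^*$ and that Lemma~\ref{lem4.6-1} applies uniformly to \emph{every} $P_j$ touching $Q$, rather than only to $Q$ itself. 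This is precisely why the argument is set up so that $Q\notin\mathscr{W}_{\rm e}'$ (the outer exclusion) propagates into $Q\notin\mathscr{W}_{\rm e}''$ (the hypothesis needed by Lemma~\ref{lem4.6-1}).
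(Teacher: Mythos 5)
Your proposal is correct and follows essentially the same line of argument as the paper: apply Lemma~\ref{lem-Gxy}(ii) and the Lipschitz bound \eqref{pro-psi-3} to collapse $G$ to a sum over touching $P_j$, integrate to extract $\ell(Q)^{n-sp}$, convert the average to an $L^p$ integral over $P_j^*\cap\Omega$, split at scale $A\delta/16$, invoke Lemma~\ref{lem4.6-1} (via $\mathscr{W}_{\rm e}''\subset\mathscr{W}_{\rm e}'$) for the small scales, and finish with the finite multiplicity from Lemmas~\ref{lem3.4.4}(ii) and \ref{W}(v). The bookkeeping observation at the end — that Lemma~\ref{lem4.6-1} is needed uniformly over every $P_j$ touching $Q$, not just $Q$ itself — correctly identifies why that lemma is stated in the form it is.
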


\begin{proof}
Using Lemmas \ref{lem-Gxy} and \ref{lem4.6-1}, we obtain
\begin{align}\label{d3}
\notag{\rm II}_{1,1}(f)&=\sum_{Q\in \mathscr{W}(\overline{\Omega})\setminus\mathscr{W}_{\rm e}'}
\int_Q\int_{B\left(x,\frac{\ell(Q)}{10}\right)}
\left|\sum\limits_{\genfrac{}{}{0pt}{}{P_j\in \mathscr{W}_{\rm e}}{\overline{P}_j\cap\overline{Q}\neq\emptyset}}
\left(E_{P_j^*}f\right)_{P_j^*}\left(\psi_j(x)-
\psi_j(y)\right)\right|^p\frac{1}{|x-y|^{n+sp}}\,dydx\\ \notag
&\lesssim \sum_{Q\in\mathscr{W}(\overline{\Omega})\setminus\mathscr{W}_{\rm e}'}
\int_Q\sum_{\genfrac{}{}{0pt}{}{P_j\in\mathscr{W}_{\rm e}}
{\overline{P}_j\cap\overline{Q}\neq\emptyset}}
\left|\left(E_{P_j^*}f\right)_{P_j^*}\right|^p
\int_{B\left(x,\frac{\ell(Q)}{10}\right)}
\frac{\left\|\nabla
\psi_j\right\|_{L^\infty(\mathbb{R}^n)}^p|x-y|^p}
{|x-y|^{n+sp}}\,dydx\\ \notag
&\lesssim \sum_{Q\in \mathscr{W}(\overline{\Omega})\setminus\mathscr{W}_{\rm e}'}
\ell(Q)^n\sum_{\genfrac{}{}{0pt}{}{P_j\in\mathscr{W}_{\rm e}}{
\overline{P}_j\cap\overline{Q}\neq\emptyset}} \frac{\ell(Q)^{(1-s)p}}{\ell(P_j)^p}\fint_{P_j^*}
\left|E_{P_j^*}f(\xi)\right|^pd\xi\\ \notag
&\lesssim \sum_{\genfrac{}{}{0pt}{}{Q\in\mathscr{W}(\overline{\Omega})
\setminus\mathscr{W}_{\rm e}'}{\ell(Q)>A\delta/16}}\sum_{\genfrac{}{}{0pt}{}{P_j\in
\mathscr{W}_{\rm e}}{\overline P_j\cap\overline{Q}\neq\emptyset}} \ell(Q)^{-sp}\int_{P_j^*\cap\Omega}|f(\xi)|^p\,d\xi
+\sum_{\genfrac{}{}{0pt}{}{Q\in
\mathscr{W}(\overline{\Omega})\setminus\mathscr{W}_{\rm e}'}{\ell(Q)\le A\delta/16}}
\sum_{\genfrac{}{}{0pt}{}{P_j\in \mathscr{W}_{\rm e}}{\overline{P}_j
\cap\overline Q\neq\emptyset}}\ell(Q)^{-sp}\int_{P_j^*\cap\Omega}
|f(\xi)|^p\,d\xi\\ \notag
&\lesssim \sum_{P_j\in\mathscr{W}_{\rm e}}\int_{P_j^*\cap\Omega} |f(\xi)|^p\,d\xi+\sum_{P_j\in\mathscr{W}_{\rm e}}\int_{P_j^*\cap\Omega}
\frac{|f(\xi)|^p}
{[{{\mathop\mathrm{dist\,}}}(\xi,D)]^{sp}}\,d\xi\\
&\lesssim \|f\|_{L^p(\Omega)}^p+\int_\Omega\frac{|f(\xi)|^p}
{[{{\mathop\mathrm{dist\,}}}(\xi,D)]^{sp}}\,d\xi,
\end{align}
which is desired.
\end{proof}

\begin{lemma}\label{lem-estimateK1}
If $\epsilon\in(0,1]$, $\delta\in(0,\infty]$, and
$\Omega\subset\mathbb{R}^n$ is an
$(\epsilon,\delta,D)$-domain with $D\subset \partial\Omega$
being closed, then, for any $s\in(0,1)$, $p\in(1,\infty)$, and $f\in L_{\rm loc}^1(\Omega)$, it holds that
\begin{align*}
{\rm II}_{1,2}(f)\lesssim \|f\|_{L^p(\Omega)}^p+\|f\|_{\dot{\mathcal{W}}^{s,p}
(\Omega)}^p+\int_\Omega\dfrac{|f(x)|^p}{[{{\mathop\mathrm{dist\,}}}(x,D)]^{sp}}\,dx
\end{align*}
with the implicit positive constant independent of $f$, where ${\rm II}_{1,2}(f)$
is as in \eqref{eqn-decom-II-1}.
\end{lemma}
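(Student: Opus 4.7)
The plan is to adapt the Whitney-chain telescoping method from Zhou's fractional extension argument \cite{ZY2015} to the present mixed-boundary setting, using the chain structure provided by Lemma \ref{lem5.7}(i). The key improvement over the preceding estimate of ${\rm II}_{1,1}(f)$ is that for $Q\in\mathscr{W}_{\rm e}'$ every cube of $\mathscr{W}(\overline{\Omega})$ touching $Q$ already belongs to $\mathscr{W}_{\rm e}$. Consequently, for $x\in Q$ and $y\in\Omega^{\complement}\cap B(x,\ell(Q)/10)$, the only $\psi_j$'s with nonvanishing contribution at $x$ or $y$ are indexed by touching neighbors of $Q$ in $\mathscr{W}_{\rm e}$, so
\[
\sum_{\substack{P_j\in\mathscr{W}_{\rm e}\\ \overline{P}_j\cap\overline{Q}\neq\emptyset}}\psi_j(x)=\sum_{\substack{P_j\in\mathscr{W}_{\rm e}\\ \overline{P}_j\cap\overline{Q}\neq\emptyset}}\psi_j(y)=1.
\]
Combining this with Lemma \ref{lem-Gxy}(ii) (applicable since $|x-y|<\ell(Q)$) and subtracting the constant $(E_{Q^*}f)_{Q^*}$, I would rewrite
\[
G(x,y)=\sum_{\substack{P_j\in\mathscr{W}_{\rm e}\\ \overline{P}_j\cap\overline{Q}\neq\emptyset}}\left[(E_{P_j^*}f)_{P_j^*}-(E_{Q^*}f)_{Q^*}\right]\left[\psi_j(x)-\psi_j(y)\right],
\]
which is the crucial form avoiding a bare $(E_{P_j^*}f)_{P_j^*}$ factor when integrating.

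Using the gradient estimate \eqref{pro-psi-3}, Jensen's inequality over the bounded number of touching neighbors (Lemma \ref{W}(v)), and integration in $y\in B(x,\ell(Q)/10)$ and then $x\in Q$ (with integrability ensured by $s<1$), the problem reduces to controlling
\[
\sum_{Q\in\mathscr{W}_{\rm e}'}\ell(Q)^{n-sp}\sum_{\substack{P_j\in\mathscr{W}_{\rm e}\\ \overline{P}_j\cap\overline{Q}\neq\emptyset}}\left|(E_{P_j^*}f)_{P_j^*}-(E_{Q^*}f)_{Q^*}\right|^p.
\]
For each pair $(P_j,Q)$ I would invoke the touching chain $F_{P_j,Q}=\{P_j^*=S_1,\ldots,S_{m_1}=Q^*\}\subset\mathscr{W}_{\rm i}$ of length at most $m$ from Lemma \ref{lem5.7}(i), telescope, and estimate each consecutive difference $|(E_{S_i}f)_{S_i}-(E_{S_{i+1}}f)_{S_{i+1}}|^p$ by Jensen on the pair $(S_i,S_{i+1})$; comparability of the chain cubes with $Q$ (Lemma \ref{W}(iv)) matches the $\ell(Q)^{n-sp}$ weight to the natural Jensen scaling.

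The main obstacle, and the novel feature compared with the purely interior case of \cite{ZY2015}, is that cubes $S_i\in\mathscr{W}_{\rm i}$ may intersect $\Omega^{\complement}$. The Jensen bound on each consecutive pair then splits into a diagonal piece over $(S_i\cap\Omega)\times(S_{i+1}\cap\Omega)$, which produces the standard Gagliardo integrand $|f(u)-f(v)|^p/|u-v|^{n+sp}$, and off-diagonal pieces over $(S_i\setminus\Omega)\times(S_{i+1}\cap\Omega)$ or $(S_i\cap\Omega)\times(S_{i+1}\setminus\Omega)$ giving residual terms of the form $|f|^p$. On the residual pieces Lemma \ref{dist-2} supplies ${{\mathop\mathrm{dist\,}}}(u,D)\lesssim\ell(S_i)\sim\ell(Q)$ whenever the opposite cube meets $\Omega^{\complement}$, so that
\[
|f(u)|^p\lesssim\ell(Q)^{sp}\,\frac{|f(u)|^p}{[{{\mathop\mathrm{dist\,}}}(u,D)]^{sp}},
\]
and the extra factor $\ell(Q)^{sp}$ cancels the $\ell(Q)^{-sp}$ from the outer sum. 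Summing over $Q\in\mathscr{W}_{\rm e}'$ and invoking the bounded overlap $\sum_{Q}\mathbf{1}_{F(Q)}\lesssim 1$ from Lemma \ref{lem5.7}(i) assembles the diagonal contributions into $\|f\|_{\dot{\mathcal{W}}^{s,p}(\Omega)}^p$ and the off-diagonal contributions into $\int_\Omega|f(x)|^p/[{{\mathop\mathrm{dist\,}}}(x,D)]^{sp}\,dx$; any remaining tail arising from the regime $\ell(Q)\gtrsim\delta$, where the factor $\ell(Q)^{-sp}$ can be bounded by a constant, is absorbed into $\|f\|_{L^p(\Omega)}^p$ in the spirit of the endgame of the ${\rm II}_{1,1}(f)$ estimate.
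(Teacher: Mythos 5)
Your proposal is correct in substance and shares the key initial reduction with the paper: for $x\in Q\in\mathscr{W}_{\rm e}'$ and $y\in B(x,\ell(Q)/10)$ the relevant partitions of unity both sum to one, Lemma~\ref{lem-Gxy}(ii) plus subtracting the constant $(E_{Q^*}f)_{Q^*}$ gives the form of $G(x,y)$ with no ``bare mean'' term, and \eqref{pro-psi-3} with $s<1$ reduces the problem to bounding $\sum_{Q\in\mathscr{W}_{\rm e}'}[\ell(Q)]^{n-sp}\sum_{P_j\approx Q}\bigl|(E_{P_j^*}f)_{P_j^*}-(E_{Q^*}f)_{Q^*}\bigr|^p$. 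The difference is in how this double sum is closed. You telescope across the touching chain $F_{P_j,Q}$ of Lemma~\ref{lem5.7}(i), apply Jensen on each consecutive pair $(S_i,S_{i+1})$, split into a diagonal piece (Gagliardo) and an off-diagonal piece handled via Lemma~\ref{dist-2}, and then sum using the bounded-overlap bound $\sum_Q\mathbf{1}_{F(Q)}\lesssim 1$. The paper skips the chain entirely: because $Q,P_j\in\mathscr{W}_{\rm e}$ touch, Lemma~\ref{lem3.4.4}(iii) gives $\mathop{\rm dist}(Q^*,P_j^*)\lesssim\ell(Q)$, so one can apply Jensen directly on the single pair $(Q^*,P_j^*)$ (noting $|\xi-\eta|\lesssim\ell(Q)$ for $\xi\in Q^*$, $\eta\in P_j^*$), and then split exactly as you do, with bounded overlap of the pairs $(Q^*,P_j^*)$ coming from Lemma~\ref{W}(v) and Lemma~\ref{lem3.4.4}(ii). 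Your route buys nothing here but costs the chain machinery; the chain lemma is genuinely needed for longer-range sums (e.g., ${\rm II}_{2,1}$-type terms where $S_k$ ranges over all of $\mathscr{W}_{\rm e}$), whereas for ${\rm II}_{1,2}$ the adjacency via Lemma~\ref{lem3.4.4}(iii) makes the single-step Jensen sufficient. Your final remark about a $\ell(Q)\gtrsim\delta$ tail is vacuous since $\ell(Q)\le A\delta$ for every $Q\in\mathscr{W}_{\rm e}$ by \eqref{We}, but this is harmless.
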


\begin{proof}
Let $\mathscr{W}_{\rm e}'$ be defined as in \eqref{We'}.
It is easy to find that, for any $x\in Q\in\mathscr{W}_{\rm e}'$,
\begin{align*}
\sum_{P_j\in\mathscr{W}_{\rm e},\overline{P}_j\cap\overline{Q}
\neq\emptyset}\psi_j(x)=1.
\end{align*}
Moreover, by Lemma \ref{W}, we obtain, for any
$P_j\in\mathscr{W}_{\rm e}$ satisfying
that $P_j$ does not touch with $Q$ but
$\overline{P}_j\cap\overline S_k\neq\emptyset$ for some $\overline{S}_k\cap\overline{Q}\neq\emptyset$, it holds that
\begin{align*}
\bigcup_{x\in Q}B\left(x,\frac{\ell(Q)}{10}\right)\cap c_nP_j=\emptyset,
\end{align*}
where $c_n:=1+\frac{1}{16\sqrt{n}}$. From this and \eqref{eqn-puy},
we deduce that, for any $y\in B(x,\ell(Q)/10)$
with $x\in Q$,
\begin{align*}
\sum_{{P_j\in\mathscr{W}_{\rm e}},
{\overline{P}_j\cap\overline{Q}\neq\emptyset}}\psi_j(y)=1,
\end{align*}
which, combined with Lemma \ref{lem-Gxy}(ii), implies
\begin{align*}
G(x,y)&=\mathop{\sum}\limits_{\genfrac{}{}{0pt}{}{P_j\in \mathscr{W}_{\rm e}}{\overline{P}_j\cap\overline{Q}\neq\emptyset}}
\left(E_{P_j^*}f\right)_{P_j^*}
\left[\psi_j(x)-\psi_j(y)\right]\\
&=\mathop{\sum}\limits_{\genfrac{}{}{0pt}{}{P_j\in \mathscr{W}_{\rm e}}{\overline P_j\cap\overline{Q}\neq\emptyset}}\left[\left(E_{P_j^*}f\right)_{P_j^*}
-\left(E_{Q^*}f\right)_{Q^*}\right]\left[\psi_j(x)-\psi_j(y)\right].
\end{align*}
Using this, together with \eqref{pro-psi-3} and the assumption $s\in(0,1)$, we obtain
\begin{align}\label{eqn-K2ff}
\notag{\rm II}_{1,2}(f)&=\sum_{Q\in \mathscr{W}_{\rm e}'}\int_Q\int_{B\left(x,\frac{\ell(Q)}{10}\right)}
\left|\mathop{\sum}
\limits_{\genfrac{}{}{0pt}{}{P_j\in \mathscr{W}_{\rm e}}{\overline{P}_j\cap\overline Q\neq\emptyset}}\left[\left(E_{P_j^*}f\right)_{P_j^*}-
\left(E_{Q^*}f\right)_{Q^*}\right]
\left(\psi_j(x)-\psi_j(y)\right)\right|^p\frac{1}{|x-y|^{n+sp}}\,dydx\\ \notag
&\lesssim \sum_{Q\in \mathscr{W}_{\rm e}'}\int_Q
\int_{B\left(x,\frac{\ell(Q)}{10}\right)}
\sum_{\genfrac{}{}{0pt}{}{P_j\in \mathscr{W}_{\rm e}}{
\overline{P}_j\cap\overline{Q}\neq\emptyset}}
\left|\left(E_{P_j^*}f\right)_{P_j^*}-\left(E_{Q^*}f\right)_{Q^*}\right|^p
\frac{\left\|\nabla\psi_j\right\|_{L^\infty}^p|x-y|^p}{|x-y|^{n+sp}}\,dydx\\ \notag
&\lesssim \sum_{Q\in\mathscr{W}_{\rm e}'}\int_Q
\sum_{\genfrac{}{}{0pt}{}{P_j\in\mathscr{W}_{\rm e}}{\overline{P}_j
\cap\overline{Q}\neq
\emptyset}}[\ell(P_j)]^{-p}\left|\left(E_{P_j^*}f\right)_{P_j^*}
-\left(E_{Q^*}f\right)_{Q^*}\right|^p\int_{B\left(x,\frac{\ell(Q)}{10}\right)}
\frac{|x-y|^p}{|x-y|^{n+sp}}\,dydx\\ \notag
&\lesssim \sum_{Q\in\mathscr{W}_{\rm e}'}
\sum_{\genfrac{}{}{0pt}{}{P_j\in\mathscr{W}_{\rm e}}{\overline{P}_j\cap\overline{Q}\neq\emptyset}}
[\ell(Q)]^{n-sp}\left|\left(E_{P_j^*}f\right)_{P_j^*}
-\left(E_{Q^*}f\right)_{Q^*}\right|^p\\
&\lesssim \sum_{Q\in \mathscr{W}_{\rm e}'}
\sum_{\genfrac{}{}{0pt}{}{P_j\in\mathscr{W}_{\rm e}}{\overline{P}_j\cap\overline{Q}\neq\emptyset}}
[\ell(Q)]^{n-sp}\fint_{Q^*}\fint_{P_j^*}
\left|E_{Q^*}f(\xi)-E_{P_j^*}f(\eta)\right|^p\,d\eta d\xi.
\end{align}
Now, for any $\xi\in Q^*$ and $\eta\in P_j^*$, Lemmas \ref{lem3.4.4} and \ref{W}(iv) provide
\begin{align*}
|\xi-\eta|\le {\mathop\mathrm{diam\,}}Q^*+{{\mathop\mathrm{dist\,}}}(Q^*,P_j^*)
+{\mathop\mathrm{diam\,}} P_j^*\lesssim \ell(Q).
\end{align*}
From this, \eqref{eqn-K2ff}, and \eqref{eqn-EQj}, we infer that
\begin{align*}
{\rm II}_{1,2}(f)&\lesssim \sum_{Q\in\mathscr{W}_{\rm e}'}
\sum_{\genfrac{}{}{0pt}{}{P_j\in\mathscr{W}_{\rm e}}{\overline{P}_j\cap
\overline{Q}\neq\emptyset}}\int_{Q^*}\int_{P_j^*}
\frac{\left|E_{Q^*}f(\xi)-E_{P_j^*}f(\eta)\right|^p}
{|\xi-\eta|^{n+sp}}\,d\eta d\xi\\
&\lesssim \sum_{Q\in\mathscr{W}_{\rm e}'}\int_{Q^*\cap\Omega}
\sum_{\genfrac{}{}{0pt}{}{P_j\in\mathscr{W}_{\rm e}}{\overline{P}_j\cap
\overline{Q}\neq\emptyset}}\int_{P_j^*\cap\Omega}
\frac{|f(\xi)-f(\eta)|^p}{|\xi-\eta|^{n+sp}}\,d\eta d\xi\\
&\quad+2\sum_{Q\in \mathscr{W}_{\rm e}'}\int_{Q^*\cap\Omega}
\sum_{\genfrac{}{}{0pt}{}{P_j\in \mathscr{W}_{\rm e}}{\overline{P}_j\cap\overline{Q}
\neq\emptyset}}\int_{P_j^*\setminus\Omega}\frac{|f(\xi)|^p}
{|\xi-\eta|^{n+sp}}\,d\eta d\xi.
\end{align*}
By Lemma \ref{W} again, the first term above can be controlled by
\begin{align*}
\int_{\Omega}\int_{\Omega}\frac{|f(\xi)-f(\eta)|^p}{|\xi-\eta|^{n+sp}}\,d\eta d\xi
\sim \|f\|_{\dot{\mathcal{W}}^{s,p}(\Omega)}^p,
\end{align*}
which is desired. On the other hand, following an argument similar to that
used in the estimation of \eqref{b1-10} with Lemma \ref{dist-1} therein
replaced by Lemma \ref{dist-2}, we obtain the second term is bounded by
\begin{align*}
\sum_{P_j\in \mathscr{W}_{\rm e}}\int_{\Omega}\int_{P_j^*\setminus\Omega}
\frac{|f(\xi)|^p}{|\xi-\eta|^{n+sp}}\,d\eta d\xi
\lesssim \|f\|_{L^p(\Omega)}^p+\int_\Omega\frac{|f(\xi)|^p}
{{{\mathop\mathrm{\,dist\,}}}(\xi,D)^{sp}}\,d\xi,
\end{align*}
which implies that
\begin{align*}
{\rm II}_{1,2}(f)\lesssim \|f\|_{L^p(\Omega)}^p+\|f\|_{\dot{\mathcal{W}}^{s,p}
(\Omega)}^p+\int_\Omega\frac{|f(x)|^p}{[{{\mathop\mathrm{dist\,}}}(x,D)]^{sp}}\,dx
\end{align*}
and hence completes the proof of Lemma \ref{lem-estimateK1}.
\end{proof}

\begin{lemma}\label{lem-estimateII3}
If $\epsilon\in(0,1]$, $\delta\in(0,\infty]$, and
$\Omega\subset\mathbb{R}^n$ is an
$(\epsilon,\delta,D)$-domain with $D\subset \partial\Omega$
being closed, then, for any $s\in(0,1)$, $p\in(1,\infty)$, and
$f\in L_{\rm loc}^1(\Omega)$, it holds that
\begin{align*}
{\rm II}_{2,1}(f)\lesssim \|f\|_{L^{p}(\Omega)}^p+\int_\Omega\dfrac{|f(x)|^p}{[{{\mathop\mathrm{dist\,}}}(x,D)]^{sp}}\,dx
\end{align*}
with the implicit positive constant independent of $f$, where ${\rm II}_{2,1}(f)$
is as in \eqref{eqn-decom-II-2}.
\end{lemma}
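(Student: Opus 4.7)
The hypothesis $Q,R\in\mathscr W_{\rm e}'$ is what makes this sub-piece tractable: every Whitney cube $P_j$ touching $Q$ (resp.\ $S_k$ touching $R$) automatically belongs to $\mathscr W_{\rm e}$, so that the partition-of-unity identities $\sum_{P_j\sim Q}\psi_j(x)=1$ on $Q$ and $\sum_{S_k\sim R}\psi_k(y)=1$ on $R$ both hold. Invoking Lemma \ref{lem-Gxy}(i) one then sees that the "defect" pieces $G_2$ and $G_3$ vanish identically, so $G(x,y)=G_1(x,y)$ for the whole range of integration in $\mathrm{II}_{2,1}(f)$.

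Next I would apply the crude triangle inequality
\[
|G_1(x,y)|\le A(x)+B(y),\qquad A(x):=\sum_{P_j\sim Q}\left|\left(E_{P_j^*}f\right)_{P_j^*}\right|\psi_j(x),
\]
and $B(y)$ the analogous sum over $S_k\sim R$. By the $(Q,R)\leftrightarrow(R,Q)$ symmetry of $\mathrm{II}_{2,1}(f)$ it suffices to estimate the $A$-contribution. Fixing $x\in Q$, I would enlarge the inner integration from $R\setminus B(x,\ell(Q)/10)$ (summed over $R\in\mathscr W_{\rm e}'$) to $\{y\in\mathbb{R}^n:|x-y|\ge\ell(Q)/10\}$; the elementary tail estimate $\int_{|x-y|\ge r}|x-y|^{-n-sp}\,dy\lesssim r^{-sp}$ produces a factor $\ell(Q)^{-sp}$. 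Then Jensen's inequality, together with the bounded overlap of $\{P_j:P_j\sim Q\}$ (Lemma \ref{W}(v)) and $|E_{P_j^*}f|=|f|\mathbf 1_\Omega$ on $P_j^*$, yields
\[
A(x)^p\lesssim \sum_{P_j\sim Q}\frac{1}{|P_j^*|}\int_{P_j^*\cap\Omega}|f(\xi)|^p\,d\xi,
\]
so after integrating in $x\in Q$ and using $|Q|\sim\ell(Q)^n\sim|P_j^*|$ (via Lemmas \ref{W}(ii) and \ref{lem3.4.4}(i)),
\[
\mathrm{II}_{2,1}(f)\lesssim \sum_{Q\in\mathscr W_{\rm e}'}\sum_{P_j\sim Q}\ell(Q)^{-sp}\int_{P_j^*\cap\Omega}|f(\xi)|^p\,d\xi.
\]

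The final step is a dichotomy on $\ell(Q)$, exactly in the spirit of the calculation \eqref{d3} for $\mathrm{II}_{1,1}(f)$. When $\ell(Q)\ge A\delta/16$ the factor $\ell(Q)^{-sp}$ is uniformly bounded, and the finite-overlap property of the reflected cubes $\{P_j^*\}_{P_j\in\mathscr W_{\rm e}}$ (Lemma \ref{lem3.4.4}(ii)) collapses the sum to $\lesssim\|f\|_{L^p(\Omega)}^p$. When $\ell(Q)<A\delta/16$, one would like to invoke Lemma \ref{lem4.6-1} to replace $\ell(Q)^{-sp}$ by $[{{\mathop\mathrm{dist\,}}}(\xi,D)]^{-sp}$ for $\xi\in P_j^*\cap\Omega$, thereby producing the Hardy integral on the right-hand side; this works immediately for cubes in $\mathscr W_{\rm e}'\setminus\mathscr W_{\rm e}''$ via the $\mathscr W_{\rm e}''$-version of Lemma \ref{lem5.7}(ii) recorded in the remark following that lemma.

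\textbf{Main obstacle.} The genuinely delicate case is the deep layer $Q\in\mathscr W_{\rm e}''$: here every neighbor of $Q$ still lies in $\mathscr W_{\rm e}'$, so Lemma \ref{lem4.6-1} does not apply directly, and a priori $\ell(Q)$ may be much smaller than ${{\mathop\mathrm{dist\,}}}(\xi,D)$ for $\xi$ in the reflected cubes. The only way I see to close the argument is to bootstrap through the nested layers $\mathscr W_{\rm e}\supset\mathscr W_{\rm e}'\supset\mathscr W_{\rm e}''\supset\cdots$ by iterating the chain construction of Lemma \ref{lem5.7}: a chain of uniformly bounded length (so with comparable scales throughout) connects $Q^*$ to the reflection of some cube in $\mathscr W_{\rm e}'\setminus\mathscr W_{\rm e}''$ where Lemma \ref{lem4.6-1} does apply, and then the bound ${{\mathop\mathrm{dist\,}}}(\xi,D)\lesssim\ell(Q)$ propagates back along the chain. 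This bootstrapping, together with careful bookkeeping of the bounded overlap along chains, is where the geometric content of the $(\epsilon,\delta,D)$-structure is really used.
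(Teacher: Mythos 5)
Your opening observation is correct: for $Q,R\in\mathscr W_{\rm e}'$, the partition-of-unity identities do indeed hold on $Q$ and $R$ separately, so $G_2\equiv G_3\equiv 0$ and $G=G_1$ on this block. However, the next step is a genuine mistake, and it is fatal to the whole strategy.

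The crude triangle inequality $|G_1(x,y)|\le A(x)+B(y)$ discards the cancellation in the difference $(E_{P_j^*}f)_{P_j^*}-(E_{S_k^*}f)_{S_k^*}$, and this cancellation is precisely what makes $\mathrm{II}_{2,1}(f)$ finite. After you decouple into $A(x)$ and $B(y)$ and integrate the tail, you arrive at a sum of the form
\begin{align*}
\sum_{P_j\in\mathscr W_{\rm e}}\ell(P_j)^{-sp}\int_{P_j^*\cap\Omega}|f(\xi)|^p\,d\xi .
\end{align*}
For cubes in $\mathscr W_{\rm e}\setminus\mathscr W_{\rm e}''$, Lemma \ref{lem4.6-1} lets you replace $\ell(P_j)^{-sp}$ by $[{\rm dist}\,(\xi,D)]^{-sp}$; but cubes deep in the extension cone (in $\mathscr W_{\rm e}''$ and near $\Gamma$) can be arbitrarily small while $P_j^*$ remains at bounded positive distance from $D$, so the factor $\ell(P_j)^{-sp}$ is genuinely unbounded relative to either $1$ or $[{\rm dist}\,(\xi,D)]^{-sp}$. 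To see that this is not a curable loose end, note that your intermediate bound already fails for $f$ a smooth function that equals $1$ on a neighborhood of $\Gamma$: the averages are then all essentially equal and $G_1\approx 0$, so $\mathrm{II}_{2,1}(f)$ is small, yet the decoupled quantity $\sum_j\ell(P_j)^{n-sp}$ diverges. The ``main obstacle'' you identified is therefore not a geometric difficulty at all; it is an artifact created by the triangle inequality, and no bootstrapping can bring it back. Indeed the proposed bootstrap is not available anyway: there is no chain of \emph{uniformly} bounded length with comparable scales from a deep cube $Q\in\mathscr W_{\rm e}''$ to a shell cube in $\mathscr W_{\rm e}'\setminus\mathscr W_{\rm e}''$, and the paper does not define layers beyond $\mathscr W_{\rm e}''$.

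The paper's argument keeps the difference intact and never produces a rogue factor $\ell(Q)^{-sp}$. The key tool you do not use is Lemma \ref{lem4.5-1}, which for $x\in Q$, $y\in c_nS_k\setminus B(x,\ell(Q)/10)$ and $P_j$ touching $Q$ gives $|x-y|\gtrsim D(P_j^*,S_k^*)$. This converts the singular factor $|x-y|^{-(n+sp)}$ into $[D(P_j^*,S_k^*)]^{-(n+sp)}$, which is bounded by $|\xi-\eta|^{-(n+sp)}$ for $\xi\in P_j^*$, $\eta\in S_k^*$. Combined with Jensen on the averages and the volume factor $\ell(Q)^n\ell(S_k)^n\sim|P_j^*||S_k^*|$, this bounds $\mathrm{II}_{2,1}(f)$ by
\begin{align*}
\sum_{Q\in\mathscr W_{\rm e}'}\;\sum_{\substack{P_j\in\mathscr W_{\rm e}\\ \overline P_j\cap\overline Q\neq\emptyset}}\;\sum_{S_k\in\mathscr W_{\rm e}}
\int_{P_j^*}\int_{S_k^*}\frac{\bigl|E_{P_j^*}f(\xi)-E_{S_k^*}f(\eta)\bigr|^p}{|\xi-\eta|^{n+sp}}\,d\eta\,d\xi ,
\end{align*}
which by the bounded overlap of reflected cubes and the splitting according to whether $\xi,\eta\in\Omega$ (via Lemma \ref{dist-2}) yields $\lesssim\|f\|_{\dot{\mathcal W}^{s,p}(\Omega)}^p+\|f\|_{L^p(\Omega)}^p+\int_\Omega|f|^p[{\rm dist}\,(\cdot,D)]^{-sp}$. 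Note in passing that the paper's proof actually produces a Gagliardo seminorm term on the right, which seems to be accidentally omitted from the lemma's statement, but that omission does not affect the overall Proposition; what matters is that your decoupled bound does not follow even after adding $\|f\|_{\dot{\mathcal W}^{s,p}(\Omega)}^p$ to the target, because the quantity you reduced to is Hardy-type, not Gagliardo-type.
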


\begin{proof}
From Lemma \ref{lem4.5-1}, we deduce that
\begin{align}\label{eqn-k11}
\notag{\rm II}_{2,1}(f)&\lesssim \sum_{Q\in\mathscr{W}_{\rm e}'}\int_Q\int_{\Omega_{\rm e}'\setminus B\left(x,\frac{\ell(Q)}{10}\right)}
\sum_{\genfrac{}{}{0pt}{}{P_j\in \mathscr{W}_{\rm e}}{ \overline{P}_j\cap\overline{Q}\neq\emptyset}}\sum_{S_k\in\mathscr{W}_{\rm e}}
\left|\left(E_{P_j^*}f\right)_{P_j^*}
-\left(E_{S_k^*}f\right)_{S_k^*}\right|^p\frac{\left|\psi_j(x)\, \psi_k(y) \right|^p}{|x-y|^{n+sp}}\,dydx\\ \notag
&\lesssim \sum_{Q\in\mathscr{W}_{\rm e}'}\int_Q
\sum_{\genfrac{}{}{0pt}{}{P_j\in\mathscr{W}_{\rm e}}{\overline{P}_j\cap\overline{Q}\neq\emptyset}}
\sum_{S_k\in\mathscr{W}_{\rm e}}\left|
\left(E_{P_j^*}f\right)_{P_j^*}-\left(E_{S_k^*}
f\right)_{S_k^*}\right|^p\int_{c_nS_k\setminus B\left(x,\frac{\ell(Q)}
{10}\right)}\frac{|\psi_k(y)|^p}{[D(P_j^*,S_k^*)]^{n+sp}}\,dydx\\ \notag
&\lesssim \sum_{Q\in\mathscr{W}_{\rm e}'}\int_Q
\sum_{\genfrac{}{}{0pt}{}{P_j\in\mathscr{W}_{\rm e}}{\overline P_j\cap\overline{Q}\neq\emptyset}}
\sum_{S_k\in\mathscr{W}_{\rm e}}\frac{\bigg|
\left(E_{P_j^*}f\right)_{P_j^*}
-\left(E_{S_k^*}f\right)_{S_k^*}\bigg|^p}{[D(P_j^*, S_k^*)]^{n+sp}}
\int_{c_nS_k}|\psi_k(y)|^p\,dydx\\ \notag
&\lesssim \sum_{Q\in \mathscr{W}_{\rm e}'}
\sum_{\genfrac{}{}{0pt}{}{P_j\in\mathscr{W}_{\rm e}}{\overline{P}_j\cap\overline{Q}\neq\emptyset}}
\sum_{S_k\in \mathscr{W}_{\rm e}}
\frac{\ell(Q)^n\ell(S_k)^n}{[D(P_j^*,S_k^*)]^{n+sp}}
\fint_{P_j^*}\fint_{S_k^*}\left|E_{P_j^*}f(\xi)-
E_{S_k^*}f(\eta)\right|^p\,d\eta d\xi\\
&\lesssim \sum_{Q\in \mathscr{W}_{\rm e}'}
\sum_{\genfrac{}{}{0pt}{}{P_j\in \mathscr{W}_{\rm e}}{ \overline{P}_j\cap\overline{Q}\neq\emptyset}}
\sum_{S_k\in \mathscr{W}_{\rm e}}\int_{P_j^*}\int_{S_k^*}
\frac{\left|E_{P_j^*}f(\xi)-E_{S_k^*}f(\eta)
\right|^p}{|\xi-\eta|^{n+sp}}\,d\eta d\xi.
\end{align}
Moreover, using Lemma \ref{W}, we find that, for any $Q\in \mathscr{W}_{\rm e}'$,
$\#\{P_j\in \mathscr{W}_{\rm e}:\ P_j\approx Q\}\lesssim1$. This, combined with \eqref{eqn-EQj}, then implies
\begin{align*}
{\rm II}_{2,1}(f)&\lesssim \sum_{P_j\in \mathscr{W}_{\rm e}}
\sum_{S_k\in \mathscr{W}_{\rm e}}\int_{P_j^*\cap\Omega}
\int_{S_k^*\cap\Omega}
\frac{|f(\xi)-f(\eta)|^p}{|\xi-\eta|^{n+sp}}\,d\eta d\xi\\ \notag
&\quad+\sum_{P_j\in\mathscr{W}_{\rm e}}\sum_{S_k\in \mathscr{W}_{\rm e}}\int_{P_j^*\cap\Omega}
\int_{S_k^*\setminus\Omega}\frac{|f(\xi)|^p}
{|\xi-\eta|^{n+sp}}\,d\eta d\xi.
\end{align*}
From Lemma \ref{W} again, we infer that the first term above can be controlled by
\begin{align*}
\int_{\Omega}\int_{\Omega}\frac{|f(\xi)-f(\eta)|^p}{|\xi-\eta|^{n+sp}}\,d\eta d\xi\sim\|f\|_{\dot{\mathcal{W}}^{s,p}
(\Omega)}^p,
\end{align*}
which is desired. On the other hand, following an argument similar to that used in
the estimation of \eqref{d3} with Lemma \ref{dist-1} therein replaced by Lemma \ref{dist-2},
we obtain the second term is bounded by
\begin{align*}
\sum_{S_k\in \mathscr{W}_{\rm e}}\int_{\Omega}\int_{S_k^*\setminus\Omega}
\frac{|f(\xi)|^p}{|\xi-\eta|^{n+sp}}\,d\eta d\xi
\lesssim \|f\|_{L^p(\Omega)}^p+\int_\Omega \frac{|f(\xi)|^p}{[{{\mathop\mathrm{dist\,}}}(\xi,D)]^{sp}}\,d\xi.
\end{align*}
Hence, we finally have
\begin{align*}
{\rm II}_{2,1}(f)\lesssim \|f\|_{\dot{\mathcal{W}}^{s,p}(\Omega)}^p+\|f\|_{L^p(\Omega)}^p
+\int_\Omega\frac{|f(\xi)|^p}{[{{\mathop\mathrm{dist\,}}}(\xi,D)]^{sp}}\,d\xi,
\end{align*}
which completes the proof of Lemma \ref{lem-estimateII3}.
\end{proof}

\begin{lemma}\label{lem-estimateII4}
If $\epsilon\in(0,1]$, $\delta\in(0,\infty]$, and $\Omega\subset\mathbb{R}^n$ is an
$(\epsilon,\delta,D)$-domain with $D\subset \partial\Omega$ being closed, then, for any
$s\in(0,1)$, $p\in(1,\infty)$, and $f\in L_{\rm loc}^1(\Omega)$, it holds that
\begin{align*}
{\rm II}_{2,2}(f)\lesssim \|f\|_{L^{p}(\Omega)}^p
+\int_\Omega\dfrac{|f(x)|^p}
{[{{\mathop\mathrm{dist\,}}}(x,D)]^{sp}}\,dx
\end{align*}
with the implicit positive constant independent of $f$, where ${\rm II}_{2,2}(f)$
is as in \eqref{eqn-decom-II-2}.
\end{lemma}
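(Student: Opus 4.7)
The plan is to apply the pointwise decomposition $G(x,y)=G_1(x,y)+G_2(x,y)+G_3(x,y)$ from Lemma~\ref{lem-Gxy}(i). The key initial observation is that, since $Q\in\mathscr{W}_{\rm e}'$, the defining property \eqref{We'} forces every neighbor of $Q$ in $\mathscr{W}(\overline{\Omega})$ to lie in $\mathscr{W}_{\rm e}$, so $\sum_{P_j\in\mathscr{W}_{\rm e},\,\overline{P}_j\cap\overline{Q}\neq\emptyset}\psi_j(x)=1$ on $Q$ and $G_3$ vanishes identically on $Q\times R$. Hence it suffices to estimate the $G_1$- and $G_2$-contributions to $\mathrm{II}_{2,2}(f)$.

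For the $G_1$ piece, the inner sum over $S_k\in\mathscr{W}_{\rm e}$ with $\overline{S}_k\cap\overline{R}\neq\emptyset$ is nontrivial only when $R$ touches some reflection cube, in which case $\ell(S_k)\sim\ell(R)$ by Lemma~\ref{W}(iv). Applying Lemma~\ref{lem4.5-1} to bound $D(P_j^*,S_k^*)\lesssim|x-y|$ and converting differences of averages to integrals via Jensen's inequality, I would follow the computation \eqref{eqn-k11} of Lemma~\ref{lem-estimateII3} and reduce matters to
\[
\sum_{P_j\in\mathscr{W}_{\rm e}}\sum_{S_k\in\mathscr{W}_{\rm e}}\int_{P_j^*}\int_{S_k^*}\frac{|E_{P_j^*}f(\xi)-E_{S_k^*}f(\eta)|^p}{|\xi-\eta|^{n+sp}}\,d\eta\,d\xi.
\]
Splitting $E_{P_j^*}f$ into its $\Omega$-restriction and its zero part via \eqref{eqn-EQj}, and applying Lemma~\ref{dist-2} to the mixed pieces, absorbs this into $\|f\|^p_{\dot{\mathcal W}^{s,p}(\Omega)}+\|f\|^p_{L^p(\Omega)}+\int_\Omega|f|^p/d_D^{sp}$.

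For the $G_2$ piece, since the bracket $1-\sum_{S_k\in\mathscr{W}_{\rm e},\,\overline{S}_k\cap\overline{R}\neq\emptyset}\psi_k(y)$ lies in $[0,1]$, I would use the pointwise bound $|G_2(x,y)|\le\sum_{P_j\in\mathscr{W}_{\rm e},\,\overline{P}_j\cap\overline{Q}\neq\emptyset}|(E_{P_j^*}f)_{P_j^*}|\psi_j(x)$, Jensen's estimate $|(E_{P_j^*}f)_{P_j^*}|^p\le\fint_{P_j^*\cap\Omega}|f|^p$, and the finite-multiplicity property of reflection from Lemma~\ref{lem3.4.4}(ii). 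Summing over $R\in\mathscr{W}(\overline{\Omega})\setminus\mathscr{W}_{\rm e}$, the $y$-domain becomes $\Omega^\complement\setminus\bigcup_{S\in\mathscr{W}_{\rm e}}S$, which lies in $\Omega^\complement\setminus\Omega_{\rm e}$ modulo a thin layer handled as in the $G_1$ analysis. On the bulk part, Corollary~\ref{dist-1xx} delivers either $|x-y|\gtrsim 1$ or $|x-y|\gtrsim{\rm dist}(\xi,D)$ for any $\xi\in P_j^*\cap\Omega$, and the radial integral $\int|x-y|^{-n-sp}\,dy$ over the corresponding ball-complements converts this into the Hardy weight $d_D^{-sp}(\xi)$ exactly as in \eqref{b1-10}, yielding $\|f\|^p_{L^p(\Omega)}+\int_\Omega|f|^p/d_D^{sp}$ after collapsing the double sum over $Q$ and touching $P_j$ via Lemma~\ref{lem3.4.4}(ii).

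The main obstacle will be the thin boundary layer where $R\in\mathscr{W}(\overline{\Omega})\setminus\mathscr{W}_{\rm e}$ touches some reflection cube $S_k\in\mathscr{W}_{\rm e}$: here $y$ may sit inside $c_nS_k\subset\Omega_{\rm e}$, so Corollary~\ref{dist-1xx} does not directly apply to the $G_2$-integral, but the Whitney comparability $\ell(R)\sim\ell(S_k)$ from Lemma~\ref{W}(iv) together with the finite-overlap structure allows one to reinterpret that contribution as a $G_1$-type term and absorb it into the Gagliardo and Hardy norms. Reconciling these two regimes and keeping the double sum over $Q$ and $P_j$ under control by the finite multiplicity of Lemma~\ref{lem3.4.4}(ii) is what makes the $D$-adapted Hardy weight emerge without loss.
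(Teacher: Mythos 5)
Your proposal takes a genuinely different route than the paper's. The paper does not decompose via Lemma~\ref{lem-Gxy}(i) at all: it observes that on the set $\{y\in\Omega^{\complement}: y\notin\Omega_{\rm e}\}$ all $\psi_k(y)$ with $S_k\in\mathscr{W}_{\rm e}$ vanish, so $G(x,y)$ collapses to the single sum $\sum_{P_j}(E_{P_j^*}f)_{P_j^*}\psi_j(x)$ with \emph{no} difference of averages anywhere; the resulting integral is then estimated via Corollary~\ref{dist-1xx}, yielding exactly the $L^p$ plus Hardy bound, with no Gagliardo seminorm. Your $G_1/G_2/G_3$ split is correct in identifying $G_3\equiv 0$ on $Q\in\mathscr{W}_{\rm e}'$, but it creates problems you then have to fight against.

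Two concrete issues. First, your $G_1$ estimate, following \eqref{eqn-k11}, necessarily produces the term $\|f\|^p_{\dot{\mathcal W}^{s,p}(\Omega)}$ via the Gagliardo-type double integral over $P_j^*\times S_k^*$. That term is \emph{absent} from the lemma's stated bound, which is $\|f\|^p_{L^p(\Omega)}+\int_\Omega |f|^p/d_D^{sp}$ only. So your proposal, even if carried through, would prove a weaker statement than the one claimed (adequate for Proposition~\ref{thm1x} as a whole, but not the lemma as written). In fact the Gagliardo term is avoidable even in the thin layer: one can bound $|G_1|\le |\sum_{P_j}(\cdots)\psi_j(x)|+|\sum_{S_k}(\cdots)\psi_k(y)|$ and control the second sum separately using Lemma~\ref{lem4.6-1} (noting $S_k$ touches $R\in\mathscr{W}(\overline\Omega)\setminus\mathscr{W}_{\rm e}''$, and $\ell(S_k)\lesssim|x-y|$ from Lemma~\ref{lem4.5-1}), keeping the total inside $L^p+$Hardy.

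Second, your resolution of the $G_2$ piece in the thin layer is not well-defined. ``Reinterpret that contribution as a $G_1$-type term'' is not an operation: $G_2$ lacks any cancellation structure --- it is a single average times a cut-off function --- so there is no pair $(P_j^*,S_k^*)$ over which to form a Gagliardo difference. But there is no obstruction to handling it directly: for $y\in R$ with $R\in\mathscr{W}(\overline\Omega)\setminus\mathscr{W}_{\rm e}$, one automatically has $y\in\Omega^{\complement}\setminus\Omega_{\rm e}'$ (since $\Omega_{\rm e}'\subset\bigcup_{S\in\mathscr{W}_{\rm e}}S$ and $y\notin\bigcup_{S\in\mathscr{W}_{\rm e}}S$), and the ball exclusion $y\notin B(x,\ell(Q)/10)$ supplies $\mathrm{diam}\,Q\lesssim|x-y|$, which are precisely the two ingredients used inside the \emph{proof} of Corollary~\ref{dist-1xx} (the hypothesis $y\notin\Omega_{\rm e}$ in that corollary is only there to derive those two facts). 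So the very same dichotomy ``$|x-y|\gtrsim 1$ or $|x-y|\gtrsim\mathrm{dist}(\xi,D)$'' extends verbatim to the thin layer, and there is no obstacle at all. In other words, the obstacle you flagged is real only as a question about set containment, not as an analytic difficulty, and your proposed fix is the wrong one.
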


\begin{proof}
By the fact $\bigcup_{S_k\in \mathscr{W}_{\rm e}}{\rm supp}\,\psi_k \subset\Omega_{\rm e}$
as in \eqref{eqn-boze}, we have, for any $x\in Q\in \mathscr{W}_{\rm e}'$ and
$y\in \Omega^{\complement}\setminus [\Omega_{\rm e}\cup B(x,\ell(Q)/10)]$,
it holds that $\psi_k(y)=0$ and
\begin{align*}
G(x,y)=\mathop{\sum}\limits_{\genfrac{}{}{0pt}{}{P_j\in \mathscr{W}_{\rm e}}{\overline P_j\cap \overline{Q}\neq\emptyset}}
\left(E_{P_j^*}f\right)_{P_j^*}\psi_j(x).
\end{align*}
Then, using \eqref{eqn-decom-II-2} and Lemma \ref{W}, we conclude that
\begin{align*}
{\rm II}_{2,2}(f)&\sim \sum_{Q\in \mathscr{W}_{\rm e}'}
\int_Q\int_{\Omega^{\complement}\setminus\Omega_{\rm e}}
\left|\mathop{\sum}
\limits_{P_j\in \mathscr{W}_{\rm e},\overline{P}_j\cap\overline{Q}
\neq\emptyset}\left(E_{P_j^*}f\right)_{P_j^*}\psi_j(x)\right|^p\frac{1}
{|x-y|^{n+sp}}\,dydx\\ \notag
&\lesssim \sum_{Q\in\mathscr{W}_{\rm e}'}\int_Q\sum_{\genfrac{}{}{0pt}{}{P_j\in \mathscr{W}_{\rm e}}{ \overline{P}_j\cap\overline{Q}\neq\emptyset}}
\fint_{P_j^*} \left|E_{P_j^*}f(\xi)\right|^pd\xi \int_{\Omega^{\complement}\setminus\Omega_{\rm e}}\frac{\left|\psi_j(x)\right|^p}{|x-y|^{n+sp}}\,dydx\\ \notag
&\lesssim \sum_{Q\in \mathscr{W}_{\rm e}'}\int_Q\sum_{\genfrac{}{}{0pt}{}{P_j\in \mathscr{W}_{\rm e}}{ \overline{P}_j\cap \overline{Q}\neq\emptyset}}
\ell(P_j)^{-n}\int_{P_j^*\cap\Omega}|f(\xi)|^p\int_{\Omega^{\complement}\setminus\Omega_{\rm e}}\frac{1}{|x-y|^{n+sp}}\,dyd\xi dx,
\end{align*}
where $\Omega_{\rm e}$ is as in \eqref{eqn-boze}.

Now, from Lemma \ref{dist-1xx}, we deduce that there exists a constant $C\in (0,\infty)$
such that, for any $x\in Q\in \mathscr{W}_{\rm e}'$, $y\in\Omega^{\complement}\setminus\Omega_{\rm e}$,
and $\xi\in P_j^*\cap\Omega$ satisfying $\overline{P}_j\cap\overline{Q}\neq\emptyset$,
it holds that either $|x-y|\ge C$ or $|x-y|\ge C{{\mathop\mathrm{\,dist\,}}}(\xi,D)$. Thus, we obtain
\begin{align*}
{\rm II}_{2,2}(f)&\lesssim \sum_{Q\in \mathscr{W}_{\rm e}'}
\int_Q\sum_{\genfrac{}{}{0pt}{}{P_j\in
\mathscr{W}_{\rm e}}{\overline P_j\cap
\overline{Q}\neq\emptyset}}\ell(P_j)^{-n}
\int_{P_j^*\cap\Omega}|f(\xi)|^p
\int_{\left\{y\in\mathbb{R}^n:\,|y-x|\ge C\right\}}\frac{1}
{|x-y|^{n+sp}}\,dyd\xi dx\\ \notag
&\quad+\sum_{Q\in \mathscr{W}_{\rm e}'}\int_Q
\sum_{\genfrac{}{}{0pt}{}{P_j\in \mathscr{W}_{\rm e}}{
\overline P_j\cap \overline{Q}\neq
\emptyset}}\ell(P_j)^{-n}\int_{P_j^*\cap\Omega}|
f(\xi)|^p\int_{\left\{y\in\mathbb{R}^n:\,|y-x|\ge C{{\mathop\mathrm{\,dist\,}}}(\xi,D)\right\}}\frac{1}
{|x-y|^{n+sp}}\,dyd\xi dx\\ \notag
&\lesssim \sum_{Q\in \mathscr{W}_{\rm e}'}\sum_{P_j\in \mathscr{W}_{\rm e}}
\int_{P_j^*\cap\Omega}|f(\xi)|^p\,d\xi
+\sum_{Q\in\mathscr{W}_{\rm e}'}\sum_{\genfrac{}{}{0pt}{}{P_j\in\mathscr{W}_{\rm e}}{ \overline{P}_j\cap\overline{Q}\ne\emptyset}}\int_{P_j^*\cap
\Omega}\frac{|f(\xi)|^p}{[{{\mathop\mathrm{dist\,}}}(\xi,D)]^{sp}}\,d\xi\\ \notag
&\lesssim \|f\|_{L^p(\Omega)}^p+\int_\Omega\frac{|f(\xi)|^p}
{[{{\mathop\mathrm{dist\,}}}(\xi,D)]^{sp}}\,d\xi,
\end{align*}
which is desired and hence completes
the proof of Lemma \ref{lem-estimateII4}.
\end{proof}

\begin{lemma}\label{lem-estimateII5}
If $\epsilon\in(0,1]$, $\delta\in(0,\infty]$, and $\Omega\subset\mathbb{R}^n$
is an $(\epsilon,\delta,D)$-domain with $D\subset \partial\Omega$ being closed,
then, for any $s\in(0,1)$, $p\in(1,\infty)$, and $f\in L_{\rm loc}^1(\Omega)$, it holds that
\begin{align*}
{\rm II}_{2,3}(f)\lesssim \|f\|_{L^{p}(\Omega)}^p+\int_\Omega\dfrac{|f(x)|^p}{[{{\mathop\mathrm{dist\,}}}(x,D)]^{sp}}\,dx
\end{align*}
with the implicit positive constant independent of $f$, where ${\rm II}_{2,3}(f)$ is as
in \eqref{eqn-decom-II-2}.
\end{lemma}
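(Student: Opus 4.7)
The plan is to parallel the analyses of Lemmas \ref{lem-estimateII3} and \ref{lem-estimateII4}. First I would decompose $G(x,y)$ using Lemma \ref{lem-Gxy}(i). Since $Q\in\mathscr{W}_{\rm e}'$, every $\mathscr{W}(\overline{\Omega})$-neighbor of $Q$ lies in $\mathscr{W}_{\rm e}$, so $\sum_{\overline{P}_j\cap\overline{Q}\neq\emptyset}\psi_j(x)=1$ on $Q$. Consequently $G_3\equiv 0$ and $G=G_1+G_2$, leaving only two pieces to bound.

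For the $G_1$ piece I would repeat the chain of estimates from the proof of Lemma \ref{lem-estimateII3} almost verbatim: Lemma \ref{lem4.5-1} yields $D(P_j^*,S_k^*)\lesssim|x-y|$ for $x\in Q$ and $y\in c_nS_k\setminus B(x,\ell(Q)/10)$, after which the $\psi_j$ and $\psi_k$ factors integrate out and the difference of averages is rewritten as a double Gagliardo integral over $P_j^*\times S_k^*$. The finite-overlap of reflected cubes (Lemma \ref{lem3.4.4}(ii)) combined with Lemma \ref{dist-2} to treat the $S_k^*\setminus\Omega$ piece then collapses the sum to $\|f\|_{\dot{\mathcal{W}}^{s,p}(\Omega)}^p+\|f\|_{L^p(\Omega)}^p+\int_\Omega|f(\xi)|^p[{{\mathop\mathrm{dist\,}}}(\xi,D)]^{-sp}\,d\xi$, exactly paralleling the output in Lemma \ref{lem-estimateII3} (the extra Gagliardo term being absorbed into the combined estimate for ${\rm II}(f)$).

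The essentially new piece is $G_2$, where the geometry $R\in\mathscr{W}_{\rm e}\setminus\mathscr{W}_{\rm e}'$ is used crucially. Since $|1-\sum_{S_k\in\mathscr{W}_{\rm e}}\psi_k(y)|\le 1$ and only boundedly many $P_j$ satisfy $\overline{P}_j\cap\overline{Q}\neq\emptyset$, Jensen's inequality gives $|G_2(x,y)|^p\lesssim\sum_{\overline{P}_j\cap\overline{Q}\neq\emptyset}\fint_{P_j^*}|f(\xi)|^p\,d\xi$. Lemma \ref{bound} provides the link to $D$: for $R$ with $\ell(R)\le A\delta/4$ one has ${{\mathop\mathrm{dist\,}}}(R,D)\lesssim\ell(R)\sim\ell(Q)$, and an argument mimicking Lemma \ref{lem4.6-1} yields ${{\mathop\mathrm{dist\,}}}(\xi,D)\lesssim\ell(Q)\lesssim|x-y|$ for every $\xi\in P_j^*\cap\Omega$ with $\overline{P}_j\cap\overline{Q}\neq\emptyset$. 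Plugging this into the singular integral $\int_Q\int_{R\setminus B(x,\ell(Q)/10)}|x-y|^{-n-sp}\,dy\,dx$, summing over $R$ using the bounded overlap of Whitney cubes, and then over $Q\in\mathscr{W}_{\rm e}'$ using Lemma \ref{lem3.4.4}(ii), delivers the Hardy term $\int_\Omega|f(\xi)|^p[{{\mathop\mathrm{dist\,}}}(\xi,D)]^{-sp}\,d\xi$. The residual large-scale case $\ell(R)>A\delta/4$ is separated from $D$ by a uniform positive distance, so $|x-y|\gtrsim\delta$ on its contribution and it collapses to $\|f\|_{L^p(\Omega)}^p$.

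The main obstacle I foresee is the bookkeeping in the double sum $\sum_{Q\in\mathscr{W}_{\rm e}'}\sum_{R\in\mathscr{W}_{\rm e}\setminus\mathscr{W}_{\rm e}'}$: one must verify that each reflected cube $P_j^*$ is charged only boundedly many times, so that the single Hardy integral over $\Omega$ appears with controlled multiplicity rather than accumulating. This rests on combining Lemma \ref{W}(v) (bounded intersection of Whitney cubes), Lemma \ref{lem3.4.4}(ii) (bounded multiplicity of the reflection map), and the thinness near $D$ of the cube family $\mathscr{W}_{\rm e}\setminus\mathscr{W}_{\rm e}'$ encoded in Lemma \ref{bound}.
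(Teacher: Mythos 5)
Your overall decomposition ($G_3\equiv 0$ on $Q\in\mathscr{W}_{\rm e}'$, leaving $G=G_1+G_2$) and the treatment of the $G_1$ piece by re-running the ${\rm II}_{2,1}$ argument both match the paper's proof. However, the $G_2$ argument as you wrote it contains a genuine error.

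You claim that for $R\in\mathscr{W}_{\rm e}\setminus\mathscr{W}_{\rm e}'$ with $\ell(R)\le A\delta/4$ one has $\ell(R)\sim\ell(Q)$, and you deduce ${{\mathop\mathrm{dist\,}}}(\xi,D)\lesssim\ell(Q)$ for $\xi\in P_j^*\cap\Omega$ with $\overline{P}_j\cap\overline{Q}\neq\emptyset$. Neither holds. The double sum over $Q\in\mathscr{W}_{\rm e}'$ and $R\in\mathscr{W}_{\rm e}\setminus\mathscr{W}_{\rm e}'$ carries no condition forcing $Q$ and $R$ to touch or even to be comparable in size; $Q$ may sit deep inside the cone over $\Gamma$, at long-range from any boundary cube $R$. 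Moreover, Lemma \ref{lem4.6-1} applies when the cube playing the role of ``$Q$'' lies in $\mathscr{W}(\overline{\Omega})\setminus\mathscr{W}_{\rm e}''$; here your outer cube $Q$ is in $\mathscr{W}_{\rm e}'$, precisely the opposite regime. For such a $Q$, its reflected neighbours $P_j^*$ need not be close to $D$ at all: on a half-plane with $D$ half of the boundary axis, a tiny $Q\in\mathscr{W}_{\rm e}'$ near a point of $\Gamma$ at unit distance from $D$ has $P_j^*$ at distance $\sim 1\gg\ell(Q)$ from $D$, so ${{\mathop\mathrm{dist\,}}}(\xi,D)\lesssim\ell(Q)$ fails.

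The paper instead estimates this piece by paralleling Lemma \ref{lem-estimateII4} (${\rm II}_{2,2}$), not Lemma \ref{lem4.6-1}. The right observation is that a.e.\ $y\in R$ with $R\in\mathscr{W}_{\rm e}\setminus\mathscr{W}_{\rm e}'$ lies in $\Omega^{\complement}\setminus\Omega_{\rm e}'$, so Lemma \ref{dist-1} applies. Combining with $|x-\xi|\lesssim\ell(Q)\lesssim|x-y|$ (as in the proof of Corollary \ref{dist-1xx}) gives the dichotomy ``either $|x-y|\gtrsim 1$ or $|x-y|\gtrsim{{\mathop\mathrm{dist\,}}}(\xi,D)$''. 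Integrating $\int_{|x-y|\ge c}|x-y|^{-n-sp}\,dy\sim c^{-sp}$ in each branch produces the $L^p$ and Hardy terms, after which the Whitney bounded-overlap arguments you describe close the sum. Your conclusion ${{\mathop\mathrm{dist\,}}}(\xi,D)\lesssim|x-y|$ can in fact be recovered on the branch $\ell(R)\le A\delta/4$ via Lemma~\ref{bound}, the Lipschitz property of the distance function, and a triangle inequality through $y$ — but that is a different derivation from the one you gave, the intermediate bound through $\ell(Q)$ being false. As written, the $G_2$ step does not stand.
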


\begin{proof}
By the definition of $\psi_j$, for any $x\in Q\in \mathscr{W}_{\rm e}'$, it is obvious
that $\sum_{P_j\in\mathscr{W}_{\rm e}}\psi_j(x)=1$, which, together with Lemma \ref{lem-Gxy},
implies that, for any $y\in R\setminus B(x,{\ell(Q)}/{10})$ with $R\in {\mathscr{W}}_{\rm e}
\setminus\mathscr{W}_{\rm e}'$, it holds that
\begin{align*}
G(x,y)=\sum_{\genfrac{}{}{0pt}{}{P_j\in \mathscr{W}_{\rm e}}{\overline{P}_j\cap\overline{Q}\neq\emptyset}}
\sum_{S_k\in \mathscr{W}_{\rm e}}\left[\left(E_{P_j^*}f\right)_{P_j^*}
-\left(E_{S_k^*}f\right)_{S_k^*}\right]\psi_j(x)\,
\psi_k(y)+\sum_{P_j\in \mathscr{W}_{\rm e}}\left(E_{P_j^*}
f\right)_{P_j^*}\psi_j(x)
\left[1-\sum_{S_k\in
\mathscr{W}_{\rm e}}\psi_k(y)\right],
\end{align*}
and consequently
\begin{align*}
{\rm II}_{2,3}(f)&\lesssim \sum_{Q\in \mathscr{W}_{\rm e}'}\int_Q\int_{\Omega_{\rm e}\setminus \left[\Omega_{\rm e}'\cup
B\left(x,\frac{\ell(Q)}{10}\right)\right]}\sum_{\genfrac{}{}{0pt}{}{P_j\in \mathscr{W}_{\rm e}}{ \overline{P}_j\cap\overline{Q}\neq
\emptyset}}\sum_{S_k\in\mathscr{W}_{\rm e}}\left|
\left(E_{P_j^*}f\right)_{P_j^*}
-\left(E_{S_k^*}f\right)_{S_k^*} \right|^p
\frac{\left|\psi_j(x)\, \psi_k(y)\right|^p}{|x-y|^{n+sp}}\,dydx \\
&\quad+\sum_{Q\in \mathscr{W}_{\rm e}'}\int_Q
\int_{\Omega_{\rm e}\setminus \left[\Omega_{\rm e}'\cup B\left(x,\frac{\ell(Q)}{10}\right)\right]}
\left|\mathop{\sum}\limits_{P_j\in \mathscr{W}_{\rm e},\overline{P}_j\cap\overline{Q}\neq\emptyset}
\left(E_{P_j^*}f\right)_{P_j^*}
\psi_j(x)\right|^p\frac{1}{|x-y|^{n+sp}}\,dydx\\
&=:{\rm II}_{2,3,1}(f)+{\rm II}_{2,3,2}(f),
\end{align*}
where $\Omega_{\rm e}$ and $\Omega_{\rm e}'$ are respectively as in \eqref{eqn-boze} and \eqref{eqn-boze'}.
Notice that the estimations for ${\rm II}_{2,3,1}(f)$ and ${\rm II}_{2,3,2}(f)$ are respectively similar to those of
${\rm II}_{2,1}$ and ${\rm II}_{2,2}$ in Lemmas \ref{lem-estimateII3} and \ref{lem-estimateII4},
the details being omitted. This implies that
\begin{align*}
{\rm II}_{2,3}(f)\lesssim \|f\|_{\dot{\mathcal{W}}^{s,p}
(\Omega)}^p+\|f\|_{L^p(\Omega)}^p
+\int_\Omega\frac{|f(\xi)|^p}{[{{\mathop\mathrm{dist\,}}}(\xi,D)]^{sp}}\,d\xi
\end{align*}
and hence finishes the proof of Lemma \ref{lem-estimateII5}.
\end{proof}

\begin{lemma}\label{lem-estimateII6}
If $\epsilon\in(0,1]$, $\delta\in(0,\infty]$, and
$\Omega\subset\mathbb{R}^n$ is an
$(\epsilon,\delta,D)$-domain with
$D\subset \partial\Omega$ being closed, then, for any
$s\in(0,1)$, $p\in(1,\infty)$, and $f\in L_{\rm loc}^1(\Omega)$, it holds that
\begin{align*}
{\rm II}_{2,4}(f)\lesssim \|f\|_{L^{p}(\Omega)}^p+\|f\|_{\dot{\mathcal{W}}^{s,p}
(\Omega)}^p
+\int_\Omega\dfrac{|f(x)|^p}{[{{\mathop\mathrm{dist\,}}}(x,D)]^{sp}}\,dx
\end{align*}
with the implicit positive constant independent of $f$, where ${\rm II}_{2,4}(f)$
is as in \eqref{eqn-decom-II-2}.
\end{lemma}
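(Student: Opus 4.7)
\emph{Proof proposal.} The plan is to mirror the proof of Lemma \ref{lem-estimateII5}, with the roles of the outer cube $Q$ and the inner cube $R$ essentially interchanged. The key observation is that, since $R\in\mathscr{W}_{\rm e}'$ has all of its Whitney neighbors in $\mathscr{W}_{\rm e}$ by the definition of $\mathscr{W}_{\rm e}'$, for any $y\in R$ we have $\sum_{S_k\in\mathscr{W}_{\rm e},\,\overline{S}_k\cap\overline{R}\neq\emptyset}\psi_k(y)=1$. Thus, in the decomposition $G=G_1+G_2+G_3$ of Lemma \ref{lem-Gxy}(i), the term $G_2$ vanishes identically on the relevant region, leaving $G(x,y)=G_1(x,y)+G_3(x,y)$. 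Using $|G|^p\lesssim|G_1|^p+|G_3|^p$, I would split
$${\rm II}_{2,4}(f)\lesssim {\rm II}_{2,4,1}(f)+{\rm II}_{2,4,2}(f),$$
where ${\rm II}_{2,4,1}(f)$ and ${\rm II}_{2,4,2}(f)$ are obtained by replacing $|G(x,y)|^p$ in \eqref{eqn-decom-II-2} by $|G_1(x,y)|^p$ and $|G_3(x,y)|^p$, respectively.

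For ${\rm II}_{2,4,1}(f)$, I would reuse the chain of estimates from the proof of Lemma \ref{lem-estimateII3}. First I would invoke Lemma \ref{lem4.5-1} to replace $|x-y|^{-(n+sp)}$ by $[D(P_j^*,S_k^*)]^{-(n+sp)}$; an inspection of that lemma's proof shows it uses only $P_j,S_k\in\mathscr{W}_{\rm e}$ with $\overline{P}_j\cap\overline{Q}\neq\emptyset$, not $Q\in\mathscr{W}_{\rm e}$, so it remains valid here. Next I would apply Jensen's inequality to convert $|(E_{P_j^*}f)_{P_j^*}-(E_{S_k^*}f)_{S_k^*}|^p$ into the double integral $\fint_{P_j^*}\fint_{S_k^*}|E_{P_j^*}f(\xi)-E_{S_k^*}f(\eta)|^p\,d\eta\,d\xi$, then use the bounded-overlap property of Whitney cubes from Lemma \ref{W} to collapse the outer sums, exactly as in \eqref{eqn-k11}. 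This yields a Gagliardo contribution $\|f\|^p_{\dot{\mathcal W}^{s,p}(\Omega)}$ coming from $\Omega\times\Omega$ plus a cross contribution on $\Omega\times(S_k^*\setminus\Omega)$, which Lemma \ref{dist-2} bounds by $\|f\|^p_{L^p(\Omega)}+\int_\Omega|f(\xi)|^p\,[d_D(\xi)]^{-sp}\,d\xi$.

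For ${\rm II}_{2,4,2}(f)$, I would follow the pattern of Lemma \ref{lem-estimateII4}. Since the factor $1-\sum_{P_j\in\mathscr{W}_{\rm e},\,\overline{P}_j\cap\overline{Q}\neq\emptyset}\psi_j(x)$ is bounded by $1$, the triangle inequality combined with bounded overlap gives
$$|G_3(x,y)|^p\lesssim\sum_{S_k\in\mathscr{W}_{\rm e},\,\overline{S}_k\cap\overline{R}\neq\emptyset}\left|\left(E_{S_k^*}f\right)_{S_k^*}\right|^p|\psi_k(y)|^p.$$
Replacing the squared average by $\fint_{S_k^*}|E_{S_k^*}f|^p$, interchanging the sums so that $S_k\in\mathscr{W}_{\rm e}$ runs outside (each $S_k$ touches only boundedly many $R\in\mathscr{W}_{\rm e}'$), and using the comparison $|x-y|\sim|x-\xi|$ for $y\in c_nS_k$ and $\xi\in S_k^*$ (cf.\ the argument in Lemma \ref{lem-type-J2}), the problem reduces to controlling
$$\sum_{S_k\in\mathscr{W}_{\rm e}}\int_{S_k^*\cap\Omega}|f(\xi)|^p\int_{\Omega^{\complement}\setminus\Omega_{\rm e}'}|x-\xi|^{-(n+sp)}\,dx\,d\xi.$$
Lemma \ref{dist-1} then ensures that either $|x-\xi|\gtrsim 1$ or $|x-\xi|\gtrsim\mathop\mathrm{dist\,}(\xi,D)$, and polar-coordinate integration produces the bound $\|f\|^p_{L^p(\Omega)}+\int_\Omega|f(\xi)|^p\,[d_D(\xi)]^{-sp}\,d\xi$.

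The hard part will be to verify in detail that the geometric lemmas of Section \ref{s2}—in particular Lemma \ref{lem4.5-1}—remain applicable when the outer cube $Q$ is only assumed to lie in $\mathscr{W}(\overline{\Omega})\setminus\mathscr{W}_{\rm e}'$ rather than in $\mathscr{W}_{\rm e}$, so that the reflected cube $Q^*$ may not be defined. Fortunately, the $G_1$-term only involves reflected cubes $P_j^*$ of $\mathscr{W}_{\rm e}$-neighbors of $Q$ (which are always available through Lemma \ref{lem3.4.4}), and the $G_3$-term does not involve $Q^*$ at all; the geometric lower bound $|x-y|\geq\ell(Q)/10$ from the integration region is exactly what Lemma \ref{lem4.5-1} needs, and this is independent of whether $Q$ itself belongs to $\mathscr{W}_{\rm e}$. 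Combining the two bounds for ${\rm II}_{2,4,1}(f)$ and ${\rm II}_{2,4,2}(f)$ then completes the proof.
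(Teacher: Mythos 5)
Your proposal is correct, and it rests on the same core ideas as the paper's proof: since $R\in\mathscr{W}_{\rm e}'$ one has $\sum_{S_k\in\mathscr{W}_{\rm e},\,\overline{S}_k\cap\overline{R}\neq\emptyset}\psi_k(y)=1$ on $R$, so in the decomposition of Lemma \ref{lem-Gxy}(i) the term $G_2$ vanishes and $G=G_1+G_3$, and one then handles the $G_1$-piece by the ${\rm II}_{2,1}$-machinery and the $G_3$-piece by the ${\rm II}_{2,2}$-machinery. The difference is in how the geometric asymmetry is dealt with. The paper avoids ever needing Lemma \ref{lem4.5-1} with $Q\notin\mathscr{W}_{\rm e}$: it first exchanges the roles of $x$ and $y$ via the pointwise inequality $\mathbf{1}_{R\setminus B(x,\ell(Q)/10)}(y)\le\mathbf{1}_{Q\setminus B(y,\mu\ell(R))}(x)$ (valid for a fixed $\mu\in(0,1/40]$ by the comparability of touching Whitney cubes and the separation of non-touching ones), after which the outer cube $R$ is in $\mathscr{W}_{\rm e}'$ and the estimate can be imported from \eqref{eqn-k11} essentially verbatim, with the remaining $x$-integral running over $\Omega^\complement\setminus[\Omega_{\rm e}'\cup B(y,\mu\ell(R))]$. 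You instead keep the given order of integration and carefully inspect the proof of Lemma \ref{lem4.5-1} to confirm that it does not actually require $Q\in\mathscr{W}_{\rm e}$ but only $Q\in\mathscr{W}(\overline{\Omega})$ and $P,S\in\mathscr{W}_{\rm e}$; this is accurate and is the genuine new observation in your route. The paper's swap trick is slightly more economical since it reuses earlier lemmas unchanged; your variant implicitly strengthens Lemma \ref{lem4.5-1} as stated. Both yield the claimed bound and neither contains a gap.
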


\begin{proof}
Fix $\mu\in(0,1/40]$. From the definitions of $\mathscr{W}_{\rm e}'$
and $\Omega_{\rm e}'$ as in
\eqref{We'} and \eqref{eqn-boze'},  it follows that
\begin{align*}
{\rm II}_{2,4}(f)&\lesssim \sum_{Q\in \mathscr{W}(\overline{\Omega})\setminus \mathscr{W}_{\rm e}'}
\sum_{R\in \mathscr{W}_{\rm e}'} \int_Q \int_R \mathbf{1}_{R \setminus B\left(x,\frac{\ell(Q)}{10}\right)}(y)
\\
&\quad\quad \quad  \times\left|\mathop{\sum}\limits_{P_j\in \mathscr{W}_{\rm e}} \left(E_{P_j^*}f\right)_{P_j^*}\psi_j(x)
-\mathop{\sum}\limits_{S_k\in\mathscr{W}_{\rm e}}
\left(E_{S_k^*}f\right)_{S_k^*}\psi_k(y)\right|^p
\frac{1}{|x-y|^{n+sp}}\,dydx\\
&\lesssim \sum_{R\in \mathscr{W}_{\rm e}'}
\int_R\int_{\Omega^{\complement}\setminus [\Omega_{\rm e}'\cup B(y,\mu \ell(R))]}
\left|\mathop{\sum}\limits_{S_k\in \mathscr{W}_{\rm e}}\left(E_{S_k^*}f\right)_{S_k^*}\psi_k(y)-\mathop{\sum}
\limits_{P_j\in \mathscr{W}_{\rm e}}\left(E_{P_j^*}f\right)_{P_j^*}
\psi_j(x)\right|^p\frac{1}{|x-y|^{n+sp}}\,dxdy,
\end{align*}
where, in the last inequality, we used the fact that, for any
$Q,R\in \mathscr{W}(\overline{\Omega})$, $x\in Q$, and $y\in R$,
it holds that $\mathbf{1}_{R\setminus B(x,{\ell(Q)}/{10})}(y)\le \mathbf{1}_{Q\setminus B(y,\mu\ell(R))}(x)$.
Applying an argument similar to that used in the estimation of ${\rm II}_{2,1}$ as in \eqref{eqn-k11}, we obtain
\begin{align*}
{\rm II}_{2,4}(f)&\lesssim \sum_{R\in\mathscr{W}_{\rm e}'}\int_R\int_{\Omega^{\complement}
\setminus[\Omega_{\rm e}'
\cup B(y,\mu\ell(R))]}\sum_{\genfrac{}{}{0pt}{}{S_k\in \mathscr{W}_{\rm e}}{\overline{S}_k\cap \overline{R}\neq\emptyset}}
\sum_{P_j\in \mathscr{W}_{\rm e}}\left|\left(E_{S_k^*}f\right)_{S_k^*}
-\left(E_{P_j^*}f\right)_{P_j^*}\right|^p\frac{\left|\psi_k(y)\,
\psi_j(x)\right|^p}{|x-y|^{n+sp}}\,dxdy\\ \notag
&\quad+\sum_{R\in \mathscr{W}_{\rm e}'}\int_R
\int_{\Omega^{\complement}\setminus[\Omega_{\rm e}'\cup B(y,\mu \ell(R))]}
\left|\mathop{\sum}\limits_{{\genfrac{}{}{0pt}{}{S_k\in \mathscr{W}_{\rm e}}
{\overline{S}_k\cap\overline{R}\neq\emptyset}}}
\left(E_{S_k^*}f\right)_{S_k^*}\psi_k(y)\right|^p\frac{1}
{|x-y|^{n+sp}}\,dxdy\\ \notag
&\lesssim \|f\|_{L^p(\Omega)}^p+\|f\|_{\dot{\mathcal{W}}^{s,p}
(\Omega)}^p
+\int_\Omega\frac{|f(\xi)|^p}
{{{\mathop\mathrm{\,dist\,}}}(\xi,D)^{sp}}\,d\xi.
\end{align*}
This finishes the proof of Lemma \ref{lem-estimateII6}.
\end{proof}

\begin{lemma}\label{lem-estimateII7}
If $\epsilon\in(0,1]$, $\delta\in(0,\infty]$, and $\Omega\subset\mathbb{R}^n$ is an
$(\epsilon,\delta,D)$-domain with $D\subset \partial\Omega$ being closed, then, for any
$s\in(0,1)$, $p\in(1,\infty)$, and $f\in L_{\rm loc}^1(\Omega)$, it holds that
\begin{align*}
{\rm II}_{2,5}(f)\lesssim \|f\|_{L^{p}(\Omega)}^p+\int_\Omega\dfrac{|f(x)|^p}{[{{\mathop\mathrm{dist\,}}}(x,D)]^{sp}}\,dx
\end{align*}
with the implicit positive constant independent of $f$, where ${\rm II}_{2,5}(f)$
is as in \eqref{eqn-decom-II-2}.
\end{lemma}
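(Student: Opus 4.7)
The strategy is to mirror the two-step template used for $\mathrm{II}_{2,2}$ and $\mathrm{II}_{2,3}$ in Lemmas~\ref{lem-estimateII4} and~\ref{lem-estimateII5}, but now with both $Q\in\mathscr{W}(\overline{\Omega})\setminus\mathscr{W}_{\rm e}'$ and $R\in\mathscr{W}(\overline{\Omega})\setminus\mathscr{W}_{\rm e}$. I first split the inner integral according to whether $y$ lies in $\Omega_{\rm e}$ as in \eqref{eqn-boze}, writing
\begin{align*}
\mathrm{II}_{2,5}(f)\le\mathrm{II}_{2,5}^{\rm out}(f)+\mathrm{II}_{2,5}^{\rm in}(f),
\end{align*}
where $\mathrm{II}_{2,5}^{\rm out}(f)$ integrates over $y\in R\setminus[\Omega_{\rm e}\cup B(x,\ell(Q)/10)]$ and $\mathrm{II}_{2,5}^{\rm in}(f)$ over $y\in(R\cap\Omega_{\rm e})\setminus B(x,\ell(Q)/10)$. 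On each piece the expression $G(x,y)$ in \eqref{eqn-Gxy} will collapse to a more tractable form via Lemma~\ref{lem-Gxy}.

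For $\mathrm{II}_{2,5}^{\rm out}(f)$, since $y\notin\Omega_{\rm e}$ one has $\psi_k(y)=0$ for every $S_k\in\mathscr{W}_{\rm e}$, hence
\begin{align*}
G(x,y)=\sum_{\genfrac{}{}{0pt}{}{P_j\in\mathscr{W}_{\rm e}}{\overline{P}_j\cap\overline{Q}\neq\emptyset}}\left(E_{P_j^*}f\right)_{P_j^*}\psi_j(x).
\end{align*}
The contribution is supported where $Q$ is adjacent to at least one element of $\mathscr{W}_{\rm e}$; such $Q$ is either in $\mathscr{W}_{\rm e}\setminus\mathscr{W}_{\rm e}'$ or a neighbor of a cube in $\mathscr{W}_{\rm e}\setminus\mathscr{W}_{\rm e}'$, a bounded-multiplicity situation. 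I would then repeat the chain of inequalities used for $\mathrm{II}_{2,2}(f)$ in Lemma~\ref{lem-estimateII4}, invoking Corollary~\ref{dist-1xx} to split the $y$-integral into $\{|x-y|\ge C\}$ and $\{|x-y|\ge C\,{{\mathop\mathrm{dist\,}}}(\xi,D)\}$ for $\xi\in P_j^*\cap\Omega$, yielding the bound
\begin{align*}
\mathrm{II}_{2,5}^{\rm out}(f)\lesssim \|f\|_{L^p(\Omega)}^p+\int_\Omega\frac{|f(\xi)|^p}{[{{\mathop\mathrm{dist\,}}}(\xi,D)]^{sp}}\,d\xi.
\end{align*}

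For $\mathrm{II}_{2,5}^{\rm in}(f)$, any $y\in R\cap\Omega_{\rm e}$ must lie in $c_n S_k$ for some $S_k\in\mathscr{W}_{\rm e}$ touching $R$; since $R\notin\mathscr{W}_{\rm e}$, such $S_k$ necessarily belongs to $\mathscr{W}_{\rm e}\setminus\mathscr{W}_{\rm e}'$. I would then decompose $G(x,y)=G_1(x,y)+G_2(x,y)+G_3(x,y)$ via Lemma~\ref{lem-Gxy}(i) and estimate the three parts in the spirit of Lemmas~\ref{lem-estimateII3}, \ref{lem-estimateII4}, and \ref{lem-estimateII5}: the $G_1$-piece via the long-distance argument of \eqref{eqn-k11} that replaces $|x-y|$ by $D(P_j^*,S_k^*)$ and then by $|\xi-\eta|$ through Lemma~\ref{lem4.5-1}; the $G_2$ and $G_3$ pieces via the same splitting as in $\mathrm{II}_{2,2}$. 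The essential tool is Lemma~\ref{bound}, which provides, for every such $S_k\in\mathscr{W}_{\rm e}\setminus\mathscr{W}_{\rm e}'$, the inequality ${{\mathop\mathrm{dist\,}}}(S_k,D)\lesssim\mathop\mathrm{diam} S_k$, so that the excess mass near $y$ is absorbed by the Hardy term. The bounded-multiplicity statements of Lemmas~\ref{lem3.4.4}(ii) and~\ref{lem5.7}(ii) are what keeps the final summation over $Q,R,P_j,S_k$ under control.

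The main obstacle is the inner piece $\mathrm{II}_{2,5}^{\rm in}(f)$: neither $x$ nor $y$ enjoys a partition-of-unity identity $\sum_j \psi_j(\cdot)=1$ on its cube, so all three summands of $G(x,y)$ must be handled, and the $P_j,S_k$ involved come from the ``boundary'' layer $\mathscr{W}_{\rm e}\setminus\mathscr{W}_{\rm e}'$. The key saving feature is precisely that this boundary layer sits close to $D$ in the sense of Lemma~\ref{bound}, so that every length scale $\ell(Q),\ell(S_k)$ is comparable with ${{\mathop\mathrm{dist\,}}}(\xi,D)$ for $\xi$ in the relevant reflected cube, allowing the extraneous factors to be charged to $\int_\Omega |f|^p/d_D^{sp}\,dx$ rather than to $\|f\|_{\dot{\mathcal{W}}^{s,p}(\Omega)}^p$.
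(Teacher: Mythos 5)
Your approach is essentially the same as the paper's, but you are more careful at one point that the paper glosses over. After collapsing $G(x,y)$ on $\Omega^\complement\setminus[\Omega_{\rm e}\cup B(x,\ell(Q)/10)]$, the paper writes
\begin{align*}
{\rm II}_{2,5}(f)\sim\sum_{Q\in\mathscr{W}(\overline{\Omega})\setminus\mathscr{W}_{\rm e}'}\int_Q\int_{\Omega^{\complement}\setminus\Omega_{\rm e}}\bigg|\sum_{\genfrac{}{}{0pt}{}{P_j\in\mathscr{W}_{\rm e}}{\overline{P}_j\cap\overline{Q}\neq\emptyset}}\big(E_{P_j^*}f\big)_{P_j^*}\psi_j(x)\bigg|^p\frac{dy\,dx}{|x-y|^{n+sp}}
\end{align*}
and declares the remainder ``similar to ${\rm II}_{2,2}(f)$, details being omitted.'' That $\sim$ silently treats $\bigcup_{R\in\mathscr{W}(\overline{\Omega})\setminus\mathscr{W}_{\rm e}}R$ as lying inside $\Omega^\complement\setminus\Omega_{\rm e}$, which is false: $\Omega_{\rm e}=\bigcup_{S\in\mathscr{W}_{\rm e}}c_nS$, and the thin aprons $c_nS_k\setminus S_k$ for $S_k\in\mathscr{W}_{\rm e}\setminus\mathscr{W}_{\rm e}'$ do meet neighboring cubes $R\notin\mathscr{W}_{\rm e}$, so $\psi_k(y)$ need not vanish on $R$. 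Your $\mathrm{II}_{2,5}^{\rm in}(f)$ is precisely this missing piece, and your observation that any $S_k$ contributing there lies in $\mathscr{W}_{\rm e}\setminus\mathscr{W}_{\rm e}'$ --- hence, via Lemma~\ref{bound} and Lemma~\ref{lem4.6-1}, all relevant reflected cubes sit within a multiple of $\ell(Q)$ of $D$ --- is exactly the right saving feature.

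Two sharpenings. For the $G_1$-piece, the long-distance $D(P_j^*,S_k^*)$-argument of \eqref{eqn-k11} would inject a $\|f\|_{\dot{\mathcal{W}}^{s,p}(\Omega)}^p$ term that the stated bound does not contain; instead (as your final sentence anticipates) use the crude bound $|(E_{P_j^*}f)_{P_j^*}-(E_{S_k^*}f)_{S_k^*}|^p\lesssim(E_{P_j^*}|f|^p)_{P_j^*}+(E_{S_k^*}|f|^p)_{S_k^*}$ and charge each average to the Hardy integral via Lemma~\ref{lem4.6-1}, so no Gagliardo seminorm appears. Also, Corollary~\ref{dist-1xx} is phrased for $x\in Q\in\mathscr{W}_{\rm e}$, whereas here $Q\in\mathscr{W}(\overline{\Omega})\setminus\mathscr{W}_{\rm e}'$ need not lie in $\mathscr{W}_{\rm e}$; you should first note that $G(x,y)\ne0$ forces $x\in c_nP_j$ for some $P_j\in\mathscr{W}_{\rm e}$ touching $Q$, and then run the corollary's proof with $P_j$ in place of $Q$ --- a point the paper elides in the same way when it appeals to Lemma~\ref{lem-estimateII4}.
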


\begin{proof}
By the fact $\bigcup_{S_k\in \mathscr{W}_{\rm e}}{\rm supp}\,\psi_k \subset\Omega_{\rm e}$ with
$\Omega_{\rm e}$
as in \eqref{eqn-boze}, we find that, for any $x\in Q\in\mathscr{W}(\overline{\Omega})
\setminus \mathscr{W}_{\rm e}'$ and $y\in\Omega^{\complement}\setminus[\Omega_{\rm e}\cup
B(x,\ell(Q)/10)]$, it holds that $\psi_k(y)=0$ and
\begin{align*}
G(x,y)=\mathop{\sum}\limits_{\genfrac{}{}{0pt}{}{P_j\in \mathscr{W}_{\rm e}}{\overline{P}_j\cap
\overline{Q}\neq\emptyset}}\left(E_{P_j^*}f\right)_{P_j^*} \psi_j(x).
\end{align*}
Then we have
\begin{align*}
{\rm II}_{2,5}(f)&\sim \sum_{Q\in \mathscr{W}(\overline{\Omega})\setminus\mathscr{W}_{\rm e}'}
\int_Q\int_{\Omega^{\complement}\setminus\Omega_{\rm e}}\left|\sum_{\genfrac{}{}{0pt}{}{P_j\in
\mathscr{W}_{\rm e}}{\overline{P}_j\cap\overline{Q}\neq\emptyset}}
\left(E_{P_j^*}f\right)_{P_j^*}\psi_j(x)\right|^p\frac{1}
{|x-y|^{n+sp}}\,dydx\\ \notag
&\lesssim \sum_{Q\in\mathscr{W}(\overline{\Omega})
\setminus \mathscr{W}_{\rm e}'}\int_Q
\sum_{\genfrac{}{}{0pt}{}{P_j\in
\mathscr{W}_{\rm e}}{\overline{P}_j\cap\overline{Q}\neq\emptyset}}
\fint_{P_j^*}\left|E_{P_j^*}f(\xi)\right|^pd\xi
\int_{\Omega^{\complement}\setminus\Omega_{\rm e}}\frac{\left|\psi_j(x)\right|^p}
{|x-y|^{n+sp}}\,dydx\\ \notag
&\lesssim \sum_{Q\in\mathscr{W}(\overline{\Omega})
\setminus\mathscr{W}_{\rm e}'}
\int_Q\sum_{\genfrac{}{}{0pt}{}{P_j\in
\mathscr{W}_{\rm e}}{\overline{P}_j\cap\overline{Q}\neq\emptyset}}
\ell(P_j)^{-n}
\int_{P_j^*\cap\Omega}|f(\xi)|^p
\int_{\Omega^{\complement}\setminus\Omega_{\rm e}}
\frac{1}{|x-y|^{n+sp}}\,dyd\xi dx.
\end{align*}
The remainder of the estimation for ${\rm II}_{2,5}(f)$
is similar to that of
${\rm II}_{2,2}(f)$ as  in Lemma \ref{lem-estimateII4},
the details being omitted.
This finishes the proof of Lemma \ref{lem-estimateII7}.
\end{proof}

\begin{lemma}\label{lem-estimateII8}
If $\epsilon\in(0,1]$, $\delta\in(0,\infty]$, and
$\Omega\subset\mathbb{R}^n$ is an
$(\epsilon,\delta,D)$-domain with $D\subset \partial\Omega$
being closed, then, for any
$s\in(0,1)$, $p\in(1,\infty)$, and $f\in L_{\rm loc}^1(\Omega)$, it holds that
\begin{align*}
{\rm II}_{2,6}(f)\lesssim \|f\|_{L^{p}(\Omega)}^p+\|f\|_{\dot{\mathcal{W}}^{s,p}
(\Omega)}^p+\int_\Omega\dfrac{|f(x)|^p}{[{{\mathop\mathrm{dist\,}}}(x,D)]^{sp}}\,dx
\end{align*}
with the implicit positive constant independent of $f$, where ${\rm II}_{2,6}(f)$
is as in \eqref{eqn-decom-II-2}.
\end{lemma}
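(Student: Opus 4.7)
Since $Q\in\mathscr{W}(\overline{\Omega})\setminus\mathscr{W}_{\rm e}'$ and $R\in\mathscr{W}_{\rm e}\setminus\mathscr{W}_{\rm e}'$, neither the identity $\sum_{P_j\in\mathscr{W}_{\rm e},\,\overline{P}_j\cap\overline Q\neq\emptyset}\psi_j(x)=1$ nor $\sum_{S_k\in\mathscr{W}_{\rm e},\,\overline{S}_k\cap\overline R\neq\emptyset}\psi_k(y)=1$ is available to simplify $G(x,y)$ as was done in Lemmas \ref{lem-estimateII3}--\ref{lem-estimateII7}. The plan is therefore to invoke the full three-term decomposition
\begin{align*}
G(x,y)=G_1(x,y)+G_2(x,y)+G_3(x,y)
\end{align*}
of Lemma \ref{lem-Gxy}(i) and to split ${\rm II}_{2,6}(f)\lesssim {\rm II}_{2,6}^{(1)}(f)+{\rm II}_{2,6}^{(2)}(f)+{\rm II}_{2,6}^{(3)}(f)$ according to the three pieces, then to estimate each contribution in the style of the proofs already developed in this subsection.

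For the $G_1$-piece, I would follow the template of Lemma \ref{lem-estimateII3}: since $|x-y|\ge\ell(Q)/10$ and $y\in c_nS_k$ for the $S_k$ supporting $\psi_k$, Lemma \ref{lem4.5-1} provides $D(P_j^*,S_k^*)\lesssim|x-y|$, which upgrades the factor $|x-y|^{-n-sp}$ into an inverse long-distance weight. Passing averages to $P_j^*$ and $S_k^*$ via Lemmas \ref{W} and \ref{lem3.4.4} and exploiting the bounded overlap of touching chains from Lemma \ref{lem5.7} reduces the resulting double sum to the Gagliardo double integral over $\Omega\times\Omega$, plus zero-extension tails on $P_j^*\setminus\Omega$ and $S_k^*\setminus\Omega$, which Lemma \ref{dist-2} converts into the Hardy contribution $\int_\Omega|f|^p/d_D^{sp}\,dx$. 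For the $G_2$-piece, the factor $[1-\sum_k\psi_k(y)]$ is nonzero only when $y\in R$ has a neighbour outside $\mathscr{W}_{\rm e}$, the situation governed by Lemma \ref{bound}; the resulting integral has the same shape as the $y$-tails treated in Lemmas \ref{lem-estimateII4} and \ref{lem-estimateII7}, and Corollary \ref{dist-1xx} yields the bound $\|f\|_{L^p(\Omega)}^p+\int_\Omega|f|^p/d_D^{sp}\,dx$ after integrating out $y$.

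The $G_3$-piece is treated symmetrically to $G_2$ by swapping the roles of $x$ and $y$ via the indicator inequality $\mathbf{1}_{R\setminus B(x,\ell(Q)/10)}(y)\le\mathbf{1}_{Q\setminus B(y,\mu\ell(R))}(x)$ already used in Lemma \ref{lem-estimateII6}, after which the argument reduces to the $G_2$-estimate with $(Q,R)$ interchanged. Summing the three contributions gives the claim. The main obstacle throughout is the careful bookkeeping of touching-chain indices when neither endpoint cube lies in $\mathscr{W}_{\rm e}'$: one must combine Lemma \ref{lem5.7}(ii) with the remark following it extending the touching-chain construction to $\mathscr{W}_{\rm e}'\setminus\mathscr{W}_{\rm e}''$ so that cubes in $\mathscr{W}_{\rm e}\setminus\mathscr{W}_{\rm e}'$ admit touching chains back into $\mathscr{W}_{\rm i}$ with bounded cardinality and bounded overlap, which is exactly what is needed to telescope the three bounds into $\|f\|_{L^p(\Omega)}^p+\|f\|_{\dot{\mathcal W}^{s,p}(\Omega)}^p+\int_\Omega|f|^p/d_D^{sp}\,dx$.
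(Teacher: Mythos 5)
Your proposal follows the same overall strategy as the paper: invoke the full three-term decomposition $G=G_1+G_2+G_3$ from Lemma \ref{lem-Gxy}(i) (since, as you correctly observe, with $Q\notin\mathscr{W}_{\rm e}'$ and $R\notin\mathscr{W}_{\rm e}'$ neither partition-of-unity identity simplifies $G$), and then refer each piece back to arguments already carried out for ${\rm II}_{2,1}$--${\rm II}_{2,5}$. The paper's own proof is terser: it states only that the $G_1$-term is bounded like ${\rm II}_{2,3}(f)$ and leaves $G_2$, $G_3$ implicit, so your filling-in of the other two pieces is useful rather than a divergence.

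One reference in your $G_2$-paragraph is not quite right, though, and it is worth flagging because it is exactly where the hypotheses matter. You cite Corollary \ref{dist-1xx}, but that statement requires $y\in\Omega^{\complement}\setminus\Omega_{\rm e}$, whereas in the $G_2$-piece of ${\rm II}_{2,6}$ the integration variable lives in $R\in\mathscr{W}_{\rm e}\setminus\mathscr{W}_{\rm e}'$, i.e.\ $y\in\Omega_{\rm e}\setminus\Omega_{\rm e}'$ --- inside the cone region the corollary excludes. The fix is direct: Lemma \ref{dist-1}, on which Corollary \ref{dist-1xx} rests, already covers the larger set $\Omega^{\complement}\setminus\Omega_{\rm e}'$, so one should prove (or invoke) the obvious variant of Corollary \ref{dist-1xx} with $\Omega_{\rm e}$ replaced by $\Omega_{\rm e}'$; the proof is identical, pushing through the chain $|x-\xi|\lesssim\operatorname{diam}Q\lesssim|x-y|$ and then applying Lemma \ref{dist-1} to $(\xi,y)$. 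As stated, Corollary \ref{dist-1xx} does not apply. A second, minor point: your closing discussion of Lemma \ref{lem5.7}(ii) and touching chains is tangential here; the bounded-overlap control actually used in this family of estimates comes from Lemma \ref{W} and the finite multiplicity in Lemma \ref{lem3.4.4}(ii), not from the chain constructions.
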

\begin{proof}

Using Lemma \ref{lem-Gxy}(i), we obtain
\begin{align*}
{\rm II}_{2,6}(f)&\lesssim \sum_{Q\in{\mathscr{W}}(\overline{\Omega})
\setminus{\mathscr{W}}'_{\rm e}}
\sum_{R\in {\mathscr{W}}_{\rm e}\setminus \mathscr{W}_{\rm e}'}\left[\rule{0pt}{1.2cm}\int_Q\int_{R\setminus B
\left(x,\frac{\ell(Q)}{10}\right)}\left|\left(E_{P_j^*}
f\right)_{P_j^*}-\left(E_{S_k^*}f\right)_{S_k^*}\right|^p
\frac{\left|\psi_j(x)\, \psi_k(y)\right|^p}{|x-y|^{n+sp}}\,dydx\right.\\
&\quad+\int_Q\int_{R\setminus B\left(x,\frac{\ell(Q)}{10}\right)}
\left|\mathop{\sum}
\limits_{\genfrac{}{}{0pt}{}{P_j\in\mathscr{W}_{\rm e}}
{\overline{P}_j\cap\overline{Q}\neq\emptyset}}
\left(E_{P_j^*}f\right)_{P_j^*}\psi_j(x)
\left(1-\mathop{\sum}\limits_{S_k\in\mathscr{W}_{\rm e}}\psi_k(y)\right)
\right|^p\frac{1}{|x-y|^{n+sp}}\,dydx\\
&\left.\quad+\int_Q\int_{R\setminus B\left(x,\frac{\ell(Q)}{10}\right)}\left|\mathop{\sum}
\limits_{\genfrac{}{}{0pt}{}{S_k\in\mathscr{W}_{\rm e}}{\overline{S}_k \cap\overline{R}\neq\emptyset}}\left(E_{S_k^*}f\right)_{S_k^*}\psi_k(y)
\left(1-\mathop{\sum}\limits_{P_j\in \mathscr{W}_{\rm e}}\psi_j(x)\right)\, \right|^p\frac{1}{|x-y|^{n+sp}}\,dydx\rule{0pt}{1cm}\right].
\end{align*}
The first term can be bounded similarly to ${\rm II}_{2,3}(f)$ as in
Lemma \ref{lem-estimateII5}, the details being omitted. This implies that
\begin{align*}
{\rm II}_{2,6}(f)\lesssim \|f\|_{L^p(\Omega)}^p+\|f\|_{\dot{\mathcal{W}}^{s,p}
(\Omega)}^p
+\int_\Omega\frac{|f(\xi)|^p}{[{{\mathop\mathrm{dist\,}}}(\xi,D)]^{sp}}d\xi,
\end{align*}
which completes the proof of Lemma \ref{lem-estimateII8}.
\end{proof}

\section{Equivalences of fractional Sobolev spaces}\label{s4}

In this section, we prove Theorem \ref{thm-equiv} and Proposition
\ref{prop-Hcx}, which establish the
characterizations of fractional Sobolev spaces via the $D$-adapted Hardy inequality.
We begin with the
following proposition  that shows the first identity of \eqref{h-con1}.

\begin{proposition}\label{lemprop-equiv}
Let $s\in(0,1)$, $p\in[1,\infty)$, and $\Omega\subset\mathbb{R}^n$ be an
$(\epsilon,\delta,D)$-domain with $\epsilon\in(0,1]$,
$\delta\in(0,\infty]$,
and $D\subset \partial\Omega$ being closed. If $\Omega$ supports
the $D$-adapted Hardy inequality \eqref{eqn-HI}, then
$W^{s,p}(\Omega)=\mathcal{W}^{s,p}(\Omega).$
\end{proposition}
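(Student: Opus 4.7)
The plan is to establish two continuous inclusions, one of which is essentially trivial from the definitions and the other of which is precisely where the $D$-adapted Hardy inequality enters (through the extension result already proved in Section \ref{s3}).

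For the easy direction $W^{s,p}(\Omega) \hookrightarrow \mathcal{W}^{s,p}(\Omega)$, I would take any $f \in W^{s,p}(\Omega)$ and any $g \in W^{s,p}(\mathbb{R}^n)$ with $g|_\Omega = f$. Since $\Omega \subset \mathbb{R}^n$, we have
\begin{align*}
\|f\|_{L^p(\Omega)} \le \|g\|_{L^p(\mathbb{R}^n)}
\quad\text{and}\quad
\int_\Omega\int_\Omega\frac{|f(x)-f(y)|^p}{|x-y|^{n+sp}}\,dxdy
\le \int_{\mathbb{R}^n}\int_{\mathbb{R}^n}\frac{|g(x)-g(y)|^p}{|x-y|^{n+sp}}\,dxdy,
\end{align*}
so $\|f\|_{\mathcal{W}^{s,p}(\Omega)} \le \|g\|_{W^{s,p}(\mathbb{R}^n)}$. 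Taking the infimum over all admissible $g$ on the right-hand side yields $\|f\|_{\mathcal{W}^{s,p}(\Omega)} \le \|f\|_{W^{s,p}(\Omega)}$, and in particular $f \in \mathcal{W}^{s,p}(\Omega)$.

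For the reverse inclusion $\mathcal{W}^{s,p}(\Omega) \hookrightarrow W^{s,p}(\Omega)$, I would invoke Proposition \ref{thm1x}, which provides, under the standing hypothesis that $\Omega$ supports the $D$-adapted Hardy inequality \eqref{eqn-HI}, a bounded linear extension operator
\begin{align*}
\mathcal{E}_D : \mathcal{W}^{s,p}(\Omega) \longrightarrow W^{s,p}(\mathbb{R}^n)
\end{align*}
with $(\mathcal{E}_D f)|_\Omega = f$ almost everywhere for every $f \in \mathcal{W}^{s,p}(\Omega)$ (this last identity is built into the definition \eqref{ext} of $\mathcal{E}_D$). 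For any such $f$, choosing $g := \mathcal{E}_D f$ as the admissible extension in the definition of $\|\cdot\|_{W^{s,p}(\Omega)}$ gives
\begin{align*}
\|f\|_{W^{s,p}(\Omega)} \le \|\mathcal{E}_D f\|_{W^{s,p}(\mathbb{R}^n)} \lesssim \|f\|_{\mathcal{W}^{s,p}(\Omega)},
\end{align*}
showing both $f \in W^{s,p}(\Omega)$ and the norm equivalence.

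The only nontrivial ingredient is Proposition \ref{thm1x}, which has already been secured in Section \ref{s3}; once it is in hand, this proposition is essentially a one-line consequence of the definitions together with the bounded extension. There is no genuine obstacle here beyond correctly citing the extension result and observing that the restriction operator is trivially bounded with norm at most one; in particular, no additional use of the Hardy inequality, of Whitney cubes, or of the reflection machinery from Section \ref{s2} is needed at this stage.
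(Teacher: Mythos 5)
Your proposal is correct and follows essentially the same two-step argument as the paper: the easy inclusion $W^{s,p}(\Omega)\subset\mathcal{W}^{s,p}(\Omega)$ is read off from the definitions (you take the infimum directly, while the paper selects a quasi-optimal extension; these are equivalent), and the reverse inclusion is the one-line consequence of the bounded extension operator $\mathcal{E}_D$ from Proposition \ref{thm1x}.
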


\begin{proof}
We first prove the inclusion $W^{s,p}(\Omega)\subset \mathcal W^{s,p}(\Omega)$.
For any $f\in W^{s,p}(\Omega)$, let $F\in W^{s,p}(\mathbb{R}^n)$ be the extension of $f$
such that $F|_\Omega=f$ and $\|F\|_{W^{s,p}(\mathbb{R}^n)}\sim \|f\|_{W^{s,p}(\Omega)}$.
By \eqref{eqn-intrinn} and \eqref{eqn-normfsdm}, we have
\begin{align*}
\|f\|_{\mathcal W^{s,p}(\Omega)}\le \|F\|_{W^{s,p}(\mathbb{R}^n)}
\sim \|f\|_{W^{s,p}(\Omega)},
\end{align*}
which immediately implies $f\in \mathcal W^{s,p}(\Omega)$.

To show the reverse inclusion $\mathcal W^{s,p}(\Omega)\subset W^{s,p}(\Omega)$,
for any $f\in \mathcal{W}^{s,p}(\Omega)$, using Proposition \ref{thm1x}, we find that
$\mathcal{E}_Df\in W^{s,p}(\mathbb{R}^n)$, which implies that
$\|f\|_{W^{s,p}(\Omega)}\le \|\mathcal{E}_Df\|_{W^{s,p}(\mathbb{R}^n)}
\lesssim \|f\|_{\mathcal{W}^{s,p}(\Omega)}$
and hence $f\in W^{s,p}(\Omega)$.
This finishes the proof of Proposition \ref{lemprop-equiv}.
\end{proof}

\begin{lemma}\label{lem-textf}
Let $\Omega\subset\mathbb{R}^n$ be a domain and $D$ be a closed part of
$\partial\Omega$. For any $m\in\mathbb{N}$ and $x\in\Omega$, define
\begin{align}\label{eqn-lem-vm}
v_m(x):=
\begin{cases}
1 &\text{if}\ {{\mathop\mathrm{\,dist\,}}}(x,D)\le 1/m,\\
2-m{{\mathop\mathrm{\,dist\,}}}(x,D) &\text{if} \ 1/m<{{\mathop\mathrm{\,dist\,}}}(x,D)\le 2/m, \\
0 &\text{if} \ {{\mathop\mathrm{\,dist\,}}}(x,D)>2/m.
\end{cases}
\end{align}
Then $|v_m(x)-v_m(y)|\le \min\{1,m|x-y|\}$ holds for any $x,y\in\Omega$.
\end{lemma}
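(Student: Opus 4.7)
The plan is to decompose $v_m$ as the composition of a one-dimensional Lipschitz profile with the distance function $d_D(\cdot):={{\mathop\mathrm{dist\,}}}(\cdot,D)$, and then use that $d_D$ is $1$-Lipschitz on $\mathbb{R}^n$.

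First I would dispose of the bound $|v_m(x)-v_m(y)|\le 1$: from the three-case definition \eqref{eqn-lem-vm} it is immediate that $v_m$ takes values in $[0,1]$, so the bound by $1$ follows. Next I would introduce the auxiliary piecewise linear profile
\begin{align*}
\phi(t):=\begin{cases} 1 & \text{if }t\le 1/m,\\ 2-mt & \text{if }1/m<t\le 2/m,\\ 0 & \text{if }t>2/m,\end{cases}
\end{align*}
so that $v_m(x)=\phi(d_D(x))$ for any $x\in\Omega$. The key step is to show that $\phi$ is $m$-Lipschitz on $[0,\infty)$; this is checked by a short case analysis on the positions of $s,t\ge 0$ relative to the breakpoints $1/m$ and $2/m$, using that $\phi$ is constant on $[0,1/m]$ and on $[2/m,\infty)$ and has derivative $-m$ on $(1/m,2/m)$, so in all the mixed cases one just observes that replacing an endpoint by the nearer breakpoint can only shrink $|\phi(s)-\phi(t)|$ while keeping $|s-t|$ no smaller.

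Finally I would combine this with the well-known fact that $d_D$ is $1$-Lipschitz, i.e. $|d_D(x)-d_D(y)|\le |x-y|$ for any $x,y\in\Omega$ (which follows directly from the definition of $d_D$ and the triangle inequality). Chaining these two Lipschitz estimates gives
\begin{align*}
|v_m(x)-v_m(y)|=|\phi(d_D(x))-\phi(d_D(y))|\le m\,|d_D(x)-d_D(y)|\le m|x-y|,
\end{align*}
and together with the trivial bound by $1$ one obtains $|v_m(x)-v_m(y)|\le\min\{1,m|x-y|\}$, as desired.

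I do not anticipate any real obstacle here; the only slightly finicky point is the case analysis establishing the Lipschitz constant of $\phi$, but this is essentially trivial since $\phi$ is piecewise affine with slopes in $\{0,-m\}$.
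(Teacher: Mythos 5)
Your proof is correct, and it packages the argument a bit more cleanly than the paper's. The paper proves the estimate by direct case analysis on the positions of $d_D(x)$ and $d_D(y)$ relative to the two breakpoints $1/m$ and $2/m$: it works out the representative case $d_D(x)\le 1/m<d_D(y)\le 2/m$, writes $|v_m(x)-v_m(y)|=m\,d_D(y)-1\le 1$ and $d_D(y)\le d_D(x)+|x-y|\le 1/m+|x-y|$, combines these, and declares the remaining cases "similar." You instead factor $v_m=\phi\circ d_D$ with a one-dimensional piecewise-affine profile $\phi$, verify once that $\phi$ is $m$-Lipschitz (slopes in $\{0,-m\}$, continuous at the breakpoints), and compose with the standard fact that $d_D$ is $1$-Lipschitz. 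The underlying inputs are identical — the triangle inequality for the distance function and the piecewise-affine structure of $v_m$ — so this is not a deep divergence; but your version avoids the paper's hand-wave over the omitted cases and isolates where the constant $m$ comes from, which is somewhat tidier. One minor thing worth saying explicitly, which you gesture at but do not write out, is the continuity of $\phi$ at $t=1/m$ and $t=2/m$ (both one-sided limits agree); the Lipschitz bound for a piecewise-affine function with bounded slopes requires continuity at the junctions, and stating it makes the case analysis for $\phi$ genuinely trivial.
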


\begin{proof}
We only prove the case where ${{\mathop\mathrm{dist\,}}}(x,D)\le 1/m$ and $1/m<{{\mathop\mathrm{dist\,}}}(y,D)\le 2/m$.
The other cases can be showed by a similar argument. Indeed, for those $x$ and
$y$, by \eqref{eqn-lem-vm}, it is easy to verify
\begin{align}\label{eqn-lem4}
|v_m(x)-v_m(y)|=m{{\mathop\mathrm{\,dist\,}}}(y,D)-1\le 1
\end{align}
and ${{\mathop\mathrm{dist\,}}}(y,D)\le {{\mathop\mathrm{dist\,}}}(x,D)+|x-y|\le \frac{1}{m}+|x-y|,$
which, combined with \eqref{eqn-lem4}, implies $|v_m(x)-v_m(y)|\le m|x-y|$ and
hence completes the proof of Lemma \ref{lem-textf}.
\end{proof}

We are now in a position to prove Theorem \ref{thm-equiv}.

\begin{proof}[Proof of Theorem \ref{thm-equiv}]
Using Proposition \ref{lemprop-equiv}, we only need to show $\mathcal  W^{s,p}(\Omega)
=\mathring{W}_D^{s,p}(\Omega)$. From their definitions, it follows that
the inclusion $\mathring{W}_D^{s,p}(\Omega)\subset \mathcal W^{s,p}(\Omega)$ is
trivial. We only need to prove the reverse inclusion
\begin{align}\label{eqn-thm2.3-2}
\mathcal  W^{s,p}(\Omega)\subset \mathring{W}_D^{s,p}(\Omega).
\end{align}
To this end, we claim
\begin{align}\label{claim-sup}
\mathring{W}_D^{s,p}(\Omega)=\mathring{W}_{D,1}^{s,p}(\Omega),
\end{align}
where
\begin{align}\label{eqn-thm2.3-4}
\mathring{W}_{D,1}^{s,p}(\Omega):=\overline{\left\{f\in \mathcal  W^{s,p}(\Omega):\,{{\mathop\mathrm{\,dist\,}}}({\rm supp}\, f,D)
>0\right\}}^{\|\cdot\|_{\mathcal W^{s,p}(\Omega)}}.
\end{align}
Indeed, let $f\in \mathring{W}_D^{s,p}(\Omega)$. By the definition, we find that there
exists a family of smooth functions $\{f_j\}_{j=1}^\infty\subset
C_D^\infty(\Omega)$ such that $f_j\to f$ in $\mathcal W^{s,p}(\Omega)$. Noticing that
$f_j\in  \mathcal W^{s,p}(\Omega)$ and ${{\mathop\mathrm{dist\,}}}({\rm supp}\, f_j,D)>0$ for
any $j\in\mathbb{N}$, we obtain $f\in \mathring{W}_{D,1}^{s,p}(\Omega)$. This implies the
inclusion $\mathring{W}_D^{s,p}(\Omega)\subset \mathring{W}_{D,1}^{s,p}(\Omega)$.

Next, we show the reverse inclusion of \eqref{claim-sup}. Let $u\in
\mathring{W}_{D,1}^{s,p}(\Omega)$. Using definition, we conclude that there exists a family of
functions $\{u_k\}_{k=1}^\infty\subset \mathcal W^{s,p}(\Omega)$ such that
${{\mathop\mathrm{dist\,}}}({\rm supp}\, u_k,D)>0$ for every $k\in\mathbb{N}$ and
\begin{align}\label{eqn-thm2.3-1}
\lim_{k\to\infty}\|u_k-u\|_{\mathcal W^{s,p}(\Omega)}=0.
\end{align}
Let $\mathcal{E}_D$ be the extension operator defined in Definition
\ref{def-extension1}. From Propositions \ref{thm1x} and \ref{lem-extensionOP},
we infer that $\mathcal E_Du_k\in W^{s,p}(\mathbb{R}^n)$ and
${{\mathop\mathrm{dist\,}}}({\rm supp}\, \mathcal{E}_Du_k,D)>0$
holds for any $k\in\mathbb{N}$, which, together with a convolution
argument, implies that $\mathcal{E}_Du_k\in \mathring{W}_D^{s,p}
(\mathbb{R}^n)$ (see, e.g., \cite{CJYZ2024}). Using this, \eqref{eqn-thm2.3-1},
and the boundedness of $\mathcal{E}_D$ from $\mathcal{W}^{s,p}(\Omega)$
to $W^{s,p}(\mathbb{R}^n)$ (see Proposition \ref{thm1x}), we find a sequence
$\{w_j\}_{j=1}^\infty$ in $C_D^\infty(\mathbb{R}^n)$ such that
\begin{align*}
\lim_{j\to\infty}\left\|w_j-\mathcal{E}_Du\right\|_{W^{s,p}(\mathbb{R}^n)}=0.
\end{align*}
Notice that, for any $j\in\mathbb{N}$, one has $w_j|_\Omega\in C_D^\infty(\Omega)$.
This, combined with Proposition \ref{lemprop-equiv}, implies
\begin{align*}
\lim_{j\to\infty}\left\|w_j|_\Omega-u\right\|_{\mathcal W^{s,p}(\Omega)}\le \lim_{j\to\infty}
\left\|w_j-\mathcal{E}_Du\right\|_{W^{s,p}(\mathbb{R}^n)}=0
\end{align*}
and hence $u\in \mathring{W}_D^{s,p}(\Omega)$. Thus, we obtain \eqref{claim-sup}.

We now continue the proof of \eqref{eqn-thm2.3-2}. By \eqref{claim-sup},
we conclude that it suffices to prove
\begin{align}\label{eqn-thm2.3-3}
\mathcal W^{s,p}(\Omega)\subset \mathring{W}_{D,1}^{s,p}(\Omega).
\end{align}
For any $f\in \mathcal  W^{s,p}(\Omega)$, using \eqref{eqn-thm2.3-4}, we only need to find
a sequence $\{f_m\}_{m=1}^\infty$ in $\mathcal  W^{s,p}(\Omega)$ satisfying ${{\mathop\mathrm{dist\,}}}({\rm supp}\,
f_m,D)>0$ such that
\begin{align}\label{eqn-thm2.3-5}
\lim_{m\to\infty}\|f_m-f\|_{\mathcal W^{s,p}(\Omega)}=0.
\end{align}
To this end, for each $m\in\mathbb{N}$, let $v_m$ be as in \eqref{eqn-lem-vm}; we then claim that
\begin{align}\label{claim2-1}
\lim_{m\to\infty}\|v_mf\|_{\mathcal W^{s,p}(\Omega)}=0.
\end{align}
Indeed, by the definition of $v_m$ in \eqref{eqn-lem-vm}, one has
\begin{align*}
\|v_mf\|_{L^p(\Omega)}^p=\int_{D_{2/m}}|v_mf(x)|^pdx\le \int_{D_{2/m}}|f(x)|^pdx\to 0
\end{align*}
as $m \to \infty$, where, for any $\delta>0$,
$D_\delta:= \{x\in\Omega: {{\mathop\mathrm{\,dist\,}}}(x,D)<\delta\}.$

On the other hand, by \eqref{eqn-normfsdm}, we find that
\begin{align}\label{eqn-dp}
\|v_mf\|_{\dot{\mathcal{W}}^{s,p}(\Omega)}^p&=\int_\Omega\int_\Omega
\frac{|v_m(x)f(x)-v_m(y)f(y)|^p}{|x-y|^{n+sp}} \,dydx \notag \\
&\le \int_{D_{3/m}}\int_{D_{3/m}}\frac{|v_m(x)f(x)-v_m(y)f(y)|^p}
{|x-y|^{n+sp}}\,dydx+2\int_{D_{3/m}}
\int_{\Omega\setminus D_{3/m}}\frac{|v_m(x)f(x)|^p}{|x-y|^{n+sp}} \,dydx \notag \\
&=:\mathrm{I}+\mathrm{II}.
\end{align}

For $\mathrm{I}$, from Lemma \ref{lem-textf}, it follows that
\begin{align*}
\mathrm{I}&\lesssim \int_{D_{3/m}}\int_{D_{3/m}}\frac{|v_m(y)|^p\, |f(x)-f(y)|^p}{|x-y|^{n+sp}}\,dydx
+\int_{D_{3/m}}\int_{D_{3/m}}\frac{|f(x)|^p\, |v_m(x)-v_m(y)|^p}{|x-y|^{n+sp}}\,dydx\\
&\lesssim \int_{D_{3/m}}\int_{D_{3/m}} \frac{|f(x)-f(y)|^p}{|x-y|^{n+sp}} \,dydx+\int_{D_{3/m}}\int_{D_{3/m}}
\frac{|f(x)|^p\, \min\left\{1,m^p|x-y|^p\right\}}{|x-y|^{n+sp}}\,dydx\\
&\lesssim \|f\|_{\dot{\mathcal{W}}^{s,p}(D_{3/m})}^p+\int_{D_{3/m}}\int_{\{y\in D_{3/m}:\, |x-y|>1/m\}}\frac{|f(x)|^p}{|x-y|^{n+sp}}\,dydx\\
&\quad+\int_{D_{3/m}}\int_{\{y\in D_{3/m}:\, |x-y|\le1/m\}} \frac{m^p\, |f(x)|^p}{|x-y|^{n-(1-s)p}}\,dydx\\
&\lesssim \|f\|_{\dot{\mathcal{W}}^{s,p}(D_{3/m})}^p+m^{sp}\int_{D_{3/m}}|f(x)|^pdx
\lesssim \|f\|_{\dot{\mathcal{W}}^{s,p}(D_{3/m})}^p+\int_{D_{3/ m}}\frac{|f(x)|^p}{[{{\mathop\mathrm{dist\,}}}(x,D)]^{sp}}\,dx,
\end{align*}
which tends to $0$ as $m\to\infty$ due to the assumption $f\in \mathcal W^{s,p}(\Omega)$.

For $\mathrm{II}$, by an argument similar to that used in the estimation of $\mathrm{I}$,
we conclude that
\begin{align*}
\mathrm{II}\le 2\int_{D_{2/m}}|f(x)|^p \int_{\{y\in\Omega:\, |x-y|>1/m\}}\frac{1}{|x-y|^{n+sp}}\,dydx\lesssim \int_{D_{2/m}}
\frac{|f(x)|^p}{[{{\mathop\mathrm{dist\,}}}(x,D)]^{sp}}\,dx
\end{align*}
tends to $0$ as $m\to\infty$, which proves \eqref{claim2-1}.

With the help of \eqref{claim2-1}, we now come back to the proof of \eqref{eqn-thm2.3-5}.
For any $m\in\mathbb{N}$, let $f_m:=(1-v_m)f$.
Using \eqref{claim2-1}, we obtain
\begin{align*}
\left\|(1-v_m)f\right\|_{\mathcal W^{s,p}(\Omega)}\le \|f\|_{\mathcal  W^{s,p}(\Omega)}+\|v_mf\|_{\mathcal W^{s,p}(\Omega)}<\infty,
\end{align*}
which implies $f_m\in  \mathcal W^{s,p}(\Omega)$. Moreover, by the definition of $v_m$, it is easy to verify
\begin{align*}
{{\mathop\mathrm{dist\,}}}({\rm supp}\, f_m,D)>0.
\end{align*}
From this and \eqref{claim2-1}, we deduce that \eqref{eqn-thm2.3-5}
and hence \eqref{eqn-thm2.3-3} and \eqref{eqn-thm2.3-2} hold. Altogether, \eqref{h-con1}
holds, which completes the proof of Theorem \ref{thm-equiv}.
\end{proof}

\begin{remark}\label{rem-notepf}
We point out that the proof of Theorem \ref{thm-equiv} works also for the case $s=1$.  The only
difference is that, unlike estimate \eqref{eqn-dp} in the fractional case,
we use Leibniz's rule and the fact that $\nu_m$ is a bounded Lipschitz function satisfying
${\rm supp}\, \nu \subset D_{3/m}$ and $|\nabla \nu_m|\lesssim m$ to obtain
\begin{align*}
\|v_mf\|_{\dot{\mathcal{W}}^{1,p}(\Omega)}^p&\lesssim \int_\Omega
|\nabla \nu_m(x)|^p|f(x)|^p\,dx+\int_\Omega
|\nu_m(x)|^p|\nabla f(x)|^p\,dx\\
&\lesssim \int_{D_{3/m}}|f(x)|^p\,dx+\int_{D_{2/m}\setminus D_{1/m}}
\frac{|\nabla f(x)|^p}{[{\rm dist\,}(x,D)]^p}\,dx.
\end{align*}
In particular, under \eqref{eqn-IHI},
$\mathcal  W^{1,p}(\Omega)= \mathring{W}_D^{1,p}(\Omega)$ holds.
\end{remark}

To prove Proposition \ref{prop-Hcx}, we recall the
following result, which is precisely \cite[Proposition 6.6]{Bec19-In}.

\begin{lemma}[\cite{Bec19-In}]\label{mixed-Hb}
Let $O\subset\mathbb{R}^n$ be an open $n$-set with $D\subset \partial O$ being
a closed uniformly $(n-1)$-set. If $p\in(1,\infty)$ and $s\in(0,1)$ satisfies
$sp\neq1$, then there exists a positive constant $C$ such that
\begin{align*}
\left\{\int_O \frac{|f(x)|^p}{[{{\mathop\mathrm{dist\,}}}(x,D)]^{sp}}\,dx\right\}^\frac{1}{p}\le C \|f\|_{W^{s,p}(O)}
\end{align*}
for any $f\in W^{s,p}(O)$ as in \eqref{eqn-fstrace} when $sp<1$ and for any $f\in W_D^{s,p}(O)$
as in \eqref{eqn-fstraceD} when $sp>1$.
\end{lemma}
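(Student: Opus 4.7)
The strategy is to reduce the weighted integral over $O$ to a global fractional Hardy inequality on $\mathbb{R}^n$ associated with the uniformly $(n-1)$-set $D$, exploiting the fact that both $W^{s,p}(O)$ and $W_D^{s,p}(O)$ are defined as restriction spaces of $W^{s,p}(\mathbb{R}^n)$. Concretely, for $f\in W^{s,p}(O)$ (respectively $f\in W_D^{s,p}(O)$) and any $\tau>0$, by \eqref{eqn-fstrace} (resp.~\eqref{eqn-fstraceD}) one can select $g\in W^{s,p}(\mathbb{R}^n)$ with $g|_O=f$ and $\|g\|_{W^{s,p}(\mathbb{R}^n)}\le\|f\|_{W^{s,p}(O)}+\tau$ (resp.~the extra condition $\mathcal{R}_Dg\equiv 0$ with the analogous norm bound). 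Since ${{\mathop\mathrm{dist\,}}}(\cdot,D)$ is intrinsic to $\mathbb{R}^n$, we have
\begin{align*}
\int_O\frac{|f(x)|^p}{[{{\mathop\mathrm{dist\,}}}(x,D)]^{sp}}\,dx\le \int_{\mathbb{R}^n}\frac{|g(x)|^p}{[{{\mathop\mathrm{dist\,}}}(x,D)]^{sp}}\,dx,
\end{align*}
so the lemma reduces to establishing a global fractional Hardy inequality on $\mathbb{R}^n$ for functions $g\in W^{s,p}(\mathbb{R}^n)$ (with $\mathcal{R}_Dg\equiv 0$ when $sp>1$).

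To handle the global inequality, I would first split $\mathbb{R}^n$ into the far region $\{x:{{\mathop\mathrm{dist\,}}}(x,D)\ge 1\}$, where the weight is bounded and the estimate follows from $\|g\|_{L^p(\mathbb{R}^n)}\le\|g\|_{W^{s,p}(\mathbb{R}^n)}$, and the near region $D_1:=\{x:{{\mathop\mathrm{dist\,}}}(x,D)<1\}$. On $D_1$, I would select a Whitney-type decomposition $\{B_j\}=\{B(x_j,r_j)\}$ with $x_j\in D$ and $r_j\sim{{\mathop\mathrm{dist\,}}}(B_j,D)$ having bounded overlap, and prove a uniform local Hardy inequality on each dilate $2B_j$, exploiting the uniform $(n-1)$-set condition $\mathcal{H}^{n-1}(B(x_j,r_j)\cap D)\sim r_j^{n-1}$. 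The local model is the half-space $\mathbb{R}_+^n$ with boundary hyperplane, where the Frank--Seiringer fractional Hardy inequality \cite{FrSe10} reads
\begin{align*}
\int_{\mathbb{R}_+^n}\frac{|h(x)|^p}{x_n^{sp}}\,dx\lesssim \int_{\mathbb{R}_+^n}\int_{\mathbb{R}_+^n}\frac{|h(x)-h(y)|^p}{|x-y|^{n+sp}}\,dy\,dx,\qquad sp\ne 1,
\end{align*}
for all $h$ (when $sp<1$) or for $h$ with vanishing trace on $\{x_n=0\}$ (when $sp>1$).

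On each $2B_j$, to mimic the half-space argument without a flat boundary, the key observation is that the $(n-1)$-regularity of $D$ gives a Poincar\'e-type inequality: on an annulus $B_j$ at scale $\sim r_j$ away from $D$, the $L^p$ average of $g$ can be controlled by the Gagliardo seminorm on $2B_j$ when $sp<1$ (since no trace is available), and additionally by the trace-zero condition $\mathcal{R}_Dg\equiv 0$ when $sp>1$ (so that nearby averages of $g$ near $D\cap 2B_j$ are small). In the $sp<1$ regime one picks up the weight $r_j^{-sp}$ from integrating a decomposition of $|g(x)|^p$ against kernels supported at scales $\gtrsim{{\mathop\mathrm{dist\,}}}(x,D)$, while in the $sp>1$ regime one writes $g(x)=g(x)-\mathcal{R}_Dg(z)$ for $z\in D$ near $x$ and applies the fractional Poincar\'e inequality on chains of balls shrinking to $z$; the $(n-1)$-density of $D$ guarantees enough such base points $z$. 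Summing with bounded overlap converts the local estimates into the global one.

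The main obstacle is the local inequality on $2B_j$ in the supercritical regime $sp>1$: here the trace assumption must be used genuinely, and one needs to quantify how $\mathcal{R}_Dg\equiv 0$ on the $(n-1)$-set $D\cap 2B_j$ forces the averages $g_{B(x,\rho)}\to 0$ as the ball shrinks to $D$, at a rate matching $\rho^s$. The restriction $sp\ne 1$ enters precisely at this step, since in the critical case the relevant logarithmic divergence cannot be absorbed and the Hardy inequality fails even on the half space, as noted in Remark \ref{rem-1.5}(ii).
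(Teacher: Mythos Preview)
The paper does not prove this lemma at all: it is quoted verbatim as \cite[Proposition~6.6]{Bec19-In} and used as a black box in the proof of Lemma~\ref{mixed-H}. So there is no ``paper's own proof'' to compare against; you are supplying one where the authors chose to cite.

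Your reduction to a global Hardy inequality on $\mathbb{R}^n$ via the extension $g$ is correct and is exactly the mechanism the paper exploits one step later (in Lemma~\ref{mixed-H}, with $G=\mathbb{R}^n\setminus D$ in place of $\mathbb{R}^n$). The overall strategy for the global inequality---Whitney decomposition of the complement of $D$, local Poincar\'e on each piece, chaining to the boundary when $sp>1$ using $\mathcal{R}_Dg=0$, and the $(n-1)$-regularity of $D$ to control the geometry---is the standard route (cf.\ \cite{DyVa14,Ege14-Hd}) and would succeed.

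There is one slip worth fixing: you describe Whitney balls $B(x_j,r_j)$ with $x_j\in D$ and $r_j\sim{{\mathop\mathrm{dist\,}}}(B_j,D)$, which is self-contradictory (if the centre lies on $D$ the distance is zero). What you want is a Whitney decomposition of the open set $\mathbb{R}^n\setminus D$ into cubes $Q_j$ with $\ell(Q_j)\sim{{\mathop\mathrm{dist\,}}}(Q_j,D)$; on each such cube the weight $[{{\mathop\mathrm{dist\,}}}(\cdot,D)]^{-sp}$ is essentially constant $\sim\ell(Q_j)^{-sp}$, and the $(n-1)$-regularity of $D$ enters when you pick a nearby boundary point $z_j\in D$ and run the chain argument (for $sp>1$) or control the measure of dyadic shells around $z_j$ (for $sp<1$). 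With that correction your sketch is sound.
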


The following result is a slight improvement of Lemma \ref{mixed-Hb}.

\begin{lemma}\label{mixed-H}
Let $\Omega\subset\mathbb{R}^n$ be an $(\epsilon,\delta,D)$-domain with
$D\subset \partial\Omega$ being a closed uniformly $(n-1)$-set. If $p\in
(1,\infty)$ and $s\in(0,1)$ satisfies $sp\neq1$, then there exists a positive constant $C$
such that, for any $f\in \mathring{W}_D^{s,p}(\Omega)$ as in \eqref{h-con121},
it holds that
\begin{align}\label{ineq-hd}
\left\{\int_\Omega \frac{|f(x)|^p}{[{{\mathop\mathrm{dist\,}}}(x,D)]^{sp}}
dx\right\}^\frac{1}{p}\le C \|f\|_{W^{s,p}(\Omega)}.
\end{align}
\end{lemma}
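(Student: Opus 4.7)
The plan is to apply Lemma \ref{mixed-Hb} on the auxiliary open set $O:=\mathbb{R}^n\setminus D$, which satisfies all the required hypotheses: $O$ is open (as $D$ is closed), an $n$-set (the uniformly $(n-1)$-set $D$ has zero $n$-dimensional Lebesgue measure), $\partial O=D$ is the prescribed closed uniformly $(n-1)$-set, $\Omega\subset O$ (since $D\subset\partial\Omega$ is disjoint from the open set $\Omega$), and $W^{s,p}(O)$ coincides isometrically with $W^{s,p}(\mathbb{R}^n)$ because $|D|=0$. Assume $\|f\|_{W^{s,p}(\Omega)}<\infty$ (otherwise \eqref{ineq-hd} is trivial), and for arbitrary $\epsilon>0$ fix an extension $g\in W^{s,p}(\mathbb{R}^n)$ of $f$ with $\|g\|_{W^{s,p}(\mathbb{R}^n)}\le(1+\epsilon)\|f\|_{W^{s,p}(\Omega)}$.

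In the subcritical regime $sp<1$, the argument is immediate: Lemma \ref{mixed-Hb} in this range imposes no trace condition, so it applies directly to $g\in W^{s,p}(O)=W^{s,p}(\mathbb{R}^n)$; restricting the resulting Hardy bound from $O$ down to $\Omega\subset O$ and then sending $\epsilon\to 0$ produces \eqref{ineq-hd} at once.

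The supercritical regime $sp>1$ is the crux, because Lemma \ref{mixed-Hb} now requires $\mathcal{R}_D g\equiv 0$, which an arbitrary extension $g$ need not satisfy. The plan is to first invoke the identification $\mathring W_D^{s,p}(\Omega)=W_D^{s,p}(\Omega)$ for $sp>1$ from \cite[Lemma 3.3]{Bec19-In}, which supplies a zero-trace extension $g_0\in W^{s,p}(\mathbb{R}^n)$ of $f$ to which Lemma \ref{mixed-Hb} applies, yielding
\[
\int_\Omega\frac{|f(x)|^p}{[{{\mathop\mathrm{dist\,}}}(x,D)]^{sp}}\,dx\le C\|g_0\|_{W^{s,p}(\mathbb{R}^n)}^p,
\]
and after taking the infimum over all such zero-trace extensions $\le C\|f\|_{W_D^{s,p}(\Omega)}^p$. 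The principal technical obstacle is then to upgrade this to the sought bound in terms of $\|f\|_{W^{s,p}(\Omega)}^p$; equivalently, one must show that on the common space $\mathring W_D^{s,p}(\Omega)=W_D^{s,p}(\Omega)$ the quotient norm over zero-trace extensions is comparable to the standard quotient norm. I would approach this by constructing, for any extension $g$ of $f$, a trace-correcting modification $g^*:=g-h$, where $h\in W^{s,p}(\mathbb{R}^n)$ is a bounded trace lift of $\mathcal{R}_Dg\big|_D$ supported in $\mathbb{R}^n\setminus\overline{\Omega}$ with $\|h\|_{W^{s,p}(\mathbb{R}^n)}\lesssim\|g\|_{W^{s,p}(\mathbb{R}^n)}$, so that $g^*$ satisfies $g^*|_\Omega=f$, $\mathcal{R}_Dg^*=0$, and $\|g^*\|_{W^{s,p}(\mathbb{R}^n)}\lesssim\|g\|_{W^{s,p}(\mathbb{R}^n)}$. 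The availability of such a trace lifting with the requisite one-sided support is precisely where the $(\epsilon,\delta,D)$-structure of $\Omega$ and the uniform $(n-1)$-set structure of $D$ intervene, and this is the technical heart of the argument.
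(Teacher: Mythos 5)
Your proposal follows the same route as the paper's actual proof: pass to the auxiliary open set $O:=\mathbb{R}^n\setminus D$ (the paper uses the letter $G$), observe it is an open $n$-set with $\partial O=D$ a closed uniformly $(n-1)$-set, apply Lemma \ref{mixed-Hb} there, and pull the Hardy inequality back to $\Omega\subset O$. For $sp<1$ your argument is complete and coincides with the paper's.

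For $sp>1$ the first half also matches: use \eqref{eqn4.15} to get a zero-trace extension, run Lemma \ref{mixed-Hb} on $O$, and land on a bound of the form $\lesssim\|f\|_{W_D^{s,p}(\Omega)}^p$. You then single out the upgrade from $\|f\|_{W_D^{s,p}(\Omega)}$ to $\|f\|_{W^{s,p}(\Omega)}$ as the ``technical heart'' and propose to build an explicit one-sided trace lift $h$ supported in $\mathbb{R}^n\setminus\overline\Omega$ with $\mathcal{R}_Dh=\mathcal{R}_Dg$. This is where the proposal goes off track, for two reasons. First, the upgrade is not actually an obstacle: \eqref{eqn4.15} is an identity of Banach spaces, and since one always has the elementary chain $\|f\|_{\mathcal{W}^{s,p}(\Omega)}\le\|f\|_{W^{s,p}(\Omega)}\le\|f\|_{W_D^{s,p}(\Omega)}$, the identity map is a continuous bijection between the Banach spaces $W_D^{s,p}(\Omega)$ and $\mathring W_D^{s,p}(\Omega)$; the bounded inverse theorem then gives $\|f\|_{W_D^{s,p}(\Omega)}\lesssim\|f\|_{\mathcal{W}^{s,p}(\Omega)}\le\|f\|_{W^{s,p}(\Omega)}$, and the proof closes with no further construction. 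Second, the specific lift you sketch is dubious precisely in the regime $sp>1$: a function $h\in W^{s,p}(\mathbb{R}^n)$ vanishing a.e.\ on the measure-thick set $\Omega$ (hence on one side of $D$) but carrying a nonzero trace $\mathcal{R}_Dh$ on $D$ would have a jump discontinuity across $D$, and the two-sided Gagliardo contribution near the interface scales like $\iint|x-y|^{-(1+sp)}$ over a pair of opposed wedges, which diverges for $sp\ge 1$. So the one-sided trace lift you posit should not be expected to exist in general; it is both unnecessary and unfinished, and replacing it by the norm comparability already contained in \eqref{eqn4.15} is what the paper's terse ``based on \eqref{eqn4.15}'' is pointing at.
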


\begin{proof}
Let $G:= \mathbb{R}^n \setminus D$. It is easy to find that $G$ is a domain in $\mathbb{R}^n$ with
$\partial G=D$. We first claim that $G$ is an $n$-set as in \eqref{def-d-set}.
Indeed, for any $x\in G$ and $r\in(0,1]$, if $B(x,r)\cap D=\emptyset$, then
\begin{align*}
|B(x,r)\cap G|=|B(x,r)|\sim r^n.
\end{align*}
Otherwise, if $B(x,r)\cap D\neq\emptyset$, by the assumption
that $D$ is an $(n-1)$-set, we find that $|D|=0$ and then
\begin{align*}
|B(x,r)\cap G|\ge |B(x,r)|-|D|=|B(x,r)|\sim r^n.
\end{align*}
Thus, using Lemma \ref{mixed-Hb}, we obtain
\begin{align}\label{eqn-HIMi}
\left\{\int_G \frac{|g(x)|^p}{[{{\mathop\mathrm{dist\,}}}(x,D)]^{sp}}
dx \right\}^\frac{1}{p}\lesssim \|g\|_{W^{s,p}(G)}
\end{align}
for any $g\in W^{s,p}(G)$ if $sp<1$ and for any $g \in W_D^{s,p}(G)$ if $sp>1$.

Now assume first $sp<1$. In this case, for any
$f\in W^{s,p}(\Omega)$, by the definition of $W^{s,p}(\Omega)$,
there exists $F\in W^{s,p}(\mathbb{R}^n)$
such that $F|_\Omega=f$ and, moreover,
$F|_G\in W^{s,p}(G)$. Using this and \eqref{eqn-HIMi}, we conclude that
\begin{align*}
\left\{\int_\Omega \frac{|f(x)|^p}{[{{\mathop\mathrm{dist\,}}}(x,D)]^{sp}} dx \right\}^\frac{1}{p}
\le \left\{\int_G \frac{|F(x)|^p}{[{{\mathop\mathrm{dist\,}}}(x,D)]^{sp}} dx \right\}^\frac{1}{p}
\lesssim \|F\|_{W^{s,p}(G)}\lesssim \|F\|_{W^{s,p}(\mathbb{R}^n)}.
\end{align*}
Taking the infimum over all such $F$, we obtain \eqref{ineq-hd} for any $f \in W^{s,p}(\Omega)$,
which, combined with Theorem \ref{thm-equiv}, proves Lemma \ref{mixed-H} in the case $sp<1$.
The case $sp>1$ can be dealt  in a similar way based on \eqref{eqn4.15}, the details being omitted.
Combining the above two cases then completes the proof of Lemma \ref{mixed-H}.
\end{proof}

We are now in a position to show Proposition \ref{prop-Hcx}.

\begin{proof}[Proof of Proposition \ref{prop-Hcx}]
From the proof of Theorem \ref{thm-equiv} and Remark \ref{rem1.4}(ii), we infer that
\begin{align*}
\left\{f\in \mathcal  W^{s,p}(\Omega):\ \int_{\Omega} \dfrac{|f(x)|^p}
{[{\rm dist\,}(x,D)]^{sp}}\,dx<\infty\right\}\subset \mathring{W}_D^{s,p}(\Omega).
\end{align*}
Thus, we only need to prove that, if $f\in \mathring{W}_D^{s,p}(\Omega)$, then
\begin{align*}
\int_\Omega \frac{|f(x)|^p}{[{{\mathop\mathrm{dist\,}}}(x,D)]^{sp}}\,dx
\lesssim\left\|f\right\|^p_{\mathcal  W^{s,p}(\Omega)},
\end{align*}
which is an immediately consequence of Lemma \ref{mixed-H}  when $s\in (0,1)$ and
\cite[Theorem 2.1 and Remark 2.2]{EgTo17} when $s=1$,  together with the assumption that $\Omega$ is an $n$-set.
This finishes the proof of Proposition \ref{prop-Hcx}.
\end{proof}

\section{Real interpolation and applications to elliptic operators}\label{s5}

In this section, we first prove Proposition \ref{t5} in Section \ref{s5.1},
which shows that the real
interpolation space $(L^p(\Omega, \mathcal{W}_{d_D}^{1,p}(\Omega)))_{s,p}$
equals the weighted fractional Sobolev space $\mathcal{W}_{d_D^s}^{s,p}(\Omega)$
in Definition \ref{def-wfs}. Then,  in  Section \ref{s5.2}, we apply Proposition \ref{t5} to
prove Corollaries \ref{cor1.11} and \ref{cor1.12}, which
characterize the domain of the fractional power and the maximal regularity of elliptic operators
with mixed boundary.

\subsection{Characterization of real interpolation}\label{s5.1}

To prove Proposition \ref{t5}, we begin with the following technical lemmas. 
The following lemma is a variant of  \cite[Theorem 1.1]{Bec21}.

\begin{lemma}\label{lem4.5}
Let $s\in (0,1)$,
$p\in (1,\infty)$, and $\Omega\subset\mathbb{R}^n$ be an $(\epsilon,\delta,D)$-domain with
$D\subset \partial\Omega$ being closed. Suppose $\mathcal{E}_D$ is the linear
extension operator defined as in \eqref{ext-x1}. Then $\mathcal{E}_D$ is
bounded from the weighted Sobolev spaces $\mathcal{W}_{d_D^s}^{s,p}(\Omega)$ to $\mathcal{W}_{d_D^s}^{s,p}(\mathbb{R}^n)$.
\end{lemma}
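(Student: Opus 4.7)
The plan is to split the target norm $\|\mathcal{E}_D f\|_{\mathcal{W}^{s,p}_{d_D^s}(\mathbb{R}^n)}$ into its fractional Sobolev part $\|\mathcal{E}_D f\|_{W^{s,p}(\mathbb{R}^n)}$ and its weighted part $\bigl(\int_{\mathbb{R}^n}|\mathcal{E}_D f|^p/d_D^{sp}\bigr)^{1/p}$, and to handle them separately. The first piece comes essentially for free: inspection of the proof of Proposition \ref{thm1x} shows that the estimate \eqref{main-2},
\begin{align*}
\|\mathcal{E}_D f\|_{W^{s,p}(\mathbb{R}^n)}\lesssim \|f\|_{L^p(\Omega)}+\|f\|_{\dot{\mathcal{W}}^{s,p}(\Omega)}+\left(\int_\Omega \frac{|f|^p}{d_D^{sp}}\,dx\right)^{1/p},
\end{align*}
is derived from the Whitney--reflection structure alone without invoking the Hardy inequality (the Hardy inequality only appears in Proposition \ref{thm1x} afterwards, to absorb the last term). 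The right-hand side is exactly $\|f\|_{\mathcal{W}^{s,p}_{d_D^s}(\Omega)}$, so this already controls the first piece.

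For the weighted part, I would decompose $\mathbb{R}^n = \Omega\cup D\cup \overline{\Omega}^{\complement}$. On $\Omega$ the extension is $f$ and contributes $\|f\|^p_{L^p(\Omega,d_D^{-sp})}$; on $D$ one has $\mathcal{E}_Df\equiv 0$. The substantive bound to prove is thus
\begin{align*}
\int_{\overline{\Omega}^{\complement}}\frac{|\mathcal{E}_Df(x)|^p}{d_D(x)^{sp}}\,dx\lesssim \int_{\Omega}\frac{|f(\xi)|^p}{d_D(\xi)^{sp}}\,d\xi.
\end{align*}
Using formula \eqref{ext-x1}, the support condition $\mathrm{supp}\,\psi_j\subset c_nQ_j$, and the bounded overlap of $\{c_nQ_j\}$ inherited from Lemma \ref{W}(v), one first bounds $|\mathcal{E}_Df(x)|^p\lesssim \sum_{Q_j\in \mathscr{W}_{\mathrm e},\,x\in c_nQ_j}|(E_{Q_j^*}f)_{Q_j^*}|^p$.

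The geometric heart of the argument is then the pair of uniform comparisons
\begin{align*}
d_D(x)\sim {{\mathop\mathrm{dist\,}}}(Q_j,D)\ \text{ for }\ x\in c_nQ_j,\qquad d_D(\xi)\lesssim {{\mathop\mathrm{dist\,}}}(Q_j,D)\ \text{ for }\ \xi\in Q_j^*\cap\Omega,
\end{align*}
valid for every $Q_j\in \mathscr{W}_{\mathrm e}$. These follow from the defining inequality ${{\mathop\mathrm{dist\,}}}(Q_j,\Gamma)<B\,{{\mathop\mathrm{dist\,}}}(Q_j,D)$ in \eqref{We} — which, combined with the Whitney identity $\mathrm{diam}\,Q_j\sim {{\mathop\mathrm{dist\,}}}(Q_j,\partial\Omega)$, forces ${{\mathop\mathrm{dist\,}}}(Q_j,D)\gtrsim \mathrm{diam}\,Q_j$ — together with the small size of $c_nQ_j\setminus Q_j$ relative to $Q_j$ and the reflection estimates of Lemma \ref{lem3.4.4}(i). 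Once they are in hand, the routine bound $|(E_{Q_j^*}f)_{Q_j^*}|^p\le |Q_j^*|^{-1}\int_{Q_j^*\cap\Omega}|f|^p$ and $|Q_j|\sim|Q_j^*|$ give
\begin{align*}
\int_{c_nQ_j}\frac{|\mathcal{E}_Df|^p}{d_D^{sp}}\,dx\lesssim \int_{Q_j^*\cap\Omega}\frac{|f(\xi)|^p}{d_D(\xi)^{sp}}\,d\xi,
\end{align*}
and summing over $\mathscr{W}_{\mathrm e}$ using the bounded multiplicity of $\{Q_j^*\}$ from Lemma \ref{lem3.4.4}(ii) closes the estimate. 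The main obstacle is verifying the two geometric comparisons with constants independent of $Q_j\in\mathscr{W}_{\mathrm e}$; once those are established the rest is a direct summation, and notably the Hardy inequality is never used — the weight $d_D^{-sp}$ on the target side already encodes the required boundary decay at $D$.
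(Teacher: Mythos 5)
Your argument is correct and follows essentially the same route as the paper's proof: split the target norm into its unweighted $W^{s,p}(\mathbb{R}^n)$ part and its weighted $L^p(\mathbb{R}^n,d_D^{-sp})$ part, handle the first via the extension estimate \eqref{main-2}, and handle the second cube-by-cube over $\mathscr{W}_{\rm e}$ using formula \eqref{ext-x1}, Jensen's inequality, bounded overlap from Lemmas \ref{W} and \ref{lem3.4.4}, and the geometric comparison ${{\mathop\mathrm{dist\,}}}(\xi,D)\lesssim {{\mathop\mathrm{dist\,}}}(x,D)$ for $x\in c_nQ_j$ and $\xi\in Q_j^*$. Your decomposition of that comparison into the two statements $d_D(x)\sim{{\mathop\mathrm{dist\,}}}(Q_j,D)$ on $c_nQ_j$ and $d_D(\xi)\lesssim{{\mathop\mathrm{dist\,}}}(Q_j,D)$ on $Q_j^*$ is equivalent to the paper's single chain of inequalities, and your observation that ${{\mathop\mathrm{dist\,}}}(Q_j,D)\gtrsim{\mathop\mathrm{diam\,}}Q_j$ for $Q_j\in\mathscr{W}_{\rm e}$ (forced by the definition \eqref{We} together with Lemma \ref{W}(ii)) is exactly the implicit step making both comparisons work.

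One point in favour of your write-up: the paper justifies the $W^{s,p}$ half by citing Theorem \ref{thm1}, whose hypothesis is the Hardy inequality \eqref{eqn-HI}, which Lemma \ref{lem4.5} does not assume. As you correctly identified, what is actually used (and what suffices) is the intermediate estimate \eqref{main-2} established within the proof of Proposition \ref{thm1x}; its right-hand side is exactly $\|f\|_{\mathcal{W}^{s,p}_{d_D^s}(\Omega)}$ and its derivation nowhere invokes \eqref{eqn-HI}. So your proof is not only correct but more precise at that step.
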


\begin{proof}
Using Remark \ref{rem1.9}, we find that
$\mathcal{W}_{d_D^s}^{s,p}(\Omega)=\mathcal{W}^{s,p}(\Omega)\cap L^p(\Omega,{d_D^{-sp}})\subset \mathring W_D^{s,p}(\Omega)$.
Thus, to finish the proof of Lemma \ref{lem4.5}, we only
need to verify  that $\mathcal{E}_D$ is both bounded from $\mathring W_D^{s,p}(\Omega)$ to
$W^{s,p}(\mathbb{R}^n)$ and bounded from $ L^p(\Omega,{d_D^{-sp}})$ to
$L^p(\mathbb{R}^n,{d_D^{-sp}})$.
Recall that the former boundedness is proven in Theorem \ref{thm1}, it remains to show the latter
one. To this end, for any $f\in  L^p(\Omega,{d_D^{-sp}})$, by \eqref{ext-x1}, we first write
\begin{align*}
\left\|\mathcal{E}_Df\right\|_{L^p(\mathbb{R}^n,{d_D^{-sp}})}^p=\int_{\Omega}\dfrac{|f(x)|^p}{[{\rm dist}\,(x,D)]^{sp}}\,dx
+\int_{\Omega^\complement}\dfrac{|\mathcal{E}_Df(x)|^p}{[{\rm dist}\,(x,D)]^{sp}}\,dx=:\mathrm{I}+\mathrm{II},
\end{align*}
where $\mathrm{I}=\|f\|_{L^p(\Omega,{d_D^{-sp}})}^p$. To estimate $\mathrm{II}$,
using \eqref{ext-x1} again and Lemma \ref{W}, we obtain
\begin{align}\label{4.12}
\mathrm{II}&\lesssim \sum_{Q_j\in \mathscr{W}_e }\int_{c_nQ_j}\dfrac{\left|(E_{Q_j^*}f)_{Q_j^*}\right|^p}{[{\rm dist}\,(x,D)]^{sp}}\,dx
\notag \\
&\lesssim \sum_{Q_j\in \mathscr{W}_e }\int_{c_nQ_j}\left( \frac{1}{|Q_j^*|} \int_{Q_j^*}\left| f(y) \right|^p\,dy \right)
\frac{1}{[{\rm dist}\,(x,D)]^{sp}}\,dx.
\end{align}
Notice that, for any $x\in c_nQ_j$ with $Q_j\in \mathscr{W}_e$ as in \eqref{We} and for any $y\in Q_j^*$ as in Lemma \ref{lem3.4.4},
by Lemma \ref{W} and Remark \ref{remark2.3}, we conclude that
\begin{align*}
{\rm dist}\,(y,D)&\le {\rm diam\,} (Q_j^*)+ {\rm dist}\,(Q_j^*,c_nQ_j)+{\rm diam\,} (c_nQ_j)+{\rm dist}\,(x,D)\\
&\lesssim {\rm diam\,} (Q_j)+{\rm dist}\,(x,D)
\lesssim {\rm dist}\,(x,D),
\end{align*}
which, combined with \eqref{4.12}, implies that
\begin{align*}
 \mathrm{II}\lesssim \sum_{Q_j\in \mathscr{W}_e }\int_{Q_j^*}\dfrac{| f(y) |^p}{[{\rm dist}\,(y,D)]^{sp}}\,dy
 \lesssim  \|f\|_{L^p(\Omega,{d_D^{-sp}})}^p.
\end{align*}

Altogether the estimates of $\mathrm{I}$ and $\mathrm{II}$, we conclude the boundedness of
$\mathcal{E}_D$ from  $ L^p(\Omega,{d_D^{-sp}})$ to $L^p(\mathbb{R}^n,{d_D^{-sp}})$.
This finishes the proof of Lemma \ref{lem4.5}.
\end{proof}

\begin{lemma}\label{lem4.6}
Let $s, s_0\in (0,1)$, $p\in (1,\infty)$, and $\Omega\subset\mathbb{R}^n$ be an $(\epsilon,\delta,D)$-domain with
$D\subset \partial\Omega$ being closed. Then
\begin{align*}
\left(L^p(\Omega), \mathcal{W}_{d_D^{s_0}}^{s_0,p}(\Omega)\right)_{s,p}\subset \mathcal{W}_{d_D^{ss_0}}^{ss_0,p}(\Omega).
\end{align*}
\end{lemma}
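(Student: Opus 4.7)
\emph{Plan.} My strategy is to lift the inclusion from $\Omega$ to $\mathbb{R}^n$ via the extension $\mathcal{E}_D$, rewrite the weighted fractional Sobolev space on $\mathbb{R}^n$ as an intersection, and then invoke two classical real-interpolation identities.

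By Lemma \ref{lem4.5} the extension $\mathcal{E}_D$ is bounded from $\mathcal{W}_{d_D^{s_0}}^{s_0,p}(\Omega)$ to $\mathcal{W}_{d_D^{s_0}}^{s_0,p}(\mathbb{R}^n)$, and by the opening estimate in the proof of Proposition \ref{thm1x} it is also bounded from $L^p(\Omega)$ to $L^p(\mathbb{R}^n)$. Since the restriction $R$ to $\Omega$ is trivially bounded between the corresponding pairs of spaces and satisfies $R\circ\mathcal{E}_D=\mathrm{id}$ on $\Omega$, the functoriality of the real interpolation method reduces the claim to the analogous inclusion on the whole space:
\begin{align*}
\bigl(L^p(\mathbb{R}^n),\mathcal{W}_{d_D^{s_0}}^{s_0,p}(\mathbb{R}^n)\bigr)_{s,p}\hookrightarrow \mathcal{W}_{d_D^{ss_0}}^{ss_0,p}(\mathbb{R}^n).
\end{align*}
To prove this, write $\mathcal{W}_{d_D^{s_0}}^{s_0,p}(\mathbb{R}^n)=W^{s_0,p}(\mathbb{R}^n)\cap L^p(\mathbb{R}^n,d_D^{-s_0p})$. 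The elementary inequality $(X_0,X_1\cap X_2)_{s,p}\hookrightarrow (X_0,X_1)_{s,p}\cap (X_0,X_2)_{s,p}$, which is immediate from the monotonicity of the K-functional under continuous embeddings of the second endpoint, yields
\begin{align*}
\bigl(L^p(\mathbb{R}^n),\mathcal{W}_{d_D^{s_0}}^{s_0,p}(\mathbb{R}^n)\bigr)_{s,p}\hookrightarrow \bigl(L^p(\mathbb{R}^n),W^{s_0,p}(\mathbb{R}^n)\bigr)_{s,p}\cap \bigl(L^p(\mathbb{R}^n),L^p(\mathbb{R}^n,d_D^{-s_0p})\bigr)_{s,p}.
\end{align*}

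The first factor on the right equals $W^{ss_0,p}(\mathbb{R}^n)=\mathcal{W}^{ss_0,p}(\mathbb{R}^n)$ by the reiteration theorem applied to the classical identity $W^{s_0,p}(\mathbb{R}^n)=(L^p(\mathbb{R}^n),W^{1,p}(\mathbb{R}^n))_{s_0,p}$ (see, e.g., \cite{Tri83}). The second factor coincides with $L^p(\mathbb{R}^n,d_D^{-ss_0p})$ by the Stein-Weiss interpolation formula for weighted Lebesgue spaces, which reduces to a direct K-functional computation. Intersecting these identifications gives $\mathcal{E}_Df\in\mathcal{W}_{d_D^{ss_0}}^{ss_0,p}(\mathbb{R}^n)$, and restricting to $\Omega$ completes the argument.

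I expect the principal issue to be bookkeeping rather than depth: one must verify the intersection inequality with quantitative constants (straightforward once the embeddings $\mathcal{W}_{d_D^{s_0}}^{s_0,p}(\mathbb{R}^n)\hookrightarrow W^{s_0,p}(\mathbb{R}^n)$ and $\mathcal{W}_{d_D^{s_0}}^{s_0,p}(\mathbb{R}^n)\hookrightarrow L^p(\mathbb{R}^n,d_D^{-s_0p})$ are noted) and confirm that the reiteration and Stein-Weiss identifications apply in the form stated. Notably, no $D$-adapted Hardy inequality is required at this step, which is consistent with the hypotheses of the lemma.
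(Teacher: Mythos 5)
Your proof is correct and follows essentially the same route as the paper: boundedness of $\mathcal{E}_D$ from Lemma \ref{lem4.5} plus the retract principle, the elementary inclusion of the interpolation of an intersection into the intersection of interpolations, reiteration to identify $(L^p,W^{s_0,p})_{s,p}=W^{ss_0,p}$, and the Stein--Weiss formula for the weighted $L^p$ factor. The only cosmetic difference is organizational: you transfer everything to $\mathbb{R}^n$ at the start and restrict at the end, whereas the paper keeps the weighted $L^p$ factor on $\Omega$ throughout and applies Stein--Weiss there; both are legitimate since Lemma \ref{lem4.5} already gives the boundedness of $\mathcal{E}_D$ on $L^p(\Omega,d_D^{-s_0p})$ that your version implicitly requires.
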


\begin{proof}
By Lemma \ref{lem4.5}, we obtain $\mathcal{E}_D$ in \eqref{ext-x1} is a linear
bounded extension from $\mathcal{W}_{d_D^{s_0}}^{s_0,p}(\Omega)$ to $\mathcal{W}_{d_D^{s_0}}^{s_0,p}(\mathbb{R}^n)$. Moreover, let
$\mathcal{R}_\Omega f:=f|_\Omega$ be the restriction operator to $\Omega$.
Using the retract theory of interpolation (see, e.g., \cite{Ber12,Tri95})
and the fact  $\mathcal{W}_{d_D^{s_0}}^{s_0,p}(\Omega)=\mathcal W^{s_0,p}(\Omega)\cap
L^{p}(\Omega,d_D^{-s_0 p})$, we find that
\begin{align*}
\left(L^p(\Omega), \mathcal{W}_{d_D^{s_0}}^{s_0,p}(\Omega)\right)_{s,p}&=\mathcal{R}_\Omega
\left[\left(L^p(\mathbb{R}^n), \mathcal{W}_{d_D^{s_0}}^{s_0,p}(\mathbb{R}^n)\right)_{s,p}\right]\cap
\left(L^p(\Omega), L^{p}\left(\Omega,d_D^{-s_0 p}\right)\right)_{s,p}\\
&\subset\mathcal{R}_\Omega
\left[\left(L^p(\mathbb{R}^n), W^{s_0,p}(\mathbb{R}^n)\right)_{s,p}\right]\cap
\left(L^p(\Omega), L^{p}\left(\Omega,d_D^{-s_0 p}\right)\right)_{s,p}\\
&\subset\mathcal{W}^{ss_0,p}(\Omega)\cap
\left(L^p(\Omega), L^{p}\left(\Omega,d_D^{-s_0 p}\right)\right)_{s,p}.
\end{align*}
On the other hand, from the real interpolation of $L^p(\Omega)$ with change of measures
(see, e.g., \cite[Theorem 5.4.1]{Ber12}),
we deduce that $(L^p(\Omega), L^p(\Omega,d_D^{-s_0 p}))_{s,p}=L^p(\Omega,d_D^{-ss_0 p})$.
This, together with Remark \ref{rem1.9}, implies that
\begin{align*}
\mathcal{W}^{ss_0,p}(\Omega)\cap
\left(L^p(\Omega), L^{p}\left(\Omega,d_D^{-s_0 p}\right)\right)_{s,p}
=\mathcal{W}^{ss_0,p}(\Omega)\cap  L^{p}\left(\Omega,d_D^{-ss_0 p}\right)
=\mathcal{W}_{d_D^{ss_0}}^{ss_0,p}(\Omega)
\end{align*}
and hence finishes the proof of  Lemma \ref{lem4.6}.
\end{proof}

To establish the reverse inclusion in Proposition \ref{t5},
we need a trace characterization of real interpolation from
\cite{Gri69,Bec19-In}. To be precise, let
$\Omega_D:=( \Omega\times \{0\} )\cup ( D\times \mathbb{R} )\subset \mathbb{R}^{n+1}$.
Assume that $\Omega$ is an $n$-set and $D$ a uniformly $(n-1)$-set.  By \cite[Lemma 6.7]{Bec19-In}, we find that
$\Omega_D$ is an $n$-set in $\mathbb{R}^{n+1}$. Using this and following Jonsson and Wallin \cite[p.\,103]{Jon1984}, we
can define the fractional Sobolev space
$\mathcal{W}^{s,p}(\Omega_D)$ on $\Omega_D$ as follows: for any $s\in (0,1)$ and $p\in (1,\infty)$,
\begin{align*}
\mathcal{W}^{s,p}(\Omega_D):=\left\{f\in L^p(\Omega_D):\ \left\|f\right\|_{\mathcal{W}^{s,p}(\Omega_D)}<\infty\right\},
\end{align*}
where
\begin{align}\label{fss}
\left\|f\right\|_{\mathcal{W}^{s,p}(\Omega_D)}:=\left[\int_{\Omega_D}
\left| f({\mathbf x}) \right|^p\, \mathcal{H}^{n}(d{\mathbf x})\right]^{\frac 1p}
+\left[\iint_{{\Omega_D}\times {\Omega_D}} \dfrac{|f({\mathbf x})-f({\mathbf y})|^{p}}{|{\mathbf x}-{\mathbf y}|^{n-sp}} \,
\mathcal{H}^{n}(d{\mathbf x})\mathcal{H}^{n}(d{\mathbf y})\right]^{\frac1p}
\end{align}
and $\mathcal{H}^{n}$ denotes the $n$-dimensional Hausdorff measure on $\Omega_D$.

Let $\mathcal{E}_0:\ L_{\rm loc}^1(\Omega)\to L_{\rm loc}^1(\Omega_D)$ be a zero extension
defined by setting, for any $f\in L_{\rm loc}^1(\Omega)$ and $\mathbf x\in \Omega_D$,
\begin{align}\label{eqn4.13}
\mathcal{E}_0f(\mathbf{x}):=
\begin{cases}
f(x) & \mbox{if } \ \mathbf{x}=(x,0)\in \Omega\times \{0\}, \\
 0 & \mbox{if}\ \mathbf{x}\in D\times \mathbb{R}.
\end{cases}
\end{align}
The following lemma is from the proof of \cite[Lemma 6.10]{Bec19-In}, which
says that, if there exists a zero extension from a subspace $X^{s,p}(\Omega)$
of $W^{s,p}(\Omega)$ to the fractional Sobolev space $\mathcal{W}^{s,p}(\Omega_D)$
on the $n$-set $\Omega_D$ of  $\mathbb{R}^{n+1}$,
then, using the extension theory of  Jonsson-Wallin \cite{Jon1984} and the trace theory of
real interpolation of Grisvard \cite{Gri69}, we can show that $X^{s,p}(\Omega)$
is contained in the real interpolation space
$(L^p(\Omega), \mathring{W}_D^{1,p}(\Omega))_{s,p}$; we omit the details.

\begin{lemma}[\cite{Bec19-In}]\label{lem4.7}
Let $s\in (0,1)$, $p\in (1,\infty)$, and
$\Omega\subset\mathbb{R}^n$ be an $(\epsilon,\delta,D)$-domain with $\Omega$ being an $n$-set and
$D\subset \partial\Omega$ being a closed uniformly $(n-1)$-set. Suppose $X^{s,p}(\Omega)$ is a subspace of $W^{s,p}(\Omega)$
satisfying that the  zero-extension $\mathcal{E}_0$ in \eqref{eqn4.13} is bounded from $X^{s,p}(\Omega)$ to
$\mathcal{W}^{s,p}(\Omega_D)$. Then it holds that
\begin{align*}
X^{s,p}(\Omega)\subset \left(L^p(\Omega), \mathring{W}_D^{1,p}(\Omega)\right)_{s,p}.
\end{align*}
\end{lemma}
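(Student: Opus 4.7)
The plan is to follow the Jonsson--Wallin--Grisvard route already hinted at in the text preceding the lemma. Let $f\in X^{s,p}(\Omega)$. By the standing hypothesis, $\mathcal{E}_0 f\in\mathcal{W}^{s,p}(\Omega_D)$ with norm controlled by $\|f\|_{X^{s,p}(\Omega)}$, where the space $\mathcal{W}^{s,p}(\Omega_D)$ is defined intrinsically on the $n$-set $\Omega_D\subset\mathbb{R}^{n+1}$ through the Gagliardo-type expression \eqref{fss}. Since $\Omega$ is an $n$-set and $D$ is a uniformly $(n-1)$-set, the set $\Omega_D$ is itself an $n$-set in $\mathbb{R}^{n+1}$ (as verified in \cite[Lemma 6.7]{Bec19-In}), so I would apply the Jonsson--Wallin extension theorem \cite{Jon1984}: there is a bounded linear operator $E_{\mathrm{JW}}\colon\mathcal{W}^{s,p}(\Omega_D)\to B^{s+1/p}_{p,p}(\mathbb{R}^{n+1})$ whose Besov extensions recover the original function on $\Omega_D$ in the pointwise-a.e.\ trace sense of \eqref{eqn-Rd}. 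Applying it to $\mathcal{E}_0 f$ produces $F\in B^{s+1/p}_{p,p}(\mathbb{R}^{n+1})$ whose trace equals $f$ on $\Omega\times\{0\}$ and vanishes on $D\times\mathbb{R}$.

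Next, I would view $\mathbb{R}^{n+1}=\mathbb{R}^n\times\mathbb{R}_t$ and convert the Besov regularity of $F$ into a $K$-functional estimate for $f$ via Grisvard's trace theory \cite{Gri69}. Concretely, using the retract description of $B^{s+1/p}_{p,p}(\mathbb{R}^{n+1})$ as functions in $L^p((0,\infty),t^{(1-s)p-1}\,dt;W^{1,p}(\mathbb{R}^n))$ whose $t$-derivatives lie in $L^p((0,\infty),t^{(1-s)p-1}\,dt;L^p(\mathbb{R}^n))$, one obtains a $t$-parameter decomposition $f=a(t)+b(t)|_\Omega$, with $a(t)\in L^p(\Omega)$ controlled by $\|f-F(\cdot,t)|_\Omega\|_{L^p}$ and $b(t)=F(\cdot,t)|_\Omega\in W^{1,p}(\Omega)$. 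The weighted integrability in $t$ furnished by the Besov norm of $F$ then controls the $K$-functional of $f$ in the couple $(L^p(\Omega),W^{1,p}(\Omega))$, placing $f$ in $(L^p(\Omega),W^{1,p}(\Omega))_{s,p}$ with quantitative bound by $\|\mathcal{E}_0 f\|_{\mathcal{W}^{s,p}(\Omega_D)}\lesssim\|f\|_{X^{s,p}(\Omega)}$.

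The main obstacle, and the reason the assumption that $F\equiv0$ on $D\times\mathbb{R}$ was carefully preserved, is to sharpen the second slot of the couple from $W^{1,p}(\Omega)$ down to $\mathring{W}_D^{1,p}(\Omega)$. For this I would exploit the slice-wise vanishing: for almost every $t>0$, the restriction $F(\cdot,t)|_\Omega\in W^{1,p}(\Omega)$ has a well-defined trace equal to zero on $D$, so in light of the integer-order characterization in Remark \ref{rem-notepf} (the $s=1$ analogue of Proposition \ref{prop-Hcx}), these slices lie in $\mathring{W}_D^{1,p}(\Omega)$. A measurable-selection argument then upgrades the $K$-functional decomposition to one taking values in $\mathring{W}_D^{1,p}(\Omega)$. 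The delicate point is that the cut-off/approximation underlying membership in $\mathring{W}_D^{1,p}(\Omega)$ (similar in spirit to the truncation by $v_m$ in Lemma \ref{lem-textf}) must be executed with norm bounds that are uniform in $t$; only then does integration against the Grisvard weight $t^{(1-s)p-1}\,dt$ give a finite $K$-functional in the refined couple $(L^p(\Omega),\mathring{W}_D^{1,p}(\Omega))$ and thereby place $f$ in $(L^p(\Omega),\mathring{W}_D^{1,p}(\Omega))_{s,p}$.
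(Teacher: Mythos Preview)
Your overall route (Jonsson--Wallin extension followed by Grisvard's trace method) matches the approach the paper cites from \cite{Bec19-In}, but your execution has a genuine gap. The ``retract description'' you invoke is false: $B^{s+1/p}_{p,p}(\mathbb{R}^{n+1})$ is \emph{not} contained in $L^p((0,\infty),t^{(1-s)p-1}\,dt;W^{1,p}(\mathbb{R}^n))$. A tensor product $\phi(x)\psi(t)$ with $\phi\in W^{s+1/p,p}(\mathbb{R}^n)\setminus W^{1,p}(\mathbb{R}^n)$ and $\psi\in C_{\rm c}^\infty(\mathbb{R})$ furnishes a counterexample, since its $t$-slices are never in $W^{1,p}$. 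Consequently your proposed decomposition with $b(t)=F(\cdot,t)|_\Omega\in W^{1,p}(\Omega)$ is unjustified if all you know about $F$ is membership in $B^{s+1/p}_{p,p}(\mathbb{R}^{n+1})$.

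What actually makes the argument go through is the \emph{specific structure} of the Jonsson--Wallin extension, not abstract Besov regularity. Because $F$ is built from a Whitney decomposition of $\mathbb{R}^{n+1}\setminus\Omega_D$, it is smooth off $\Omega_D$. For fixed $t>0$ the hyperplane $\mathbb{R}^n\times\{t\}$ meets $\Omega_D$ only along $D\times\{t\}$, and the Whitney cubes near that set draw their averages only from $D\times\mathbb{R}$, where $\mathcal{E}_0f\equiv 0$. Hence $F(\cdot,t)$ vanishes identically on $\{x:\mathrm{dist}(x,D)\lesssim t\}$ and is therefore smooth on all of $\mathbb{R}^n$. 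In particular the step you identify as the ``main obstacle'' --- passing from $W^{1,p}(\Omega)$ to $\mathring{W}_D^{1,p}(\Omega)$ --- is automatic, with no truncation, measurable selection, or uniform-in-$t$ approximation needed. The real work sits elsewhere: one must verify the weighted bounds $\int_0^\infty t^{(1-s)p}\|F(\cdot,t)\|_{W^{1,p}(\Omega)}^p\,\frac{dt}{t}<\infty$ and the companion estimate on $\partial_t F$, and these come from the quantitative Whitney-type derivative estimates in \cite{Jon1984}, not from the Besov norm of $F$ alone.
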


With the help of the above technical lemmas, we now turn to the proof of Proposition  \ref{t5}.

\begin{proof}[Proof of Proposition \ref{t5}]
We prove the present proposition by considering the following two cases based on the size of $sp$.

{\bf Case 1}: $sp\ne 1$. In this case,  let $\mathcal{E}_D$ be the linear extension in \eqref{ext-x1} and
$\mathcal{R}_\Omega:L^1_{\rm{loc}}(\mathbb{R}^n)\to L^1(\Omega)$ the trace operator
defined by setting $\mathcal{R}_\Omega f:=f|_\Omega$ for any $f\in L_{\rm loc}^1(\mathbb{R}^n)$.
Recall that
$\mathcal{E}_D$ is a bounded linear operator from $\mathring{W}_{D}^{1,p}(\Omega)$
to $\mathring{W}_{D}^{1,p}(\mathbb{R}^n)$ (see \cite[Theorem 1.2]{Bec19-Ext}) and
$\mathcal{R}_\Omega$ is a bounded linear operator
from $\mathring{W}_{D}^{1,p}(\mathbb{R}^n)$ to $\mathring{W}_{D}^{1,p}(\Omega)$. This,
combined with the fact
$\mathcal{R}_\Omega\circ \mathcal{E}_D=I$ on $\mathring{W}_{D}^{1,p}(\Omega)$, implies
that $(\mathcal{E}_D,\mathcal{R}_\Omega)$ is a retract from $\mathring{W}_{D}^{1,p}(\Omega)$
to $\mathring{W}_{D}^{1,p}(\mathbb{R}^n)$. Similarly,
we find that $(\mathcal{E}_D,\mathcal{R}_\Omega)$ is also a retract from $L^p(\Omega)$
to $L^p(\mathbb{R}^n)$. Thus, using the retract theory of interpolation (see, e.g., \cite{Ber12,Tri95}), we obtain
\begin{align*}
\left(L^{p}(\Omega),\mathring W_D^{1,p}(\Omega) \right)_{s,p}=
\mathcal{R}_\Omega\circ
\left(L^{p}(\mathbb{R}^n),\mathring W_D^{1,p}
(\mathbb{R}^n) \right)_{s,p}.
\end{align*}
Moreover, recall that it was proven in \cite[Theorem 1.1]{Bec19-In} that
\begin{align*}
\left(L^{p}(\mathbb{R}^n),\mathring W_D^{1,p}
(\mathbb{R}^n) \right)_{s,p}=
\begin{cases}
\mathring W^{s,p}(\mathbb{R}^n) & \mbox{if } \ sp<1, \\
\mathring W_D^{s,p}(\mathbb{R}^n) & \mbox{if}\ sp>1.
\end{cases}
\end{align*}
This, together with the following facts
\begin{align*}
\mathcal{R}_\Omega\circ\mathring W^{s,p}
(\mathbb{R}^n)=\mathcal{R}_\Omega\circ W^{s,p}(\mathbb{R}^n)
=W^{s,p}(\Omega)=\mathring W_D^{s,p}(\Omega)= \mathcal{W}_{d_D^{s}}^{s,p}(\Omega)
\end{align*}
for $sp<1$  (see Remarks \ref{rem-1.5}(iii) and \ref{rem1.9}) and
\begin{align*}
\mathcal{R}_\Omega\circ\mathring W_D^{s,p}(\mathbb{R}^n)=\mathcal{R}_\Omega\circ W_D^{s,p}(\mathbb{R}^n)
=W^{s,p}_D(\Omega)=\mathring W_D^{s,p}(\Omega)= \mathcal{W}_{d_D^{s}}^{s,p}(\Omega)
\end{align*}
for $sp>1$, proves that \eqref{int-hh} holds.

{\bf Case 2:} $sp=1$. In this case, by the iteration theorem of real interpolation (see, e.g., \cite{Tri92})
and {\bf Case 1}, we only need to show
that, for any $s_0\in (\frac{1}{p},1)$, it holds that
\begin{align*}
\left(L^p(\Omega), \mathcal{W}_{d_D^{s_0}}^{s_0,p}(\Omega)\right)_{s,p}= \mathcal{W}_{d_D^{ss_0}}^{ss_0,p}(\Omega).
\end{align*}
From Lemma \ref{lem4.6}, it follows that
\begin{align*}
\left(L^p(\Omega), \mathcal{W}_{d_D^{s_0}}^{s_0,p}(\Omega)\right)_{s,p}\subset  \mathcal{W}_{d_D^{ss_0}}^{ss_0,p}(\Omega).
\end{align*}
Thus, by Lemma \ref{lem4.7}, to finish the proof of the present proposition, it suffices
to show that the zero extension $\mathcal{E}_0$ in \eqref{eqn4.13} is bounded from $\mathcal{W}_{d_D^s}^{s,p}(\Omega)$ to
$\mathcal{W}^{s,p}(\Omega_D)$ for any $s\in (0,1)$ and $p\in (1,\infty)$. We verify this by using some ideas from
the proof of \cite[Proposition {6.8}]{Bec19-In}. More precisely,
for any $f\in \mathcal{W}_{d_D^s}^{s,p}(\Omega)$, from \eqref{fss} and \eqref{eqn4.13} we infer that
\begin{align}\label{eqn-ze}
\left\|  \mathcal{E}_0f\right\|_{\mathcal{W}^{s,p}(\Omega_D)}^p&= \int_{\Omega_D}
\left| f({\mathbf x}) \right|^p \mathcal{H}^{n}(d{\mathbf x})
+ \iint_{{\Omega_D}\times {\Omega_D}} \dfrac{|\mathcal{E}_0f({\mathbf x})
-\mathcal{E}_0f({\mathbf y})|^{p}}{|{\mathbf x}-{\mathbf y}|^{n-sp}} \,
\mathcal{H}^{n}(d{\mathbf x})\mathcal{H}^{n}(d{\mathbf y}) \notag\\
&\le \|f\|^p_{L^p(\Omega)}+\iint_{\genfrac{}{}{0pt}{}{{\Omega_D}\times {\Omega_D}}{|\mathbf{x}-\mathbf{y}|<1}
} \ldots\,\mathcal{H}^{n}(d{\mathbf x})\mathcal{H}^{n}(d{\mathbf y}) +\iint_{\genfrac{}{}{0pt}{}{{\Omega_D}\times {\Omega_D}}
{|\mathbf{x}-\mathbf{y}|\ge 1}}
\dots\,\mathcal{H}^{n}(d{\mathbf x})\mathcal{H}^{n}(d{\mathbf y}) \notag\\
&=:\|f\|^p_{L^p(\Omega)}+\mathrm{I}+\mathrm{II}.
\end{align}

For $\mathrm{II}$, by the fact that $\mathcal{H}^n|_{\Omega}$ reduces
to the $n$-dimensional Lebesgue measure on $\mathbb{R}^n$
and the fact that $D\times \mathbb{R}$ is an $n$-set, we obtain
\begin{align*}
\mathrm{II}&=\iint_{\genfrac{}{}{0pt}{}{{\Omega}\times {\Omega}}
{|{x}-{y}|\ge 1}}
\dfrac{|f({ x})-f({y})|^{p}}{|{ x}-{ y}|^{n-sp}} \,d{ x}d{y}+2\iint_{\genfrac{}{}{0pt}{}{{(D\times \mathbb{R})}
\times \Omega}
{|\mathbf{x}-( y,0)|\ge 1}}
\dfrac{|f({ y})|^{p}}{|{\mathbf x}-{( y,0)}|^{n-sp}} \,
\mathcal{H}^{n}(d{\mathbf x})d{ y}\\
&\lesssim \|f\|^p_{\dot {\mathcal W}^{s,p}(\Omega)}+\|f\|_{L^p(\Omega)}^p,
\end{align*}
which is desired.

To estimate $\mathrm{I}$, for any ${y}\in \Omega$, let
$$F(y):=\int_{\genfrac{}{}{0pt}{}{{D\times \mathbb{R}}}
{|\mathbf{x}-( y,0)|< 1}}
\dfrac{1}{|{\mathbf x}-{( y,0)}|^{n-sp}} \,
\mathcal{H}^{n}(d{\mathbf x})$$
Then, for any $|\mathbf{x}-( y,0)|\ge 1$, we have
$d(y,D)\le d((y,0),\mathbf{x})<1$. Thus, there exists $k_0=k_0(y)$ such that
$2^{-k_0-1}\le d(y,D)<2^{-k_0},$
which implies that $d((y,0), \mathbf{x})\ge d(y,D)\ge 2^{-{k_0}-1}$ and hence,
 using the fact that $D\times \mathbb{R}$ is an $n$-set, we conclude that
\begin{align}\label{eqn-eofF}
F(y)&\le \sum_{k=0}^{k_0} \int_{\genfrac{}{}{0pt}{}{D\times \mathbb{R}}
{2^{-k-1}\le |\mathbf{x}-({y},0)|<2^{-k}} }\dfrac{1}{|{\mathbf x}-{( y,0)}|^{n-sp}} \,
\mathcal{H}^{n}(d{\mathbf x}) \notag\\
&\lesssim\sum_{k=0}^{k_0} 2^{k(n+sp)} 2^{-nk}\lesssim 2^{k_0sp}
\sim \frac{1}{{\rm dist\,}(y,D)^{sp}}.
\end{align}
From \eqref{eqn-ze} and \eqref{eqn-eofF}, we deduce that
\begin{align*}
\mathrm{I}\lesssim \int_\Omega \dfrac{|f(y)|^{p}}{[{\rm dist\,}(y,D)]^{sp}}\,dy,
\end{align*}
which, combined with the estimates of $\mathrm{II}$, shows that
$\mathcal{E}_0$  is bounded from $\mathcal{W}_{d_D^s}^{s,p}(\Omega)$ to
$\mathcal{W}^{s,p}(\Omega_D)$.
This finishes the proof of Proposition \ref{t5}.
\end{proof}

\subsection{Applications to elliptic operators}\label{s5.2}

This subsection is devoted to the proofs of Corollaries \ref{cor1.11} and \ref{cor1.12}.
We begin with showing that the bilinear form
 $(\mathfrak a,\mathcal{W}_{d_D}^{1,p}(\Omega))$ in \eqref{eqn-DF} is a Dirichlet form
 in $L^2(\Omega)$.
Recall in \cite{FuOsTa11}  that a bilinear form $(\mathcal{E}, {\mathcal{F}})$ in $L^2(\Omega)$
is called a \emph{Dirichlet form} if $\mathcal{E}$ is a closed symmetric bilinear form with the domain
$\mathcal{F}$ dense in $L^2(\Omega)$ and $\mathcal{E}$ having the {Markovian property}:
for any $f\in \mathcal{F}$, it holds that $g:=0\vee f\wedge 1\in  \mathcal{F}$ and
\begin{align}\label{eqn-MP}
\mathcal{E}(g,g)\le \mathcal{E}(f,f).
\end{align}

\begin{lemma}\label{lem-DF}
Let $\Omega\subset\mathbb{R}^n$ be an $(\epsilon,\delta,D)$-domain with $\Omega$ being an $n$-set and
$D\subset \partial\Omega$ being a closed uniformly $(n-1)$-set.
Then $(\mathfrak a,\mathcal{W}_{d_D}^{1,p}(\Omega))$ in \eqref{eqn-DF} is a Dirichlet form in $L^2(\Omega)$.
\end{lemma}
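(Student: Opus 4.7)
The plan is to verify the four defining conditions of a Dirichlet form in turn: symmetry, density of the domain in $L^2(\Omega)$, closedness of the form, and the Markovian property \eqref{eqn-MP}. Throughout the argument I take $p=2$, which is the natural exponent for an $L^2$ Dirichlet form.

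First, symmetry $\mathfrak{a}(u,v)=\mathfrak{a}(v,u)$ is immediate from the hypothesis that $A(x)$ is symmetric for a.e. $x\in\Omega$. For density, I would observe that $C_{\rm c}^\infty(\Omega)\subset C_D^\infty(\Omega)$ because any function with compact support in $\Omega$ is automatically at positive distance from $D\subset\partial\Omega$, and each $\varphi\in C_D^\infty(\Omega)$ belongs to $\mathcal{W}_{d_D}^{1,2}(\Omega)$ since $d_D$ is bounded below on $\mathrm{supp}\,\varphi$. The classical density of $C_{\rm c}^\infty(\Omega)$ in $L^2(\Omega)$ therefore yields density of $\mathcal{W}_{d_D}^{1,2}(\Omega)$ in $L^2(\Omega)$.

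For closedness, I would first use the ellipticity and boundedness of $A$ to obtain the two-sided bound $\lambda\|\nabla u\|_{L^2(\Omega)}^2\le\mathfrak{a}(u,u)\le\|A\|_{L^\infty(\Omega)}\|\nabla u\|_{L^2(\Omega)}^2$, so that the form norm $\mathfrak{a}_1(u,u):=\mathfrak{a}(u,u)+\|u\|_{L^2(\Omega)}^2$ is equivalent to the square of the $W^{1,2}(\Omega)$-norm on the domain. Under the geometric assumptions of the lemma, the integer-order $D$-adapted Hardy inequality \eqref{eqn-IHI} holds with $p=2$ (by arguments parallel to Lemma \ref{mixed-H}, together with the references cited in the proof of Proposition \ref{prop-Hcx}). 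Combined with Remark \ref{rem-notepf}, this gives $\mathcal{W}_{d_D}^{1,2}(\Omega)=\mathcal{W}^{1,2}(\Omega)$ with equivalent norms, so that $(\mathcal{W}_{d_D}^{1,2}(\Omega),\mathfrak{a}_1)$ is a Hilbert space and $\mathfrak{a}$ is closed.

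For the Markovian property, given $f\in\mathcal{W}_{d_D}^{1,2}(\Omega)$, I would set $g:=0\vee f\wedge 1$. The standard chain rule for Sobolev functions gives $g\in W^{1,2}(\Omega)$ with $\nabla g=\nabla f\cdot\mathbf{1}_{\{0<f<1\}}$ almost everywhere, and the pointwise bound $|g|\le|f|$ transfers the weighted integrability to $g$, so that $g\in\mathcal{W}_{d_D}^{1,2}(\Omega)$. Since $A(x)$ is positive semi-definite, $A(x)\nabla g(x)\cdot\nabla g(x)\le A(x)\nabla f(x)\cdot\nabla f(x)$ almost everywhere, and integration yields $\mathfrak{a}(g,g)\le\mathfrak{a}(f,f)$. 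The hard step will be the closedness verification, which hinges on the integer-order Hardy inequality \eqref{eqn-IHI}: without it the weighted norm of $\mathcal{W}_{d_D}^{1,2}(\Omega)$ and the form norm $\mathfrak{a}_1$ need not be comparable, and the domain need not be complete under $\mathfrak{a}_1$. The geometric assumptions on $\Omega$ and $D$ are precisely what make the required Hardy inequality available.
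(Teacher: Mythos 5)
Your overall structure — symmetry, density, closedness, and the Markovian property \eqref{eqn-MP} — matches the paper's, and your treatments of density (via $C_{\rm c}^\infty(\Omega)\subset\mathcal{W}_{d_D}^{1,2}(\Omega)$) and of the Markovian property (chain rule plus $|g|\le|f|$ plus nonnegativity of $A$) are essentially the same as in the paper's proof, which uses Lemma~\ref{lem-weakd}.

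The closedness step, however, contains a genuine error. You assert that under the stated geometric assumptions the integer Hardy inequality \eqref{eqn-IHI} holds on all of $\mathcal{W}^{1,2}(\Omega)$ and conclude $\mathcal{W}_{d_D}^{1,2}(\Omega)=\mathcal{W}^{1,2}(\Omega)$. This is false whenever $D\neq\emptyset$: the paper itself points out, right after Remark~\ref{rem1.4}, that \eqref{eqn-HI} and \eqref{eqn-IHI} ``may fail to hold even for constant functions.'' Indeed, if $\Omega$ is bounded, constants lie in $\mathcal{W}^{1,2}(\Omega)$ but $\int_\Omega d_D(x)^{-2}\,dx=\infty$ for an $(n-1)$-set $D$, so constants are not in $\mathcal{W}_{d_D}^{1,2}(\Omega)$. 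The reference chain you invoke (Lemma~\ref{mixed-H}, Proposition~\ref{prop-Hcx}, \cite{EgTo17}) gives the Hardy inequality on $\mathring{W}_D^{1,2}(\Omega)$, not on the full space $\mathcal{W}^{1,2}(\Omega)$. The correct conclusion, and what Remark~\ref{rem1.9} (invoked in the paper's proof) actually delivers since $sp=2\neq 1$ here, is $\mathcal{W}_{d_D}^{1,2}(\Omega)=\mathring{W}_D^{1,2}(\Omega)$. That is enough for closedness: $\mathring{W}_D^{1,2}(\Omega)$ is by definition a closed subspace of $\mathcal{W}^{1,2}(\Omega)$, hence complete under the $\mathcal{W}^{1,2}$-norm, which by ellipticity and boundedness of $A$ is equivalent to your form norm $\mathfrak{a}_1$ on this subspace. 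So your conclusion is right, but the intermediate identity $\mathcal{W}_{d_D}^{1,2}(\Omega)=\mathcal{W}^{1,2}(\Omega)$ must be replaced by $\mathcal{W}_{d_D}^{1,2}(\Omega)=\mathring{W}_D^{1,2}(\Omega)$ (as a closed subspace of $\mathcal{W}^{1,2}(\Omega)$).
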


To prove Lemma \ref{lem-DF}, we need some basic properties on weak derivatives from \cite{ArEl97}.

\begin{lemma}[\cite{ArEl97}]\label{lem-weakd}
Let $p\in (1,\infty)$ and $\Omega\subset \mathbb{R}^n$ be a domain. Suppose $f,g\in \mathcal{W}^{1,p}(\Omega)$.
Then the following assertions hold.
\begin{itemize}
\item [{\rm (i)}] Let $f^+:=0\vee f:=\max\{0,f\}$ and $ f^-:=0\vee (-f)$. Then $f^+, f^-, |f|\in W^{1,p}(\Omega)$ and,
for any $i\in \{1,\ldots,n\}$,  it holds that
$D_i( f^+)=\mathbf{1}_{\{f>0\}}D_i  f$,
$D_i(f^-)=-\mathbf{1}_{\{f<0\}}D_i  f$,  and $D_i(|f|)=({\rm sgn}\, f)\,D_i  f$,
where $D_i$ denotes the weak derivative with respect to $x_i$.

\item [{\rm (ii)}] Assume $f\ge 0$. Then  $f\wedge 1:=\min\{f,1\}\in \mathcal{W}^{1,p}(\Omega)$ and,
for any $i\in \{1,\ldots,n\}$, it holds that
$D_i( f\wedge 1)=\mathbf{1}_{\{f<1\}}D_i  f$.

\item [{\rm (iii)}] $f\vee g$, $f\wedge g\in \mathcal{W}^{1,p}(\Omega)$ and, for any $i\in \{1,\ldots,n\}$,
it holds that
$$D_i( f\vee  g)=\mathbf{1}_{\{f\ge g\}}D_i  f+\mathbf{1}_{\{f< g\}}D_i  g$$
and
$D_i( f\wedge  g)=\mathbf{1}_{\{f\le g\}}D_i  f+\mathbf{1}_{\{f> g\}}D_i  g$.
\end{itemize}
\end{lemma}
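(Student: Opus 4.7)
The plan is to establish (i) by a smooth approximation argument, then derive (ii) and (iii) from (i) using the identities $f\wedge 1 = f - (f-1)^+$ and $f\vee g = \tfrac12(f+g+|f-g|)$, $f\wedge g = \tfrac12(f+g-|f-g|)$. The crucial auxiliary fact, which I would isolate as a preliminary claim, is that if $h\in \mathcal{W}^{1,p}(\Omega)$ and $c\in\mathbb{R}$, then $D_i h = 0$ a.e.\ on the level set $\{h=c\}$; this is what allows replacement of $\mathbf{1}_{\{f>0\}}$ by $\mathbf{1}_{\{f\ge 0\}}$ and similar conversions between strict and non-strict inequalities throughout the conclusions.

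For (i), I would regularize the positive part by taking, for each $\varepsilon>0$,
\[
\phi_\varepsilon(t):=\bigl(t^2+\varepsilon^2\bigr)^{1/2}-\varepsilon \ \text{ if } t>0, \qquad \phi_\varepsilon(t):=0 \ \text{ if } t\le 0.
\]
Then $\phi_\varepsilon\in C^1(\mathbb{R})$ is globally Lipschitz with $\phi_\varepsilon'(t)\to \mathbf{1}_{\{t>0\}}$ pointwise and $\phi_\varepsilon(t)\to t^+$ uniformly. The standard chain rule for $C^1$ Lipschitz functions acting on Sobolev functions gives $\phi_\varepsilon\circ f\in \mathcal{W}^{1,p}(\Omega)$ with $D_i(\phi_\varepsilon\circ f)=\phi_\varepsilon'(f)\, D_i f$. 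Testing against any $\varphi\in C_{\rm c}^\infty(\Omega)$ and passing to the limit $\varepsilon\to 0^+$ by the dominated convergence theorem (majorant $|D_i f|$) yields the distributional identity $D_i(f^+)=\mathbf{1}_{\{f>0\}}D_i f$. Applying this to $-f$ gives the formula for $f^-=(-f)^+$, and then $|f|=f^++f^-$ together with the level-set lemma at $c=0$ produces $D_i|f|=({\rm sgn}\,f)\,D_i f$ (the value assigned to ${\rm sgn}(0)$ is irrelevant).

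For (ii), the assumption $f\ge 0$ gives $f-1\in\mathcal{W}^{1,p}(\Omega)$, so (i) applied to $f-1$ yields $(f-1)^+\in\mathcal{W}^{1,p}(\Omega)$ and $D_i(f-1)^+=\mathbf{1}_{\{f>1\}}D_i f$. Since $f\wedge 1=f-(f-1)^+$, linearity gives $D_i(f\wedge 1)=\mathbf{1}_{\{f\le 1\}}D_i f$, which coincides a.e.\ with $\mathbf{1}_{\{f<1\}}D_i f$ by the level-set lemma at $c=1$. For (iii), applying (i) to $f-g\in\mathcal{W}^{1,p}(\Omega)$ gives $D_i|f-g|=({\rm sgn}(f-g))(D_i f-D_i g)$, and plugging into $f\vee g=\tfrac12(f+g+|f-g|)$ and $f\wedge g=\tfrac12(f+g-|f-g|)$ and rearranging using $\mathbf{1}_{\{f>g\}}+\mathbf{1}_{\{f<g\}}+\mathbf{1}_{\{f=g\}}=1$ produces the stated identities, once more after invoking the level-set lemma applied to $f-g$ at $c=0$ to move the null set $\{f=g\}$ into the chosen indicator.

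The main technical obstacle I anticipate is the level-set lemma $D_i h=0$ a.e.\ on $\{h=c\}$. A workable approach is to note that $(h-c)^+ - (h-c)^- = h-c$, so combining the two formulas from (i) already proved forces $\mathbf{1}_{\{h=c\}}D_i h$ to equal both sides of a zero identity; alternatively, one can apply (i) to the truncations $\phi_m(h):=((h-c)\vee(-1/m))\wedge(1/m)$ and let $m\to\infty$, since $\phi_m(h)\to 0$ in $\mathcal{W}^{1,p}$-norm on $\{h=c\}$. Once this is in hand, the rest of the argument is routine rearrangement, and the only care needed is to ensure that the dominated convergence step in (i) has an $L^p$ majorant independent of $\varepsilon$, which is guaranteed by $|\phi_\varepsilon'|\le 1$.
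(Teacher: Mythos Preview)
The paper does not prove this lemma; it is quoted from \cite{ArEl97} without proof, so there is no in-paper argument to compare against. Your approach is the standard textbook one (smooth approximation of $t\mapsto t^+$, then algebraic reduction of the remaining cases), and it is correct in substance.

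One minor technical point: in (ii) you write ``$f-1\in\mathcal{W}^{1,p}(\Omega)$'' and then apply (i) to it, but if $|\Omega|=\infty$ the constant $1$ is not in $L^p(\Omega)$, so $f-1$ need not belong to $\mathcal{W}^{1,p}(\Omega)$. This does not affect the conclusion: either run the same mollification with $\phi_\varepsilon(t-1)$ directly (noting $\phi_\varepsilon(t-1)=0$ for $t\le 1$ and $0\le\phi_\varepsilon(f-1)\le(f-1)^+\le f$, so the approximants stay in $L^p$), or observe that the derivative identity for $(f-1)^+$ is local and can be verified on every bounded subdomain, after which global membership $(f-1)^+\in\mathcal{W}^{1,p}(\Omega)$ follows from $0\le(f-1)^+\le f$ and $|D_i(f-1)^+|\le|D_if|$. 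The same remark applies to the level-set lemma at $c\ne 0$; at $c=0$ your derivation from $h=h^+-h^-$ is clean and needs no shift.
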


\begin{proof}[Proof of Lemma \ref{lem-DF}]
From \eqref{eqn-DF} and Remark \ref{rem1.9}, it follows that $(\mathfrak a,\mathcal{W}_{d_D}^{1,p}(\Omega))$ is a closed
symmetric bilinear form in $L^2(\Omega)$. On the other hand, it follows from Remark \ref{rem1.9} that
\begin{align*}
C_{\rm c}^\infty(\Omega)\subset\mathring{W}_{D}^{1,p}(\Omega)=\mathcal{W}_{d_D}^{1,p}(\Omega).
\end{align*}
This implies that $\mathcal{W}_{d_D}^{1,p}(\Omega)$ is a dense space in $L^2(\Omega)$.
Thus, to show  $(\mathfrak a,\mathcal{W}_{d_D}^{1,p}(\Omega))$ is a Dirichlet form,
we only need to verify the Markovian property \eqref{eqn-MP}.
To this end, for any $f\in \mathcal{W}_{d_D}^{1,p}(\Omega)$,
by Lemma \ref{lem-weakd}, we have, for any $i\in \{1,\ldots,n\}$,
\begin{align*}
D_i(0\vee f\wedge 1)=\mathbf{1}_{\{f^+<1\}}D_i (f^+)=\mathbf{1}_{\{f^+<1\}}\mathbf{1}_{\{f>0\}}D_i (f)\in L^p(\Omega),
\end{align*}
which, together with the fact $0\vee f\wedge 1\le f$, implies that $0\vee f\wedge 1\in \mathcal{W}_{d_D}^{1,p}(\Omega)$ and
$$\mathfrak a\left(0\vee f\wedge 1,0\vee f\wedge 1\right)\le \mathfrak a(f,f).$$
This verifies \eqref{eqn-MP} and hence shows
that $(\mathfrak a,\mathcal{W}_{d_D}^{1,p}(\Omega))$ is a Dirichlet form in $L^2(\Omega)$, which completes the
proof of Lemma   \ref{lem-DF}.
\end{proof}

Under the assumptions of Lemma \ref{lem-DF},
let $\mathcal{L}_D$ be the associated operator  of the Dirichlet form $(\mathfrak a,\mathcal{W}_{d_D}^{1,p}(\Omega))$.
It is known that $\mathcal{L}_D$ is a nonnegative definite self-adjoint operator on $L^2(\Omega)$
(see, e.g., \cite[Theorem 1.3.1]{FuOsTa11}).
Moreover, for any $s\in (0,1]$,
one can define the factional power $\mathcal{L}_D^s$ of $\mathcal{L}_D$ by setting
\begin{align*}
\mathcal{L}_D^s:=(z^s)(\mathcal{L}_D)
\end{align*}
via the functional calculus on $L^2(\Omega)$ with domain
\begin{align}\label{eqn-dfp}
{\rm dom}_2(\mathcal{L}_D^s):=\left\{f\in L^2(\Omega):\
\mathcal{L}_D^s\in L^2(\Omega)\right\}
\end{align}
(see, e.g., \cite{Haa06}). In particular, for the square root $\mathcal{L}_D^{1/2}$, it holds that
\begin{align}\label{eqn-kr}
{\rm dom}_2(\,\mathcal{L}_D^{1/2})=\mathcal{W}_{d_D}^{1,2}(\Omega).
\end{align}

To extrapolate the above property from $p=2$ to general $p\in (1,\infty)$,
we need the Markovian semigroup
$\{P_t\}_{t>0}$ generated by $-\mathcal{L}_D$ to have
a positive heat kernel that has the local Gaussian upper bound,
which is established in the following lemma.

\begin{lemma}\label{lem-Gauss}
Let $(\mathfrak a,\mathcal{W}_{d_D}^{1,p}(\Omega))$
be the Dirichlet form in \eqref{eqn-DF}  and $\mathcal{L}_D$
the associated operator on $L^2(\Omega)$. Then $\mathcal{L}_D$ generates a positive semigroup
having a heat kernel $p_t(\cdot,\cdot)\in L^\infty(\Omega,\Omega)$ such that there exist constants $w_0\in \mathbb{R}$ and
$c, b\in (0,\infty)$ such that, for any $t\in (0,\infty)$ and
a.e. $x, y\in \Omega$, it holds that
\begin{align}\label{eqn-GE}
0\le p_t(x,y)\le ct^{-n/2}\exp\left\{w_0t-\frac{b|x-y|^2}{t}\right\}.
\end{align}
\end{lemma}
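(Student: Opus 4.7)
The plan is to derive the heat-kernel Gaussian bound in five conceptual steps, following the well-established machinery of Davies combined with the extension-based Sobolev inequality available on $(\epsilon,\delta,D)$-domains.

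First, I would invoke Lemma \ref{lem-DF}: since $(\mathfrak{a},\mathcal{W}_{d_D}^{1,2}(\Omega))$ is a densely defined, closed, symmetric, non-negative Dirichlet form in $L^2(\Omega)$, the standard theory (see, e.g., \cite{FuOsTa11}) yields that $-\mathcal{L}_D$ generates a strongly continuous sub-Markovian semigroup $\{P_t\}_{t>0}$ on $L^2(\Omega)$. Sub-Markovianity immediately gives positivity ($P_t f\geq 0$ whenever $f\geq 0$) and the fact that $\{P_t\}_{t>0}$ extrapolates consistently to a semigroup of contractions on $L^p(\Omega)$ for every $p\in[1,\infty]$.

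Second, I would transfer the Sobolev embedding from $\mathbb{R}^n$ to $\Omega$ using the integer-order extension $\mathcal{E}_D:\mathring{W}_D^{1,2}(\Omega)\to \mathring{W}_D^{1,2}(\mathbb{R}^n)$ provided by \cite[Theorem 1.2]{Bec19-Ext} (see Remark \ref{rem1.4}(ii)); together with the usual $W^{1,2}(\mathbb{R}^n)\hookrightarrow L^{2n/(n-2)}(\mathbb{R}^n)$ (for $n\geq3$; Nash's inequality is used for $n\leq 2$), one obtains, for every $u\in \mathcal{W}_{d_D}^{1,2}(\Omega)$,
\begin{align*}
\|u\|_{L^{2n/(n-2)}(\Omega)}^2\lesssim \|\nabla u\|_{L^2(\Omega)}^2+\|u\|_{L^2(\Omega)}^2.
\end{align*}
Combining this inequality with the uniform ellipticity $A(x)\xi\cdot\xi\geq \lambda|\xi|^2$ produces the shifted Sobolev-type estimate $\mathfrak{a}(u,u)+w_0\|u\|_{L^2(\Omega)}^2\gtrsim \|u\|_{L^{2n/(n-2)}(\Omega)}^2$ for a suitable $w_0\in\mathbb{R}$. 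The Varopoulos--Carlen--Kusuoka--Stroock ultracontractivity theorem (or direct Nash iteration) then upgrades this into the $L^1$-$L^\infty$ bound
\begin{align*}
\|P_t\|_{L^1(\Omega)\to L^\infty(\Omega)}\leq c\,t^{-n/2}e^{w_0 t},\qquad t\in(0,\infty),
\end{align*}
and, by Dunford--Pettis, gives a jointly measurable kernel $p_t(\cdot,\cdot)\in L^\infty(\Omega\times\Omega)$ with $0\leq p_t(x,y)\leq c\,t^{-n/2}e^{w_0 t}$ for almost every $x,y\in\Omega$.

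Third, to produce the Gaussian off-diagonal factor I would apply Davies' exponential-perturbation method. For any bounded Lipschitz $\phi:\Omega\to\mathbb{R}$ with $\||\nabla\phi|\|_{L^\infty(\Omega)}\leq 1$ and any $\rho\in\mathbb{R}$, consider the twisted form $\mathfrak{a}_\rho(u,v):=\mathfrak{a}(e^{-\rho\phi}u,e^{\rho\phi}v)$ on $\mathcal{W}_{d_D}^{1,2}(\Omega)$. A direct computation using the product rule and the boundedness and ellipticity of $A$ shows that $\mathfrak{a}_\rho$ differs from $\mathfrak{a}$ only by lower-order terms controlled by $\rho^2\|A\|_{L^\infty}\|u\|_{L^2(\Omega)}^2$, so there exists $b\in(0,\infty)$ such that $\mathfrak{a}_\rho(u,u)+(w_0+b\rho^2)\|u\|_{L^2(\Omega)}^2$ still dominates the same Sobolev norm. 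Repeating the ultracontractivity argument for the twisted semigroup $e^{\rho\phi}P_t e^{-\rho\phi}$ yields
\begin{align*}
\left\|e^{\rho\phi}P_t e^{-\rho\phi}\right\|_{L^1(\Omega)\to L^\infty(\Omega)}\leq c\,t^{-n/2}e^{(w_0+b\rho^2)t}.
\end{align*}
Undoing the weight and optimizing over $\rho$ and $\phi$ (choosing, for fixed $x,y\in\Omega$, $\phi$ to approximate the truncated Euclidean distance to $y$ and $\rho\sim |x-y|/(2bt)$) produces the desired bound
\begin{align*}
0\leq p_t(x,y)\leq c\,t^{-n/2}\exp\!\left\{w_0 t-\tfrac{b|x-y|^2}{t}\right\}
\end{align*}
(after relabelling the constants).

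The main obstacle I anticipate is step two, namely ensuring a genuine Sobolev embedding for $\mathcal{W}_{d_D}^{1,2}(\Omega)$. This is precisely the point where the geometric hypotheses on $\Omega$ and $D$ become essential: only because $\Omega$ is an $(\epsilon,\delta,D)$-domain and $D$ a closed uniformly $(n-1)$-set does the Bechtel extension operator of \cite[Theorem 1.2]{Bec19-Ext} reduce the problem to the classical Sobolev inequality on $\mathbb{R}^n$. The remaining steps (ultracontractivity via Nash and Davies' weighted perturbation) are then standard once this ingredient is in place.
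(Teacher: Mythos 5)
Your argument is correct, but it takes a genuinely different route from the paper. The paper handles this lemma as a pure verification exercise: it cites the Arendt--ter Elst criterion (Lemma \ref{lem-aeGauss} from \cite{ArEl97}), which states that the associated operator of a closed form on any so-called Gaussian admissible space automatically has a heat kernel with Gaussian upper bounds, and then checks the five defining conditions of Definition \ref{def-ads} (closedness, inclusion of $\mathring{W}^{1,2}(\Omega)$, existence of a bounded extension to $W^{1,2}(\mathbb{R}^n)$, stability under $|\cdot|$ and truncation, and the ideal property). The required extension operator is supplied by Lemma \ref{lem4.5}, and the lattice properties are handled exactly as in Lemma \ref{lem-DF}. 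You instead unpack the content of that black-box theorem: sub-Markovianity from the Dirichlet-form structure, a Sobolev inequality for $\mathcal{W}_{d_D}^{1,2}(\Omega)$ transferred from $\mathbb{R}^n$ via $\mathcal{E}_D$, ultracontractivity via Nash/Varopoulos iteration, and then Davies' exponential perturbation to produce the Gaussian factor. The key input---the extension operator for $\mathcal{W}_{d_D}^{1,2}(\Omega)=\mathring{W}_D^{1,2}(\Omega)$ (Remark \ref{rem1.9}, or \cite[Theorem 1.2]{Bec19-Ext})---is identical in both treatments; you also correctly flag this as the only place the $(\epsilon,\delta,D)$ geometry enters. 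The paper's approach is shorter and defers all analysis to \cite{ArEl97}; yours is more self-contained and makes visible how each piece of the hypothesis is actually used. Two small points worth tidying up: (a) you should explicitly invoke the identity $\mathcal{W}_{d_D}^{1,2}(\Omega)=\mathring{W}_D^{1,2}(\Omega)$ (Remark \ref{rem1.9}) before applying the extension operator, since the extension of \cite[Theorem 1.2]{Bec19-Ext} is stated on $\mathring{W}_D^{1,2}(\Omega)$, and (b) the Davies twist needs a sentence confirming that multiplication by a bounded Lipschitz $e^{\pm\rho\phi}$ preserves $\mathcal{W}_{d_D}^{1,2}(\Omega)$, which is immediate from its intrinsic definition (boundedness handles the weighted $L^2$ piece and Leibniz the gradient piece) but should be said.
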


\begin{remark}\label{rem5.7}
\begin{itemize}
\item [(i)]
The Gaussian upper bound \eqref{eqn-GE} for the heat kernel offers significant advantages
in extending the functional properties of $\mathcal{L}_D$ from $L^2(\Omega)$ to $L^p(\Omega)$
for any $p\in (1,\infty)$.
For instance, it was proven in \cite[Theorem 5.9]{ArEl97} that, for any $p\in (1,\infty)$ and $w\in (w_0,\infty)$ with
$w_0$ as in \eqref{eqn-GE}, $\mathcal{L}_D+w$ has a bounded $H_\infty$
functional calculus on $L^p(\Omega)$.  Moreover,
$-\mathcal{L}_D$ generates a holomorphic semigroup on $L^p(\Omega)$
with holomorphic sector containing at least
$$\Sigma(\pi/2):=\{z\in \mathbb{C}:\ |{\rm arg}\, z|<\pi/2 \}.$$
This implies that   $\mathcal{L}_D$ is a sectorial operator with spectral angle $0$
(see \cite{DuRo96}). Thus, following \cite{Haa06} one can extend the definition
of the fractional power $\mathcal{L}_D^s$ from $L^2(\Omega)$ to $L^p(\Omega)$ for any
$p\in (1,\infty)$ with the domain
\begin{align*}
{\rm dom}_p(\mathcal{L}_D^s)=\left\{f\in L^p(\Omega):\
\mathcal{L}_D^s\in L^p(\Omega)\right\}.
\end{align*}
For $s=1/2$, let $(p_-,q_+)$ be the internal
of the maximal interval of all $p\in (1,\infty)$ such that
\begin{align*}
{\rm dom}_p(\mathcal{L}_D^{1/2})=\mathring{W}^{1,p}_D(\Omega).
\end{align*}
We refer to \cite{Bec24} for a systematic study of the characterization of $(p_-,q_+)$.

\item [(ii)] Based on the Gaussian upper bound \eqref{eqn-GE},
we infer from \cite[Theorem 2.2]{HiMo00} that, for any $p\in (1,\infty)$,
the space $L^p([0,T); L^p(\Omega))$ is a space of the maximal regularity for $\mathcal{L}_D$ as in \eqref{eqn-mr}.
Recall in \cite{HNVW16} that, for any $p\in (1,\infty)$ and any Banach space $X$, the \emph{Bochner space} $L^p([0,T); X)$
is defined by setting
\begin{align}\label{eqn-BS}
L^p([0,T); X):=\left\{u:  [0,T)\to X :\
\left[\int_0^T \|u(t,\cdot)\|^p_X\,dt\right]^{\frac 1p}<\infty\right\}.
\end{align}
\end{itemize}
\end{remark}

To prove Lemma \ref{lem-Gauss}, we need the following definition of the
Gaussian admissible space from \cite{ArEl97}.

\begin{definition}\label{def-ads}
Let $\Omega \subset \mathbb{R}^n$ be a domain. A linear space $\mathcal{V}\subset
\mathcal{W}^{1,2}(\Omega)$ is called a \emph{Gaussian admissible space} if $\mathcal{V}$ satisfies
the following
\begin{itemize}
  \item [(i)] $\mathcal{V}$ is a closed subspace of $\mathcal{W}^{1,2}(\Omega)$,
  \item [(ii)] $\mathring W^{1,2}(\Omega)\subset \mathcal{V}$,
  \item [(iii)] there exists a bounded linear extension $\mathcal{E}_{\mathcal{V}}$ from $\mathcal{V}$ to $W^{1,2}(\mathbb{R}^n)$ such that, for any
  $f\in \mathcal{V}\cap L^1(\Omega)$,
  $\|\mathcal{E}_{\mathcal{V}}f\|_{L^1(\mathbb{R}^n)}\lesssim \|f\|_{L^1(\Omega)}$,
  \item [(iv)] if $f\in \mathcal{V}$, then $|f|$, $|f|\wedge 1\in \mathcal{V}$, and
  \item [(v)] if $f\in \mathcal{V}$ and $g\in \mathcal{W}^{1,2}(\Omega)$ satisfy
  $|g|\le f$, then $g\in \mathcal{V}$.
\end{itemize}
\end{definition}

Arendt and Elst \cite[Theorem 4.4]{ArEl97} proved the following interesting result which reduces the proof of Gaussian upper bounds
of heat kernels to verifying that the conditions of Gaussian admissible spaces
as in Definition \ref{def-ads} hold.

\begin{lemma}[\cite{ArEl97}]\label{lem-aeGauss}
Let $\Omega\subset\mathbb{R}^n$ be a domain and  $\mathcal{V}\subset
\mathcal{W}^{1,2}(\Omega)$ be a {Gaussian admissible space}. Suppose
$(\mathfrak a,\mathcal{V})$ is a closed bilinear form  defined similar to \eqref{eqn-DF} and $\mathcal{L}_\mathcal{V}$
is the associated operator on $L^2(\Omega)$. Then $\mathcal{L}_\mathcal{V}$ generates a positive semigroup
having a heat kernel $p_t(\cdot,\cdot)\in L^\infty(\Omega,\Omega)$ that satisfies the Gaussian upper
bound \eqref{eqn-GE}.
\end{lemma}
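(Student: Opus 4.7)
The plan is to follow the classical Davies--Gaffney approach: derive positivity and $L^\infty$-contractivity from the Beurling--Deny criteria, obtain an ultracontractive bound from a Nash inequality pulled back from $\mathbb{R}^n$ via the extension operator $\mathcal{E}_\mathcal{V}$, and then upgrade this on-diagonal estimate to the full Gaussian bound \eqref{eqn-GE} by Davies' exponential perturbation. The first step follows immediately from Definition \ref{def-ads}(iv) and Lemma \ref{lem-weakd}: for any $f\in\mathcal{V}$, one has $|f|,|f|\wedge 1\in\mathcal{V}$ with $\nabla |f|=(\mathrm{sgn}\,f)\nabla f$ and $\nabla(|f|\wedge 1)=\mathbf{1}_{\{|f|<1\}}\nabla|f|$ almost everywhere, so the ellipticity of $A$ yields $\mathfrak{a}(|f|,|f|)\le \mathfrak{a}(f,f)$ and $\mathfrak{a}(|f|\wedge 1,|f|\wedge 1)\le \mathfrak{a}(f,f)$. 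These are the two Beurling--Deny criteria and force $\{e^{-t\mathcal{L}_\mathcal{V}}\}_{t>0}$ to be positivity-preserving and $L^\infty$-contractive.

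Next I would transfer the Euclidean Nash inequality to $\mathcal{V}$ via the extension operator. By (iii), $\mathcal{E}_\mathcal{V}:\mathcal{V}\to W^{1,2}(\mathbb{R}^n)$ is bounded and also satisfies $\|\mathcal{E}_\mathcal{V}f\|_{L^1(\mathbb{R}^n)}\lesssim \|f\|_{L^1(\Omega)}$; interpolating with the $L^2$-continuity gives $\|\mathcal{E}_\mathcal{V}f\|_{L^p(\mathbb{R}^n)}\lesssim \|f\|_{L^p(\Omega)}$ for $p\in[1,2]$. Applying the classical Nash inequality on $\mathbb{R}^n$ to $\mathcal{E}_\mathcal{V}f$ and controlling $\|\nabla \mathcal{E}_\mathcal{V}f\|_{L^2(\mathbb{R}^n)}^2$ by $\mathfrak{a}(f,f)+w_0\|f\|_{L^2(\Omega)}^2$ for a suitable $w_0\in \mathbb{R}$ then produces
\begin{align*}
\|f\|_{L^2(\Omega)}^{2+4/n}\le C\left[\mathfrak{a}(f,f)+w_0\|f\|_{L^2(\Omega)}^2\right]\|f\|_{L^1(\Omega)}^{4/n}
\end{align*}
for every $f\in\mathcal{V}\cap L^1(\Omega)$. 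Nash's ODE argument applied to $t\mapsto \|e^{-t\mathcal{L}_\mathcal{V}}f\|_{L^2(\Omega)}^2$ then yields the ultracontractive bound $\|e^{-t\mathcal{L}_\mathcal{V}}\|_{L^1\to L^\infty}\le c\,t^{-n/2}e^{w_0 t}$, whence a heat kernel $p_t\in L^\infty(\Omega\times\Omega)$ exists and satisfies the on-diagonal Gaussian.

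To obtain the off-diagonal decay in \eqref{eqn-GE} I would apply Davies' perturbation. For a bounded Lipschitz function $\phi:\Omega\to\mathbb{R}$ with $\|\nabla\phi\|_{L^\infty(\Omega)}\le 1$ and $\rho\in\mathbb{R}$, the multiplication operator $M_\rho f:=e^{\rho\phi}f$ preserves $\mathcal{V}$: conditions (iv) and (v) of Definition \ref{def-ads} ensure that bounded Lipschitz multipliers map $\mathcal{V}$ into itself, since $|M_\rho f|\le e^{|\rho|\|\phi\|_{L^\infty}}|f|$ and $M_\rho f\in W^{1,2}(\Omega)$ by the product rule. The twisted form $\mathfrak{a}_\rho(u,v):=\mathfrak{a}(M_{-\rho}u,M_\rho v)$ is then closed on $\mathcal{V}$ and satisfies $\mathfrak{a}_\rho(f,f)\ge \mathfrak{a}(f,f)-\rho^2\Lambda^2\|f\|_{L^2(\Omega)}^2$ with $\Lambda:=\|A\|_{L^\infty(\Omega)}$ by a direct expansion and Young's inequality. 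Repeating Nash's argument for the generator of $\mathfrak{a}_\rho$ gives $\|M_{-\rho}e^{-t\mathcal{L}_\mathcal{V}}M_\rho\|_{L^1\to L^\infty}\le c\,t^{-n/2}e^{(w_0+\rho^2\Lambda^2)t}$; pairing with indicators of bounded subsets of $\Omega$, choosing $\phi$ to approximate a truncation of $z\mapsto |z-y|$, and optimizing in $\rho\sim |x-y|/t$ produces exactly \eqref{eqn-GE}. The main obstacle lies precisely in this last step: one must verify rigorously that $\mathfrak{a}_\rho$ is closed on $\mathcal{V}$ and that its associated operator coincides with $M_{-\rho}\mathcal{L}_\mathcal{V}M_\rho$, so that its semigroup is $M_{-\rho}e^{-t\mathcal{L}_\mathcal{V}}M_\rho$. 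This relies essentially on the fact that conditions (iv) and (v) together make $\mathcal{V}$ a module over bounded Lipschitz multipliers, a property not shared by every closed subspace of $W^{1,2}(\Omega)$; the same module property underlies the secondary subtlety of transporting the shift constant $w_0$ (needed because no Poincar\'e inequality is assumed on $\Omega$) unchanged through the perturbation.
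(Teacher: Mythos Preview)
The paper does not prove this lemma; it is quoted verbatim from \cite[Theorem~4.4]{ArEl97} and used as a black box in the proof of Lemma~\ref{lem-Gauss}. There is therefore no ``paper's own proof'' to compare against.

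That said, your sketch is the correct strategy and is essentially what the original reference does: the Beurling--Deny criteria follow from (iv) as you indicate, the Nash inequality is imported from $\mathbb{R}^n$ via the extension operator in (iii) (this is the whole point of including the $L^1$-bound in the definition), and Davies' exponential perturbation yields the off-diagonal decay. Your identification of the key structural fact---that (iv) and (v) together make $\mathcal{V}$ an ideal in $\mathcal{W}^{1,2}(\Omega)$ dominated by its own absolute values, hence stable under multiplication by bounded Lipschitz functions---is exactly the reason those axioms appear in Definition~\ref{def-ads}. The argument you flag as the ``main obstacle'' (closedness of $\mathfrak{a}_\rho$ and identification of its semigroup with $M_{-\rho}e^{-t\mathcal{L}_\mathcal{V}}M_\rho$) is routine once this module property is in hand, since $M_\rho$ is then a bounded isomorphism of $\mathcal{V}$ with bounded inverse $M_{-\rho}$. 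So your outline is sound; it simply reproduces the cited result rather than anything the present paper contributes.
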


We now turn to the proof of Lemma \ref{lem-Gauss}.

\begin{proof}[Proof of Lemma \ref{lem-Gauss}]
With the help of Lemma \ref{lem-aeGauss}, we only need to verify that $\mathcal{W}_{d_D}^{1,p}(\Omega)$
is a {Gaussian admissible space} as in Definition \ref{def-ads}.  By Remark \ref{rem1.9} and Lemma \ref{lem4.5},
we find that (i)-(iii) and (v) of Definition \ref{def-ads} hold.
Definition \ref{def-ads}(iv) follows from an argument similar to that used in the proof of Lemma
\ref{lem-DF}, the details being omitted. This finishes the proof of Lemma \ref{lem-Gauss}.
\end{proof}

Finally, we are in a position to prove Corollaries \ref{cor1.11} and \ref{cor1.12}.

\begin{proof}[Proof of Corollary \ref{cor1.11}]
Let ${\rm dom}_2(\mathcal{L}_D^{1/2})$ be the domain of the square root of
$\mathcal{L}_D$ as in \eqref{eqn-dfp}. From \eqref{eqn-kr}, we deduce
${\rm dom}_2(\,\mathcal{L}_D^{1/2})=\mathcal{W}_{d_D}^{1,2}(\Omega)$.
Moreover, using the complex interpolation of the domains of fractional powers
(see, e.g., \cite[Theorem 6.6.9]{Haa06}), we obtain,  for any $s\in (0,1)$,
\begin{align*}
\left[L^2(\Omega), {\rm dom}_2(\,\mathcal{L}_D^{1/2})\right]_s
={\rm dom}_2(\,\mathcal{L}_D^{s/2}),
\end{align*}
which, combined with Proposition \ref{t5} and the coincidence of
the real and complex interpolations in the $L^2$ scale (see, \cite[Corollary 4.37]{Lua}),
further implies that
\begin{align*}
{\rm dom}_2(\,\mathcal{L}_D^{s/2})=\left[L^2(\Omega), {\rm dom}_2(\,\mathcal{L}_D^{1/2})\right]_s
=\left(L^2(\Omega), \mathcal{W}_{d_D}^{1,2}(\Omega)\right)_{s,2}
=\mathcal{W}_{d_D^s}^{s,2}(\Omega).
\end{align*}
This finishes the proof of Corollary \ref{cor1.11}.
\end{proof}

To show Corollary \ref{cor1.12}, we need the following lemma from
\cite{DaGr75} (see also \cite[Theorem 9.3.6]{Haa06}).

\begin{lemma}[\cite{DaGr75}]\label{lem-DG}
Let $T\in (0,\infty)$, $p\in [1,\infty)$, $q \in [1, \infty]$,
$\theta \in  (0, 1)$, and ${\rm Re}\,\alpha  \in (0,\infty)$. Suppose that $A$ is a sectorial operator on the
Banach space $X$ with spectral angle $< \pi/2$. Then the space
$L^p([0, T); (X, {\rm dom}_X(A^\alpha))_{\theta,q})$ is a space of the maximal regularity for $A$.
\end{lemma}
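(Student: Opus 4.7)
The plan is to proceed by the classical semigroup-theoretic argument of Da Prato--Grisvard (as presented in \cite[Chapter~9]{Haa06}), using the interpolation space
$Y := (X, \text{dom}_X(A^\alpha))_{\theta, q}$
as the ``correct'' space in which the abstract evolution equation gains maximal regularity. Since $A$ is sectorial of angle strictly less than $\pi/2$, the operator $-A$ generates a bounded analytic $C_0$-semigroup $(e^{-tA})_{t \ge 0}$ on $X$, and the standard moment estimates $\|A^\beta e^{-tA}\|_{\mathcal{L}(X)} \lesssim t^{-\beta}$ hold for every $t > 0$ and $\beta \ge 0$; these are the quantitative tools driving the proof. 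Reduction to real $\alpha$ is routine because $\|A^\alpha e^{-tA}\|_{\mathcal{L}(X)} \lesssim t^{-\text{Re}\,\alpha}$ with implicit constant depending only on $\text{Im}\,\alpha$.

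The first ingredient is a semigroup description of the interpolation norm: combining the $K$-method with the sectorial calculus of $A$, one establishes
\[
\|x\|_Y \sim \|x\|_X + \left(\int_0^1 \bigl(t^{\text{Re}\,\alpha\,(1-\theta)}\, \|A^\alpha e^{-tA} x\|_X\bigr)^q\, \frac{dt}{t}\right)^{1/q},
\]
with the usual supremum modification when $q = \infty$. The second ingredient is the Duhamel identity $u'(t) = f(t) - A u(t)$, which holds almost everywhere thanks to analyticity of the semigroup; this reduces the maximal regularity claim to the single estimate $\|Au\|_{L^p([0, T); Y)} \lesssim \|f\|_{L^p([0, T); Y)}$.

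To obtain this key estimate, I would apply the semigroup description of $Y$ to $Au(t) = \int_0^t A e^{-(t-s)A} f(s)\, ds$ and exploit the splitting
\[
A^{1+\alpha} e^{-(\tau + t - s)A} \;=\; A\, e^{-(\tau + t - s)A/2}\, \cdot\, A^{\alpha}\, e^{-(\tau + t - s)A/2},
\]
where $\tau$ is the auxiliary variable from the $Y$-norm. The moment bound on the first factor reduces matters to a scalar convolution in $s$ with kernel of the form $(\tau + t - s)^{-1}$ acting on the ``semigroup density'' that measures $\|f(s)\|_Y$. Successive applications of Minkowski's integral inequality (to interchange the $Y$-norm with the $s$-integral) followed by a Hardy--Young inequality in the $\tau$-variable then contract the $L^p(dt)$ norm against $\|f\|_{L^p([0,T); Y)}$. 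The simpler piece $\|Au(t)\|_X$ in the $Y$-norm is handled in parallel, using the $\alpha$-gain of $Y$ over $X$ to absorb the $(t-s)^{-1}$ singularity from the kernel $A e^{-(t-s)A}$.

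The main obstacle is the exponent bookkeeping in the last step: the singularity order $1$ from $A e^{-\rho A}$, the weight $t^{\text{Re}\,\alpha\,(1-\theta)}$ from the semigroup description of $Y$, and the exponents $p$ and $q$ must conspire so that the resulting scalar convolution kernel lies in $L^1(dt)$ and the Hardy--Young step is admissible; the gain $\text{Re}\,\alpha\,\theta > 0$ over the endpoint $\theta = 0$ is precisely what makes the argument work. The hypothesis that the spectral angle is strictly less than $\pi/2$ is essential: it yields an exponential decay $\|e^{-tA}\|_{\mathcal{L}(X)} \lesssim e^{-ct}$ for some $c > 0$, without which the tail of the $\tau$-integral in the $Y$-norm would diverge and the convolution kernel would fail to be integrable at infinity.
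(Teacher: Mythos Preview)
The paper does not prove this lemma; it is quoted directly from Da Prato--Grisvard \cite{DaGr75} (see also \cite[Theorem 9.3.6]{Haa06}) and used as a black box in the proof of Corollary~\ref{cor1.12}. Your sketch follows the classical Da Prato--Grisvard argument and is broadly on the right track, so in that sense it is compatible with what the paper invokes.

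However, there is a genuine error in your final paragraph. The spectral angle being strictly less than $\pi/2$ does \emph{not} yield exponential decay of the semigroup: a sectorial operator of angle $<\pi/2$ generates a \emph{bounded} analytic semigroup, but the bound $\|e^{-tA}\|_{\mathcal{L}(X)}\lesssim e^{-ct}$ with $c>0$ would additionally require $0\notin\sigma(A)$ (invertibility), which is not assumed here. Nor is exponential decay needed: the $\tau$-integral in your semigroup description of the $Y$-norm runs over $(0,1)$, not $(0,\infty)$, and the time interval $[0,T)$ is finite, so no tail at infinity arises. The actual role of the angle condition is to guarantee analyticity of the semigroup, which is precisely what produces the moment estimates $\|A^\beta e^{-tA}\|_{\mathcal{L}(X)}\lesssim t^{-\beta}$ you rely on throughout; without analyticity these fail and the entire argument collapses. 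Your exponent bookkeeping discussion is otherwise sound, but you should correct this misidentification of the role of the hypothesis before regarding the sketch as complete.
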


\begin{proof}[Proof of Corollary \ref{cor1.12}]
Corollary \ref{cor1.12} is an immediately consequence of
Lemma \ref{lem-DG} in view of both Remark \ref{rem5.7}(i)
and Proposition \ref{t5}. We omit the details.
\end{proof}

\noindent\textbf{Acknowledgements}\quad The authors would like to
express their sincere thanks to Dr. Sebastian Bechtel
for his many helpful comments which definitely improve the quality 
of this article.

\medskip

\noindent\textbf{Data Availability}\quad Data sharing is not applicable
to this article as obviously no datasets were generated or
analyzed during the current study.

\section*{Declarations}

\noindent\textbf{Conflict of interest}\quad The authors have no
conflict of interest to declare that are relevant to the content of this
article.

\bigskip

\noindent Jun Cao

\medskip

\noindent School of Mathematical Sciences, Zhejiang University of Technology, Hangzhou 310023, The People's Republic of China

\smallskip

\noindent{\it E-mail address}: \texttt{caojun1860@zjut.edu.cn}

\bigskip

\noindent Dachun Yang (Corresponding author)

\medskip

\noindent Laboratory of Mathematics and Complex Systems (Ministry of Education of China),
School of Mathematical Sciences, Beijing Normal University, Beijing 100875, The People's Republic of China

\smallskip

\noindent{\it E-mail address}: \texttt{dcyang@bnu.edu.cn}

\bigskip

\noindent  Qishun Zhang

\medskip

\noindent School of Mathematics,
Renmin University of China, Beijing 100872, The People's Republic of China

\smallskip

\noindent{\it E-mail address}: \texttt{qszhang@ruc.edu.cn}


\end{document}